\begin{document}
\newtheorem{theorem}{Theorem}    
\newtheorem{proposition}[theorem]{Proposition}
\newtheorem{conjecture}[theorem]{Conjecture}
\def\theconjecture{\unskip}
\newtheorem{corollary}[theorem]{Corollary}
\newtheorem{lemma}[theorem]{Lemma}
\newtheorem{sublemma}[theorem]{Sublemma}
\newtheorem{observation}[theorem]{Observation}
\theoremstyle{definition}
\newtheorem{definition}{Definition}
\newtheorem{notation}[definition]{Notation}
\newtheorem{remark}[definition]{Remark}
\newtheorem{question}[definition]{Question}
\newtheorem{questions}[definition]{Questions}
\newtheorem{example}[definition]{Example}
\newtheorem{problem}[definition]{Problem}
\newtheorem{exercise}[definition]{Exercise}

\def\earrow{{\mathbf e}}
\def\rarrow{{\mathbf r}}
\def\uarrow{{\mathbf u}}
\def\varrow{{\mathbf V}}
\def\tpar{T_{\rm par}}
\def\apar{A_{\rm par}}

\def\reals{{\mathbb R}}
\def\torus{{\mathbb T}}
\def\heis{{\mathbb H}}
\def\integers{{\mathbb Z}}
\def\naturals{{\mathbb N}}
\def\complex{{\mathbb C}\/}
\def\distance{\operatorname{distance}\,}
\def\support{\operatorname{support}\,}
\def\dist{\operatorname{dist}\,}
\def\Span{\operatorname{span}\,}
\def\degree{\operatorname{degree}\,}
\def\kernel{\operatorname{kernel}\,}
\def\dim{\operatorname{dim}\,}
\def\codim{\operatorname{codim}}
\def\trace{\operatorname{trace\,}}
\def\Span{\operatorname{span}\,}
\def\dimension{\operatorname{dimension}\,}
\def\codimension{\operatorname{codimension}\,}
\def\nullspace{\scriptk}
\def\kernel{\operatorname{Ker}}
\def\ZZ{ {\mathbb Z} }
\def\p{\partial}
\def\rp{{ ^{-1} }}
\def\Re{\operatorname{Re\,} }
\def\Im{\operatorname{Im\,} }
\def\ov{\overline}
\def\eps{\varepsilon}
\def\lt{L^2}
\def\diver{\operatorname{div}}
\def\curl{\operatorname{curl}}
\def\etta{\eta}
\newcommand{\norm}[1]{ \|  #1 \|}
\def\expect{\mathbb E}
\def\bull{$\bullet$\ }

\def\xone{x_1}
\def\xtwo{x_2}
\def\xq{x_2+x_1^2}
\newcommand{\abr}[1]{ \langle  #1 \rangle}

\numberwithin{theorem}{section} \numberwithin{definition}{section}
\numberwithin{equation}{section}

\newcommand{\Norm}[1]{ \left\|  #1 \right\| }
\newcommand{\set}[1]{ \left\{ #1 \right\} }
\def\one{\mathbf 1}
\def\whole{\mathbf V}
\newcommand{\modulo}[2]{[#1]_{#2}}

\def\scriptf{{\mathcal F}}
\def\scriptg{{\mathcal G}}
\def\scriptm{{\mathcal M}}
\def\scriptb{{\mathcal B}}
\def\scriptc{{\mathcal C}}
\def\scriptt{{\mathcal T}}
\def\scripti{{\mathcal I}}
\def\scripte{{\mathcal E}}
\def\scriptv{{\mathcal V}}
\def\scriptw{{\mathcal W}}
\def\scriptu{{\mathcal U}}
\def\scriptS{{\mathcal S}}
\def\scripta{{\mathcal A}}
\def\scriptr{{\mathcal R}}
\def\scripto{{\mathcal O}}
\def\scripth{{\mathcal H}}
\def\scriptd{{\mathcal D}}
\def\scriptl{{\mathcal L}}
\def\scriptn{{\mathcal N}}
\def\scriptp{{\mathcal P}}
\def\scriptk{{\mathcal K}}
\def\frakv{{\mathfrak V}}

\newfam\msbfam
\font\tenmsb=msbm10   \textfont\msbfam=\tenmsb
\font\sevenmsb=msbm7  \scriptfont\msbfam=\sevenmsb
\font\fivemsb=msbm5   \scriptscriptfont\msbfam=\fivemsb

\def\Bbb{\fam\msbfam \tenmsb}
\def\rr{{\Bbb R}}
\def\rz{{{\rr}^n}}
\def\zz{{\Bbb Z}}
\def\nn{{\Bbb N}}
\newfam\bigfam
\font\tenbig=msbm10 scaled \magstep2   \textfont\bigfam=\tenbig
\font\sevenbig=msbm7 scaled \magstep2
\scriptfont\bigfam=\sevenbig \font\fivebig=msbm5 scaled
\magstep2   \scriptscriptfont\bigfam=\fivebig
\def\Bbig{\fam\bigfam \tenbig}

\def\dprod{\displaystyle\prod}
\def\br{{\Bbig R}}
\def\fz{\infty}
\def\az{\alpha}
\def\supp{{\rm{\ supp\ }}}
\def\dinf{\displaystyle\inf}
\def\loc{{\rm{\ loc\ }}}
\def\lp{{L^p(X)}}
\def\lq{{L^q(X)}}
\def\bdz{\Delta}
\def\ez{\epsilon}
\def\bz{\beta}
\def\vz{\varphi}
\def\pz{\partial}
\def\tz{\theta}
\def\sz{\sigma}
\def\bsz{\Sigma}
\def\dz{\delta}
\def\gz{\gamma}
\def\bgz{\Gamma}
\def\lz{\lambda}
\def\cz{\cdot}
\def\supp{{\rm supp}}
\def\loc{{\rm loc}}
\def\wt{\widetilde}
\def\bmo{\rm {\rm BMO(\rz)}}
\def\blz{\Lambda}
\def\wh{\widehat}
\def\dis{\displaystyle}
\def\dsum{\displaystyle\sum}
\def\dint{\displaystyle\int}
\def\dfrac{\displaystyle\frac}
\def\dsup{\displaystyle\sup}
\def\dlim{\displaystyle\lim}
\def\bom{\Omega}
\def\om{\omega}
\def\C{\mathbb{C}}
\def\R{\mathbb{R}}
\def\Rn{{\mathbb{R}^n}}
\newtheorem{thm}{\hskip\parindent Theorem}
\renewcommand\thethm{}
\newtheorem{lem}{\hskip\parindent Lemma}
\renewcommand\thelem{}
\newtheorem{prop}{\hskip\parindent Proposition}[section]
\newtheorem{rem}{\hskip\parindent Remark}[section]
\newtheorem{cor}{\hskip\parindent Corollary}[section]
\newtheorem{defn}{\hskip\parindent Definition}[section]
\renewcommand\thedefn{}
\newtheorem{pf}{\hskip\parindent Proof}
\renewcommand\thepf{}
\def\dabmo{\dsum_{|\gamma|=m-1}\|D^\gamma A\|_{\rm {\rm BMO}}}
\def\dg{\dsum_{|\gamma|=m-1}}
\def\dgb{\dsum_{|\alpha|=m}\|D^{\alpha}A\|_{\rm {\rm BMO}} }
\def\gb{\sum_{|\alpha|=m}\|D^{\alpha}A\|_{\rm {\rm BMO}} }

\author{Feng Liu}
\address{Feng Liu
\\
College of Mathematics and Systems Science
\\
Shandong University of Science and Technology
\\
Qingdao, Shandong 266590
\\
People's Republic of China} \email{Fliu@sdust.edu.cn}

\author{Qingying Xue$^{*}$}
\address{Qingying Xue
\\
School of Mathematical Sciences
\\
Beijing Normal University
\\
Laboratory of Mathematics and Complex Systems
\\
Ministry of Education
\\
Beijing 100875
\\
People's Republic of China \\
current address:
Department of Mathematics\\ The University of Kansas, Lawrence, KS  66045, USA} \email{qyxue@bnu.edu.cn}

\author{K\^{o}z\^{o} Yabuta}
\address{K\^{o}z\^{o} Yabuta
\\ Research Center for Mathematical Sciences
\\
Kwansei Gakuin University
\\
Gakuen 2-1, Sanda 669-1337
\\
Japan } \email{kyabuta3@kwansei.ac.jp}
\thanks{The first author was supported partly by NSFC (No. 11701333)
and SP-OYSTTT-CMSS (No. Sxy2016K01). The second author was partly
supported by NSFC (Nos. 11471041, 11671039) and NSFC-DFG (No.
11761131002). The third named author was supported partly by
Grant-in-Aid for Scientific Research (C) Nr. 15K04942, Japan Society
for the Promotion of Science.\\
 \indent Corresponding author: Qingying Xue \indent Email: qyxue@bnu.edu.cn}

\keywords{Multilinear strong maximal operators, discrete multilinear
strong maximal operators, Sobolev spaces, regularity, Tribel-Lizorkin
spaces and Besov spaces, mixed Lebesgue spaces and Sobolev spaces,
Sobolev capacity. }


\date{}
\title [Regularity and continuity of the strong maximal operators]
{Regularity and continuity of the multilinear strong maximal operators}

\maketitle

\begin{abstract} {Let $m\ge 1$, in this paper, our object of
investigation is the regularity and and continuity properties of the following
multilinear strong maximal operator
$$
{\mathscr{M}}_{\mathcal{R}}(\vec{f})(x)
=\sup_{\substack{R \ni x \\ R\in\mathcal{R}}}\prod\limits_{i=1}^m
\frac{1}{|R|}\int_{R}|f_i(y)|dy,
$$
where $x\in\mathbb{R}^d$ and $\mathcal{R}$ denotes the
family of all rectangles in $\mathbb{R}^d$ with sides
parallel to the axes. When $m=1$, denote
$\mathscr{M}_{\mathcal{R}}$ by $\mathcal {M}_{\mathcal{R}}$.
Then, $\mathcal {M}_{\mathcal{R}}$ coincides with the
classical strong maximal function initially studied by
Jessen, Marcinkiewicz and Zygmund. We showed that
${\mathscr{M}}_{\mathcal{R}}$ is bounded and continuous
from the Sobolev spaces $W^{1,p_1}(\mathbb{R}^d)\times
\cdots\times W^{1,p_m}(\mathbb{R}^d)$ to $W^{1,p}
(\mathbb{R}^d)$, from the Besov spaces $B_{s}^{p_1,q}
(\mathbb{R}^d)\times\cdots\times B_s^{p_m,q}(\mathbb{R}^d)$
to $B_s^{p,q}(\mathbb{R}^d)$, from  the Triebel-Lizorkin
spaces $F_{s}^{p_1,q}(\mathbb{R}^d)\times\cdots\times
F_s^{p_m,q}(\mathbb{R}^d)$ to $F_s^{p,q}(\mathbb{R}^d)$. As
a consequence, we further showed that
${\mathscr{M}}_{\mathcal{R}}$ is bounded and continuous from
the fractional Sobolev spaces $W^{s,p_1}(\mathbb{R}^d)\times
\cdots\times W^{s,p_m}(\mathbb{R}^d)$ to $W^{s,p}
(\mathbb{R}^d)$ for $0<s\leq 1$ and $1<p<\infty$. As an
application, we obtain a weak type inequality for the Sobolev
capacity, which can be used to prove the $p$-quasicontinuity
of $\mathscr{M}_{\mathcal{R}}$. In addition, we proved that
$\mathscr{M}_{\mathcal{R}}(\vec{f})$ is approximately
differentiable a.e. if $\vec{f}=(f_1,\ldots,f_m)$ with each
$f_j\in L^1(\mathbb{R}^d)$ being approximately differentiable
a.e. The discrete type of the strong maximal operators has
also been considered. We showed that this discrete type of
the maximal operators enjoys somewhat unexpected regularity
properties.}
\end{abstract}

\section{Introduction}
\subsection{Hardy-Littlewood maximal functions} Let $f\in
L_{\rm loc}^1(\mathbb{R}^d)$ with $d\ge 1$ and $\mathcal{M}$
be the well-known Hardy-Littlewood maximal operator defined
on $\mathbb{R}^n$ as follows.
$$
\mathcal{M}f(x)=\sup\limits_{r>0}\frac{1}{|B_r(x)|}\int_{B_r(x)}|f(y)|dy,
$$
where $B_r(x)$ is the open ball in $\mathbb{R}^d$ centered
at $x$ with radius $r$ and $|B_r(x)|$ denotes the volume
of $B_r(x)$. Analogously, the uncentered maximal function
$\widetilde{\mathcal{M}}f$ at a point $x$ is defined by
taking the supremum of averages over open balls that
contain the point. It was well known that the maximal
functions and their purpose in differentiation on $\mathbb{R}$
were first introduced by Hardy and Littlewood \cite{HL}, and
on $\mathbb{R}^d$ were treated by Wiener \cite{W}. The celebrated
theorem of Hardy-Littlewood-Wiener states that the operator
$\mathcal{M}$ is of type $(p,p)$ for $1<p\leq\infty$ and weak
type $(1,1)$. As a basic and important tool in Harmonic
analysis and other fields, such as PDE, the maximal functions
and their variants are often used to control some other
important operators and give some good absolute size estimates
(see \cite{BH}, \cite{LOPTT} and \cite{Le}).

There is a basic question in the theory of Hardy-Littlewood
maximal operators: How does the Hardy-Littlewood maximal
operator preserve the smoothness properties of a function?
Achievements have been made in this direction in the past
few years. Among them is the nice work of Kinnunen \cite{Ki}
in 1997, where the regularity properties of maximal
operators on the $W^{1,p}$ spaces has been studied. Recall
that the Sobolev spaces $W^{1,p}(\mathbb{R}^d)$,
$1\leq p\leq\infty$, are defined by
$$
W^{1,p}(\mathbb{R}^d):=\{f:\mathbb{R}^d\rightarrow\mathbb{R}:
\ \|f\|_{1,p}=\|f\|_{L^p(\mathbb{R}^d)}+\|\nabla{f}\|_{L^p(\mathbb{R}^d)}
<\infty\},
$$
where $\nabla{f}=(D_1f,\ldots,D_df)$ is the weak gradient
of $f$. Kinnunen showed that $\mathcal{M}$ is bounded from
$W^{1,p}(\mathbb{R}^d)$ to $W^{1,p}(\mathbb{R}^d)$ for
$1<p\leq\infty$. It was noticed that the $W^{1,p}$-bound
for $\widetilde{\mathcal{M}}$ also holds by a simple
modification of Kinnunen's arguments or Theorem 1 of
\cite{HO}. Later on, the result of Kinnunen has been
extended to a local version in \cite{KL}, to a fractional
version in \cite{KS}, to a multisublinear version in
\cite{CM2,LW1} and to a one-sided version in \cite{LM}. Whether the continuity for
$\mathcal{M}$ on $W^{1,p}(\mathbb{R}^d)$ space holds or not
is another certainly nontrivial problem, since the maximal operator is not necessarily
sublinear at the derivative level. This problem was first
posed by Haj{\l}asz and Onninen \cite{HO} and was later
settled affirmatively by Luiro \cite{Lu1}. 

Due to the lack
of reflexivity of $L^1$, it makes the understanding of the
$W^{1,1}(\mathbb{R}^d)$ regularity more subtler. One interesting question was raised by Haj{\l}asz
and Onninen in \cite{HO}: Is the operator $f\mapsto|\nabla
\mathcal{M}f|$ bounded from $W^{1,1}(\mathbb{R}^d)$ to
$L^1(\mathbb{R}^d)$? A complete answer was addressed only
in dimension $d=1$ in \cite{AL1,Ku,LCW,Ta} and partial
progress on the general case $d\geq2$ was given by
Haj{\l}asz and Mal\'{y} \cite{HM} and Luiro \cite{Lu3}. For
more previous works or related topic we refer the readers
to consult \cite{AL2,CM1,CMP,CS,Ku,Liu1,LW2}, and the 
references therein.

Now we know that $\mathcal{M}$ is bounded on
$L^p(\mathbb{R}^d)=W^{0,p}(\mathbb{R}^d)$ and $W^{1,p}
(\mathbb{R}^d)$ for $p>1$. Therefore a natural question
arises: what is the properties of $\mathcal{M}$ on the
fractional Sobolev spaces $W^{s,p}(\mathbb{R}^d)$ defined
by the Bessel potentials when $0<s<1$? This question
was first studied by Korry \cite{Ko2} who observed that
$\mathcal{M}:W^{s,p}(\mathbb{R}^d)\rightarrow W^{s,p}
(\mathbb{R}^d)$ is bounded for all $0<s<1$ and $1<p<\infty$.
Notice that $F_s^{p,2}(\mathbb{R}^d)=W^{s,p}(\mathbb{R}^d)$
for any $s>0$ and $1<p<\infty$ (see \cite{Gr}). It may be
further expected that $\mathcal{M}$ still enjoys the
boundedness on Triebel-Lizorkin spaces $F_s^{p,q}(\mathbb{R}^d)$.
This was done by Korry \cite{Ko1}, who indeed proved that
$\mathcal{M}$ is bounded on the inhomogeneous Triebel-Lizorkin
spaces $F_s^{p,q}(\mathbb{R}^d)$ and Besov spaces $B_s^{p,q}
(\mathbb{R}^d)$ for all $0<s<1$ and $1<p,\,q<\infty$. Recently,
Luiro \cite{Lu2} established the continuity of $\mathcal{M}$
on $F_s^{p,q}(\mathbb{R}^d)$ for all $0<s<1$ and $1<p,\,q<\infty$.
Still more recently, Liu and Wu \cite{LW4} extended the above
results to the maximal operators associated with polynomial
mappings.
\subsection{Multilinear strong maximal operators}
Over the past few decades, many celebrated works have been done
in the study of the maximal functions associated with different
kinds of basis. These bases mainly including: some differentiation
bases (balls or cubes, rectangles with some restrictions see
\cite{JMZ}, \cite{ZY} and \cite{ZY2}), translation in-variant
basis of rectangles \cite{CF}, basis formed by convex sets, using
rectangles with a side parallel to some direction (lacunary
parabolic set of directions in \cite{NSW}, Cantor set of directions
in \cite{KA1}, arbitrary set of directions \cite{AS}, \cite{KA2}).
In this paper, we will focus on the translation in-variant basis
of rectangles studied by C\'{o}rdoba and Fefferman \cite{CF}.

Let $\vec{f}=(f_1,\ldots,f_m)$ be an $m$-dimensional vector
of locally integrable functions and $\mathcal{R}$ denotes
the collection of all open rectangles $R\subset\mathbb{R}^d$
with sides parallel to the coordinate axes. In \rm{2011},
Grafakos, Liu, P\'{e}rez and Torres \cite{GLPT} introduced
and studied the weighted strong and endpoint estimates for
the multilinear strong maximal function
${\mathscr{M}}_{\mathcal{R}}$, which is defined by
\begin{equation}
{\mathscr{M}}_{\mathcal{R}}(\vec{f})(x)=
\sup_{\substack{R \ni x \\ R\in\mathcal{R}}}
\prod_{i=1}^m \frac{1}{|R|} \int_R |f_i(y_i)| dy_i,
\end{equation}
where $x\in\mathbb{R}^d$ and $\mathcal{R}$ denotes the
family of all rectangles in $\mathbb{R}^d$ with sides
parallel to the axes.

Whenever $m=1$, we simply denote $\mathscr{M}_{\mathcal{R}}$
by $\mathcal{M}_{\mathcal{R}}$. Then
$\mathcal{M}_{\mathcal{R}}$ coincides with the classical
strong maximal operator. As the most prototypical
representative of the multi-parameter operators,
$\mathcal{M}_{\mathcal{R}}$ can be looked as a geometric
maximal operator which commutes with full $d$-parameter
group of dilations $(x_1,x_2,\ldots,x_d)\rightarrow
(\delta_1x_1,\delta_2x_2,\ldots,\delta_dx_d)$. It was
proved by Garc\'{\i}a-Cuerva and Rubio de Francia that
$\mathcal{M}_{\mathcal{R}}$ is bounded on
$L^p(\mathbb{R}^d)$ for all $1<p<\infty$ (see
\cite[p.452]{GR}). In \rm{1935}, a maximal theorem was
given by Jessen, Marcinkiewicz and Zygmund in \cite{JMZ}.
They pointed out that unlike the classical Hardy-Littlewood
maximal operator, the strong maximal function is not of
weak type $(1,1)$. As a replacement, they showed that it
is bounded from $L(\log^+L)(\mathbb{R}^d)$ to
$L^1(\mathbb{R}^d)$. Subsequently, an additional proof of
the maximal theorem was given by C\'{o}rdoba and Fefferman
in \rm{1975}, using an alternative geometric method \cite{CF}.
The basis of the work
of C\'{o}rdoba and Fefferman is a selection theorem for
families of rectangles in $\mathbb{R}^d$. Some delicate properties of rectangles in $\mathbb{R}^d$
were also quantified in that study. 

Furthermore, if $m=1$ and $d=1$, the operator
${\mathscr{M}}_{\mathcal{R}}=\widetilde{\mathcal{M}}$. It
was known that $\widetilde{\mathcal{M}}$ is bounded and
continuous on $W^{1,p}(\mathbb{R})$ for $1<p<\infty$. It
follows from \cite{AL1,LCW} that if $f\in W^{1,1}
(\mathbb{R})$, then $\widetilde{\mathcal{M}}f$ is
absolutely continuous on $\mathbb{R}$ and it holds that
$\|( \widetilde{\mathcal{M}}f)'\|_{L^1(\mathbb{R})}
\leq\|f'\|_{L^1(\mathbb{R})}.$
For $d\geq1$, Aldaz
and P\'{e}rez L\'{a}zaro \cite{AL2} considered a class of
local strong maximal operator and proved that it maps
${\rm BV}(U)$ into $L^1(U)$, where $U$ is an open set of
$\mathbb{R}^d$ and ${\rm BV}(U)$ is a subclass of $L^1(U)$
functions. See \cite[Definition 1.3]{Gi} and
\cite[Definition 3.4]{AFP} for instance.

The results in \cite{GLPT} indicate that $\mathscr{M}_{\mathcal{R}}$ 
is bounded from $L^{p_1}(\mathbb{R}^d)\times\cdots\times L^{p_m}
(\mathbb{R}^d)$ to $L^p(\mathbb{R}^d)$ for all $1<p_1,
\ldots,p_m,p\leq\infty$ and $1/p=\sum_{i=1}^m1/p_i$.
Moreover, for $\vec{f}=(f_1,\ldots,f_m)$ with each
$f_i\in L^{p_i}(\mathbb{R}^d)$, the following norm inequality holds
\begin{equation}\label{1.2}
\|\mathscr{M}_{\mathcal{R}}(\vec{f})\|_{L^p(\mathbb{R}^d)}
\lesssim_{p_1,\ldots,p_m}\prod\limits_{i=1}^m\|f_i\|_{L^{p_i}(\mathbb{R}^d)}.
\end{equation}
It is well known that the geometry of rectangles in
$\mathbb{R}^d$ is more intricate than that of cubes or
balls, even when both classes of sets are restricted to
have sides parallel to the axes. Even for $m=1$, a basic
observation is that $\mathcal{M}f(x)\lesssim_d
\mathcal{M}_{\mathcal{R}}f(x)$ for all $x\in\mathbb{R}^d$.
However, there does not exist any constant $C>0$ such
that $\mathcal{M}_{\mathcal{R}}f(x)\leq C\mathcal{M}f(x)$
for all $x\in\mathbb{R}^d$. This indicates fully that the
strong maximal functions are uncontrollable. For these
reasons, this makes the investigation of the strong
maximal functions very complex, but also quite interesting.

Based on the facts concerning the previous results on the
Hardy-Littlewood maximal operators, it is therefore a
natural question to ask whether the multilinear strong
maximal operators are bounded and continuous on the products of the first
order Sobolev spaces $W^{1,p}(\mathbb{R}^d)$ or the
fractional Sobolev spaces $W^{s,p}(\mathbb{R}^d)$ or on
its generalizations $F_s^{p,q}(\mathbb{R}^d)$ and $B_s^{p,q}
(\mathbb{R}^d)$. This is the main motivation of this work. 
In the first part of this work, the regularityand continuity properties
of the strong maximal functions will be studied. We will
show that $ {\mathscr{M}}_{\mathcal{R}}$ is bounded and
continuous from the Sobolev spaces $W^{1,p_1}(\mathbb{R}^d)
\times\cdots\times W^{1,p_m}(\mathbb{R}^d)$ to
$W^{1,p}(\mathbb{R}^d)$, from the Besov spaces
$B_{s}^{p_1,q}(\mathbb{R}^d)\times\cdots\times B_s^{p_m,q}
(\mathbb{R}^d)$ to $B_s^{p,q}(\mathbb{R}^d)$, from the
Triebel-Lizorkin spaces $F_{s}^{p_1,q}(\mathbb{R}^d)\times
\cdots\times F_s^{p_m,q}(\mathbb{R}^d)$ to $F_s^{p,q}
(\mathbb{R}^d)$. We further showed that 
${\mathscr{M}}_{\mathcal{R}}$ is bounded and continuous 
from the fractional Sobolev spaces $W^{s,p_1}(\mathbb{R}^d)
\times\cdots\times W^{s,p_m}(\mathbb{R}^d)$ to $W^{s,p}
(\mathbb{R}^d)$ for $0<s<1$ and $1<p<\infty$. As an
application, we obtain a weak type inequality for the
Sobolev capacity, which can be used to prove
$p$-quasicontinuity of the strong maximal function of a
Sobolev function. In addition, we  also show that $\mathscr{M}_{\mathcal{R}}
(\vec{f})$ is approximately differentiable a.e. if $\vec{f}=
(f_1,\ldots,f_m)$ with each $f_j\in L^1(\mathbb{R}^d)$ being
approximately differentiable a.e.
\subsection{Discrete multilinear strong maximal operators.}
Another aim of this paper is to investigate the regularity
properties of the discrete multilinear strong maximal
operators. For a vector-valued function $\vec{f}=(f_1,\ldots,
f_m)$ with each $f_j$ being a discrete function defined on
$\mathbb{Z}^d$, we define the discrete multilinear strong
maximal operator $\mathbb{M}_{\mathcal{R}}$ by
\begin{equation}\label{1.3}
\mathbb{M}_{\mathcal{R}}(\vec{f})(\vec{n})
=\sup_{\substack{R \ni\vec{n}\atop R\in\mathcal{R}}}\frac{1}{N(R)^{m}}
\prod\limits_{i=1}^m\sum\limits_{\vec{k}\in R\cap\mathbb{Z}^n}|f_i(\vec{k})|,
\end{equation}
where $N(R)$ is the number of elements in the set
$R\cap\mathbb{Z}^d$. When $m=1$, the operator
$\mathbb{M}_{\mathcal{R}}$ reduces to the discrete strong
maximal operator $M_{\mathcal{R}}$.

Let us recall some pertinent definitions, notations and
backgrounds. We shall generally denote by $\vec{n}=(n_1,
n_2,\ldots,n_d)$ a vector in $\mathbb{Z}^d$. For a discrete
function $f:\mathbb{Z}^d\rightarrow\mathbb{R}$, we define
the $\ell^p(\mathbb{Z}^d)$-norm for $1\leq p<\infty$ by
$\|f\|_{\ell^p(\mathbb{Z}^d)}=(\sum_{\vec{n}\in\mathbb{Z}^d}
|f(\vec{n})|^p)^{1/p}$ and $\ell^\infty(\mathbb{Z}^d)$-norm
by $\|f\|_{\ell^\infty(\mathbb{Z}^d)}=\sup_{\vec{n}\in
\mathbb{Z}^d}|f(\vec{n})|$. Next, we recall the definitions
of discrete Sobolev space $W^{1,p}(\mathbb{Z}^d)$
and ${\rm BV}_q(\mathbb{Z}^d)$ function class.
\begin{definition} [\textbf{Discrete Sobolev
space $W^{1,p}(\mathbb{Z}^d)$}, (\cite{BCHP})] For $1\le l\le d$,
let $\vec{e}_l$ be the canonical
$l$-th base vector defined by $\vec{e}_l=(0,\ldots,0,1,0,\ldots,0)$. 
Let $D_l f(\vec{n})$ be the partial derivative of $f$ given by 
$D_l f(\vec{n})=f(\vec{n}+\vec{e}_l)-f(\vec{n})$ and $\nabla f$ 
be the gradient of $f$ defined by $\nabla f(\vec{n})=
(D_1f(\vec{n}),\ldots,D_df(\vec{n}))$. Then, the discrete 
Sobolev spaces is defined by
$$
W^{1,p}(\mathbb{Z}^d):=\{f:\mathbb{Z}^d\rightarrow\mathbb{R}\mid\ \|f\|_{1,p}
=\|f\|_{\ell^p(\mathbb{Z}^d)}+\|\nabla f\|_{\ell^p(\mathbb{Z}^d)}<\infty\}.
$$
\end{definition}
Note that
\begin{equation}\label{1.4}
\|\nabla f\|_{\ell^p(\mathbb{Z}^d)}\leq 2d\|f\|_{\ell^p(\mathbb{Z}^d)}
\ \ \ {\rm for}\ 1\leq p\leq\infty.
\end{equation}
It follows that
\begin{equation}\label{1.5}
\|f\|_{\ell^p(\mathbb{Z}^d)}\leq\|f\|_{1,p}
\leq(2d+1)\|f\|_{\ell^p(\mathbb{Z}^d)}\ \ {\rm for}\ 1\leq p\leq\infty.
\end{equation}
This implies that the discrete Sobolev space $W^{1,p}
(\mathbb{Z}^d)$ is just $\ell^p(\mathbb{Z}^d)$ with an
equivalent norm. It might make our efforts to study the
$W^{1,p}(\mathbb{Z}^d)$ regularity of discrete maximal
operators seem almost vacuous since any $\ell^p$-bound
automatically implies a $W^{1,p}$-bound. However, the
endpoint $p=1$ is highly nontrivial because of the
lack of $\ell^1$-bound for discrete strong maximal operators.

To investigate the endpoint regularity of $\mathbb{M}_{\mathcal{R}}$, 
we now introduce the following function class.

\begin{definition}[\textbf{${\rm BV}(\mathbb{Z}^d)$
function class}, (\cite{CH})] We denote by ${\rm BV}(\mathbb{Z}^d)$
the set of all functions of bounded variation defined
on $\mathbb{Z}^d$, where the total variation of
$f:\mathbb{Z}^d\rightarrow\mathbb{R}$ is defined by
$${\rm Var}(f)=\|\nabla f\|_{\ell^1(\mathbb{Z}^d)}.$$\end{definition}
\eqref{1.4} together with \eqref{1.5} and a simple example
$f(\vec{n})=1$ yields that
$$
{\rm BV}(\mathbb{Z}^d)\subsetneq \ell^1(\mathbb{Z}^d)=W^{1,1}(\mathbb{Z}^d).
$$
Recently, the investigation of the regularity of discrete
maximal operators has also attracted the attention of many
authors (see \cite{BCHP,CH,CM1,Liu,LM,LW3,LW5,Mad,Te} et al.).
Recall that the discrete uncentered version of maximal
function is defined by
$${M}f(\vec{n})=\sup\limits_{r>0,\vec{n}\in B_r}\frac{1}{N(B_r)}
\sum\limits_{\vec{k}\in B_r\cap\mathbb{Z}^d}|f(\vec{k})|,$$
where the surpremum is taken over all open balls $B_r$ in
$\mathbb{R}^d$ containing the point $\vec{n}$ with radius
$r$ and $N(B_r)$ denotes the number of lattice
points in the set $B_r$. We denote the centered version of
discrete maximal function by $\widetilde{M}$.

When $d=1$, the regularity properties of the discrete maximal 
type operators were studied by Bober et al. \cite{BCHP}, Temur 
\cite{Te} and Madrid \cite{Mad}, Carneiro and Madrid \cite{CM1} 
and Liu \cite{Liu}. The following sharp inequalities have been 
established.
\begin{equation}{\rm Var}(\widetilde{M}f)\leq {\rm Var}(f)\end{equation}
and
\begin{equation}{\rm Var}(Mf)\leq 2\|f\|_{\ell^1(\mathbb{Z})}.\end{equation}

For $d\ge1$, Carneiro
and Hughes \cite{CH} proved that ${M}$ maps $\ell^1
(\mathbb{Z}^d)$ into ${\rm BV}(\mathbb{Z}^d)$ boundedly
and continuously.  In (\ref{1.3}), if one replace the rectangles $R$ by balls $B_r$, then we denote $\mathbb{M}_{\mathcal{R}}$ by  $\mathfrak{M}$. Still more recently, the results in \cite{CH} was extended by Liu and Wu \cite{LW3} as follows.

\quad \hspace{-20pt}{\bf Theorem A (\cite{LW3})
}. {\it Let $d\geq1$.
Then $\mathfrak{M}$ maps $\ell^1(\mathbb{Z}^d)\times
\cdots\times\ell^1(\mathbb{Z}^d)$ into ${\rm BV}
(\mathbb{Z}^d)$ boundedly and continuously.}

It is observed that $\mathfrak{M}(\vec{f})(\vec{n})
\lesssim_{d,m}\mathbb{M}_{\mathcal{R}}(\vec{f})(\vec{n})$
for all $\vec{n}\in\mathbb{Z}^d$. Specially,
$\mathbb{M}_{\mathcal{R}}=\mathfrak{M}$ when $d=1$.
However, when $d\geq2$, there does not exist any constant
$C>0$ such that $\mathbb{M}_{\mathcal{R}}(\vec{f})(\vec{n})
\leq C\mathfrak{M}(\vec{f})(\vec{n})$ for all
$\vec{n}\in\mathbb{Z}^d$. Based on the above analysis,
it is interesting and natural to ask whether the discrete
strong maximal operators still enjoy some sort of
regularity properties. We will show that the discrete
type of the strong maximal operators does enjoy somewhat
unexpected regularities in the end of next part.

\subsection{Main results} We now state our main results
as follows.
\begin{theorem} [\textbf{Properties on Sobolev spaces}]\label{thm1}Let
$1<p_1,\ldots,p_m,p<\infty$ and $1/p=\sum_{i=1}^m1/p_i$.
Then $\mathscr{M}_{\mathcal{R}}$ is bounded and continuous
from $W^{1,p_1}(\mathbb{R}^d)\times\cdots\times W^{1,p_m}
(\mathbb{R}^d)$ to $W^{1,p}(\mathbb{R}^d)$. Moreover, if
$\vec{f}=(f_1,\ldots,f_m)$ with each $f_i\in W^{1,p_i}
(\mathbb{R}^d)$, then, for $1\leq l\leq d$, it holds that
$$
|D_l\mathscr{M}_{\mathcal{R}}(\vec{f})(x)|
\lesssim_{m,d,p_1,\ldots,p_m}\sum\limits_{\mu=1}^m
\mathscr{M}_{\mathcal{R}}(\vec{f}_\mu^l)(x),\ \ {\rm a.e.}\ x\in\mathbb{R}^d,
$$
where $\vec{f}_\mu^l=(f_1,\ldots,f_{\mu-1},D_lf_\mu,
f_{\mu+1},\ldots,f_m)$.
\end{theorem}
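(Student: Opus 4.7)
The plan is to follow Kinnunen's strategy for the pointwise gradient bound and boundedness, and to adapt Luiro's near-maximizer analysis for the continuity, both modified to the multilinear strong-maximal setting. Since $\mathscr{M}_{\mathcal{R}}$ depends only on $|f_i|$ and passage to $|f_i|$ does not increase the $W^{1,p_i}$-norm, I may reduce to $f_i\ge 0$ throughout.

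For the pointwise estimate I parametrize a rectangle $R\ni x$ as $R=x+R_0$ with $R_0\ni 0$, and introduce the translation-averaging $A_{R_0}(f)(x)=|R_0|^{-1}\int_{R_0}f(x+z)\,dz$, setting $u_{R_0}(x)=\prod_{i=1}^{m}A_{R_0}(f_i)(x)$ so that $\mathscr{M}_{\mathcal{R}}(\vec{f})(x)=\sup_{R_0\ni 0}u_{R_0}(x)$. Since $A_{R_0}$ is a convolution, weak differentiation commutes with it, and the Leibniz rule gives
$$
D_l u_{R_0}(x)=\sum_{\mu=1}^{m}\Bigl(\prod_{i\neq\mu}A_{R_0}(f_i)(x)\Bigr)A_{R_0}(D_l f_\mu)(x),
$$
whence the uniform-in-$R_0$ bound $|D_l u_{R_0}(x)|\le\sum_{\mu=1}^{m}\mathscr{M}_{\mathcal{R}}(\vec{f}_\mu^l)(x)$. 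To pass to the supremum I would fix a countable family $\{R_0^{(k)}\}$ of rectangles with rational vertices, dense enough that $\mathscr{M}_{\mathcal{R}}(\vec{f})=\sup_k u_{R_0^{(k)}}$, and truncate by $v_N=\max_{k\le N}u_{R_0^{(k)}}$. By the standard Sobolev lattice estimate, $v_N\in W^{1,p}$ with $|D_l v_N|\le\max_{k\le N}|D_l u_{R_0^{(k)}}|$. The convergence $v_N\nearrow\mathscr{M}_{\mathcal{R}}(\vec{f})$ in $L^p$ from \eqref{1.2}, together with the $L^p$-majorant $\sum_\mu\mathscr{M}_{\mathcal{R}}(\vec{f}_\mu^l)$ (which lies in $L^p$ by \eqref{1.2} applied to $\vec{f}_\mu^l$) and reflexivity of $L^p$, identifies the weak gradient of $\mathscr{M}_{\mathcal{R}}(\vec{f})$ and delivers both the pointwise bound and $W^{1,p}$-boundedness.

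For continuity, take $\vec{f}^{(n)}\to\vec{f}$ in the product $W^{1,p_i}$-norm. The $L^p$-convergence of $\mathscr{M}_{\mathcal{R}}(\vec{f}^{(n)})$ follows from the multilinear telescoping $|\mathscr{M}_{\mathcal{R}}(\vec{g})-\mathscr{M}_{\mathcal{R}}(\vec{h})|\le\sum_j\mathscr{M}_{\mathcal{R}}(g_1,\ldots,g_j-h_j,\ldots,h_m)$ combined with \eqref{1.2}. For the gradient I would follow Luiro's route: introduce the near-maximizer collection $\mathcal{R}(\vec{f},x,\varepsilon)=\{R_0\ni 0:u_{R_0}(x)\ge\mathscr{M}_{\mathcal{R}}(\vec{f})(x)-\varepsilon\}$ and show that a.e.\ $D_l\mathscr{M}_{\mathcal{R}}(\vec{f})(x)$ is realized, in the limit $\varepsilon\to 0$, by the Leibniz expressions over $R_0\in\mathcal{R}(\vec{f},x,\varepsilon)$. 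Strong Sobolev convergence then transfers to $A_{R_0}(f_i^{(n)})\to A_{R_0}(f_i)$ and $A_{R_0}(D_l f_i^{(n)})\to A_{R_0}(D_l f_i)$ uniformly in $R_0$ on appropriate compacta, and dominated convergence against the majorant $\sum_\mu\mathscr{M}_{\mathcal{R}}(\vec{f}_\mu^l)$ (and its analogues for $\vec{f}^{(n)}$, which also converge in $L^p$ by the same telescoping) closes the argument.

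The main obstacle is the gradient continuity. Strong maximal functions are not sublinear at the derivative level, and unlike balls or cubes the rectangles $R_0$ form a non-compact family that permits degeneration in any coordinate direction or escape to infinity, so Luiro's Hardy-Littlewood maximizer argument does not transfer directly. The delicate step is extracting convergent subsequences of near-maximizers modulo the degenerate sides and carefully tracking which Leibniz term dominates in the limit, compounded by the multilinear product structure of $u_{R_0}$.
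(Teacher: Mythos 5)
Your treatment of the boundedness and of the pointwise bound $|D_l\mathscr{M}_{\mathcal{R}}(\vec f)|\lesssim\sum_\mu\mathscr{M}_{\mathcal{R}}(\vec f_\mu^l)$ is essentially the paper's Steps 1--2: a countable rational parametrization of rectangles, a finite truncation (your $v_N$, the paper's $T_k$), the multilinear Leibniz/telescoping bound term by term, and weak compactness in $W^{1,p}$ to pass to the supremum. That part is sound.

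The continuity part, however, is a plan rather than a proof, and the missing piece is exactly the heart of the matter. You assert that a.e.\ $D_l\mathscr{M}_{\mathcal{R}}(\vec f)(x)$ is realized in the $\varepsilon\to0$ limit by Leibniz expressions over near-maximizing rectangles, and then you yourself observe that the rectangle family is non-compact, so maximizing sequences may degenerate in some coordinate directions or escape to infinity and ``Luiro's argument does not transfer directly.'' Naming this obstacle does not discharge it, and the paper's entire Section 2.1 exists to overcome it. Concretely, three things are needed and are absent from your proposal: (i) attainment of the supremum in a compactified parameter space that includes degenerate rectangles (segments through $x$ parallel to an axis, and the point $x$ itself), which rests on decay of $u_{x,\vec f}$ as the side-sums tend to infinity and on continuity of $u_{x,\vec f}$ up to the degenerate strata for a.e.\ $x$ (Lemma 2.1, whose proof is the whole of Section 7); (ii) a derivative formula at a.e.\ $x$ for each type of limiting maximizer (Lemma 2.5, Cases A--D) --- in the degenerate cases the limit is a one-dimensional average of slices $f_j(\cdot,x_2)$, $D_lf_\mu(\cdot,x_2)$, not a ``Leibniz expression over $R_0$,'' and proving it requires choosing difference-quotient sequences along which the maximizer sets $\mathcal{B}_i(\vec f)(x+h\vec e_l)$ converge to $\mathcal{B}_i(\vec f)(x)$ in Hausdorff distance (Corollary 2.4), Lebesgue-point properties of the slices, and control by slice-wise one-dimensional maximal operators; and (iii) quantitative stability of the (possibly degenerate) maximizer sets under perturbation of the data, i.e.\ the measure of $\{x: \mathcal{B}_i(\vec f_j)(x)\nsubseteq\mathcal{B}_i(\vec f)(x)_{(\lambda)}\}$ tends to $0$ (Lemma 2.3, including a nontrivial measurability argument). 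Without (iii) your ``dominated convergence against the majorant'' has nothing pointwise to converge: the near-maximizers of $\vec f^{(n)}$ may sit at parameters where $u_{x,\vec f}$ is far from its maximum, and the paper instead runs an $\epsilon$--$\eta$ good-set/bad-set decomposition of $R_{\vec\Lambda}(\vec 0)$ using uniform continuity of $\sum_i u_{x,\vec f_l^i}$ on the compactified parameter space. Likewise, dominating the degenerate limit expressions by $\sum_\mu\mathscr{M}_{\mathcal{R}}(\vec f_\mu^l)(x)$ is itself only an a.e.\ statement requiring the slice-wise Lebesgue-point analysis. So the proposal would need all of this machinery (or a genuine substitute for it) before the continuity claim can be considered proved.
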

\remark{The case $p=\infty$ is also valid in Theorem
\ref{thm1}, which follows from the similar arguments to those
used in \cite[Remark (iii)]{Ki}.}
\begin{theorem} [\textbf{Properties  on Besov spaces}]\label{thm2}
Let $1<p_1,\ldots,p_m,p,q<\infty$, $1/p=\sum_{i=1}^m1/p_i$
and $0<s<1$. Then $\mathscr{M}_{\mathcal{R}}$ is bounded and
continuous from $B_{s}^{p_1,q}(\mathbb{R}^d)\times\cdots
\times B_s^{p_m,q}(\mathbb{R}^d)$ to $B_s^{p,q}(\mathbb{R}^d)$.
\end{theorem}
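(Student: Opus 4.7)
The plan is to adapt Korry's argument for the linear Hardy--Littlewood maximal operator on Besov spaces to the multilinear strong setting, using the translation invariance of $\mathscr{M}_{\mathcal{R}}$ together with the known $L^p$ bound \eqref{1.2}. I would work with the equivalent characterization
\[
\|f\|_{B_s^{p,q}}
=\|f\|_{L^p(\mathbb{R}^d)}
+\left(\int_{\mathbb{R}^d}\frac{\|f(\cdot+h)-f(\cdot)\|_{L^p(\mathbb{R}^d)}^{q}}{|h|^{sq+d}}\,dh\right)^{1/q}.
\]
The $L^p$ piece of the target norm is controlled directly by \eqref{1.2} and H\"older, so the whole game is estimating the modulus of continuity of $\mathscr{M}_{\mathcal{R}}(\vec f)$.

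The first key step is a pointwise difference estimate. Since $\mathcal{R}$ is translation invariant, $\mathscr{M}_{\mathcal{R}}(\vec f)(x+h)=\mathscr{M}_{\mathcal{R}}(\tau_{-h}\vec f)(x)$ with $\tau_{-h}f(y)=f(y+h)$. Exploiting that $\mathscr{M}_{\mathcal{R}}$ is sublinear in each slot, a standard telescoping argument yields
\[
|\mathscr{M}_{\mathcal{R}}(\vec f)(x+h)-\mathscr{M}_{\mathcal{R}}(\vec f)(x)|
\le\sum_{j=1}^{m}\mathscr{M}_{\mathcal{R}}
\bigl(f_1,\ldots,f_{j-1},\tau_{-h}f_j-f_j,\tau_{-h}f_{j+1},\ldots,\tau_{-h}f_m\bigr)(x).
\]
Taking $L^p$ norms in $x$ and applying \eqref{1.2} gives
\[
\|\mathscr{M}_{\mathcal{R}}(\vec f)(\cdot+h)-\mathscr{M}_{\mathcal{R}}(\vec f)\|_{L^p}
\lesssim\sum_{j=1}^{m}\|\tau_{-h}f_j-f_j\|_{L^{p_j}}\prod_{i\neq j}\|f_i\|_{L^{p_i}}.
\]
Plugging this into the Besov seminorm and using Minkowski's inequality in the $\ell^q$ weighted-integral norm produces the multilinear bound $\|\mathscr{M}_{\mathcal{R}}(\vec f)\|_{B_s^{p,q}}\lesssim\prod_{i=1}^m\|f_i\|_{B_s^{p_i,q}}$.

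For continuity, suppose $\vec f^{(k)}\to\vec f$ in the product Besov space. The $L^p$ convergence of $\mathscr{M}_{\mathcal{R}}(\vec f^{(k)})$ follows from the same telescoping inequality applied to $\vec f^{(k)}$ versus $\vec f$ plus \eqref{1.2}. For the seminorm, I would split the $h$-integral at a threshold $\eps>0$. On $|h|\ge\eps$ the integrand is bounded by $2^q\|\mathscr{M}_{\mathcal{R}}(\vec f^{(k)})-\mathscr{M}_{\mathcal{R}}(\vec f)\|_{L^p}^{q}|h|^{-sq-d}$, and $\int_{|h|\ge\eps}|h|^{-sq-d}dh<\infty$ together with the $L^p$ convergence handles this piece. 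On $|h|<\eps$ I would dominate
\[
\|\tau_{-h}f_j^{(k)}-f_j^{(k)}\|_{L^{p_j}}^{q}
\le 2^{q-1}\|\tau_{-h}f_j-f_j\|_{L^{p_j}}^{q}
+2^{q-1}\|\tau_{-h}(f_j^{(k)}-f_j)-(f_j^{(k)}-f_j)\|_{L^{p_j}}^{q},
\]
so the inner contribution is controlled by the tail $\int_{|h|<\eps}\|\tau_{-h}f_j-f_j\|_{L^{p_j}}^{q}|h|^{-sq-d}dh$ (small by absolute continuity of the Besov seminorm of the fixed limit $f_j$) plus $[f_j^{(k)}-f_j]_{B_s^{p_j,q}}^{q}\to 0$. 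Choosing $\eps$ first and then letting $k\to\infty$ finishes continuity.

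The boundedness step is essentially mechanical once the telescoping identity is in place; the real obstacle is the continuity proof near $h=0$, where the weight $|h|^{-sq-d}$ is non-integrable and prevents a naive dominated-convergence argument on the whole of $\mathbb{R}^d$. The two-scale splitting in $\eps$, together with the uniform smallness of the Besov modulus extracted from the convergent sequence, is the technical heart of the argument and must be arranged carefully so that the chosen $\eps$ does not depend on $k$.
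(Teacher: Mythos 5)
Your argument is correct, and while the boundedness half coincides with the paper's (the same telescoping estimate $|\Delta_h\mathscr{M}_{\mathcal{R}}(\vec f)|\le\sum_{j}\mathscr{M}_{\mathcal{R}}(f_1,\ldots,f_{j-1},\Delta_h f_j,\tau_{-h}f_{j+1},\ldots,\tau_{-h}f_m)$ plus the $L^{p_1}\times\cdots\times L^{p_m}\to L^p$ bound and Minkowski), the continuity half takes a genuinely different route. The paper works with the dyadic difference characterization of Lemma \ref{l3.1} (with $r=p$, the mixed norm $\|\cdot\|_{p,q}$ over $(x,k,\zeta)\in\mathbb{R}^d\times\mathbb{Z}\times\mathfrak{R}_d$) and proves continuity by contradiction: it extracts a subsequence with summable errors, builds a dominating function $H(x,k,\zeta)$ from the estimate \eqref{3.8}, and applies the dominated convergence theorem successively in $x$, $\zeta$ and $k$. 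You instead use the classical integral modulus-of-continuity norm (valid precisely because $0<s<1$) and argue directly: split the $h$-integral at a threshold $\varepsilon$, control $|h|\ge\varepsilon$ by $L^p$-convergence of the maximal functions together with $\int_{|h|\ge\varepsilon}|h|^{-sq-d}dh<\infty$, and control $|h|<\varepsilon$ by bounding $\|\Delta_h(\mathscr{M}_{\mathcal{R}}(\vec f^{(k)})-\mathscr{M}_{\mathcal{R}}(\vec f))\|_{L^p}$ by the two individual moduli, which after writing $f_j^{(k)}=f_j+(f_j^{(k)}-f_j)$ are dominated by the tail of the (fixed, convergent) seminorm integral of $f_j$ plus $[f_j^{(k)}-f_j]_{\dot B_s^{p_j,q}}^{q}\to0$, with the products $\prod_{i\ne j}\|f_i^{(k)}\|_{L^{p_i}}$ uniformly bounded; the order of quantifiers ($\varepsilon$ first, independent of $k$) is exactly as you say. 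What each buys: your two-scale argument is shorter, quantitative, and avoids subsequences and the dominated-convergence machinery; the paper's scheme, though heavier, is set up so that essentially the same domination-plus-DCT template transfers verbatim to the Triebel--Lizorkin case of Theorem \ref{thm3}, where the $\ell^q$-sum sits inside the $L^p$-norm and a naive splitting in the $h$-variable alone is less convenient. The only points worth making explicit in a final write-up are the norm equivalence you invoke (difference characterization of $B_s^{p,q}$ for $0<s<1$, $1<p,q<\infty$) and the intermediate triangle inequality $\|\Delta_h(\mathscr{M}_{\mathcal{R}}(\vec f^{(k)})-\mathscr{M}_{\mathcal{R}}(\vec f))\|_{L^p}\le\|\Delta_h\mathscr{M}_{\mathcal{R}}(\vec f^{(k)})\|_{L^p}+\|\Delta_h\mathscr{M}_{\mathcal{R}}(\vec f)\|_{L^p}$ on which the small-$|h|$ estimate rests.
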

\begin{theorem} [\textbf{Properties  on Triebel-Lizorkin
spaces}] \label{thm3}Let $1<p_1,\ldots,p_m,p,q<\infty$,
$1/p=\sum_{i=1}^m1/p_i$ and $0<s<1$. Then
$\mathscr{M}_{\mathcal{R}}$ is bounded and continuous
from $F_{s}^{p_1,q}(\mathbb{R}^d)\times\cdots\times
F_s^{p_m,q}(\mathbb{R}^d)$ to $F_s^{p,q}(\mathbb{R}^d)$.
\end{theorem}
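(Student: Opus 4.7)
My plan is to deduce the theorem from the $L^p$-bound \eqref{1.2} for $\mathscr{M}_{\mathcal{R}}$ via a first-difference characterization of the Triebel--Lizorkin space combined with a pointwise multisublinear bound. For $0<s<1$ and $1<p,q<\infty$, Strichartz's characterization gives
$$
\|f\|_{F_s^{p,q}}\ \sim\ \|f\|_{L^p}\ +\ \Big\|\mathbb{D}_{s,q}f\Big\|_{L^p},\qquad \mathbb{D}_{s,q}f(x):=\Big(\int_{\mathbb{R}^d}\frac{|f(x+h)-f(x)|^q}{|h|^{d+sq}}\,dh\Big)^{1/q},
$$
so the $L^p$-part of the target norm is handled directly by \eqref{1.2}, and the whole task reduces to controlling $\|\mathbb{D}_{s,q}(\mathscr{M}_{\mathcal{R}}(\vec f))\|_{L^p}$ by $\prod_i\|f_i\|_{F_s^{p_i,q}}$.

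The central pointwise input is a multisublinear control on first differences. Writing $\tau_h g(y)=g(y+h)$ and using $\mathscr{M}_{\mathcal{R}}(\vec f)(x+h)=\mathscr{M}_{\mathcal{R}}(\tau_h\vec f)(x)$ together with the identity $\prod_ja_j-\prod_jb_j=\sum_i(\prod_{j<i}a_j)(a_i-b_i)(\prod_{j>i}b_j)$ applied inside the supremum defining $\mathscr{M}_{\mathcal{R}}$, I obtain
$$
|\mathscr{M}_{\mathcal{R}}(\vec f)(x+h)-\mathscr{M}_{\mathcal{R}}(\vec f)(x)|\ \leq\ \sum_{i=1}^m\mathscr{M}_{\mathcal{R}}(\vec G_i^{\,h})(x),
$$
where $\vec G_i^{\,h}=(\tau_hf_1,\ldots,\tau_hf_{i-1},\tau_hf_i-f_i,f_{i+1},\ldots,f_m)$. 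Combining this with the elementary product domination $\mathscr{M}_{\mathcal{R}}(\vec a)(x)\le\prod_j\mathcal{M}_{\mathcal{R}}(a_j)(x)$ isolates the $h$-dependence onto the single factor $\mathcal{M}_{\mathcal{R}}(\tau_hf_i-f_i)(x)$, while every remaining factor is a translate of some $\mathcal{M}_{\mathcal{R}}(f_j)$ and lies in $L^{p_j}$ by the classical strong-maximal theorem.

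To pass to the $L^p$-bound I would apply Hölder with exponents $(p_1,\ldots,p_m)$ to absorb the non-difference factors, and then invoke the vector-valued (continuous-parameter) Fefferman--Stein inequality for $\mathcal{M}_{\mathcal{R}}$,
$$
\Big\|\Big(\int_{\mathbb{R}^d}|\mathcal{M}_{\mathcal{R}}(g_h)(\cdot)|^q\,d\mu(h)\Big)^{1/q}\Big\|_{L^{p_i}}\ \lesssim\ \Big\|\Big(\int_{\mathbb{R}^d}|g_h|^q\,d\mu(h)\Big)^{1/q}\Big\|_{L^{p_i}},\qquad d\mu(h)=\frac{dh}{|h|^{d+sq}},
$$
valid for $1<p_i,q<\infty$ and obtained by iterating the one-dimensional Fefferman--Stein inequality along the coordinate axes. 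Applied with $g_h=\tau_hf_i-f_i$, this interchanges the $x$- and $h$-integrations and produces the factor $\|\mathbb{D}_{s,q}f_i\|_{L^{p_i}}\lesssim\|f_i\|_{F_s^{p_i,q}}$, closing the boundedness step.

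Continuity would then follow the template already in place for Theorem~\ref{thm1}, i.e.\ Luiro's scheme adapted to the multisublinear setting. The same multisublinear identity gives the pointwise bound $|\mathscr{M}_{\mathcal{R}}(\vec f^{\,k})-\mathscr{M}_{\mathcal{R}}(\vec f)|(x)\le\sum_i\mathscr{M}_{\mathcal{R}}(f_1^k,\ldots,f_i^k-f_i,\ldots,f_m)(x)$, so $L^p$-convergence is immediate from \eqref{1.2}; the analogous convergence of $\mathbb{D}_{s,q}(\mathscr{M}_{\mathcal{R}}(\vec f^{\,k}))$ to $\mathbb{D}_{s,q}(\mathscr{M}_{\mathcal{R}}(\vec f))$ in $L^p$ is extracted by the same Fefferman--Stein machinery combined with a subsequence and dominated-convergence / Brezis--Lieb argument. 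The hardest step I anticipate is the appearance of the translated factors $\mathcal{M}_{\mathcal{R}}(f_j)(x+h)$ for $j<i$ after the elementary separation: keeping the $h$-integration clean requires either symmetrizing the multisublinear identity so that all translates sit on the difference factor, or absorbing the shift into a slightly enlarged supremum before Fefferman--Stein is applied. The analogous nonlinear-selection issue in the continuity step is likewise where the proof becomes most delicate.
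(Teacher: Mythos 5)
Your overall architecture is the same as the paper's: a first-difference characterization of $F_s^{p,q}$ (the paper uses the dyadic/annulus form of Lemma \ref{l3.1}, you use the equivalent continuous Strichartz functional), the multisublinear splitting of $\Delta_h\mathscr{M}_{\mathcal{R}}(\vec f)$, product domination by $\mathcal{M}_{\mathcal{R}}$ of each entry, and a vector-valued Fefferman--Stein inequality for the strong maximal operator obtained by iterating the one-dimensional inequality coordinatewise (exactly Lemmas \ref{l32}--\ref{l3.4}). The gap is precisely at the point you flag, and neither of your two proposed remedies works. Symmetrizing the telescoping identity cannot place all translates on the difference factor: in every ordering the term indexed by $i$ carries translated non-difference factors on one side (only the extreme term is clean), and a ``sum with all untranslated spectators'' no longer telescopes to the difference of products. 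Absorbing the shift into an enlarged supremum fails pointwise for $\mathcal{M}_{\mathcal{R}}$: a rectangle containing $x+h$ may be arbitrarily thin, so enlarging it to contain $x$ can inflate its measure by an unbounded factor, and there is no estimate $\mathcal{M}_{\mathcal{R}}f(x+h)\lesssim\mathcal{M}_{\mathcal{R}}f(x)$ uniform in $h$. Consequently the factors $\mathcal{M}_{\mathcal{R}}(f_j)(x+h)$ cannot be pulled out of the $h$-integral, and your plan of ``H\"older in $x$ with exponents $(p_1,\dots,p_m)$, then Fefferman--Stein on the single difference factor'' does not close.

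The paper's resolution, which is the genuinely missing ingredient, is to expand every translated spectator as $\mathcal{M}_{\mathcal{R}}(f_\nu^{k,\zeta})\le\mathcal{M}_{\mathcal{R}}(\Delta_{2^{-k}\zeta}f_\nu)+\mathcal{M}_{\mathcal{R}}f_\nu$, producing the sum over nonempty subsets $\tau\subset\{1,\dots,m\}$ in \eqref{3.19} in which each term is a product of difference factors (indices in $\tau$) and untranslated factors (indices in $\tau'$). Terms with $|\tau|\ge2$ difference factors then require H\"older in $x$ \emph{and} in the difference variable, applying Lemma \ref{l3.4} to each difference factor at the modified exponents $(p_\mu/p_\tau,\,p_\mu q/p_\tau)$, and finally the embedding $F_s^{p_\mu,q}\hookrightarrow \dot F_{p_\tau s/p_\mu}^{p_\mu,\,p_\mu q/p_\tau}$ (using $p_\mu>p_\tau$); this is the content of \eqref{3.21} and is not a routine application of your displayed inequality at the original exponents. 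Your continuity sketch is in the right spirit but is also incomplete for the same reason: the paper's argument (\eqref{3.23}--\eqref{3.34}) builds a dominating function from the same subset decomposition and the bounds \eqref{3.24}--\eqref{3.27}, then applies dominated convergence three times; no Brezis--Lieb lemma is needed.
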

Noting that $F_s^{p,2}(\mathbb{R}^d)=W^{s,p}(\mathbb{R}^d)$
for any $s>0$ and $1<p<\infty$, then Theorem \ref{thm3} implies
the following result immediately.
\begin{corollary} [\textbf{Properties on Fractional
Sobolev spaces}]
Let $1<p_1,\ldots,p_m,p<\infty$, $1/p=\sum_{i=1}^m1/p_i$
and $0<s<1$. Then $\mathscr{M}_{\mathcal{R}}$ is bounded
and continuous from the fractional Sobolev spaces
$W^{s,p_1}(\mathbb{R}^d)\times\cdots\times W^{s,p_m}
(\mathbb{R}^d)$ to $W^{s,p}(\mathbb{R}^d)$.
\end{corollary}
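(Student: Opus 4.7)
The plan is to deduce this corollary directly from Theorem \ref{thm3} by invoking the Littlewood--Paley characterization of the fractional Sobolev spaces. Specifically, I would appeal to the classical identification
\[
F_s^{p,2}(\mathbb{R}^d) = W^{s,p}(\mathbb{R}^d), \qquad 0<s<1,\ 1<p<\infty,
\]
with equivalent norms (this is recalled in the excerpt, citing \cite{Gr}). Thus, specializing $q=2$ in Theorem \ref{thm3}, I obtain the boundedness of $\mathscr{M}_{\mathcal{R}}$ from the product $F_s^{p_1,2}(\mathbb{R}^d)\times\cdots\times F_s^{p_m,2}(\mathbb{R}^d)$ into $F_s^{p,2}(\mathbb{R}^d)$, and the norm equivalence on each factor (and on the target) converts this immediately into the desired product bound
\[
\|\mathscr{M}_{\mathcal{R}}(\vec{f})\|_{W^{s,p}(\mathbb{R}^d)} \lesssim \prod_{i=1}^m \|f_i\|_{W^{s,p_i}(\mathbb{R}^d)}.
\]

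For the continuity assertion, the key observation is that boundedness and continuity are topological properties that are preserved under replacing a norm by an equivalent one. Since the $W^{s,p_i}$-norm and the $F_s^{p_i,2}$-norm generate the same topology on $W^{s,p_i}(\mathbb{R}^d)=F_s^{p_i,2}(\mathbb{R}^d)$, and analogously on the target, any sequence $\vec{f}^{(k)}\to\vec{f}$ in the product of $W^{s,p_i}$-spaces is automatically convergent in the product of $F_s^{p_i,2}$-spaces. Applying the continuity portion of Theorem \ref{thm3} (with $q=2$) yields $\mathscr{M}_{\mathcal{R}}(\vec{f}^{(k)})\to\mathscr{M}_{\mathcal{R}}(\vec{f})$ in $F_s^{p,2}(\mathbb{R}^d)$, and again by the norm equivalence this convergence takes place in $W^{s,p}(\mathbb{R}^d)$.

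There is no substantive obstacle in this argument: the entire content of the corollary is packaged into Theorem \ref{thm3}, and the only nontrivial fact invoked beyond it is the Triebel--Lizorkin characterization of $W^{s,p}$, which is a standard result. The only point that deserves a brief sentence in the write-up is the verification that continuity survives the passage through equivalent norms, which is a formal topological observation.
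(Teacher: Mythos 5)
Your argument is correct and is exactly the paper's route: the corollary is stated as an immediate consequence of Theorem \ref{thm3} with $q=2$ via the identification $F_s^{p,2}(\mathbb{R}^d)=W^{s,p}(\mathbb{R}^d)$ with equivalent norms. The extra sentence you add about continuity being preserved under equivalent norms is a harmless formal point that the paper leaves implicit.
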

Theorem \ref{thm1} can be used to obtain a weak type inequality
for the Sobolev capacity, which can be further employed
to prove the quasicontinuity of the strong maximal
function of a Sobolev function. We first need to give
the definition of Sobolev $p$-capacity.
\begin{definition}[{\bf {Sobolev $p$-capacity}}, (\cite{KKM})] For
$1<p<\infty$, the Sobolev $p$-capacity of the set
$E\subset\mathbb{R}^d$ is defined by
\begin{equation}
C_p(E):=\inf\limits_{f\in\mathcal{A}(E)}
\int_{\mathbb{R}^d}(|f(y)|^p+|\nabla f(y)|^p)dy,
\end{equation}
where $\mathcal{A}(E)=\{f\in W^{1,p}(\mathbb{R}^d):
\ f\geq1\ {\rm on\ a\ neighbourhood\ of}\ E\}$.
We set $C_p(E)=\infty$ if $\mathcal{A}(E)=\emptyset$.
\end{definition}
It was shown in \cite{FZ} that the Sobolev $p$-capacity
is a monotone and a countably subadditive set function.
Also, it is an outer measure over $\mathbb{R}^d$.
\begin{definition} [\textbf{$p$-quasicontinuous and
$p$-quasieverywhere}, \cite{FZ}]{A function $f$ is said to be
$p$-quasicontinuous in $\mathbb{R}^d$ if for every
$\epsilon>0$, there exists a set $F\subset\mathbb{R}^d$
such that $C_p(F)<\epsilon$ and the restriction of $f$
to $\mathbb{R}^d\setminus F$ is continuous and finite.
A property holds $p$-quasieverywhere if it holds outside
a set of the Sobolev $p$-capacity zero.}
\end{definition}
\begin{remark}It was known that each Sobolev function
has a quasicontinuous representative, that is, for each
$u\in W^{1,p}(\mathbb{R}^d)$, there is a
$p$-quasicontinuous function $v\in W^{1,p}(\mathbb{R}^d)$
such that $u=v$ a.e. in $\mathbb{R}^d$.
This representative is unique in the sense that if
$v$ and $w$ are $p$-quasicontinuous and $v=w$ a.e. in
$\mathbb{R}^d$, then $w=v$ $p$-quasieverywhere in
$\mathbb{R}^d$, see \cite{FZ} for more details.
\end{remark}
In 1997, Kinnunen proved that $\mathcal{M}f$ is
$p$-quasicontinuous if $f\in W^{1,p}(\mathbb{R}^d)$ for
any $1<p<\infty$. Motivated by Kinnunen's work \cite{Ki},
we shall prove the following result:
\begin{theorem} [\textbf{$p$-quasicontinuity}]\label{thm5}Let $1<p_1,
\ldots,p_m<\infty$, and $1/p=\sum_{i=1}^m1/p_i$. Suppose that
$\vec{f}=(f_1,\ldots,f_m)$ with each $f_i\in W^{1,p_i}
(\mathbb{R}^d)$, then $\mathscr{M}_{\mathcal{R}}(\vec{f})$
is $p$-quasicontinuous.
\end{theorem}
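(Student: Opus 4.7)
The plan is to adapt Kinnunen's 1997 strategy for $\mathcal{M}$ (see \cite{Ki}) to the multilinear strong setting, using Theorem \ref{thm1} as the main engine. Two pillars are needed: a weak-type Sobolev $p$-capacity bound for $\mathscr{M}_{\mathcal{R}}(\vec{f})$ itself, and a smooth approximation procedure whose convergence is controlled via that bound.

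First I would establish the weak-type estimate
\begin{equation*}
C_p\bigl(\{x\in\mathbb{R}^d:\mathscr{M}_{\mathcal{R}}(\vec{f})(x)>\lambda\}\bigr)
\leq C_{p,d,m}\,\lambda^{-p}\prod_{i=1}^m\|f_i\|_{W^{1,p_i}(\mathbb{R}^d)}^{\,p},\qquad\lambda>0.
\end{equation*}
The crucial observation is that $\mathscr{M}_{\mathcal{R}}(\vec{f})$ is lower semicontinuous: if an open rectangle $R\in\mathcal{R}$ with $R\ni x_0$ satisfies $\prod_i|R|^{-1}\int_R|f_i|>\lambda$, then the same strict inequality holds for every $x$ in a neighborhood of $x_0$ inside $R$, so $\{\mathscr{M}_{\mathcal{R}}(\vec{f})>\lambda\}$ is open. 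By Theorem \ref{thm1} the function $\lambda^{-1}\mathscr{M}_{\mathcal{R}}(\vec{f})$ lies in $W^{1,p}(\mathbb{R}^d)$ and is $\geq 1$ on that open set, hence is admissible in the definition of $C_p$. Plugging it in and invoking the norm bound of Theorem \ref{thm1} produces the claim.

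Next I would approximate. Choose $\phi_i^{(k)}\in C_c^{\infty}(\mathbb{R}^d)$ with $\phi_i^{(k)}\to f_i$ in $W^{1,p_i}(\mathbb{R}^d)$. For such smooth, compactly supported inputs, $g_k:=\mathscr{M}_{\mathcal{R}}(\vec{\phi}^{(k)})$ is continuous: the bounds $\prod_i|R|^{-1}\int_R|\phi_i^{(k)}|\leq\min\bigl(\prod_i\|\phi_i^{(k)}\|_{L^\infty},\,|R|^{-m}\prod_i\|\phi_i^{(k)}\|_{L^1}\bigr)$ confine near-maximizing rectangles at any point to a compact family, on which the averaged product varies continuously in the base point. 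The continuity part of Theorem \ref{thm1} then gives $g_k\to g:=\mathscr{M}_{\mathcal{R}}(\vec{f})$ in $W^{1,p}(\mathbb{R}^d)$.

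Finally, I would pass to the quasicontinuity conclusion. Extract a subsequence, still denoted $g_k$, with $\|g_k-g\|_{W^{1,p}}^{\,p}\leq 4^{-k}$ and invoke the standard principle that $W^{1,p}$-convergence implies $p$-quasi-uniform convergence along a subsequence. This principle is a direct corollary of the capacity weak-type bound: apply it to lower semicontinuous majorants in $W^{1,p}$ of $|g_k-g|$ to obtain open sets $U_k$ with $C_p(U_k)\lesssim 2^{-k}$ on whose complement $|g_k-g|\leq 2^{-k}$; then $V_N:=\bigcup_{k\geq N}U_k$ satisfies $C_p(V_N)\lesssim 2^{-N+1}$ by countable subadditivity of $C_p$, and $g_k\to g$ uniformly on $\mathbb{R}^d\setminus V_N$. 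Since each $g_k$ is continuous, the uniform limit $g$ is continuous on each $\mathbb{R}^d\setminus V_N$, which is exactly $p$-quasicontinuity of $\mathscr{M}_{\mathcal{R}}(\vec{f})$. The principal technical obstacle is the continuity of $g_k$ for smooth compactly supported inputs, where one must rule out degenerating near-maximizing rectangles over the non-compact family $\mathcal{R}$; once this and the associated passage from the lsc case of the capacity bound to differences of lsc functions are handled, the rest of the argument is the by now standard capacity/Borel--Cantelli chain.
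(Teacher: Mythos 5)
Your overall plan coincides with the paper's proof: the same capacity weak-type bound obtained by testing $C_p$ with $\lambda^{-1}\mathscr{M}_{\mathcal{R}}(\vec f)$ (admissible because the superlevel set is open and Theorem \ref{thm1} puts the function in $W^{1,p}$), the same reduction to continuity of $\mathscr{M}_{\mathcal{R}}$ of $C_c^\infty$ data, and the same approximation/subadditivity chain. However, two steps that carry the real weight are only gestured at, and one of them is wrong as literally stated. First, the compactness claim for near-maximizing rectangles fails: even with $\prod_i|R|^{-1}\int_R|\phi_i^{(k)}|$ bounded below, the admissible rectangles may shrink to points or degenerate to lower-dimensional boxes (this degeneracy is exactly why the paper has to introduce the boundary pieces $\mathcal{B}_2,\mathcal{B}_3$ in Section 2), so ``continuity on a compact family of rectangles'' is not available. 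The paper instead proves continuity for $C_c^\infty$ data by a direct dichotomy: if $|E_{\vec r}(x)|$ is large, H\"older's inequality makes every competing average smaller than $\epsilon$; if $|E_{\vec r}(x)|$ is small, the mean value theorem gives $\frac{1}{|E_{\vec r}(x)|}\int|f_i(y+h)-f_i(y)|\,dy\le C(f_i)|h|$ and the remaining factors are bounded by $\|f_\mu\|_\infty$. You flagged this as the ``principal obstacle'' but did not supply the argument.

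Second, your Borel--Cantelli step rests on unspecified ``lower semicontinuous majorants in $W^{1,p}$ of $|g_k-g|$''. For a generic Sobolev function such an everywhere-pointwise lsc majorant with comparable norm need not exist (e.g. $\mathcal{M}(g_k-g)$ dominates $|g_k-g|$ only at Lebesgue points), and the generic principle ``$W^{1,p}$-convergence implies quasi-uniform convergence of a subsequence'' applies to quasicontinuous representatives — which would only yield quasicontinuity of some representative of $\mathscr{M}_{\mathcal{R}}(\vec f)$, not of the pointwise-defined maximal function that the theorem is about. The paper closes this gap with the explicit pointwise estimate, valid at every $x$,
\begin{equation*}
|\mathscr{M}_{\mathcal{R}}(\vec f_k)(x)-\mathscr{M}_{\mathcal{R}}(\vec f)(x)|
\le\sum_{l=1}^m\mathscr{M}_{\mathcal{R}}(\vec F_k^l)(x),\qquad
\vec F_k^l=(f_1,\dots,f_{l-1},f_{l,k}-f_l,f_{l+1,k},\dots,f_{m,k}),
\end{equation*}
whose right-hand side is lower semicontinuous, lies in $W^{1,p}$, and has norm $\lesssim\sum_l\|f_{l,k}-f_l\|_{1,p_l}\prod_{j\ne l}(\|f_j\|_{1,p_j}+1)$ by Theorem \ref{thm1}; feeding it into the Step-1 capacity bound gives $C_p(E_k)\lesssim 2^{-kp}$ for the sets $E_k=\{|\mathscr{M}_{\mathcal{R}}(\vec f_k)-\mathscr{M}_{\mathcal{R}}(\vec f)|>2^{-k}\}$ directly, with no detour through representatives. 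Adding this inequality (which also replaces your appeal to the continuity part of Theorem \ref{thm1}) and the large/small-rectangle continuity argument would turn your outline into the paper's proof.
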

In 2010, Haj{\l}asz and Mal\'{y} \cite{HM} proved that
$\mathcal{M}f$ is approximately differentiable a.e. provided that
$f\in L^{1}(\mathbb{R}^d)$. Motivated by Haj{\l}asz
and Mal\'{y}'s work, we shall establish the following
result:
\begin{theorem}\label{thm6}
Let $\vec{f}=(f_1,\ldots,f_m)$ with each $f_j\in L^1
(\mathbb{R}^d)$ being approximately differentiable a.e.,
then $\mathscr{M}_{\mathcal{R}}(\vec{f})$ is approximately
differentiable a.e.
\end{theorem}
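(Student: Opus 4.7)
The plan is to adapt the argument of Haj\l{}asz and Mal\'y~\cite{HM} for the Hardy--Littlewood maximal operator to the multilinear strong setting, using Theorem~\ref{thm1} in place of Kinnunen's Sobolev bound. The decisive input is the Whitney--Federer Lusin characterization: a measurable function $u\colon\mathbb{R}^d\to\mathbb{R}$ that is finite a.e. is approximately differentiable a.e. if and only if for every $\eta>0$ there exists a Lipschitz function $v$ with $|\{u\ne v\}|<\eta$; in particular, every locally Sobolev function is approximately differentiable a.e.

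Fix $\eta>0$ and a large truncation level $M>0$. For each $j=1,\dots,m$, apply the Lusin--Whitney construction to the approximately differentiable $f_j\in L^1(\mathbb{R}^d)$ to obtain a compactly supported Lipschitz function $g_j=g_j^{\eta,M}$ with $\|g_j\|_\infty\le CM$ and $|\{f_j\ne g_j\}|<\eta/m$. Setting $h_j:=f_j-g_j$, one has $\|h_j\|_1\le C\eta$ and $\|h_j\|_\infty\le CM$. Pick exponents $1<p_1,\dots,p_m<\infty$ with $1<p:=(\sum_j 1/p_j)^{-1}<\infty$ (e.g.\ $p_j=2m$). Each $g_j\in W^{1,p_j}(\mathbb{R}^d)$, so by Theorem~\ref{thm1}, $\mathscr{M}_{\mathcal R}(\vec g)\in W^{1,p}(\mathbb{R}^d)$ and is therefore approximately differentiable a.e. The inequality $|f_i|\le|g_i|+|h_i|$ applied inside the product defining $\mathscr{M}_{\mathcal R}$, followed by the supremum over rectangles $R\ni x$, yields the pointwise bound
$$
\bigl|\mathscr{M}_{\mathcal R}(\vec f)-\mathscr{M}_{\mathcal R}(\vec g)\bigr|(x)\le\sum_{\varnothing\ne S\subseteq\{1,\dots,m\}}\mathscr{M}_{\mathcal R}(\vec u^{\,S})(x),
$$
where $u_i^S=h_i$ for $i\in S$ and $u_i^S=g_i$ otherwise. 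Since $\|h_j\|_{p_j}\le C(M)\eta^{1/p_j}$ by interpolation between $L^1$ and $L^\infty$, the multilinear strong-type inequality~\eqref{1.2} gives $\|\mathscr{M}_{\mathcal R}(\vec u^{\,S})\|_{L^p(\mathbb{R}^d)}\le C(\vec f,M)\eta^{1/p}$ for every nonempty $S$.

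By Chebyshev, $\bigl|\{|\mathscr{M}_{\mathcal R}(\vec f)-\mathscr{M}_{\mathcal R}(\vec g^{\eta,M})|>\lambda\}\bigr|\le C(\vec f,M,\lambda)\eta$ for every $\lambda>0$. Choose sequences $\eta_k\to 0$ and $M_k\to\infty$ slowly enough that the exceptional sets $F_k=\{|\mathscr{M}_{\mathcal R}(\vec f)-\mathscr{M}_{\mathcal R}(\vec g^{\eta_k,M_k})|>2^{-k}\}$ satisfy $\sum_k|F_k|<\infty$. By Borel--Cantelli there is a set $E$ of full measure on which, for every sufficiently large $k$, $|\mathscr{M}_{\mathcal R}(\vec f)-\mathscr{M}_{\mathcal R}(\vec g^{\eta_k,M_k})|\le 2^{-k}$. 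Applying the reverse direction of the Lusin characterization on the good sets $E\cap\bigcap_j\{f_j=g_j^{\eta_k,M_k}\}$, whose union exhausts $E$ up to a null set, delivers the approximate differentiability of $\mathscr{M}_{\mathcal R}(\vec f)$.

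The main obstacle will be this last step: one must verify that the approximate differentials of the smooth approximants $\mathscr{M}_{\mathcal R}(\vec g^{\eta_k,M_k})$ combine into a genuine approximate differential of $\mathscr{M}_{\mathcal R}(\vec f)$, rather than merely giving convergence in measure. The argument is to be carried out at Lebesgue density points of the intersections $\bigcap_j\{f_j=g_j^{\eta_k,M_k}\}$: at such a point, any two Lipschitz functions that agree with $\mathscr{M}_{\mathcal R}(\vec f)$ on a set of density one share the same approximate differential, and the pointwise $L^p$-control of $\mathscr{M}_{\mathcal R}(\vec f)-\mathscr{M}_{\mathcal R}(\vec g^{\eta_k,M_k})$ on $E$ forces this common value to be an approximate differential of $\mathscr{M}_{\mathcal R}(\vec f)$ itself.
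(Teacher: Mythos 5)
Your scheme breaks down at exactly the step you flag, and the obstacle is not a technicality that can be patched. First, the strong maximal operator is non-local: knowing $f_j=g_j^{\eta_k,M_k}$ on a set of density one at a point tells you nothing about equality of $\mathscr{M}_{\mathcal R}(\vec f)$ and $\mathscr{M}_{\mathcal R}(\vec g^{\eta_k,M_k})$ near that point, since the supremum sees the functions on arbitrarily long rectangles. So Whitney's criterion (Lemma \ref{l5.1}) cannot be invoked through the sets $\bigcap_j\{f_j=g_j^{\eta_k,M_k}\}$: there is no Lipschitz function in your construction that actually \emph{coincides} with $\mathscr{M}_{\mathcal R}(\vec f)$ on a large set, only one that is within $2^{-k}$ of it on a large set. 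Second, approximate differentiability a.e.\ is not stable under convergence in measure, nor even under uniform convergence (uniform limits of Lipschitz functions can be nowhere differentiable), so no Chebyshev/Borel--Cantelli scheme based on zeroth-order error bounds $|\mathscr{M}_{\mathcal R}(\vec f)-\mathscr{M}_{\mathcal R}(\vec g^{\eta_k,M_k})|\le 2^{-k}$ can produce an approximate differential of the limit; an error of size $2^{-k}$ is useless at scales $|y-x|\ll 2^{-k}$, and you have no first-order (gradient) control on the differences. There is also a quantitative gap earlier: since $f_j\in L^1$ need not be bounded, the error $h_j=f_j-g_j$ satisfies only $\|h_j\|_{L^1}\lesssim\eta$, not $\|h_j\|_{L^\infty}\le CM$, so $h_j$ need not lie in $L^{p_j}$ and the bound $\|\mathscr{M}_{\mathcal R}(\vec u^{\,S})\|_{L^p}\lesssim\eta^{1/p}$ via \eqref{1.2} is unjustified; and the strong maximal operator has no weak $(1,\dots,1)$ substitute to fall back on.

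The paper's proof avoids approximation altogether and is the direct multilinear analogue of Haj{\l}asz--Mal\'y: decompose $\mathbb{R}^d$ (up to the null set of non-Lebesgue points) into the set where $\mathscr{M}_{\mathcal R}(\vec f)(x)=u_{x,\vec f}(\vec 0)=\prod_j|f_j(x)|$ and the sets $E_{k_1,\ldots,k_d}$ on which the supremum is attained by rectangles whose side lengths in the $i$-th direction are at least $1/k_i$. On the first set $\mathscr{M}_{\mathcal R}(\vec f)$ coincides with $\prod_j|f_j|$, which is approximately differentiable a.e.\ by Lemma \ref{l5.1} applied to the $f_j$; on $E_{k_1,\ldots,k_d}$ it coincides with the truncated operator $\mathscr{M}_{\mathcal{R}}^{1/k_1,\ldots,1/k_d}(\vec f)$, which Lemma \ref{l5.2} shows is globally Lipschitz with constant $\lesssim \varepsilon_0^{-(md+1)}\prod_j\|f_j\|_{L^1(\mathbb{R}^d)}$. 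Since approximate differentiability is a density-point notion, coincidence of $\mathscr{M}_{\mathcal R}(\vec f)$ with an a.e.\ approximately differentiable function on each piece yields the conclusion a.e.\ on that piece. If you want to salvage your idea, the missing ingredient is precisely such an ``attainment'' decomposition (or some first-order control of the differences), not better $L^p$ estimates; Theorem \ref{thm1} plays no role in the paper's argument for this statement.
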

\remark{Since every function in $W^{1,1}(\mathbb{R}^d)$ space
is approximately differentiable a.e., thus Theorem \ref{thm6}
yields that if each
$f_j\in W^{1,1}(\mathbb{R}^d)$, then
$\mathscr{M}_{\mathcal{R}}(\vec{f})$ is approximately
differentiable a.e. However, it is unknown that whether
$\mathscr{M}_{\mathcal{R}}(\vec{f})$ is weak differentiable
when each $f_j\in W^{1,1}(\mathbb{R}^d)$, even in the case $m=1$
and $d\geq2$.}

As for the discrete type strong maximal functions, we have
the following conclusion.
\begin{theorem}  [\textbf{Properties of discrete strong
maximal functions}]\label{thm7}Let $d\geq1$ and $m\geq2$. Then
$\mathbb{M}_{\mathcal{R}}$ is bounded and continuous
from $\ell^1(\mathbb{Z}^d)\times\cdots\times\ell^1
(\mathbb{Z}^d)$ to ${\rm BV}(\mathbb{Z}^d)$.
Equivalently, the operator $\vec{f}\mapsto\nabla
\mathbb{M}_{\mathcal{R}}(\vec{f})$ is bounded and
continuous from $\ell^1(\mathbb{Z}^d)\times\cdots
\times\ell^1(\mathbb{Z}^d)$ to $\ell^1(\mathbb{Z}^d)$.
Moreover, if $f_j\in\ell^1(\mathbb{Z}^d)$ for
$1\le j\le m$. Then
$$
\|\nabla\mathbb{M}_{\mathcal{R}}(\vec{f})\|_{\ell^1(\mathbb{Z}^d)}
\lesssim_{d}\sum\limits_{l=1}^{m}\|\nabla f_l\|_{\ell^1(\mathbb{Z}^d)}
\prod\limits_{j\neq l,1\leq j\leq m}\|f_j\|_{\ell^1(\mathbb{Z}^d)}.
$$
\end{theorem}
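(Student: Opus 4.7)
The plan is to establish the quantitative estimate
\begin{equation*}
\|\nabla \mathbb{M}_{\mathcal{R}}(\vec{f})\|_{\ell^1(\mathbb{Z}^d)} \lesssim_d \sum_{l=1}^{m}\|\nabla f_l\|_{\ell^1(\mathbb{Z}^d)}\prod_{j\neq l}\|f_j\|_{\ell^1(\mathbb{Z}^d)},
\end{equation*}
which gives the boundedness $\mathbb{M}_{\mathcal{R}}\colon \ell^1\times\cdots\times\ell^1\to {\rm BV}(\mathbb{Z}^d)$ directly from the definition of ${\rm BV}(\mathbb{Z}^d)$, and then to deduce continuity. Write $A_R(f):=N(R)^{-1}\sum_{\vec{k}\in R\cap\mathbb{Z}^d}|f(\vec{k})|$. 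For each $\vec{n}\in\mathbb{Z}^d$ and each direction $\vec{e}_l$, I would choose nearly extremal axis-parallel rectangles $R^-_{\vec{n}}\ni\vec{n}$ and $R^+_{\vec{n}}\ni\vec{n}+\vec{e}_l$. Using $R^-_{\vec{n}}+\vec{e}_l$ as a competitor at $\vec{n}+\vec{e}_l$ and $R^+_{\vec{n}}-\vec{e}_l$ as a competitor at $\vec{n}$, combined with the telescoping identity $\prod_i a_i-\prod_i b_i=\sum_{k=1}^m(a_k-b_k)\prod_{i<k}a_i\prod_{i>k}b_i$ and the key bound $|A_{R+\vec{e}_l}(f_k)-A_R(f_k)|\le A_R(|D_lf_k|)$ (which uses $N(R+\vec{e}_l)=N(R)$ and $||f(\vec{j}+\vec{e}_l)|-|f(\vec{j})||\le|D_lf(\vec{j})|$), one obtains the pointwise estimate
\begin{equation*}
|D_l\mathbb{M}_{\mathcal{R}}(\vec{f})(\vec{n})|\le C_m\sum_{k=1}^m\max_{R\in\{R^-_{\vec{n}},R^+_{\vec{n}}\}}A_R(|D_lf_k|)\prod_{i\neq k}A_R(|f_i|).
\end{equation*}

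The main obstacle will be summing this pointwise bound over $\vec{n}$: since $\mathbb{M}_{\mathcal{R}}$ is \emph{not} $\ell^1$-bounded, one cannot pull $\sup_R$ outside the sum and replace it by an instance of $\mathbb{M}_{\mathcal{R}}$. My strategy is an inductive reduction from dimension $d$ to dimension $1$. For $l=1$ (say), write $\vec{n}=(n_1,\vec{n}')$ with $\vec{n}'\in\mathbb{Z}^{d-1}$, so that
\begin{equation*}
\|D_1\mathbb{M}_{\mathcal{R}}(\vec{f})\|_{\ell^1(\mathbb{Z}^d)}=\sum_{\vec{n}'\in\mathbb{Z}^{d-1}}{\rm Var}_{n_1}\big(\mathbb{M}_{\mathcal{R}}(\vec{f})(\cdot,\vec{n}')\big).
\end{equation*}
Factoring each rectangle as $R=I_1\times R'$ with $R'\subset\mathbb{R}^{d-1}$ presents the slice $n_1\mapsto\mathbb{M}_{\mathcal{R}}(\vec{f})(n_1,\vec{n}')$ as a supremum over $R'\ni\vec{n}'$ of a one-dimensional multilinear uncentered maximal of the slice functions $\tilde\phi_i^{R'}(k):=N(R')^{-1}\sum_{\vec{k}'\in R'\cap\mathbb{Z}^{d-1}}|f_i(k,\vec{k}')|$. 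Because $\sup_{R'}$ sits outside ${\rm Var}_{n_1}$, a selection argument is required: at each $\vec{n}'$ choose $R'_{\vec{n}'}$ nearly extremal, apply Theorem A in dimension one (where $\mathfrak{M}$ coincides with the $d=1$ case of $\mathbb{M}_{\mathcal{R}}$) to the family $\{\tilde\phi_i^{R'_{\vec{n}'}}\}_i$, and control the selection error through a C\'{o}rdoba--Fefferman-type geometric covering of axis-parallel rectangles in $\mathbb{R}^{d-1}$. This covering step, organized dyadically in the sidelength vector of $R'$, is the technical heart of the argument.

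For continuity, suppose $f_l^{(n)}\to f_l$ in $\ell^1(\mathbb{Z}^d)$ for each $l$. The product-rule bound $|\prod_i A_R(f_i^{(n)})-\prod_i A_R(f_i)|\le C(M)\sum_l\|f_l^{(n)}-f_l\|_{\ell^1}$ (with $M$ a uniform bound on the $\ell^1$ norms) is uniform in $R$, hence $\mathbb{M}_{\mathcal{R}}(\vec{f}^{(n)})\to\mathbb{M}_{\mathcal{R}}(\vec{f})$ uniformly on $\mathbb{Z}^d$, and $\nabla\mathbb{M}_{\mathcal{R}}(\vec{f}^{(n)})\to\nabla\mathbb{M}_{\mathcal{R}}(\vec{f})$ pointwise. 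To upgrade to $\ell^1$-convergence I would follow a Luiro-style argument: extremal rectangles $R_{\vec{n}}^{(n)}$ for $\vec{f}^{(n)}$ eventually stabilize on each finite set to an extremal $R_{\vec{n}}$ for $\vec{f}$, which together with the quantitative gradient estimate applied to the difference $\vec{f}^{(n)}-\vec{f}$ and a Brezis--Lieb argument yields the desired $\|\nabla\mathbb{M}_{\mathcal{R}}(\vec{f}^{(n)})-\nabla\mathbb{M}_{\mathcal{R}}(\vec{f})\|_{\ell^1(\mathbb{Z}^d)}\to 0$.
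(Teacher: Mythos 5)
There is a genuine gap at the heart of your boundedness argument. Your pointwise estimate (shifted extremal rectangles plus the telescoping product rule) is fine and matches the paper's first step, but the whole difficulty of the theorem is the summation over $\vec{n}$, and your proposal does not carry it out. The slicing scheme you describe cannot work as stated: for a fixed $\vec{n}'$ the slice $n_1\mapsto\mathscr{\mathbb{M}}_{\mathcal{R}}(\vec{f})(n_1,\vec{n}')$ is a supremum over $R'$ of functions of $n_1$, and the variation of a supremum is not controlled by the variation of one nearly extremal member chosen once per $\vec{n}'$ -- the extremizing rectangle depends on $n_1$ as well, so a selection must be made at every lattice point $(n_1,\vec{n}')$, which collapses your reduction and makes the appeal to Theorem A in $d=1$ moot. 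You acknowledge that the ``C\'ordoba--Fefferman-type covering'' of the selected rectangles is ``the technical heart,'' but no mechanism for it is given, and this is exactly the step that cannot be waved through. A symptom of the gap: nowhere in your argument does the hypothesis $m\geq 2$ enter, yet the statement is \emph{false} for $m=1$, $d\geq2$ (Remark 1.8 (ii)--(iii) of the paper), so any correct proof must exploit the multilinearity quantitatively. The paper does so by bounding the $m-1$ non-differentiated factors by their full (un-normalized) $\ell^1$ norms, thereby attaching all $m$ powers of $N(R)^{-1}\lesssim\prod_iF(r_i)^{-1}$ to the single gradient term; after interchanging the sums, each coordinate contributes $\sum_{n_i}F(r_i(n',n_d))^{-m}\chi_{\{|k_i-n_i|<2r_i(n',n_d)\}}\leq 13$, and this convergence is precisely where $m\geq2$ is used. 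Without an analogous decay-and-summation mechanism your covering step has nothing to converge on.

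The continuity part of your proposal is also not secured. The claim that extremal rectangles for $\vec{f}^{(n)}$ ``eventually stabilize on each finite set'' is unjustified (extremizers need not be unique or stable under $\ell^1$ perturbations), and the Brezis--Lieb route you invoke is not set up. The paper's proof avoids both: from the uniform pointwise convergence it reduces matters to showing that the tail $\|D_d\mathbb{M}_{\mathcal{R}}(\vec{g}_i)\chi_{(R_{\vec{2\Lambda}}(\vec{0}))^c}\|_{\ell^1(\mathbb{Z}^d)}$ is small uniformly in $i$, and proves this by rerunning the boundedness argument on the tail region, where the smallness comes again from the $F(r)^{-m}$ decay (with $\Lambda^{1-m}<\epsilon$, another place where $m\geq2$ is essential) together with the smallness of $f_j$ outside a large box. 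To repair your proposal you would either have to make the covering/selection step precise in a way that produces the required summable weights, or abandon the slicing and argue directly in $\mathbb{Z}^d$ as the paper does.
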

\remark \label {1.8}we need to address the facts that:
\begin{enumerate}
\item[{\text{(i)}}] $\mathbb{M}_{\mathcal{R}}$ is bounded
and continuous from $W^{1,p_1}(\mathbb{Z}^d)\times\cdots
\times W^{1,p_m}(\mathbb{Z}^d)$ to $W^{1,p}(\mathbb{Z}^d)$
for all $1<p_1,\ldots,p_m,p\leq\infty$ and $1/p=\sum_{i=1}^m
1/p_i$. This conclusion is basically implied by the following
two facts. First, one can check that $\mathbb{M}_{\mathcal{R}}$
is bounded from $\ell^{p_1}(\mathbb{Z}^d)\times\cdots\times
\ell^{p_m}(\mathbb{Z}^d)$ to $\ell^p(\mathbb{Z}^d)$. Secondly,
it holds easily that
$
|\mathbb{M}_{\mathcal{R}}(\vec{f})-\mathbb{M}_{\mathcal{R}}(\vec{g})|
\leq\sum_{\mu=1}^m\mathbb{M}_{\mathcal{R}}(\vec{F}_{\mu}),
$
where $\vec{f}=(f_1,\ldots,f_m)$, $\vec{g}=(g_1,\ldots,g_m)$
and $\vec{F}_\mu=(f_1,\ldots,f_{\mu-1},f_\mu-g_\mu,g_{\mu+1},
\ldots,g_{m})$. This together with \eqref{1.5} implies the 
continuity for $\mathbb{M}_{\mathcal{R}}$ from $W^{1,p_1}
(\mathbb{Z}^d)\times\cdots\times W^{1,p_m}(\mathbb{Z}^d)$ 
to $W^{1,p}(\mathbb{Z}^d)$;

\item[{\text{(ii)}}] When $d\geq2$, the operator $f\mapsto
\nabla M_{\mathcal{R}}f$ is bounded and continuous from
$\ell^1(\mathbb{Z}^d)$ to $\ell^p(\mathbb{Z}^d)$ for
$1<p\leq\infty$. However, the operator $f\mapsto
\nabla M_{\mathcal{R}}f$ is not bounded from $\ell^1
(\mathbb{Z}^d)$ to $\ell^1(\mathbb{Z}^d)$. This
conclusions are basically implied by two facts. First, 
one can easily check that the operator $f\mapsto\nabla 
M_{\mathcal{R}}f$ is bounded and continuous from 
$\ell^1(\mathbb{Z}^d)$ to $\ell^p(\mathbb{Z}^d)$. Secondly, 
let $f(\vec{n})=\chi_{\{\vec{0}\}}(\vec{n})$. Note that 
$\|f\|_{\ell^1(\mathbb{Z}^d)}=1$ and $M_{\mathcal{R}}f
(\vec{n})=\prod_{i=1}^d(|n_i|+1)^{-1}$ for each 
$\vec{n}=(n_1,\ldots,n_d)\in\mathbb{Z}^d$. It follows 
that $\|\nabla\mathbb{M}_{\mathcal{R}}f\|_{\ell^1
(\mathbb{Z}^d)}=+\infty$. Thus, the operator $f\mapsto
\nabla M_{\mathcal{R}}f$ is not bounded from $\ell^1
(\mathbb{Z}^d)$ to $\ell^1(\mathbb{Z}^d)$;

\item[{\text{(iii)}}] When $d\geq2$, from Remark (ii)
we know that the discrete strong maximal operator
$M_{\mathcal{R}}$ is not bounded from $\ell^1(\mathbb{Z}^d)$
to ${\rm BV}(\mathbb{Z}^d)$. However, it was known that the
discrete maximal operator $M$ is bounded from $\ell^1
(\mathbb{Z}^d)$ to ${\rm BV}(\mathbb{Z}^d)$. Thus, the
regularity property of discrete strong maximal operator
$M_{\mathcal{R}}$ is worse than that of $M$ when $d\geq2$;

\item[{\text{(iv)}}] The proof of Theorem A in \cite{LW3}
depends highly on a summability argument over the sequence
of local maximal, local minimal of discrete multilinear
maximal functions and the Brezis-Lieb lemma \cite{BL}.
However, in the proof of Theorem \ref{thm7}, the above
techniques are unnecessary and our proofs are more simple,
direct and different than those in \cite{LW3}.
\end{enumerate}
By (ii) of Remark \ref{1.8}, we can get the following result
immediately.

\begin{corollary}Let $d\geq2$. Then the map $f\mapsto
\nabla M_{\mathcal{R}}f$ is bounded from
$\ell^1(\mathbb{Z}^d)$ to $\ell^q(\mathbb{Z}^d)$ if
and only if $q>1$.
\end{corollary}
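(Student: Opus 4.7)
The plan is to read off both directions of the equivalence directly from Remark~\ref{1.8}(ii), with one extra embedding step to extend the failure from $q=1$ down to $0<q<1$.

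For sufficiency ($q>1$), I would chain three facts already implicit in the remark: (a) the continuous inclusion $\ell^1(\mathbb{Z}^d)\hookrightarrow\ell^q(\mathbb{Z}^d)$ with $\|f\|_{\ell^q}\le\|f\|_{\ell^1}$ for $q\ge 1$; (b) the standard $\ell^q$-boundedness of the discrete strong maximal operator $M_{\mathcal{R}}$ for $q>1$, transferred from the continuous $L^q(\mathbb{R}^d)$-theory; and (c) the elementary estimate \eqref{1.4}, $\|\nabla g\|_{\ell^q}\le 2d\,\|g\|_{\ell^q}$, applied to $g=M_{\mathcal{R}}f$. Composing the three gives
$$
\|\nabla M_{\mathcal{R}}f\|_{\ell^q(\mathbb{Z}^d)}\le 2d\,\|M_{\mathcal{R}}f\|_{\ell^q(\mathbb{Z}^d)}\le C_q\,\|f\|_{\ell^q(\mathbb{Z}^d)}\le C_q\,\|f\|_{\ell^1(\mathbb{Z}^d)}.
$$

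For necessity I would use the counterexample $f=\chi_{\{\vec{0}\}}$ highlighted in Remark~\ref{1.8}(ii). The explicit formula $M_{\mathcal{R}}f(\vec{n})=\prod_{i=1}^d(|n_i|+1)^{-1}$ comes from optimising the number of lattice points in any rectangle of $\mathcal{R}$ containing both $\vec{0}$ and $\vec{n}$. Taking the discrete partial difference in the $\vec{e}_1$-direction along the slice $n_1=-1$ gives $|D_1 M_{\mathcal{R}}f(-1,n_2,\dots,n_d)|=\tfrac12\prod_{i\ge 2}(|n_i|+1)^{-1}$, and summing over $(n_2,\dots,n_d)\in\mathbb{Z}^{d-1}$ produces a divergent product of harmonic series as soon as $d\ge 2$. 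Hence $\|\nabla M_{\mathcal{R}}f\|_{\ell^1(\mathbb{Z}^d)}=+\infty$, which rules out $q=1$. To rule out $0<q<1$ as well, I would invoke the discrete embedding $\ell^q\hookrightarrow\ell^1$ with $\|g\|_{\ell^1}\le\|g\|_{\ell^q}$ (a one-line consequence of $\sum b_n^{1/q}\le(\sum b_n)^{1/q}$ for $b_n\ge 0$ and $1/q\ge 1$, applied to $b_n=|g(n)|^q$): boundedness from $\ell^1$ to $\ell^q$ would then entail boundedness from $\ell^1$ to $\ell^1$, contradicting what was just shown.

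The corollary is really just a repackaging of the two observations in Remark~\ref{1.8}(ii); there is no essential obstacle. The only mild care is in carrying out the rectangle optimisation that yields the explicit form of $M_{\mathcal{R}}\chi_{\{\vec{0}\}}$, and in invoking the nested $\ell^p$-inclusion for sequence spaces to extend the $q=1$ failure into the quasi-Banach range $0<q<1$.
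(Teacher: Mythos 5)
Your proof is correct and takes essentially the same route as the paper, which deduces the corollary directly from Remark 1.8(ii): the positive half is exactly your chain ($\ell^1\hookrightarrow\ell^q$, $\ell^q$-boundedness of $M_{\mathcal{R}}$ for $q>1$, and the trivial gradient bound \eqref{1.4}), and the negative half is the same counterexample $f=\chi_{\{\vec{0}\}}$ with $M_{\mathcal{R}}f(\vec{n})=\prod_{i=1}^d(|n_i|+1)^{-1}$. Your additions — the explicit slice computation at $n_1=-1$ showing $\|\nabla M_{\mathcal{R}}f\|_{\ell^1(\mathbb{Z}^d)}=+\infty$ and the embedding argument extending the failure to $0<q<1$ — merely fill in details the paper leaves implicit.
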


This paper will be organized as follows. Section \ref{S2}
will be devoted to present the proof of Theorem \ref{thm1}.
Section \ref{S3} will be devoted to give the proofs of
Theorems \ref{thm2} and \ref{thm3}. The proofs of Theorems
\ref{thm5} and \ref{thm6} will be given in Sections \ref{S4}
and \ref{S5}, respectively. In Section \ref{S6}, we shall
prove Theorem \ref{thm7}. Finally, we introduce some
properties of $u_{x,\vec{f}}$ in Section \ref{S7}. We would
like to remark that the main ideas employed in the proofs
of Theorems \ref{thm1} and \ref{thm7} are greatly motivated
by \cite{Ki,Lu1}, but our methods and techniques are more
delicate and complex than those in \cite{Ki,Lu1}. It should
be pointed out that the main ideas in the proofs of Theorems
\ref{thm2} and \ref{thm3} are motivated by \cite{LW4}. Our
arguments in the proof of the bounded part in Theorem
\ref{thm7} are motivated by \cite{CM1}, but our methods and
techniques are somewhat different and direct than those in
\cite{CM1}. In addition, the Brezis-Lieb lemma \cite{BL} is
not necessary in the proof of the continuity part of Theorem
\ref{thm7}.

Throughout this paper, if there exists a constant $c>0$
depending only on $\vartheta$ such that $A\leq cB$, we then
write $A\lesssim_{\vartheta}B$ or $B\gtrsim_\vartheta A$;
and if $A\lesssim_\vartheta B\lesssim_\vartheta A$, we then
write $A\thicksim_\vartheta B$.

\section {Properties on Sobolev spaces}\label{S2}
\subsection{Prelimary lemmas}
We first present several preliminary lemmas, which play
important roles in the proof of Theorem 1.1. Some basic
ideas will be taken from \cite{Lu1}, where the proof
for the continuity in $W^{1,p}(\mathbb{R}^d)$ of the
Hardy-Littlewood maximal operator has been given. We
only consider the case $d=2$ and other cases are
analogous and more complex.

For $A\subset\mathbb{R}^2$ and $x\in\mathbb{R}^2$, define
$$
d(x,A):=\inf\limits_{a\in A}|x-a|\ \ {\rm and}
\ A_{(\lambda)}:=\{x\in\mathbb{R}^2;d(x,A)\leq\lambda\}
\ \ {\rm for}\ \lambda\geq0.
$$
We denote by $\|f\|_{p,A}$ the $L^p$-norm of $f\chi_{A}$ for
all measurable sets $A\subset\mathbb{R}^2$. Let $1/p=
\sum_{j=1}^m1/p_j$ and $1<p_1,p_2,\ldots,p_m,p<\infty$. Let 
$\vec{f}=(f_1,\ldots,f_m)$ with each $f_j\in L^{p_j}
(\mathbb{R}^2)$. For convenience, we set $\mathbb{R}_{+}=(0,
\infty)$ and $\overline{\mathbb{R}}_{+}=[0,\infty)$. We also
set
$$(\overline{\mathbb{R}}_{+})_1^4=\{(r_1,r_2,0,0):(r_1,r_2)\in\overline{\mathbb{R}}_{+}^2, r_1+r_2>0\},$$
$$(\overline{\mathbb{R}}_{+})_2^4=\{(0,0,r_3,r_4):(r_3,r_4)\in\overline{\mathbb{R}}_{+}^2, r_3+r_4>0\},$$
$$(\overline{\mathbb{R}}_{+})_{1,2}^4=\{(r_1,r_2,r_3,r_4):(r_1,r_2,r_3,r_4)\in\overline{\mathbb{R}}_{+}^4,r_1+r_2>0,r_3+r_4>0\}.$$
Define the function $u_{(x_1,x_2),\vec{f}}:\overline
{\mathbb{R}}_{+}^4\rightarrow\mathbb{R}$ by
$$
u_{(x_1,x_2),\vec{f}}(r_{1,1},r_{1,2},0,0):=\frac{1}{(r_{1,1}+r_{1,2})^m}\prod\limits_{j=1}^m
\int_{x_1-r_{1,1}}^{x_1+r_{1,2}}|f_j(y_1,x_2)|dy_1
\ \ \text{ for\ }(r_{1,1},r_{1,2},0,0)\in(\overline{\mathbb{R}}_{+})_1^4;
$$
$$
u_{(x_1,x_2),\vec{f}}(0,0,r_{2,1},r_{2,2}):=\frac{1}{(r_{2,1}+r_{2,2})^m}\prod\limits_{j=1}^m
\int_{x_2-r_{2,1}}^{x_2+r_{2,2}}|f_j(x_1,y_2)|dy_2
\ \ \text{ for\ }(0,0,r_{2,1},r_{2,2})\in(\overline{\mathbb{R}}_{+})_2^4;
$$
$$
\aligned
u_{(x_1,x_2),\vec{f}}(r_{1,1},r_{1,2},r_{2,1},r_{2,2})
&:=\displaystyle\prod\limits_{i=1}^2\frac{1}{(r_{i,1}+r_{i,2})^m}
\prod\limits_{j=1}^m\int_{x_1-r_{1,1}}^{x_1+r_{1,2}}\int_{x_2-r_{2,1}}^{x_2+r_{2,2}}
|f_j(y_1,y_2)|dy_1dy_2,  \\& \quad\quad\quad\quad\quad\quad\quad\quad\quad\quad\quad\text{ for \ }(r_{1,1},r_{1,2},r_{2,1},r_{2,2})\in(\overline{\mathbb{R}}_{+})_{1,2}^4.
\endaligned
$$
In particular, we denote
$u_{(x_1,x_2),\vec{f}}(0,0,0,0)=\prod_{j=1}^m|f_j(x_1,x_2)|$.
We can write
$$
{\mathscr{M}}_{\mathcal{R}}(\vec{f})(x)
=\sup\limits_{r_{1,1},r_{1,2},r_{2,1},r_{2,2}>0}u_{(x_1,x_2),\vec{f}}(r_{1,1},r_{1,2},r_{2,1},r_{2,2}).
$$

For a fixed point $x=(x_1,x_2)\in\mathbb{R}^2$, we define
the sets
$\mathcal{B}_i(\vec{f})(x_1,x_2)$ ($i=1,2,3$) by
$$
\aligned
\mathcal{B}_1(\vec{f})(x_1,x_2)&:=\Big\{(r_{1,1},r_{1,2},r_{2,1},r_{2,2})
\in\overline{\mathbb{R}}_{+}^4:
\  {\mathscr{M}}_{\mathcal{R}}(\vec{f})(x_1,x_2)=
\\
& \displaystyle
\quad \limsup\limits_{\substack{(r_{1,1,k},r_{1,2,k},r_{2,1,k},r_{2,2,k})
\\ \rightarrow(r_{1,1},r_{1,2},r_{2,1},r_{2,2})}}u_{(x_1,x_2),\vec{f}}(r_{1,1,k},r_{1,2,k},r_{2,1,k},r_{2,2,k})\\
&\quad\quad\quad\quad\quad\quad\quad\quad\quad\quad\quad\quad\quad\quad\ {\rm for\ some}\ r_{1,1,k},\,r_{1,2,k},\,r_{2,1,k},\,r_{2,2,k}>0\Big\}.
\endaligned
$$
$$
\aligned
\mathcal{B}_2(\vec{f})(x_1,x_2)
&:=\Big\{(r_{1,1},r_{1,2},r_{2,1},r_{2,2})
\in\overline{\mathbb{R}}_{+}^2\times\{(0,0)\}:
 \displaystyle\mathscr{M}_{\mathcal{R}}(\vec {f})(x_1,x_2)=
\\&\quad\quad\limsup\limits_{(r_{1,1,k},r_{1,2,k})\rightarrow (r_{1,1},r_{1,2})}u_{(x_1,x_2),\vec{f}}(r_{1,1,k},r_{1,2,k},0,0)\ \ {\rm for\ some}\ r_{1,1,k},r_{1,2,k}>0\Big\}.
\endaligned
$$
$$
\aligned
\mathcal{B}_3(\vec{f})(x_1,x_2)
 \displaystyle
&:=\Big\{(r_1,r_2)\in\{(0,0)\}\times\overline{\mathbb{R}}_{+}^2:
 \displaystyle\ \mathscr{M}_{\mathcal{R}}(\vec{f})(x_1,x_2)=
\\&\quad\quad\limsup\limits_{(r_{2,1,k},r_{2,2,k})\rightarrow(r_{2,1},r_{2,2})}u_{(x_1,x_2),\vec{f}}(0,0,r_{2,1,k},r_{2,2,k})\ \ {\rm for\ some}\ r_{2,1,k},r_{2,2,k}>0\Big\}.
\endaligned
$$
The function $u_{(x_1,x_2),\vec{f}}$ enjoys the following
properties:

\newpage
\begin{lemma}\label{l2.1}
Let $\vec{f}=(f_1,\ldots,f_m)$ with each $f_j\in L^{p_j}
(\mathbb{R}^2)$ for $1<p_j<\infty$, $(j=1,2,\dots,m)$. Then
the following statements hold:
\begin{enumerate}
\item[{\text{(i)}}] {\it
$u_{(x_1,x_2),\vec{f}}$ are continuous on
$(\overline{\mathbb{R}}_{+})_+^4:=\{(r_{1,1},r_{1,2},r_{2,1},
r_{2,2})\in\overline{\mathbb{R}}_{+}^4: r_{1,1}+r_{1,2},
r_{2,1}+r_{2,2}>0\}$ for all $(x_1,x_2)\in\mathbb{R}^2$, and
continuous on $\overline{\mathbb{R}}_{+}^4$ for a.e. $(x_1,
x_2)\in\mathbb{R}^2$;
$$
\lim_{(r_{1,1},r_{1,2},r_{2,1},r_{2,2})\in\overline{\mathbb{R}}_{+}^4\atop
r_{1,1}+r_{1,2},r_{2,1}+r_{2,2}\rightarrow\infty}
u_{(x_1,x_2),\vec{f}}(r_{1,1},r_{1,2},r_{2,1},r_{2,2})=0,\quad \hbox{for}\ a.e.\  (x_1,x_2)\in\mathbb{R}^2;
$$$\mathcal{B}_1
(\vec{f})(x_1,x_2)$ are nonempty and closed for every
$(x_1,x_2)\in\mathbb{R}^2$;}

\item[{(ii)}] {\it $u_{(x_1,x_2),\vec{f}}$ are continuous on
$\{(r_{1,1},r_{1,2})\in\overline{\mathbb{R}}_{+}^2: r_{1,1}+
r_{1,2}>0\}\times\{(0,0)\}$ for all $x_1\in\mathbb R$ and
a.e. $x_2\in\mathbb{R}$, and continuous at $(0,0,0,0)$ for
a.e. $(x_1,x_2)\in\mathbb{R}^2$;
$$
\lim_{(r_{1,1},r_{1,2})\in\overline{\mathbb{R}}_{+}^2\atop
r_{1,1}+r_{1,2}\rightarrow\infty}u_{(x_1,x_2),\vec{f}}(r_{1,1},r_{1,2},0,0)=0,\quad \hbox{for all} \ x_1\in\mathbb R\quad \hbox{and a.e. }\ x_2\in\mathbb{R};
$$
$\mathcal{B}_2(\vec{f})(x_1,x_2)$ are nonempty and closed
for a.e. $(x_1,x_2)\in\mathbb{R}^2$;}

\item[{(iii)}]  {\it $u_{(x_1,x_2),\vec{f}}$ are continuous
on $\{(0,0)\}\times\{(r_{2,1},r_{2,2})\in\overline
{\mathbb{R}}_{+}^2: r_{2,1}+r_{2,2}>0\}$ for all
$x_2\in\mathbb R$ and a.e. $x_1\in\mathbb{R}$ and continuous
at $(0,0,0,0)$ for a.e. $(x_1,x_2)\in\mathbb{R}^2$;
$$
\lim_{(r_{2,1},r_{2,2})\in\overline{\mathbb{R}}_{+}^2\atop
r_{2,1}+r_{2,2}\rightarrow\infty}
u_{(x_1,x_2),\vec{f}}(0,0,r_{2,1},r_{2,2})=0,\quad \hbox{for all} \ x_2\in\mathbb R\quad \hbox{and a.e. }\ x_1\in\mathbb{R};
$$
$\mathcal{B}_3(\vec{f})(x_1,x_2)$ are nonempty and closed
for a.e. $(x_1,x_2)\in\mathbb{R}^2$.}
\end{enumerate}
\end{lemma}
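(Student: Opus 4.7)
The plan is to combine three standard analytic tools --- the dominated convergence theorem, the Jessen--Marcinkiewicz--Zygmund strong differentiation theorem for rectangles with sides parallel to the axes, and Hölder's inequality --- with a compactness argument for extracting maximizers and a diagonal argument for closedness.

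For continuity, on the open region $(\overline{\mathbb{R}}_{+})_{1,2}^4$ where both denominators are strictly positive, the map $(r_{1,1}, r_{1,2}, r_{2,1}, r_{2,2}) \mapsto \int_{x_1-r_{1,1}}^{x_1+r_{1,2}}\int_{x_2-r_{2,1}}^{x_2+r_{2,2}}|f_j|\,dy$ is continuous for every $x$ by dominated convergence (the characteristic functions of the varying rectangles converge a.e.\ as parameters vary and are dominated by that of an enlarged rectangle, integrable against $|f_j|\in L^{p_j}_{\mathrm{loc}}$); and the denominator $[(r_{1,1}+r_{1,2})(r_{2,1}+r_{2,2})]^m$ is continuous and nonvanishing there, so $u_{(x_1,x_2),\vec f}$ is continuous on this piece. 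The same argument handles the boundary pieces $(\overline{\mathbb{R}}_{+})_1^4$ and $(\overline{\mathbb{R}}_{+})_2^4$ using the appropriate 1D integrals, valid for all $x_1$ and a.e.\ $x_2$ (resp.\ all $x_2$ and a.e.\ $x_1$), where the exceptional set comes from Fubini (the slices with $f_j(\cdot,x_2)\notin L^{p_j}(\mathbb{R})$ have measure zero). For continuity at the degenerate vertex $(0,0,0,0)$ and across strata, I would invoke the two-parameter strong differentiation theorem, applicable since each $f_j\in L^{p_j}$ with $p_j>1$: a.e.\ $x\in\mathbb{R}^2$ is a strong Lebesgue point, so $|R|^{-1}\int_R|f_j|\,dy\to|f_j(x)|$ as $R\ni x$ contracts to $x$; combining this with 1D Lebesgue differentiation on the Fubini slices gives $u_{(x_1,x_2),\vec f}(r)\to\prod_j|f_j(x)|$ along any admissible approach to $(0,0,0,0)$, for a.e.\ $x$.

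For the vanishing at infinity, Hölder's inequality gives $|R|^{-1}\int_R|f_j|\,dy\leq |R|^{-1/p_j'}\|f_j\|_{L^{p_j}}$, hence
\[
u_{(x_1,x_2),\vec f}(r_{1,1},r_{1,2},r_{2,1},r_{2,2}) \leq |R|^{1/p-m}\prod_{j=1}^m\|f_j\|_{L^{p_j}(\mathbb{R}^2)},
\]
with $|R|=(r_{1,1}+r_{1,2})(r_{2,1}+r_{2,2})$; since $m\ge 1$ and $p<\infty$ the exponent $m-1/p$ is positive, so the bound tends to zero as $|R|\to\infty$, and the 1D analogues in (ii)--(iii) reduce to the same estimate on a.e.\ Fubini slice. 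For the nonemptyness of $\mathcal B_i(\vec f)(x)$, fix $x$ and extract a maximizing sequence $r^{(k)}\in\mathbb{R}_+^4$ of positive parameters with $u_{x,\vec f}(r^{(k)})\to\mathscr{M}_{\mathcal R}(\vec f)(x)$. If this supremum is $0$ then $(0,0,0,0)\in\mathcal B_1$; otherwise the vanishing-at-infinity bound forces each sum $r_{1,1,k}+r_{1,2,k}$ and $r_{2,1,k}+r_{2,2,k}$ to lie in a compact interval of $(0,\infty)$, so Bolzano--Weierstrass extracts an accumulation point $r^\ast\in\overline{\mathbb{R}}_{+}^4$ that witnesses $r^\ast\in\mathcal B_1(\vec f)(x)$. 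The cases $\mathcal B_2$ and $\mathcal B_3$ follow by the same compactness argument applied to the restricted suprema in the respective boundary planes, with the ``a.e.''\ qualifier inherited from Fubini. Closedness of each $\mathcal B_i$ is a diagonal selection: given $r^{(k)}\in\mathcal B_i$ with $r^{(k)}\to r$, pick for each $k$ a sequence of positive parameters $r^{(k,l)}\to r^{(k)}$ with $u_{x,\vec f}(r^{(k,l)})\to\mathscr{M}_{\mathcal R}(\vec f)(x)$, then choose $l_k$ so that both $|r^{(k,l_k)}-r^{(k)}|\leq 1/k$ and $|u_{x,\vec f}(r^{(k,l_k)})-\mathscr{M}_{\mathcal R}(\vec f)(x)|\leq 1/k$; the resulting diagonal sequence places $r$ in $\mathcal B_i$.

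The principal obstacle is bookkeeping the several ``a.e.''\ exceptional sets so that they hold simultaneously on a common full-measure subset of $\mathbb R^2$: the set where the 2D strong differentiation theorem applies to each $|f_j|$; the sets where $f_j(\cdot,x_2),f_j(x_1,\cdot)\in L^{p_j}(\mathbb R)$; and the sets where 1D Lebesgue differentiation holds on these slices. Since there are only finitely many such conditions, intersecting them yields the desired common full-measure set, and the above arguments can be executed uniformly on it.
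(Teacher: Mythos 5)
Your plan covers the easy parts of the lemma (continuity on each stratum by dominated convergence, the vanishing bound via H\"older, continuity at the origin via strong differentiation plus one–dimensional differentiation of slices, and the compactness/diagonal arguments for $\mathcal B_i$), and these do match what the paper does. But it misses the part the paper itself flags as ``very delicate'' and proves in Section~7: continuity on all of $\overline{\mathbb{R}}_{+}^4$ for a.e.\ $x$ at boundary points of the form $(r_{1,1},r_{1,2},0,0)$ with $r_{1,1}+r_{1,2}>0$ (and the symmetric ones) when approached from the region $r_{2,1}+r_{2,2}>0$. There one needs, for a.e.\ $(x_1,x_2)$ and \emph{simultaneously for all} $(r_{1,1},r_{1,2})$, that $\frac{1}{r_{2,1}+r_{2,2}}\int_{x_2-r_{2,1}}^{x_2+r_{2,2}}\frac{1}{r_{1,1}+r_{1,2}}\int_{x_1-r_{1,1}}^{x_1+r_{1,2}}|f_j(y_1,y_2)|\,dy_1dy_2 \to \frac{1}{r_{1,1}+r_{1,2}}\int_{x_1-r_{1,1}}^{x_1+r_{1,2}}|f_j(y_1,x_2)|\,dy_1$ as $r_{2,1}+r_{2,2}\to0$. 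This is a Lebesgue-point statement at $x_2$ for the averaged function $y_2\mapsto\frac{1}{r_{1,1}+r_{1,2}}\int_{x_1-r_{1,1}}^{x_1+r_{1,2}}|f_j(y_1,y_2)|\,dy_1$, and it follows from none of the conditions you intersect: not from strong differentiation at $(x_1,x_2)$ (the rectangle does not shrink in the $y_1$-direction), and not from 1D differentiation of the slices $f_j(\cdot,x_2)$, $f_j(x_1,\cdot)$. For each fixed $(r_{1,1},r_{1,2})$ it holds off a null set, but the null set depends on the parameters, and there are uncountably many of them; your closing claim that ``there are only finitely many such conditions'' is precisely where the argument breaks. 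The paper resolves this (Claims 2--3 for measurability, Claim 6 for all \emph{rational} $(r_{1,1},r_{1,2})$ off a single null set, Claims 7--9 for a quantitative continuity estimate in $(r_{1,1},r_{1,2})$ with error controlled by $\mathcal{M}_{\mathcal{R}}f_j(x)$ and $\|f_j\|_{L^{p_j}}$ uniformly in the small vertical parameters), then passes from rational to arbitrary parameters. Your proposal contains no substitute for this density-plus-uniform-approximation step, and without it the a.e.\ continuity on $\overline{\mathbb{R}}_{+}^4$ (hence also the limit relations in Lemma~2.2 and the later use in Lemma~2.5) is not established.

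A secondary gap: your compactness argument for nonemptiness of $\mathcal B_1$ needs $u_{x,\vec f}$ to decay along \emph{any} unbounded maximizing sequence, but such a sequence may have, say, $r_{1,1}+r_{1,2}\to\infty$ while $r_{2,1}+r_{2,2}\to0$, so that $|R|$ stays bounded and your H\"older bound gives nothing (incidentally the per-factor bound should be $|R|^{-1/p_j}\|f_j\|_{L^{p_j}}$, i.e.\ $|R|^{-1/p}$ overall, not $|R|^{-1/p_j'}$; harmless but worth fixing). The paper's Claim~1 treats exactly this regime by dominating the rectangle average by a one-dimensional average of the slice maximal function, e.g.\ $u_{x,\vec f}(\vec r)\le\prod_j(r_{1,1}+r_{1,2})^{-1/p_j}\|\widetilde{\mathcal{M}}_2f_j(\cdot,x_2)\|_{L^{p_j}(\mathbb{R})}$, which is finite only for a.e.\ $x_2$ by Fubini --- yet another a.e.\ ingredient absent from your bookkeeping, and the reason the parameter sums (not the area) can be forced to stay bounded.
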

\begin{proof}(i) The first statement follows from the
integrability of $f_j$. The proof of the continuity on
$\overline{\mathbb{R}}_{+}^4$ for a.e. $(x_1,x_2)\in
\mathbb{R}^2$ is  very delicate. So, we shall prove it
in the last section. We can see easily that for any
$(x_1,x_2)\in\mathbb{R}^2$, it holds that
$$
\lim_{(r_{1,1},r_{1,2},r_{2,1},r_{2,2})\in\overline{\mathbb{R}}_{+}^4\atop
r_{1,1}+r_{1,2},r_{2,1}+r_{2,2}\rightarrow\infty}u_{(x_1,x_2),
\vec{f}}(r_{1,1},r_{1,2},r_{2,1},r_{2,2})=0,
$$
since $u_{(x_1,x_2),f}(r_{1,1},r_{1,2},r_{2,1},r_{2,2})\leq
|(r_{1,1}+r_{1,2})(r_{2,1}+r_{2,2})|^{-1/p}\prod_{j=1}^m
\|f_j\|_{L^{p_j}(\mathbb{R}^2)}$. But, when $0<r_{1,1}+r_{1,2}
+r_{2,1}+r_{2,2}\rightarrow\infty$, we should treat more
carefully, and we shall prove it in the last section. The last
statement can be checked easily.

(ii) The first statement follows from the integrability of
$f_j$. The continuity at $(0,0,0,0)$ will be checked in the
last section. Since $u_{(x_1,x_2),f}(\vec r)\leq |r_{1,1}+
r_{1,2}|^{-1/p}\prod_{j=1}^m\|f_j(\cdot, x_2)\|_{L^{p_j}
(\mathbb{R})}$ for any $(r_{1,1},r_{1,2},0,0)\in
\mathbb{R}_{+}\times\{(0,0)\}$ and all $x_1\in\mathbb R$ and
a.e. $x_2\in\mathbb{R}$, we get
$$
\lim_{(r_{1,1},r_{1,2})\in\overline{\mathbb{R}}_{+}^2\atop
r_{1,1}+r_{1,2}\rightarrow\infty}u_{(x_1,x_2),\vec{f}}(r_{1,1},r_{1,2},0,0)=0.
$$
The last statement can be checked easily.

(iii) (iii) is the same as in (ii).
\end{proof}
\begin{lemma}\label {l2.2}

The following
relationships between $ {\mathscr{M}}_{\mathcal{R}}
(\vec{f})$ and $u_{(x_1,x_2),\vec{f}}$ are valid. \begin{enumerate}
\item[{(iv)}]
${\mathscr{M}}_{\mathcal{R}}(\vec{f})(x_1,x_2)=
\prod\limits_{j=1}^m|f_j(x_1,x_2)|$ for a.e. $(x_1,x_2)\in
\mathbb{R}^2$ such that $\vec{0}\in\bigcup\limits_{i=1}^3
\mathcal{B}_i(f)(x_1,x_2)$;

\item[{(v)}]${\mathscr{M}}_{\mathcal{R}}(\vec{f})(x_1,x_2)=u_{(x_1,x_2),
\vec{f}}(\vec{r})$ for a.e. $(x_1,x_2)\in\mathbb{R}^2$
such that $\vec{r}\in\mathcal{B}_1(\vec{f})(x_1,x_2)
\cap(\overline{\mathbb{R}}_{+})_{1}^4;$
\item[{(vi)}]${\mathscr{M}}_{\mathcal{R}}(\vec{f})(x_1,x_2)=u_{(x_1,x_2),
\vec{f}}(\vec{r})$ for a.e. $(x_1,x_2)\in\mathbb{R}^2$
such that $\vec{r}\in\mathcal{B}_1(\vec{f})(x_1,x_2)
\cap(\overline{\mathbb{R}}_{+})_{2}^4;$
\item[{(vii)}]${\mathscr{M}}_{\mathcal{R}}(\vec{f})(x_1,x_2)=u_{(x_1,x_2),
\vec{f}}(\vec r)$ for a.e. $(x_1,x_2)\in\mathbb{R}^2$
such that $\vec r\in\mathcal{B}_2(\vec{f})(x_1,x_2)$;

\item[{(viii)}]${\mathscr{M}}_{\mathcal{R}}(\vec{f})
(x_1,x_2)=u_{(x_1,x_2),\vec{f}}(\vec r)$ for a.e.
$(x_1,x_2)\in\mathbb{R}^2$ such that $\vec r\in
\mathcal{B}_3(\vec{f})(x_1,x_2)$;

\item[{(ix)}] $ {\mathscr{M}}_{\mathcal{R}}(\vec{f})(x_1,x_2)=
u_{(x_1,x_2),\vec{f}}(\vec r)$ if $\vec r\in\mathcal{B}_1
(\vec{f})(x)\cap(\overline{\mathbb{R}}_{+})_{1,2}^4$ for all
$x\in\mathbb{R}^2$.
\end{enumerate}
\end{lemma}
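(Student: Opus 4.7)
The plan is to recognize that each of the six identities $\mathscr{M}_{\mathcal{R}}(\vec{f})(x)=u_{x,\vec{f}}(\vec{r})$ (or $=\prod_{j=1}^{m}|f_j(x)|$ in (iv)) is nothing more than the statement that the defining $\limsup$ in the membership condition $\vec{r}\in\mathcal{B}_i(\vec{f})(x)$ collapses to the value of $u_{x,\vec{f}}$ at the accumulation point. This collapse is automatic wherever $u_{x,\vec{f}}$ is continuous at $\vec{r}$, since a continuous function $u$ satisfies $\limsup_{\vec{r}_k\to\vec{r}}u(\vec{r}_k)=u(\vec{r})$. Therefore each part of the lemma reduces to citing the appropriate continuity assertion from Lemma~\ref{l2.1} at the relevant point $\vec{r}$ and for the appropriate class of $x$.

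First I would dispatch (ix). Here $\vec{r}\in(\overline{\mathbb{R}}_{+})_{1,2}^{4}$, which by definition places $\vec{r}$ in the open domain $(\overline{\mathbb{R}}_{+})_{+}^{4}$ where both pairs of radii have positive sum. The first assertion of Lemma~\ref{l2.1}(i) supplies continuity of $u_{x,\vec{f}}$ on this set for every $x\in\mathbb{R}^{2}$, so the identity holds pointwise for all $x$, exactly as stated. Next I would treat (v) and (vi): the accumulation point $\vec{r}$ lies on a boundary stratum of $\overline{\mathbb{R}}_{+}^{4}$, while the approximating sequence entering the definition of $\mathcal{B}_1$ comes from the interior with each $r_{i,j,k}>0$. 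What is needed is therefore continuity on the full closed octant $\overline{\mathbb{R}}_{+}^{4}$, and this is furnished by the second assertion of Lemma~\ref{l2.1}(i), which holds for a.e.\ $x$, matching the quantifier in (v) and (vi).

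For (vii) and (viii) the approximating sequences remain trapped inside the degenerate slices $\overline{\mathbb{R}}_{+}^{2}\times\{(0,0)\}$ and $\{(0,0)\}\times\overline{\mathbb{R}}_{+}^{2}$, respectively. The relevant continuity is the one-sided continuity asserted in Lemma~\ref{l2.1}(ii) and (iii): continuity on the open half-slice $\{(r_{1,1},r_{1,2})\in\overline{\mathbb{R}}_{+}^{2}:r_{1,1}+r_{1,2}>0\}\times\{(0,0)\}$ handles the case $\vec{r}\neq\vec{0}$ of (vii) for a.e.\ $x$, and symmetrically for (viii), while the remaining case $\vec{r}=\vec{0}$ is subsumed by the continuity at $(0,0,0,0)$ asserted in the same two parts of Lemma~\ref{l2.1}. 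Part (iv) is then immediate: if $\vec{0}\in\mathcal{B}_i(\vec{f})(x)$ for some $i\in\{1,2,3\}$, the defining $\limsup$ collapses to $u_{x,\vec{f}}(0,0,0,0)$ by the continuity at the origin supplied by Lemma~\ref{l2.1}(i)--(iii), and by the explicit convention $u_{x,\vec{f}}(0,0,0,0)=\prod_{j=1}^{m}|f_j(x_1,x_2)|$ this evaluates to the desired product.

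The analytical content of the proof lives entirely inside Lemma~\ref{l2.1}; the present lemma is a formal corollary of those continuity statements and involves no new computation. I do not expect a genuine obstacle. The only point requiring care is to match the approach direction used in the definition of $\mathcal{B}_i$ with the stratum on which continuity is being invoked: the interior-to-boundary approach in $\mathcal{B}_1$ needs continuity on all of $\overline{\mathbb{R}}_{+}^{4}$ (second statement of Lemma~\ref{l2.1}(i)), whereas the within-stratum approach in $\mathcal{B}_2$ and $\mathcal{B}_3$ only needs continuity along the corresponding $2$-dimensional slice (Lemma~\ref{l2.1}(ii)--(iii)). Once this bookkeeping is in place, each of (iv)--(ix) follows by a single line.
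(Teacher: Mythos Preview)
Your proposal is correct and is precisely the approach the paper has in mind: the paper states Lemma~\ref{l2.2} without proof, treating it as an immediate consequence of the continuity assertions in Lemma~\ref{l2.1}, and your argument supplies exactly those details by matching each stratum of $\overline{\mathbb{R}}_{+}^{4}$ and each approach direction in the definition of $\mathcal{B}_i$ to the corresponding continuity clause in Lemma~\ref{l2.1}(i)--(iii).
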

For convenience, for any $\vec{r}=(r_1,r_2,\ldots,r_d)\in
\mathbb{R}_{+}^d$ and $x=(x_1,x_2,\ldots,x_d)\in\mathbb{R}^d$,
we let
$$R_{\vec{r}}(x)=\{y=(y_1,y_2,\ldots,y_d)\in\mathbb{R}^d;
|y_i-x_i|<r_i,\,i=1,2,\ldots,d\}.$$
\begin{lemma}\label{l2.3}
Let $\Lambda>0$, $\vec{\Lambda}=(\Lambda,\Lambda)$
and $\vec{0}=(0,0)$, $1<p,p_i<\infty$ and $1/p=\sum_{i=1}^m
1/p_i$, and $\vec{f}=(f_1,\ldots,f_m)$ with each $f_i\in L^{p_i}
(\mathbb{R}^2)$. Let $\vec{f}_j=(f_{1,j},
\ldots,f_{m,j})$ such that $f_{i,j}\rightarrow f_i$ in
$L^{p_i}(\mathbb{R}^2)$ when $j\rightarrow\infty$ for all
$i=1,2,\ldots,m$. Then for all $\lambda>0$ and
$i=1,2,3$, we have

\begin{equation}\label{2.1}
\lim\limits_{j\rightarrow\infty}|\{x\in R_{\vec{\Lambda}}(\vec{0});
\mathcal{B}_i(\vec{f_j})(x)\nsubseteq\mathcal{B}_i(\vec{f})(x)_{(\lambda)}\}|
=0.\end{equation}
\end{lemma}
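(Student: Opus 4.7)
The strategy is a contradiction argument combined with an extraction of convergent subsequences of ``best radii.'' First I would record two preparatory convergence facts. Writing $\vec F_{\mu,j}=(f_1,\ldots,f_{\mu-1},f_\mu-f_{\mu,j},f_{\mu+1,j},\ldots,f_{m,j})$, the telescoping identity that is noted for the operator itself gives the pointwise, $\vec r$-uniform bound
\[
|u_{x,\vec f_j}(\vec r)-u_{x,\vec f}(\vec r)|\le\sum_{\mu=1}^m u_{x,\vec F_{\mu,j}}(\vec r)\le\sum_{\mu=1}^m \mathscr{M}_{\mathcal{R}}(\vec F_{\mu,j})(x)=:\Delta_j(x).
\]
Combined with the multilinear $L^p$ boundedness \eqref{1.2} and the assumed $L^{p_i}$ convergences $f_{i,j}\to f_i$, this forces $\|\Delta_j\|_{L^p(\mathbb R^2)}\to 0$ and hence $\mathscr{M}_{\mathcal{R}}(\vec f_j)\to\mathscr{M}_{\mathcal{R}}(\vec f)$ in $L^p$. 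Passing to a subsequence I may assume that $\Delta_j(x)\to 0$ and $\mathscr{M}_{\mathcal{R}}(\vec f_j)(x)\to\mathscr{M}_{\mathcal{R}}(\vec f)(x)$ almost everywhere on $R_{\vec\Lambda}(\vec 0)$.

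Next, suppose \eqref{2.1} fails for some fixed $i\in\{1,2,3\}$, some $\lambda>0$, and some $\varepsilon>0$; after a further subsequence I may assume $|E_j|\ge\varepsilon$, where $E_j:=\{x\in R_{\vec\Lambda}(\vec 0):\mathcal{B}_i(\vec f_j)(x)\not\subseteq\mathcal{B}_i(\vec f)(x)_{(\lambda)}\}$. Reverse Fatou on the bounded set $R_{\vec\Lambda}(\vec 0)$ gives $|\limsup_j E_j|\ge\varepsilon$, and I fix a generic $x\in\limsup_j E_j$ at which the above a.e.\ convergences hold and Lemmas \ref{l2.1}--\ref{l2.2} apply. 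For infinitely many $j$ (depending on $x$) I pick $\vec r_j\in\mathcal{B}_i(\vec f_j)(x)$ with $d(\vec r_j,\mathcal{B}_i(\vec f)(x))>\lambda$. The vanishing-at-infinity statement of Lemma \ref{l2.1} applied to $u_{x,\vec f}$, together with the uniform bound $u_{x,\vec f_j}(\vec r)\le u_{x,\vec f}(\vec r)+\Delta_j(x)$, produces $M(x)<\infty$ beyond which $u_{x,\vec f_j}(\vec r)<\tfrac12\mathscr{M}_{\mathcal{R}}(\vec f_j)(x)$ for all large $j$; since $\vec r_j$ realises $\mathscr{M}_{\mathcal{R}}(\vec f_j)(x)$ as a $\limsup$ of its approximating sequence, this forces $|\vec r_j|\le M(x)$ for large $j$, so along a further subsequence $\vec r_j\to\vec r_\infty$ with $d(\vec r_\infty,\mathcal{B}_i(\vec f)(x))\ge\lambda$.

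The contradiction then comes from showing $\vec r_\infty\in\mathcal{B}_i(\vec f)(x)$ via a diagonal argument. Using membership $\vec r_j\in\mathcal{B}_i(\vec f_j)(x)$ I select approximants $\vec r_{j,k(j)}$ carrying the positivity pattern prescribed by $i$ (all four components positive when $i=1$, only the first pair positive when $i=2$, only the second pair positive when $i=3$) and satisfying $|\vec r_{j,k(j)}-\vec r_j|<1/j$ together with $u_{x,\vec f_j}(\vec r_{j,k(j)})>\mathscr{M}_{\mathcal{R}}(\vec f_j)(x)-1/j$. Then $\vec r_{j,k(j)}\to\vec r_\infty$ and
\[
u_{x,\vec f}(\vec r_{j,k(j)})\ge u_{x,\vec f_j}(\vec r_{j,k(j)})-\Delta_j(x)\longrightarrow\mathscr{M}_{\mathcal{R}}(\vec f)(x),
\]
while the reverse inequality is automatic, so the defining $\limsup$ of $\mathcal{B}_i(\vec f)(x)$ is realised at $\vec r_\infty$ by approximants of the correct positivity pattern, giving $\vec r_\infty\in\mathcal{B}_i(\vec f)(x)$ and contradicting the distance bound. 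The main obstacle is the simultaneous bookkeeping of the three degeneracy classes: the decay-at-infinity input of Lemma \ref{l2.1} and the selection of the approximants $\vec r_{j,k(j)}$ must be carried out separately in each of the three coordinate strata (the full four-dimensional interior, the first two-dimensional slice, the second two-dimensional slice), and one must verify that the positivity pattern of $\vec r_j$ forced by membership in $\mathcal{B}_i(\vec f_j)(x)$ propagates to $\vec r_\infty$ so that the witnessing approximants for $\mathcal{B}_i(\vec f)(x)$ have the correct sign structure. A secondary measurability concern (choosing $\vec r_j$ as a function of $x$) is avoided by running the entire argument pointwise on $\limsup_j E_j$.
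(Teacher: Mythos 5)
Your route is genuinely different from the paper's. The paper first proves measurability of the exceptional sets, then establishes the pointwise gap \eqref{2.2} (for a.e.\ $x$ there is $\gamma(x)$ with $u_{x,\vec{f}}(\vec{r})<\mathscr{M}_{\mathcal{R}}(\vec{f})(x)-1/\gamma(x)$ whenever $d(\vec{r},\mathcal{B}_i(\vec{f})(x))>\lambda$), makes $\gamma$ uniform off a set of measure $<\epsilon$, and then covers the bad set by $A_{1,j}\cup A_{2,j}\cup E_0$ plus a null set, estimating $|A_{1,j}|$ and $|A_{2,j}|$ by Chebyshev together with \eqref{1.2}. You instead run a contradiction argument pointwise at a generic $x$, getting compactness of the maximizing radii from the decay at infinity of $u_{x,\vec{f}}$ and then a diagonal selection of positive approximants; the shared ingredients are the telescoping comparison $|u_{x,\vec{f}_j}-u_{x,\vec{f}}|\le\Delta_j$, the bound \eqref{1.2}, and Lemmas \ref{l2.1}--\ref{l2.2}. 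In effect your pointwise step is the same compactness argument the paper uses to prove \eqref{2.2}, but applied directly against $\vec{f}_j$; what it buys is that no uniform $\gamma$ or set decomposition is needed, at the price of an a.e.\ subsequence extraction and a set-theoretic $\limsup$ step.

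That last step is where the concrete gap lies. You never address measurability of $E_j=\{x\in R_{\vec{\Lambda}}(\vec{0}):\mathcal{B}_i(\vec{f}_j)(x)\nsubseteq\mathcal{B}_i(\vec{f})(x)_{(\lambda)}\}$, yet your reverse Fatou inequality $|\limsup_j E_j|\ge\limsup_j|E_j|$ genuinely requires it: for arbitrary (non-measurable) sets, continuity from above for outer measure fails — one can produce pairwise disjoint subsets of $R_{\vec{\Lambda}}(\vec{0})$ each of full outer measure, so $|E_j|^{*}\ge\varepsilon$ while $\limsup_j E_j=\emptyset$. The paper's first half page is exactly the measurability argument (rewriting the bad set through rational radii and the measurability of $x\mapsto u_{x,\vec{f}}(\vec{t})$ and $x\mapsto\mathscr{M}_{\mathcal{R}}(\vec{f})(x)$), and you must import it, or something equivalent, before your extraction of a generic point is legitimate; note the paper's own covering argument only needs monotonicity and subadditivity, so it is less sensitive to this issue than yours. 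Two smaller repairs: the quantifiers should be ordered ``negate \eqref{2.1}, choose $j_k$ with $|E_{j_k}|\ge\varepsilon$, then refine to get $\Delta_{j_k}(x)\to0$ a.e.''\ (as written you fix the a.e.\ subsequence first, and the negation of \eqref{2.1} need not pass to it); and ``the reverse inequality is automatic'' is only true for $i=1$ — at stratum points $(r_{1,1},r_{1,2},0,0)$ the quantity $u_{x,\vec{f}}$ is a one-dimensional average, and both the bound $u_{x,\vec{f}}\le\mathscr{M}_{\mathcal{R}}(\vec{f})(x)$ and the $\vec{r}$-uniform comparison $|u_{x,\vec{f}_j}-u_{x,\vec{f}}|\le\Delta_j(x)$ at such points hold only almost everywhere, via the a.e.\ continuity of $u_{x,\vec{f}}$ at degenerate radii supplied by Lemma \ref{l2.1} (and the degenerate case $\mathscr{M}_{\mathcal{R}}(\vec{f})(x)=0$ should be disposed of separately). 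With these insertions your plan does yield a correct alternative proof.
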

\begin{proof} Without loss of generality we may assume all
$f_{i,j}\geq0$ and $f_i\geq0$. We shall prove \eqref{2.1} 
for the case $i=1$ and the other cases are analogous. Let 
$\lambda>0$ and $\Lambda>0$. We first conclude that the set
$\{x\in\mathbb{R}^2;\ \mathcal{B}_1(\vec{f}_j)(x)\nsubseteq
\mathcal{B}_1(\vec{f})(x)_{(\lambda)}\}$ is measurable for
all $j\geq1$. To see this, let $E$ be the set of all points
which are not Lebesgue points of any of the functions
$f_{i,j}$ and $f_i$. Obviously, $|E|=0$. We denote by
$\mathbb{Q}_{+}$ the set of positive rationals. Fix $j\geq1$,
we can write
$$
\begin{array}{ll}
&\{x\in\mathbb{R}^2\setminus E:\mathcal{B}_1(\vec{f}_j)(x)
\nsubseteq\mathcal{B}_1(\vec{f})(x)_{\lambda}\}
\\
&=\displaystyle\bigcup\limits_{i=1}^\infty\bigcap\limits_{k=1}^\infty
\Big\{x\in\mathbb{R}^2:\exists \vec r\in\mathbb{R}_{+}^4\ {\rm s.t.}
\ d(\vec r,\mathcal{B}_1(\vec{f})(x))>\lambda+\frac{1}{i}\\
&\quad\ \displaystyle{\rm and}\ {\mathscr{M}}_{\mathcal{R}}(\vec{f}_j)(x)
<u_{x,\vec{f}_j}(\vec{r})+\frac{1}{k}\Big\}
\\
&=\displaystyle\bigcup\limits_{i=1}^\infty\bigcap\limits_{k=1}^\infty
\bigcup\limits_{\vec t\in\mathbb{Q}_{+}^4}
\Big(\Big\{x\in\mathbb{R}^2:d(\vec t,\mathcal{B}_1(\vec{f})(x))>\lambda+\frac{1}{i}\Big\}\bigcap
\\
&\quad\displaystyle\Big\{x\in\mathbb{R}^2:{\mathscr{M}}_{\mathcal{R}}
(\vec{f}_j)(x)<u_{x,\vec{f}_j}(\vec{t})+\frac{1}{k}\Big\}\Big).
\end{array}
$$
On the other hand, for any fixed $\vec{t}\in\mathbb{Q}_{+}^4$, 
we have
$$
\begin{array}{ll}
\quad\displaystyle\{x:d(\vec t,\mathcal{B}_1(\vec{f})(x))>\lambda\}
&=\displaystyle\bigcup\limits_{l=1}^\infty
\bigcap\limits_{\vec s\in\mathbb{Q}_{+}^4
\cap\{\vec s:|\vec s-\vec t|\leq\lambda\}}
\Big\{x\in\mathbb{R}^2: {\mathscr{M}}_{\mathcal{R}}(\vec{f})(x)
>u_{x,\vec{f}}(\vec{s})+\frac{1}{l}\Big\}.
\end{array}
$$
Therefore, we get the measurability of
$\{x\in\mathbb{R}^2;\ \mathcal{B}_1(\vec{f}_j)(x)\nsubseteq
\mathcal{B}_1(\vec{f})(x)_{(\lambda)}\}$ for any $j\geq1$.

Now, we claim that for a.e. $x\in R_{\vec{\Lambda}}(\vec{0})$,
there exists $\gamma(x)\in\mathbb{N}\setminus\{0\}$ such that
\begin{equation}\label{2.2}
u_{x,\vec{f}}(\vec{r})<{\mathscr{M}}_{\mathcal{R}}(\vec{f})(x)
-\frac{1}{\gamma(x)},
\ \ {\rm when}\ d(\vec{r},\mathcal{B}_1(\vec{f})(x))>\lambda.\end{equation}
Actually, if \eqref{2.2} does not hold, then for a.e.
$x\in R_{\vec{\Lambda}}(\vec{0})$, there exists a bounded
sequence of $\{\vec{r_k}\}_{k=1}^\infty$ such that
$$
\lim\limits_{k\rightarrow\infty}u_{x,\vec{f}}(\vec{r_k})
= {\mathscr{M}}_{\mathcal{R}}(\vec{f})(x)
\ \ {\rm and}\ d(\vec{r_k},\mathcal{B}_1(\vec{f})(x))>\lambda.
$$
Hence, we may choose a subsequence $\{\vec{s_k}\}_{k=1}^\infty$
of $\{\vec{r_k}\}_{k=1}^\infty$ such that $\vec{s_k}\rightarrow
\vec{r}$ as $k\rightarrow\infty$. It follows that $\vec{r}\in
\mathcal{B}_1(\vec{f})(x)$ and $d(\vec{r},\mathcal{B}_1
(\vec{f})(x))\geq\lambda$, which is a contradiction. Therefore,
\eqref{2.2} holds. Let
\begin{enumerate}
\item[{\text{}}]
$A_{1,j}:=\{x\in\mathbb{R}^2:| {\mathscr{M}}_{\mathcal{R}}(\vec{f}_j)(x)
- {\mathscr{M}}_{\mathcal{R}}(\vec{f})(x)|\geq(4\gamma)^{-1}\},$
\item[{\text{}}]
$A_{2,j}:=\{x\in\mathbb{R}^2: |u_{x,\vec{f}_j}(\vec{r})-u_{x,\vec{f}}(\vec{r})|
\geq(2\gamma)^{-1}
\ \ {\rm if }\ d(\vec{r},\mathcal{B}_1(\vec{f})(x))>\lambda\},$
\item[{\text{}}]
$A_{3,j}:=\{x\in\mathbb{R}^2: u_{x,\vec{f}_j}(\vec{r})
< {\mathscr{M}}_{\mathcal{R}}(\vec{f}_j)(x)-(4\gamma)^{-1},
\ \ {\rm if}\ d(\vec{r},\mathcal{B}_1(\vec{f})(x))>\lambda\}.$
\end{enumerate}
Given $\epsilon\in(0,1)$, we get from \eqref{2.2} that there 
exists $\gamma=\gamma(\Lambda,\lambda,\epsilon)\in\mathbb{N}
\setminus\{0\}$ and a measurable set $E_0$ with $|E_0|<\epsilon$ 
such that
\begin{equation}\label{2.3}
\begin{array}{ll}
R_{\vec{\Lambda}}(\vec{0})
&\subset \big\{x\in\mathbb{R}^2:u_{x,\vec{f}}(\vec{r})
< {\mathscr{M}}_{\mathcal{R}}(\vec{f})(x)-\gamma^{-1},
\ \ {\rm if}\ d(\vec{r},\mathcal{B}_1(\vec{f})(x))>\lambda\big\}\cup E_0
\\
&\subset A_{1,j}\cup A_{2,j}\cup A_{3,j}\cup E_0.
\end{array}\end{equation}

Let $\bar{A}$ be the set of all points $x$ such that $x$
is a Lebesgue point of all $f_j$. Note that
$|\mathbb{R}^{2}\setminus\bar{A}|=0$. One can easily check
that $A_{3,j}\cap\bar{A}\subset\{x\in\mathbb{R}^2:
\mathcal{B}_1(\vec{f}_j)(x)\subset\mathcal{B}_1(\vec{f})
(x)_{(\lambda)}\}$. This together with \eqref{2.3} yields 
that
\begin{equation}\label{2.4}
\{x\in R_{\vec{\Lambda}}(\vec{0});\mathcal{B}_1(\vec{f}_j)(x)\nsubseteq
\mathcal{B}_1(\vec{f})(x)_{(\lambda)}\}\subset A_{1,j}\cup A_{2,j}
\cup E_0\cup(\mathbb{R}^{2}\setminus\bar{A}).
\end{equation}
Since $f_{i,j}\rightarrow f_i$ in $L^{p_i}(\mathbb{R}^2)$
when $j\rightarrow\infty$, then there exists $N_i=N_i
(\epsilon,\gamma)\in\mathbb{N}$ such that
\begin{equation}\label{2.5}
\|f_{i,j}-f_i\|_{L^{p_i}(\mathbb{R}^2)}<\frac{\epsilon}{\gamma}\ \ {\rm and}\ \|f_{i,j}\|_{L^{p_i}(\mathbb{R}^2)}\leq \|f_i\|_{L^{p_i}(\mathbb{R}^2)}+1\ \ \forall j\geq N_i.
\end{equation}
Moreover, it holds that
\begin{equation}\label{2.6}
\aligned
{}&| {\mathscr{M}}_{\mathcal{R}}(\vec{f}_j)(x)- {\mathscr{M}}_{\mathcal{R}}(\vec{f})(x)|\\
&\leq\displaystyle\sup_{\substack{R \ni x \\ R\in\mathcal{R}}}\frac{1}{|R|^{m}}\Big|\prod\limits_{i=1}^m\int_{R}f_{i,j}(y)dy-\prod\limits_{i=1}^m\int_{R}f_{i}(y)dy\Big|\\
&\leq\displaystyle\sum\limits_{l=1}^m\sup_{\substack{R \ni x \\ R\in\mathcal{R}}}\frac{1}{|R|^{m}}\prod\limits_{\mu=1}^{l-1}\int_{R}f_{\mu}(y)dy\prod\limits_{\nu=l+1}^{m}\int_{R}f_{\nu,j}(y)dy
\int_{R}|f_{l,j}(y)-f_l(y)|dy\\
&\leq\displaystyle\sum\limits_{l=1}^m\mathscr{M}_{\mathcal{R}}(\vec{F}_j^l)(x),
\endaligned \end{equation}
where $\vec{F}_j^l=(f_1,\ldots,f_{l-1}, f_{l,j}-f_l,
f_{l+1,j},\ldots,f_{m,j})$.
Let $N_0=\max_{1\leq j\leq m}N_j$.Then, for any $j\geq N_0$, we get from \eqref{2.5} and \eqref{2.6} that
\begin{equation}\label{2.7}
\aligned
|A_{1,j}|&\leq(4\gamma)^p
\| {\mathscr{M}}_{\mathcal{R}}(\vec{f_j})
- {\mathscr{M}}_{\mathcal{R}}(\vec{f})\|_{L^p(\mathbb{R}^2)}^p
\\&\leq\displaystyle(4\gamma m)^p\sum\limits_{l=1}^m\prod\limits_{\mu=1}^{l-1}\|f_\mu\|_{L^{p_{\mu}}(\mathbb{R}^2)}^p
\prod\limits_{\nu=l+1}^{m}\|f_{\nu,j}\|_{L^{p_{\nu}}(\mathbb{R}^d)}^p\|f_{l,j}-f_l\|_{L^{p_l}(\mathbb{R}^2)}^p\\
&\lesssim_{m,p_1,\ldots,p_m,p}\epsilon.
\endaligned \end{equation}
Since $$|u_{x,\vec{f}_j}(\vec{r})-u_{x,\vec{f}}(\vec{r})|
\leq\displaystyle\sum\limits_{l=1}^m\mathscr{M}_{\mathcal{R}}(\vec{F}_j^l)(x).$$
Similarly, $|A_{2,j}|\lesssim_{m,p_1,\ldots,p_m,p}\epsilon$
for any $j\geq N_0$. This together with \eqref{2.4} and 
\eqref{2.7} yields \eqref{2.1}.
\end{proof}

For any fixed $h>0$ and $f_i\in L^{p_i}(\mathbb{R}^2)$
with $1<p_i<\infty$, define
$$(f_i)_{h}^l(x)=\frac{(f_i)_{\tau(h)}^{l}(x)-f_i(x)}{h}\ \ {\rm and}\ \ (f_i)_{\tau(h)}^{l}(x)=f_i(x+h\vec{e}_l),\ \ l=1,2.$$
It is well known that for $l=1,2$ and $1<p_i<\infty$,
$(f_i)_{\tau(h)}^{l}\rightarrow f_i$ in $L^{p_i}
(\mathbb{R}^2)$ when $h\rightarrow0$, and if
$f_i\in W^{1,p_i}(\mathbb{R}^2)$ we have
$(f_i)_{h}^l\rightarrow D_l f_i$ in $L^{p_i}(\mathbb{R}^2)$
when $h\rightarrow0$ (see \cite{GT}). Let $A,\,B$ be two
subsets of $\mathbb{R}^2$, we define the Hausdorff
distance of $A$ and $B$ by
$$\pi(A,B):=\inf\{\delta>0: A\subset B_{(\delta)}\ {\rm and}\ B\subset A_{(\delta)}\}.$$

Applying Lemma \ref{l2.3}, we can get the following 
corollary.
\begin{corollary}\label{c2.4}
Let $f_i\in L^{p_i}(\mathbb{R}^2)$ with $1<p_i<\infty$.
Then for all $\Lambda>0$, $\lambda>0$, $i=1,2,3$ and
$l=1,2$, we have
$$
\lim\limits_{h\rightarrow0}|\{x\in R_{\vec{\Lambda}}(\vec{0});\pi(\mathcal{B}_i(\vec{f})(x),\mathcal{B}_i(\vec{f})(x+h\vec{e}_l))>\lambda\}|=0.
$$
\end{corollary}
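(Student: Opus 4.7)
The plan is to deduce this from Lemma \ref{l2.3} by combining it with the spatial translation invariance built into $u_{x,\vec f}$. Set $\vec f_h:=((f_1)_{\tau(h)}^l,\ldots,(f_m)_{\tau(h)}^l)$; then a straightforward change of integration variable in each of the three pieces of the definition of $u_{x,\vec f_h}$ yields the key identity $u_{x,\vec f_h}(\vec r)=u_{x+h\vec e_l,\vec f}(\vec r)$, and consequently
$$\mathscr M_{\mathcal R}(\vec f_h)(x)=\mathscr M_{\mathcal R}(\vec f)(x+h\vec e_l),\qquad\mathcal B_i(\vec f_h)(x)=\mathcal B_i(\vec f)(x+h\vec e_l)$$
for $i=1,2,3$. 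Since $(f_i)_{\tau(h)}^l\to f_i$ in $L^{p_i}(\mathbb R^2)$ as $h\to 0$, Lemma \ref{l2.3} is applicable along any sequence $h_j\to 0$, and the usual subsequence characterisation of limits upgrades the sequential conclusion to the continuous-parameter statement.

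Applied directly, this gives
$$
\lim_{h\to 0}\big|\{x\in R_{\vec\Lambda}(\vec 0):\mathcal B_i(\vec f)(x+h\vec e_l)\nsubseteq\mathcal B_i(\vec f)(x)_{(\lambda)}\}\big|=0,
$$
which is one of the two inclusions implicit in the bound $\pi(\mathcal B_i(\vec f)(x),\mathcal B_i(\vec f)(x+h\vec e_l))\le\lambda$.

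For the reverse inclusion I apply Lemma \ref{l2.3} to $\vec f_{-h}\to\vec f$, but on the slightly enlarged box $R_{(\Lambda+1,\Lambda+1)}(\vec 0)$, obtaining
$$
\lim_{h\to 0}\big|\{x\in R_{(\Lambda+1,\Lambda+1)}(\vec 0):\mathcal B_i(\vec f)(x-h\vec e_l)\nsubseteq\mathcal B_i(\vec f)(x)_{(\lambda)}\}\big|=0.
$$
Changing variables $y=x-h\vec e_l$ and using that $y\in R_{\vec\Lambda}(\vec 0)$ with $|h|<1$ forces $y+h\vec e_l\in R_{(\Lambda+1,\Lambda+1)}(\vec 0)$, this rewrites as the missing inclusion
$$
\lim_{h\to 0}\big|\{y\in R_{\vec\Lambda}(\vec 0):\mathcal B_i(\vec f)(y)\nsubseteq\mathcal B_i(\vec f)(y+h\vec e_l)_{(\lambda)}\}\big|=0.
$$
Combining the two small sets and invoking the definition of the Hausdorff distance $\pi$ closes the argument. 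The only real subtlety is that Lemma \ref{l2.3} is asymmetric between the perturbed tuple and its limit, so one inclusion is a direct consequence while the other requires both the translation identity above and the mild enlargement of the reference box; everything else is routine bookkeeping.
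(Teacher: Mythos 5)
Your argument is correct and is essentially the paper's own proof: the translation identity $\mathcal{B}_i(\vec f_{\tau(h)}^l)(x)=\mathcal{B}_i(\vec f)(x+h\vec e_l)$ combined with Lemma \ref{l2.3} (applied to $\vec f_{\tau(h)}^l\to\vec f$ and $\vec f_{\tau(-h)}^l\to\vec f$), with the second inclusion handled by passing to the enlarged box $R_{(\Lambda+1,\Lambda+1)}(\vec 0)$ and translating, exactly as in \eqref{2.9}--\eqref{2.12}. The only cosmetic difference is that you make the sequential-to-continuous upgrade of Lemma \ref{l2.3} explicit, which the paper leaves implicit.
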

\begin{proof}
Fix $i\in\{1,2,3\}$. It suffices to show that
\begin{equation}\label{2.8}
\lim\limits_{h\rightarrow0}|\{x\in R_{\vec{\Lambda}}(\vec{0}):\mathcal{B}_i(\vec{f})(x)\nsubseteq\mathcal{B}_i(\vec{f})(x+h\vec{e}_l)_{(\lambda)}\ {\rm or}\ \mathcal{B}_i(\vec{f})(x+h\vec{e}_l)\nsubseteq\mathcal{B}_i(\vec{f})(x)_{(\lambda)}\}|=0.
\end{equation}
One can easily check that
$$\mathcal{B}_i(\vec{f})(x+h\vec{e}_l)=\mathcal{B}_i(\vec{f}_{\tau(h)}^l)(x)\ {\rm and}\ \mathcal{B}_i(\vec{f})(x)=\mathcal{B}_i(\vec{f}_{\tau(-h)}^l)(x+h\vec{e}_l).$$
Here $\vec{f}_{\tau(h)}^l=(f_1(x+h\vec{e}_l),\ldots,f_m
(x+h\vec{e}_l))$. It follows that
\begin{equation}\label{2.9}
\begin{array}{ll}{}
&\{x\in R_{\vec{\Lambda}}(\vec{0}):\mathcal{B}_i(\vec{f})(x)\nsubseteq\mathcal{B}_i(\vec{f})(x+h\vec{e}_l)_{(\lambda)}\}\\
&=\{x\in R_{\vec{\Lambda}}(\vec{0}):\mathcal{B}_i(\vec{f}_{\tau(-h)}^l)(x+h\vec{e}_l)\nsubseteq\mathcal{B}_i(\vec{f})(x+h\vec{e}_l)_{(\lambda)}\}\\
&\subset\{x\in R_{\vec{\Lambda+1}}(\vec{0}):\mathcal{B}_i(\vec{f}_{\tau(-h)}^l)(x)\nsubseteq\mathcal{B}_i(\vec{f})(x)_{(\lambda)}\}-h\vec{e}_l
\end{array}\end{equation}
where $\vec{\Lambda+1}=(\Lambda+1,\Lambda+1)$ and
$|h|\leq1$. Moreover,
\begin{equation}\label{2.10}
\{x\in R_{\vec{\Lambda}}(\vec{0}):\mathcal{B}_i(\vec{f})(x+he_l)\nsubseteq\mathcal{B}_i(\vec{f})(x)_{(\lambda)}\}=\{x\in R_{\vec{\Lambda}}(\vec{0}):\mathcal{B}_i(\vec{f}_{\tau(h)}^l)(x)_{(\lambda)}\nsubseteq\mathcal{B}_i(\vec{f})(x)_{(\lambda)}\}.\end{equation}
We note that $(f_i)_{\tau(h)}^{l}\rightarrow f_i$ in
$L^{p_i}(\mathbb{R}^2)$ when $h\rightarrow0$. By Lemma
\ref{l2.3}, it yields that
\begin{equation}\label{2.11}
\lim\limits_{h\rightarrow0}|\{x\in R_{\vec{\Lambda+1}}(\vec{0}):\mathcal{B}_i(\vec{f}_{\tau(-h)}^l)(x)\nsubseteq\mathcal{B}_i(\vec{f})(x)_{(\lambda)}\}|=0\end{equation}
and
\begin{equation}\label{2.12}
\lim\limits_{h\rightarrow0}|\{x\in R_{\vec{\Lambda}}(\vec{0})
:\mathcal{B}_i(\vec{f}_{\tau(h)}^l)(x)_{(\lambda)}\nsubseteq
\mathcal{B}_i(\vec{f})(x)_{(\lambda)}\}|=0.\end{equation}
Now, it is easy to see that \eqref{2.8} follows from 
\eqref{2.9}-\eqref{2.12}.
\end{proof}

We now state some formulas for the derivatives of the multilinear
strong maximal functions, which provide a foundation for our
analysis in the continuity part of Theorem \ref{thm1}.

\medskip
\begin{lemma}\label {l2.5} {\it Let $f_i\in W^{1,p_i}
(\mathbb{R}^2)$ with $1<p_i<\infty$. Then for any $l=1,2$
and almost every $(x_1,x_2)\in\mathbb{R}^2$, we have
\begin{enumerate}
\item[{\rm (i)}] 
For all $\vec r\in\mathcal{B}_1(\vec{f})(x_1,x_2)$
 with $\ r_{1,1}+r_{1,2}>0,\,r_{2,1}+r_{2,2}>0$, it holds that
\begin{equation}
\begin{array}{ll}\label{2.13}
{}&D_l {\mathscr{M}}_{\mathcal{R}}\vec{f}(x_1,x_2)
\\&
=\sum\limits_{\mu=1}^m\displaystyle
\frac{1}{(r_{1,1}+r_{1,2})^m(r_{2,1}+r_{2,2})^m}
\prod\limits_{j\neq \mu,1\le j \le m}^m
\int_{x_1-r_{1,1}}^{x_1+r_{1,2}}
\int_{x_2-r_{2,1}}^{x_2+r_{2,2}}|f_j(y_1,y_2)|dy_1dy_2
\\
&\quad \hspace{6.5cm}\times\displaystyle\int_{x_1-r_{1,1}}^{x_1+r_{1,2}}
\int_{x_2-r_{2,1}}^{x_2+r_{2,2}}D_l|f_{\mu} (y_1,y_2)|dy_1dy_2\end{array}
\end{equation}
\item[{\rm (ii)}]  
For all $ \vec r\in \mathcal{B}_1(\vec{f})(x_1,x_2)\cup
\mathcal{B}_2(\vec{f})(x_1,x_2)$ with $r_{1,1}+r_{1,2}>0,\,r_{2,1}=r_{2,2}=0$, 
we have
\begin{equation}\aligned\label{2.14}
{}&D_l {\mathscr{M}}_{\mathcal{R}}\vec{f}(x_1,x_2)
\\
&=\sum\limits_{\mu=1}^m\frac{1}{(r_{1,1}+r_{1,2})^m}
\prod\limits_{j\neq \mu,1\le j \le m}^m
\int_{x_1-r_{1,1}}^{x_1+r_{1,2}}|f_j(y_1,x_2)|dy_1
\int_{x_1-r_{1,1}}^{x_1+r_{1,2}}D_l|f_\mu(y_1,x_2)|dy_1
\endaligned  
\end{equation}
\item[{\rm (iii)}]  
For all $\vec r\in\mathcal{B}_1(\vec{f})(x_1,x_2)\cup
\mathcal{B}_3(\vec{f})(x_1,x_2)$ 
with $r_{1,1}=r_{1,2}=0,\, r_{2,1}+r_{2,2}>0$, it holds
\begin{equation}\aligned\label{2.15}
{}&D_l {\mathscr{M}}_{\mathcal{R}}\vec{f}(x_1,x_2)
\\
&=\sum\limits_{\mu=1}^m\frac{1}{(r_{2,1}+r_{2,2})^m}
\prod\limits_{j\neq \mu,1\le j \le m}^m
\int_{x_2-r_{2,1}}^{x_2+r_{2,2}}|f_j(x_1,y_2)|dy_2
\int_{x_2-r_{2,1}}^{x_2+r_{2,2}}D_l|f_\mu(x_1,y_2)|dy_2
\;\endaligned
\end{equation}
\item[{\rm (iv)}]   
If $ \vec{0}\in\mathcal{B}_i(\vec{f})(x_1,x_2)$ for $i=1,2,3$, then,
\begin{equation}\label{2.16}
D_l {\mathscr{M}}_{\mathcal{R}}\vec{f}(x_1,x_2)
=D_l|f|(x_1,x_2).
\end{equation}
\end{enumerate}}
\end{lemma}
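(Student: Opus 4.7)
The plan is to prove Lemma \ref{l2.5} via a Sobolev ``touching'' argument, comparing $\mathscr{M}_{\mathcal{R}}(\vec{f})$ with the frozen function $\phi_{\vec{r}}(x):=u_{x,\vec{f}}(\vec{r})$ obtained by fixing the rectangle shape $\vec{r}$. The key structural input is that $\mathscr{M}_{\mathcal{R}}(\vec{f})\in W^{1,p}(\mathbb{R}^2)$ and hence has weak partial derivatives a.e.; this membership is obtained in parallel with Theorem \ref{thm1} by a Kinnunen-type approximation argument using $\phi_{\vec{r}}$ over rational $\vec{r}$.

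First I would check that for each fixed $\vec{r}\in(\overline{\mathbb{R}}_{+})_{1,2}^4$ the function $\phi_{\vec{r}}$ lies in $W^{1,p}(\mathbb{R}^2)$, and that Leibniz's integral rule together with the product rule across the $m$ factors give a weak derivative $D_l\phi_{\vec{r}}(x)$ equal precisely to the right-hand side of \eqref{2.13} (after rewriting boundary values $|f_\mu(x_1+r_{1,2},y_2)|-|f_\mu(x_1-r_{1,1},y_2)|$ as $\int D_1|f_\mu|\,dy_1$ by the fundamental theorem of calculus). For degenerate $\vec{r}\in(\overline{\mathbb{R}}_{+})_1^4$ or $(\overline{\mathbb{R}}_{+})_2^4$, Fubini and the one-dimensional Leibniz rule on the surviving fiber yield \eqref{2.14} and \eqref{2.15} analogously, using that $f_j\in W^{1,p_j}$ restricts to a Sobolev function on a.e.\ fiber.

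Since $\phi_{\vec{r}}(x)\le\mathscr{M}_{\mathcal{R}}(\vec{f})(x)$ by construction, the standard Sobolev touching lemma (if $u,v\in W^{1,p}$ with $u\le v$, then $\nabla u=\nabla v$ a.e.\ on $\{u=v\}$) immediately yields $D_l\mathscr{M}_{\mathcal{R}}(\vec{f})(x)=D_l\phi_{\vec{r}}(x)$ for a.e.\ $x$ in the contact set $\{\phi_{\vec{r}}=\mathscr{M}_{\mathcal{R}}(\vec{f})\}$, which by parts (v)--(ix) of Lemma \ref{l2.2} contains $\{x:\vec{r}\in\mathcal{B}_i(\vec{f})(x)\}$ in the appropriate case. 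Statement (iv) is the specialization to $\vec{r}=\vec{0}$, where $\phi_{\vec{0}}=\prod_{j=1}^m|f_j|$ and the same touching argument closes the case.

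The main obstacle is upgrading ``for a.e.\ $x$ and each fixed $\vec{r}$'' to the asserted ``for a.e.\ $x$ and every $\vec{r}\in\mathcal{B}_i(\vec{f})(x)$.'' I would address this with a countable-dense argument: fix a countable dense subset $\{\vec{r}_n\}$ in each relevant cone, apply the touching argument to each to obtain a single exceptional null set off which the identity holds for all $\vec{r}_n$, and then for arbitrary $\vec{r}\in\mathcal{B}_i(\vec{f})(x)$ pass to the limit along a subsequence $\vec{r}_{n_k}\to\vec{r}$. Continuity of $u_{x,\vec{f}}$ in $\vec{r}$ from Lemma \ref{l2.1}, the Hausdorff stability of $\mathcal{B}_i$ provided by Corollary \ref{c2.4}, and Lebesgue differentiation of the one-sided boundary integrals on the right of \eqref{2.13}--\eqref{2.15} together force convergence of both sides, completing the proof.
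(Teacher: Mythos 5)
Your computation of $D_l\phi_{\vec r}$ for fixed $\vec r$ (via translation/Leibniz and the product rule, giving exactly the right-hand sides of \eqref{2.13}--\eqref{2.15}) and your use of the Sobolev touching lemma are fine as far as they go: for each \emph{fixed} $\vec r$ they yield $D_l\mathscr{M}_{\mathcal{R}}(\vec f)(x)=D_l\phi_{\vec r}(x)$ for a.e.\ $x$ in the contact set $\{\phi_{\vec r}=\mathscr{M}_{\mathcal{R}}(\vec f)\}$, with an exceptional null set depending on $\vec r$. The genuine gap is in the final quantifier upgrade. The lemma asserts the identity, for a.e.\ $x$, for \emph{every} $\vec r\in\mathcal{B}_i(\vec f)(x)$, and $\mathcal{B}_i(\vec f)(x)$ is an $x$-dependent closed set that may consist of a single irrational point. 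Your countable-dense argument only gives, off one null set, the identity at those rational radii $\vec r_n$ which happen to lie in (or touch) the maximizing set at $x$; but if $\vec r\in\mathcal{B}_i(\vec f)(x)$ and $\vec r_{n_k}\to\vec r$ with $\vec r_{n_k}$ rational, there is no reason any $\vec r_{n_k}$ is a maximizer at $x$, and at non-contact points the touching lemma says nothing. Continuity of $u_{x,\vec f^{\,i}_l}$ in $\vec r$ lets you pass to the limit on the right-hand side, but you have no identity at the approximating radii to pass to the limit \emph{from}. A Fubini-type reformulation does not help either, since for fixed $x$ the set $\mathcal{B}_i(\vec f)(x)$ is typically Lebesgue-null in the $\vec r$-variable.

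Note also that Corollary \ref{c2.4} does not provide the kind of stability you invoke: it is a statement (in measure) about how $\mathcal{B}_i(\vec f)(x)$ varies under the translation $x\mapsto x+h\vec e_l$, not about density or perturbation in the $\vec r$-variable. In the paper this corollary plays a completely different, and essential, role: one selects increments $h_{i,k}\to 0$ along which maximizing radii $\vec r_k\in\mathcal{B}_i(\vec f)(x+h_{i,k}\vec e_l)$ converge to the given $\vec r\in\mathcal{B}_i(\vec f)(x)$, and then squeezes the difference quotient of $\mathscr{M}_{\mathcal{R}}(\vec f)$ between $\frac{1}{h_{i,k}}\bigl(u_{x+h_{i,k}\vec e_l,\vec f}(\vec r_k)-u_{x,\vec f}(\vec r_k)\bigr)$ from above and $\frac{1}{h_{i,k}}\bigl(u_{x+h_{i,k}\vec e_l,\vec f}(\vec r)-u_{x,\vec f}(\vec r)\bigr)$ from below, evaluating both limits through $L^1$-convergence of the translated difference quotients of $f_\mu$, Lebesgue points, and maximal-function domination. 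That two-sided comparison along a well-chosen sequence of increments is precisely the mechanism that produces the ``for all $\vec r\in\mathcal{B}_i(\vec f)(x)$'' conclusion, and it is what your touching-lemma route is missing; without it (or a substitute argument handling arbitrary, possibly isolated, maximizing radii), the proof does not establish the statement as claimed, nor the version needed later in Step 3 of the proof of Theorem \ref{thm1}.
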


\begin{proof}
We may assume without loss of generality that all
$f_i\geq0$, since $|f_i|\in W^{1,p_i}(\mathbb{R}^2)$
if $f_i\in W^{1,p_i}(\mathbb{R}^2)$. Fix $\Lambda>0$
and $l\in\{1,2\}$. Invoking Corollary 2.4, for any
$i\in\{1,2,3\}$, we can choose a sequence
$\{s_{i,k}\}_{k=1}^\infty$, $s_{i,k}>0$ and
$s_{i,k}\rightarrow0$ such that $\lim_{k\rightarrow
\infty}\pi(\mathcal{B}_i(\vec{f})(x),\mathcal{B}_i
(\vec{f})(x+s_{i,k}\vec{e}_l))=0$ for a.e.
$x\in R_{\vec{\Lambda}}(\vec{0})$.
Then, Step 1 in the proof of Theorem 1.1 yields that
${\mathscr{M}}_{\mathcal{R}}(\vec{f})\in W^{1,p}
(\mathbb R^2)$  and
$$
\|({\mathscr{M}}_{\mathcal{R}}(\vec{f}))_{s_{i,k}}^l
-D_l {\mathscr{M}}_{\mathcal{R}}(\vec{f})
\|_{L^p(\mathbb{R}^2)}\rightarrow0\ \ {\rm as}\ k\rightarrow\infty.
$$
We also see that
$$\|(f_\mu)_{s_{i,k}}^l-D_lf_\mu\|_{L^{p}(\mathbb{R}^2)}\rightarrow0
\ \ {\rm as}\ k\rightarrow\infty,
\quad
\|(f_\mu)_{\tau(s_{i,k})}^l-f_\mu\|_{L^{p}(\mathbb{R}^2)}\rightarrow0
\ \ {\rm as}\ k\rightarrow\infty,$$
$$\|\mathcal{M}_{\mathcal{R}}((f_\mu)_{s_{i,k}}^l-D_lf_\mu)\|_{L^{p}(\mathbb{R}^2)}\rightarrow0
\ \ {\rm as}\ k\rightarrow\infty,
\quad
\|\mathcal{M}_{\mathcal{R}}((f_\mu)_{\tau(s_{i,k})}^l-f_\mu)\|_{L^{p}(\mathbb{R}^2)}\rightarrow0
\ \ {\rm as}\ k\rightarrow\infty,$$
$$\|\widetilde{\mathcal{M}}_j((f_\mu)_{s_{i,k}}^l-D_lf_\mu)\|_{L^{p}(\mathbb{R}^2)}
\rightarrow0\ \ {\rm as}\ k\rightarrow\infty \quad (j=1,2),$$
$$\|\widetilde{\mathcal{M}}_j((f_\mu)_{\tau(s_{i,k})}^l-f_\mu)\|_{L^{p}(\mathbb{R}^2)}
\rightarrow0\ \ {\rm as}\ k\rightarrow\infty \quad (j=1,2),$$
where $\widetilde{\mathcal{M}}_j$ is the one dimensional
uncentered Hardy-Littlewood maximal operator with respect
to the variable $x_j$ $(j=1,2)$. Furthermore, there exists
a subsequence $\{h_{i,k}\}_{k=1}^\infty$ of
$\{s_{i,k}\}_{k=1}^\infty$ and a measurable set
$A_{i,1}\subset R_{\vec{\Lambda}}(\vec{0})$ such that
$|R_{\vec{\Lambda}}(\vec{0})\backslash A_{i,1}|=0$ and
\begin{enumerate}
\item[{(i)}] $(f_\mu)_{h_{i,k}}^l(x)\rightarrow D_lf_\mu(x)$,
$(f_\mu)_{\tau(h_{i,k})}^l(x)\rightarrow f_\mu(x)$,
$\mathcal{M}_{\mathcal{R}}((f_\mu)_{h_{i,k}}^l-D_lf_\mu)
\rightarrow0$, $\mathcal{M}_{\mathcal{R}}((f_\mu)_{\tau
(h_{i,k})}^l-f_\mu)\rightarrow0$,
$\widetilde{\mathcal{M}}_j((f_\mu)_{h_{i,k}}^l-D_lf_\mu)
(x)\rightarrow0$ $(j=1,2)$, $\widetilde{\mathcal{M}}_j
((f_\mu)_{\tau(h_{i,k})}^l-f_\mu)\rightarrow0$ $(j=1,2)$
and $({\mathscr{M}}_{\mathcal{R}}(\vec{f}))_{h_{i,k}}^l(x)
\rightarrow D_l {\mathscr{M}}_{\mathcal{R}}(\vec{f})(x)$
when $k\rightarrow\infty$ for any $x\in A_{i,1}$;

\item[{(ii)}] $\lim_{k\rightarrow\infty}\pi(\mathcal{B}_i
(\vec{f}))(x),\mathcal{B}_i(\vec{f})(x+h_{i,k}\vec{e}_l))=0$
for any $x\in A_{i,1}$.
\end{enumerate}
Let
\begin{enumerate}
\item[{}] $A_{i,2}:=\bigcap\limits_{k=1}^\infty\{x\in\mathbb{R}^2:
 {\mathscr{M}}_{\mathcal{R}}(\vec{f})(x+h_{i,k}\vec{e}_l)\geq
 u_{x+h_{i,k}\vec{e}_l,\vec{f}}(0,0,0,0)\},
$
\item[{}] $
A_{i,3}:=\{x\in\mathbb{R}^2: {\mathscr{M}}_{\mathcal{R}}(\vec{f})(x)=u_{x,\vec{f}}(0,0,0,0)
\ {\rm if}\ (0,0,0,0)\in\mathcal{B}_i(\vec{f}(x)\},
$
\item[{}] $
A_{i,4}:=\bigcap\limits_{k=1}^\infty\{x\in\mathbb{R}^2:
 {\mathscr{M}}_{\mathcal{R}}(\vec{f})(x+h_{i,k}\vec{e}_l)=u_{x+h_{i,k}\vec{e}_l,\vec{f}}(0,0,0,0)$\\

\qquad {\rm if}\ $(0,0,0,0)\in\mathcal{B}_i(\vec{f})(x+h_{i,k}\vec{e}_l)\},
$
\item[{}]
$A_{i,5}:=\bigcap\limits_{k=1}^\infty\{x\in\mathbb{R}^2:
 {\mathscr{M}}_{\mathcal{R}}(\vec{f})(x+h_{i,k}\vec{e}_l)=u_{x+h_{i,k}\vec{e}_l,\vec{f}}(r_{1,1},r_{1,2},0,0)$\\

 \qquad{\rm if}\ $(r_{1,1},r_{1,2},0,0)\in\mathcal{B}_1(\vec{f})(x+h_{i,k}\vec{e}_l)\ \ {\rm and}\ r_{1,1}+r_{1,2}>0\},$
\item[{}] $\aligned
A_{i,6}:&=\{x\in\mathbb{R}^2: {\mathscr{M}}_{\mathcal{R}}(\vec{f})(x)
=u_{x,\vec{f}}(r_{1,1},r_{1,2},0,0)
\ {\rm if}\ (r_{1,1},r_{1,2},0,0)\in\mathcal{B}_1(\vec{f})(x)
\\&\quad \ {\rm and}\ r_{1,1}+r_{1,2}>0\},\endaligned
$
\item[{}]$
A_{i,7}:=\bigcap\limits_{k=1}^\infty\{x\in\mathbb{R}^2:
 {\mathscr{M}}_{\mathcal{R}}(\vec{f})(x+h_{i,k}\vec{e}_l)
=u_{x+h_{i,k}\vec{e}_l,\vec{f}}(0,0,r_{2,1},r_{2,2})$\\

\qquad{\rm if}\ $(0,0,r_{2,1},r_{2,2})\in\mathcal{B}_1(\vec{f})(x+h_{i,k}\vec{e}_l)\ {\rm and}\ r_{2,1}+r_{2,2}>0\},$
\item[{}] $\aligned
A_{i,8}:&=\{x\in\mathbb{R}^2: {\mathscr{M}}_{\mathcal{R}}(\vec{f})(x)
=u_{x,\vec{f}}(0,0,r_{2,1},r_{2,2})\ {\rm if}\ (0,0,r_{2,1},r_{2,2})\in\mathcal{B}_1(\vec{f})(x)
\\&\quad\ \ {\rm and}\ r_{2,1}+r_{2,2}>0\},\endaligned
$
\item[{}] $A_{i,9}:=\bigcap\limits_{k=1}^\infty\{x\in\mathbb{R}^2:
 {\mathscr{M}}_{\mathcal{R}}(\vec{f})(x+h_{i,k}\vec{e}_l)
=u_{x+h_{i,k}\vec{e}_l,\vec{f}}(\vec r) \ {\rm if}\ \vec r\in\mathcal{B}_2(\vec{f})(x+h_{i,k}\vec{e}_l)\};$
\item[{}] $
A_{i,10}:=\{x\in\mathbb{R}^2: {\mathscr{M}}_{\mathcal{R}}(\vec{f})(x)
=u_{x,\vec{f}}(\vec r)\ \ {\rm if}\ \vec r\in\mathcal{B}_2(\vec{f})(x)\};
$
\item[{}] $
A_{i,11}:=\bigcap\limits_{k=1}^\infty\{x\in\mathbb{R}^2:
 {\mathscr{M}}_{\mathcal{R}}(\vec{f})(x+h_{i,k}\vec{e}_l)=u_{x+h_{i,k}\vec{e}_l,\vec{f}}(\vec r)
\ \ {\rm if}\ \vec r\in\mathcal{B}_3(\vec{f})(x+h_{i,k}\vec{e}_l)\};
$
\item[{}] $
A_{i,12}:=\{x\in\mathbb{R}^2: {\mathscr{M}}_{\mathcal{R}}(\vec{f})(x)
=u_{x,\vec{f}}(\vec r)\ \ {\rm if}\ \vec r\in\mathcal{B}_3(f)(x)\}.$
\end{enumerate}
Let $A_{i}=\bigcap_{k=1}^{12}A_{i,k}$ and $A=A_1\cup A_2
\cup A_3$. Note that $|R_{\vec{\Lambda}}(\vec{0})\setminus A|=0$.
Let $x=(x_1,x_2)\in A$ be a Lebesgue point of all $f_\mu$ and
$D_lf_\mu$ and $\vec{r}=(r_{1,1},r_{1,2},r_{2,1},r_{2,2})\in
\mathcal{B}_i(\vec{f})(x)$. There exists $\vec{r}_{k}^i=(r_{1,1,
i,k},r_{1,2,i,k},r_{2,1,i,k},r_{2,2,i,k})\in\mathcal{B}_i(\vec{f})
(x+h_{i,k}\vec{e}_l)$ such that $\lim_{k\rightarrow\infty}
(r_{1,1,i,k},r_{1,2,i,k},r_{2,1,i,k},r_{2,2,i,k})=(r_{1,1},
r_{1,2}, r_{2,1},r_{2,2})$. Furthermore, we assume that $x_1$
is a Lebesgue point of $D_lf_\mu(\cdot,x_2)$ for $i=2$, $x_2$
is a Lebesgue point of $D_lf_\mu(x_1,\cdot)$ for $i=3$, and
$\|D_lf_\mu(x_1,\cdot)\|_{L^{p_\mu}(\mathbb R)},\,\|D_lf_\mu
(\cdot,x_2)\|_{L^{p_\mu}(\mathbb R)}<\infty$.

{\bf Case A} ($r_{1,1}+r_{1,2}>0$ and $r_{2,1}+r_{2,2}>0$). In
this case $\vec r\in \mathcal B_1(\vec f)(x)$ and this happens
when $x\in A_1$. Without loss of generality we may assume that
all $r_{1,1,1,k}>0$, $r_{1,2,1,k}>0$, $r_{2,1,1,k}>0$ and
$r_{2,2,1,k}>0$. Denote  $[x_1-r_{1,1,1,k}, x_1+r_{1,2,1,k}]
\times [x_2-r_{2,1,1,k}, x_2+r_{2,2,1,k}]$ by $R_k$ and
$dy_1dy_2=d\vec{y}$. Then, noting $\vec r_k\in \mathcal B_1
(\vec f)(x+h_{1,k}\vec{e}_l)$ and using Lemma \ref{l2.2} (ix),
we have
\begin{equation}\label{2.17}
\begin{array}{ll}
{}&D_l {\mathscr{M}}_{\mathcal{R}}(\vec{f})(x)
=\displaystyle\lim\limits_{k\rightarrow\infty}
\frac{1}{h_{1,k}}( {\mathscr{M}}_{\mathcal{R}}(\vec{f})(x+h_{1,k}\vec{e}_l)
-{\mathscr{M}}_{\mathcal{R}}(\vec{f})(x))
\\
&\leq\displaystyle\lim\limits_{k\rightarrow\infty}
\frac{1}{h_{1,k}}(u_{x+h_{1,k}\vec{e}_l,\vec{f}}(\vec{r}_k^1)-u_{x,\vec{f}}(\vec{r}_k^1))\\
&=\displaystyle\lim\limits_{k\rightarrow\infty}\sum\limits_{\mu=1}^m\frac{1}{(r_{1,1,1,k}+r_{1,2,1,k})^m(r_{2,1,1,k}+r_{2,2,1,k})^m}\iint_{R_k}(f_\mu)_{h_{1,k}}^l(y_1,y_2)
dy_1dy_2\\
&\quad\times\displaystyle\prod\limits_{\nu=1}^{\mu-1}\iint_{R_k}(f_{\nu})_{\tau(h_{1,k})}^l(y_1,y_2)
dy_1dy_2\prod\limits_{w=\mu+1}^m\iint_{R_k}f_w(y_1,y_2)dy_1dy_2\\
&=\displaystyle\sum\limits_{\mu=1}^m\frac{1}{(r_{1,1}+r_{1,2})^m(r_{2,1}+r_{2,2})^m}
\int_{x_1-r_{1,1}}^{x_1+r_{1,2}}\int_{x_2-r_{2,1}}^{x_2+r_{2,2}}
D_lf_\mu(y_1,y_2)d\vec{y}
\\
&\quad\times\displaystyle\prod\limits_{\nu=1}^{\mu-1}
\int_{x_1-r_{1,1}}^{x_1+r_{1,2}}\int_{x_2-r_{2,1}}^{x_2+r_{2,2}}
f_{\nu}(y_1,y_2)d\vec{y}\prod\limits_{w=\mu+1}^m\int_{x_1-r_{1,1}}^{x_1+r_{1,2}}
\int_{x_2-r_{2,1}}^{x_2+r_{2,2}}f_w(y_1,y_2)d\vec{y}.
\end{array}\end{equation}
Here, we used the fact that $\lim_{k\rightarrow\infty}\vec{r}_k^1
=\vec{r}$ and $(f_\mu)_{\tau(h_{1,k})}^{l}\chi_{R_k}\rightarrow
f_\mu\chi_{[x_1-r_{1,1},x_1+r_{1,2}]\times[x_2-r_{2,1},x_2+
r_{2,2}]}$ and $(f_\mu)_{h_{1,k}}^{l}\chi_{R_k}\rightarrow
D_lf_\mu\chi_{[x_1-r_{1,1},x_1+r_{1,2}]\times[x_2-r_{2,1},
x_2+r_{2,2}]}$ in $L^1(\mathbb{R}^2)$ as $k\rightarrow\infty$.
Then

\begin{equation}\label{2.18}
\aligned
D_l {\mathscr{M}}_{\mathcal{R}}\vec{f}(x)
&
\leq\sum\limits_{\mu=1}^m\frac{1}{(r_{1,1}+r_{1,2})^m(r_{2,1}+r_{2,2})^m}
\int_{x_1-r_{1,1}}^{x_1+r_{1,2}}\int_{x_2-r_{2,1}}^{x_2+r_{2,2}}
D_lf_\mu(y_1,y_2)dy_1dy_2
\\&\quad\times
\prod\limits_{\nu\neq\mu,1\leq\nu\leq m}\int_{x_1-r_{1,1}}^{x_1+r_{1,2}}
\int_{x_2-r_{2,1}}^{x_2+r_{2,2}}f_{\nu}((y_1,y_2))dy_1dy_2.
\endaligned
\end{equation}
On the other hand, using Lemma \ref{l2.2} (ix), we have
\begin{equation}\label{2.19}
\begin{array}{ll}
{}&D_l {\mathscr{M}}_{\mathcal{R}}(\vec{f})(x)
=\displaystyle\lim\limits_{k\rightarrow\infty}\frac{1}{h_{1,k}}
( {\mathscr{M}}_{\mathcal{R}}\vec{f}(x+h_{1,k}\vec{e}_l)
-{\mathscr{M}}_{\mathcal{R}}(\vec{f})(x))
\\
&\geq\displaystyle\lim\limits_{k\rightarrow\infty}
\frac{1}{h_{1,k}}(u_{x+h_{1,k}\vec{e}_l,\vec f}(\vec{r})-u_{x,\vec f}(\vec{r}))
\\
&=\displaystyle
\sum\limits_{\mu=1}^m\frac{1}{(r_{1,1}+r_{1,2})^m(r_{2,1}+r_{2,2})^m}
\int_{x_1-r_{1,1}}^{x_1+r_{1,2}}\int_{x_2-r_{2,1}}^{x_2+r_{2,2}}
D_lf_\mu(y_1,y_2)dy_1dy_2
\\
&\displaystyle\hspace{1cm}\times
\prod\limits_{1\le \nu\neq\mu\le m}\int_{x_1-r_{1,1}}^{x_1+r_{1,2}}
\int_{x_2-r_{2,1}}^{x_2+r_{2,2}}f_{\nu}((y_1,y_2))dy_1dy_2.
\end{array}\end{equation}
Combining \eqref{2.19} with \eqref{2.18} yields (\ref{2.13})
for a.e. $x\in R_{\vec{\Lambda}}(\vec{0})\cap A_1$.

{\bf Case B} ($r_{1,1}+r_{1,2}>0$ and $r_{2,1}=r_{2,2}=0$).
We consider the following two cases.

(i) $(r_{1,1}, r_{1,2},0,0)\in \mathcal B_2(\vec{f})(x)$. This
happens in the case $x\in A_2$. Without loss of generality we
may assume that all $r_{1,1,2,k}, r_{1,2,2,k}>0$. We notice
that $r_{2,1,2,k}=r_{2,2,2,k}=0$ for all $k\geq1$. Then, noting
$\vec r_k\in \mathcal B_2(\vec f)(x+h_{2,k}\vec{e}_l)$ and using
Lemma \ref{l2.2} (vii), we have
\begin{equation}\label{2.20}
\begin{array}{ll}
{}&D_l {\mathscr{M}}_{\mathcal{R}}(\vec{f})(x)
=\displaystyle\lim\limits_{k\rightarrow\infty}
\frac{1}{h_{2,k}}( {\mathscr{M}}_{\mathcal{R}}(\vec{f})(x+h_{2,k}\vec{e}_l)
-{\mathscr{M}}_{\mathcal{R}}(\vec{f})(x))
\\
&\leq\displaystyle\lim\limits_{k\rightarrow\infty}
\frac{1}{h_{2,k}}(u_{x+h_{2,k}\vec{e}_l,\vec{f}}(r_{1,1,2,k},r_{1,2,2,k},0,0)
-u_{x,\vec{f}}(r_{1,1,2,k},r_{1,2,2,k},0,0))
\\
&\leq\displaystyle\lim\limits_{k\rightarrow\infty}\frac{1}{h_{2,k}}
\sum\limits_{\mu=1}^m
\frac{1}{(r_{1,1,2,k}+r_{1,2,2,k})^m}\int_{x_1-r_{1,1,2,k}}^{x_1+r_{1,2,2,k}}
(f_\mu)_{h_{2,k}}^{l}(y_1,x_2)dy_1
\\
&
\quad \times\displaystyle\prod\limits_{\nu=1}^{\mu-1}
\int_{x_1-r_{1,1,2,k}}^{x_1+r_{1,2,2,k}}
(f_{\nu})_{\tau({h_{2,k}})}^l(y_1,x_2)
dy_1\prod\limits_{w=\mu+1}^m\int_{x_1-r_{1,1,2,k}}^{x_1+r_{1,2,2,k}}
f_{w}(y_1,x_2)
dy_1
\\
&\leq\displaystyle\sum\limits_{\mu=1}^m
\frac{1}{(r_{1,1}+r_{1,2})^m}\int_{x_1-r_{1,1}}^{x_1+r_{1,2}}
D_l f_\mu(y_1,x_2)dy_1
\prod\limits_{\nu\neq\mu,1\leq\nu\leq m}\int_{x_1-r_{1,1}}^{x_1+r_{1,2}}
f_{\nu}(y_1,x_2)dy_1.
\end{array}
\end{equation}
Here we used the fact that $\lim_{k\rightarrow\infty}r_{1,1,2,k}
=r_{1,1}$, $\lim_{k\rightarrow\infty}r_{1,2,2,k}=r_{1,2}$ and
$$
(f_\mu)_{h_{2,k}}^{l}(\cdot,x_2)\chi_{[x_1-r_{1,1,2,k},x_1+r_{1,2,2,k}]}
\rightarrow D_lf_\mu(\cdot,x_2)\chi_{[x_1-r_{1,1},x_1+r_{1,2}]}
\ {\rm in}\ L^1(\mathbb{R})\ {\rm as}\ k\rightarrow\infty.
$$
Further more, using Lemma \ref{l2.2} (vii), we have
\begin{equation}\label{2.21}
\begin{array}{ll}
&D_l {\mathscr{M}}_{\mathcal{R}}(\vec f)(x)\\
&\geq\displaystyle\lim\limits_{k\rightarrow\infty}
\frac{1}{h_{2,k}}(u_{x+h_{2,k}e_l,\vec{f}}(r_{1,1},r_{1,2},0,0)
-u_{x,\vec{f}}(r_{1,1},r_{1,2},0,0))
\\
&\geq\displaystyle\lim\limits_{k\rightarrow\infty}\sum\limits_{\mu=1}^m
\frac{1}{(r_{1,1,2,k}+r_{1,2,2,k})^m}\int_{x_1-r_{1,1,2,k}}^{x_1+r_{1,2,2,k}}
(f_\mu)_{h_{2,k}}^{l}(y_1,x_2)dy_1
\\&\hspace{0.5cm}\times\displaystyle
\prod\limits_{\nu=1}^{\mu-1}\int_{x_1-r_{1,1,2,k}}^{x_1+r_{1,2,2,k}}
(f_{\nu})_{\tau({h_{2,k}})}^l((y_1,x_2))
dy_1\prod\limits_{w=\mu+1}^m\int_{x_1-r_{1,1,2,k}}^{x_1+r_{1,2,2,k}}
f_{w}((y_1,x_2))dy_1
\end{array}\end{equation}
\begin{equation*}
\begin{array}{ll}
&\geq\displaystyle\sum\limits_{\mu=1}^m
\frac{1}{(r_{1,1}+r_{1,2})^m}\int_{x_1-r_{1,1}}^{x_1+r_{1,2}}
D_l f_\mu(y_1,x_2)dy_1
\prod\limits_{\nu\neq\mu,1\leq\nu\leq m}\int_{x_1-r_{1,1}}^{x_1+r_{1,2}}
f_{\nu}(y_1,x_2)dy_1.
\end{array}\end{equation*}
\eqref{2.21} together with \eqref{2.20} yields \eqref{2.14} 
for a.e. $x\in R_{\vec{\Lambda}}(\vec{0})\cap A_2$.

(ii) $(r_{1,1},r_{1,2},0,0)\in \mathcal B_1(\vec{f})(x)$. 
This happens in the case $x\in A_1$. Assume that
$r_{1,1,1,k},r_{1,2,1,k}>0$. As in the case A, noting 
$x\in A_1\subset A_{1,5}$, we have
\begin{equation}\label{2.22}
\aligned
D_l {\mathscr{M}}_{\mathcal{R}}(\vec{f})(x)
&\leq \displaystyle\lim\limits_{k\rightarrow\infty}
\sum_{\mu=1}^m
\frac{1}{(r_{1,1,1,k}+r_{1,2,1,k})^m}
\int_{x-r_{1,1,1,k}}^{x+r_{1,2,1,k}}
(f_{\mu})_{h_{1,k}}^l(y_1,x_2)dy_1
\\
&\quad \times
\prod_{\nu=1}^{\mu-1}\int_{x-r_{1,1,1,k}}^{x+r_{1,2,1,k}}
(f_{\nu})_{\tau(h_{1,k)}}^l(y_1,x_2)dy_1
\prod_{w=\mu+1}^{m}\int_{x-r_{1,1,1,k}}^{x+r_{1,2,1,k}}f_{w}(y_1,x_2)dy_1.
\endaligned
\end{equation}
We claim that the limits of the right side will tend to
$$
\aligned
\sum_{\mu=1}^m\frac{1}{(r_{1,1}+r_{1,2})^m}
\int_{x_1-r_{1,1}}^{x_1+r_{1,2}}D_lf_{\mu}(y_1,x_2)dy_1
\prod_{\nu\neq\mu,1\leq\nu\leq m}\int_{x_1-r_{1,1}}^{x_1+r_{1,2}}
f_{\nu}(y_1,x_2)dy_1.
\endaligned
$$
To see this, we only consider the limit of the following parts,
since the same reasoning applies to the other terms.
$$
\frac{1}{r_{1,1,1,k}+r_{1,2,1,k}}
\int_{x-r_{1,1,1,k}}^{x+r_{1,2,1,k}}(f_{\mu})_{h_{1,k}}^l(y_1,x_2)dy_1.
$$
Now, we know from the property (i) for $x\in A_1$ that
\begin{equation}\label{2.23}
\begin{array}{ll}\aligned
{}&\displaystyle\lim\limits_{k\rightarrow\infty}\biggl|\displaystyle
\frac{1}{r_{1,1,1,k}+r_{1,2,1,k}}
\int_{x-r_{1,1,1,k}}^{x+r_{1,2,1,k}}
(f_\mu)_{h_{1,k}}^l(y_1,x_2)-D_l f_\mu(y_1,x_2))dy_1\biggr|
\\
&\le \lim\limits_{k\rightarrow\infty}\widetilde{\mathcal{M}}_1((f_\mu)_{h_{1,k}}^l-D_lf_\mu)(x_1,x_2)\\
&\quad+\displaystyle\lim\limits_{k\rightarrow\infty}\biggl|\displaystyle
\frac{1}{r_{1,1,1,k}+r_{1,2,1,k}}
\int_{x-r_{1,1,1,k}}^{x+r_{1,2,1,k}}D_lf_\mu(y_1,x_2)-D_l f_\mu(y_1,x_2))dy_1\biggr|\\
&=0.
\endaligned\end{array}
\end{equation}
We see moreover that
\begin{equation}\label{2.24}
\begin{array}{ll}
{}&\displaystyle\lim\limits_{k\rightarrow\infty}\biggl|\displaystyle
\Bigl(\dfrac{1}{r_{1,1,1,k}+r_{1,2,1,k}}
-\dfrac{1}{r_{1,1}+r_{1,2}}\Bigr)
\displaystyle\int_{x_1-r_{1,1,1,k}}^{x_1+r_{1,2,1,k}}
D_lf_\mu(y_1,x_2)dy_1\biggr|
\\
&\le \displaystyle\lim\limits_{k\rightarrow\infty}
(r_{1,1,1,k}+r_{1,2,1,k})\Bigl|\dfrac{1}{r_{1,1,1,k}+r_{1,2,1,k}}
-\dfrac{1}{r_{1,1}+r_{1,2}}\Bigr|\widetilde{\mathcal{M}}_1(D_l f_\mu)(x_1,x_2)
=0.
\end{array}
\end{equation}
Noting that $\|D_\ell f_\mu)(\cdot,x_2)\|_{L^p(\mathbb R))}<\infty$, we get
\begin{equation}\label{2.25}
\begin{array}{ll}
{}&\displaystyle\lim\limits_{k\rightarrow\infty}\biggl|
\dfrac{1}{r_{1,1}+r_{1,2}}
\int_{x_1-r_{1,1,1,k}}^{x_1+r_{1,2,1,k}}D_lf_\mu(y_1,x_2)dy_1 -
\dfrac{1}{r_{1,1}+r_{1,2}}\int_{x_1-r_{1,1}}^{x_1+r_{1,2}}
D_lf_\mu(y_1,x_2)dy_1\biggr|
\\
&\le
\displaystyle\lim\limits_{k\rightarrow\infty}
\dfrac{(|r_{1,1,1,k}-r_{1,1}|+|r_{1,2,1,k}-r_{1,2}|)^{1/p'}}{r_{1,1}+r_{1,2}}
\\
&\quad\times\biggl(\displaystyle
\int_{x_1-\max\{r_{1,1},r_{1,1,1,k}\}}^{x_1-\min\{r_{1,1},r_{1,1,1,k}\}}
+ \int_{x_1+\min\{r_{1,2},r_{1,2,1,k}\}}^{x_1+\max\{r_{1,2},r_{1,2,1,k}\}}|D_lf_\mu(y_1,x_2))|^pdy_1
\biggr)^{1/p}
\\
&\le C\displaystyle\lim\limits_{k\rightarrow\infty}
\dfrac{(|r_{1,1,1,k}-r_{1,1}|+|r_{1,2,1,k}-r_{1,2}|)^{1/p'}}{r_{1,1}+r_{1,2}}
\|D_lf_\mu(\cdot,x_2)\|_{L^p(\mathbb R))}=0.
\end{array}
\end{equation}
From \eqref{2.22} to \eqref{2.25} , it follows that

\begin{equation}\label{2.26}\aligned
&\displaystyle\lim\limits_{k\rightarrow\infty}
\frac{1}{r_{1,1,1,k}+r_{1,2,1,k}}
\int_{x_1-r_{1,1,1,k}}^{x_1+r_{1,2,1,k}}
(f_\mu)_{h_{1,k}}^l(y_1,x_2)dy_1
= \displaystyle
\frac{1}{r_{1,1}+r_{1,2}}\int_{x_1-r_{1,1}}^{x_1+r_{1,2}}
D_lf_\mu(y_1,x_2)dy_1,
\endaligned
\end{equation}
and hence we verified the claim.

On the other hand, noting $x\in A_1\subset A_{1,6}$, by 
the same reasoning as in the case A, we get
\begin{equation}\label{2.27}
\aligned
D_l {\mathscr{M}}_{\mathcal{R}}(\vec{f})(x)
&\ge\displaystyle\sum_{\mu=1}^m\frac{1}{(r_{1,1}+r_{1,2})^m}
\int_{x_1-r_{1,1,}}^{x_1+r_{1,2}}
D_lf_{\mu}(y_1,x_2)dy_1\prod_{\nu=1}^{\mu-1}\int_{x_1-r_{1,1}}^{x_1+r_{1,2}}
f_{\nu}(y_1,x_2)dy_1
&\\
&\quad\times\prod_{w=\mu+1}^{m}\int_{x_1-r_{1,1}}^{x_1+r_{1,2}}
f_{w}(y_1,x_2)dy_1
\endaligned
\end{equation}
The above claim and \eqref{2.27} yield \eqref{2.14} for a.e.
$x\in R_{\vec{\Lambda}}(\vec{0})\cap A_1$.

{\bf Case C} ($r_{1,1}=r_{1,2}=0$ and $r_{2,1}+r_{2,2}>0$).
Similar argument as in Case B gives \eqref{2.15} for a.e.
$x\in R_{\vec{\Lambda}}(\vec{0})\cap (A_1\cup A_2)$.

{\bf Case D} ($\vec r=(0,0,0,0)$). We consider the following
three cases:

(i) Assume that $(0,0,0,0)\in\mathcal{B}_2(\vec{f})(x)$. Then
$x\in A_2$. The lower bound of $D_l {\mathscr{M}}_{\mathcal{R}}
(\vec{f})(x)$ follows from
\begin{equation}\label{2.28}
\aligned
&D_l {\mathscr{M}}_{\mathcal{R}}(\vec{f})(x)\\
&=\lim\limits_{k\rightarrow\infty}\frac{1}{h_{2,k}}
({\mathscr{M}}_{\mathcal{R}}(\vec{f})(x+h_{2,k}\vec{e}_l)
- {\mathscr{M}}_{\mathcal{R}}(\vec{f})(x))
\\&\geq\displaystyle\lim\limits_{k\rightarrow\infty}\frac{1}{h_{2,k}}\Big(\prod\limits_{\mu=1}^mf_\mu(x+h_{2,k}\vec{e}_l)
-\prod\limits_{\mu=1}^mf_\mu(x)\Big)\\
&\geq\displaystyle\sum\limits_{\mu=1}^m\lim\limits_{k\rightarrow\infty}\frac{1}{h_{2,k}}(f_\mu(x+h_{2,k}\vec{e}_l)-f_\mu(x))
\Big(\prod\limits_{\nu=1}^{\mu-1}f_\nu(x)\Big)\Big(\prod\limits_{j=\mu+1}^mf_j(x+h_{2,k}\vec{e}_l)\Big)\\
&\geq\displaystyle\sum\limits_{\mu=1}^mD_lf_\mu(x)\Big(\prod\limits_{i\neq\mu,1\leq i\leq m}f_i(x)\Big).
\endaligned\end{equation}
To get the upper bound of $D_l\mathscr{M}_{\mathcal{R}}
(\vec{f})(x)$, note that $\lim_{k\rightarrow\infty}r_{1,1,
2,k}=0$, $\lim_{k\rightarrow\infty}r_{1,2,2,k}=0$ and
$r_{2,1,2,k}=r_{2,2,2,k}=0$ for all $k\geq1$. If $r_{1,1,
k}+r_{1,2,k}=0$ for infinitely many $k$, then by Lemma 
\ref{l2.2} (iv). one obtains that 
\begin{equation}\label{2.29}
\aligned
D_l {\mathscr{M}}_{\mathcal{R}}(\vec{f})(x)
&=\lim\limits_{k\rightarrow\infty}\frac{1}{h_{2,k}}
({\mathscr{M}}_{\mathcal{R}}(\vec{f})(x+h_{2,k}\vec{e}_l)
-{\mathscr{M}}_{\mathcal{R}}(\vec{f})(x))
\\&\leq\displaystyle\lim\limits_{k\rightarrow\infty}\frac{1}{h_{2,k}}\Big(\prod\limits_{\mu=1}^mf_\mu(x+h_{2,k}\vec{e}_l)
-\prod\limits_{\mu=1}^mf_\mu(x)\Big)\\
&\leq\displaystyle\sum\limits_{\mu=1}^mD_lf_\mu(x)\Big(\prod\limits_{\nu\neq\mu,1\leq\nu\leq m}f_\nu(x)\Big)
.\endaligned\end{equation}
If there exists $k_0\in\mathbb{N}$ such that $r_{1,1,2,k}
+r_{1,2,2,k}>0$ when $k\geq k_0$. Then \eqref{2.20} gives 
that
\begin{equation*}
\begin{array}{ll}
D_l {\mathscr{M}}_{\mathcal{R}}(\vec{f})(x)\leq\displaystyle\sum\limits_{\mu=1}^m\lim\limits_{k\rightarrow\infty}\frac{1}{r_{1,1,2,k}+r_{1,2,2,k}}\int_{x_1-r_{1,1,2,k}}^{x_1+r_{1,2,2,k}}(f_\mu)_{h_{2,k}}^l(y_1,x_2)dy_1\displaystyle
\Big(\prod\limits_{\nu=1}^{\mu-1}\frac{1}{r_{1,1,2,k}+r_{1,2,2,k}}\end{array}\end{equation*}\begin{equation}\label{2.30}\begin{array}{ll}
{}&\quad\times\displaystyle\int_{x_1-r_{1,2,2,k}}^{x_1+r_{1,2,2,k}}f_\nu(y_1,x_2)dy_1\Big)
\Big(\prod\limits_{j=\nu+1}^{m}\frac{1}{r_{1,1,2,k}+r_{1,2,2,k}}\int_{x_1-r_{1,1,2,k}}^{x_1+r_{1,2,2,k}}(f_j)_{\tau(h_{2,k})}^l(y_1,x_2)dy_1\Big).
\end{array}\end{equation}
Since $x_1$ is a Lebesgue point for $D_lf_\mu(\cdot,x_2)$,
we have
\begin{equation}\label{2.31}
\begin{array}{ll}
{}&\displaystyle\lim\limits_{k\rightarrow\infty}\Big|\frac{1}{r_{1,1,2,k}+r_{1,2,2,k}}\int_{x_1-r_{1,1,2,k}}^{x_1+r_{1,2,2,k}}(f_\mu)_{h_{2,k}}^l(y_1,x_2)dy_1-D_lf_\mu(x_1,x_2)\Big|\\
&\leq\displaystyle\lim\limits_{k\rightarrow\infty}\frac{1}{r_{1,1,2,k}+r_{1,2,2,k}}\int_{x_1-r_{1,1,2,k}}^{x_1+r_{1,2,2,k}}|(f_\mu)_{h_{2,k}}^l(y_1,x_2)-D_lf_\mu(y_1,x_2))|dy_1\\
&\leq\displaystyle\lim\limits_{k\rightarrow\infty}\widetilde{\mathcal{M}}_1((f_\mu)_{h_{2,k}}^l-D_lf_\mu)(x)\\
&\quad+\displaystyle\lim\limits_{k\rightarrow\infty}\frac{1}{r_{1,1,2,k}+r_{1,2,2,k}}\int_{x_1-r_{1,1,2,k}}^{x_1+r_{1,2,2,k}}|D_lf_\mu(y_1,x_2)-D_lf_\mu(y_1,x_2))|dy_1=0.
\end{array}\end{equation}
Similarly, it holds that $\lim_{k\rightarrow\infty}
\frac{1}{r_{1,1,2,k}+r_{1,2,2,k}}\int_{x_1-r_{1,1,2,k}}^{x_1
+r_{1,2,2,k}}(f_\mu)_{\tau(h_{2,k})}^l(y_1,x_2)dy_1=f_\mu
(x_1,x_2)$.

We get from \eqref{2.30} and \eqref{2.31} that
\begin{equation}\label{2.32}
\begin{array}{ll}
D_l {\mathscr{M}}_{\mathcal{R}}(\vec{f})(x)\leq \sum\limits_{\mu=1}^m D_lf_\mu(x_1,x_2)\Big(\prod\limits_{1\leq \nu\neq\mu\leq m}f_{\nu}(x_1,x_2)\Big)\end{array}\end{equation}
\eqref{2.32} together with \eqref{2.28}-\eqref{2.29} yields 
\eqref{2.16} in the case $\vec{0}\in\mathcal{B}_2(\vec{f})
(x)$ for a.e. $x\in R_{\vec{\Lambda}}(\vec{0})$.

(ii) Assume that $(0,0,0,0)\in\mathcal{B}_3(\vec{f})(x)$. We 
can get \eqref{2.16} for almost $x\in R_{\vec{\Lambda}}
(\vec{0})$ similarly.

(iii) Assume that $(0,0,0,0)\in\mathcal{B}_1(\vec{f})(x)$. In the
case $x\in A_1$. Note that
\begin{equation}\label{2.33}
\begin{array}{ll}
D_l {\mathscr{M}}_{\mathcal{R}}(\vec{f})(x)&=\displaystyle\lim\limits_{k\rightarrow\infty}\frac{1}{h_{1,k}}( {\mathscr{M}}_{\mathcal{R}}(\vec{f})(x+h_{1,k}\vec{e}_l)
-{\mathscr{M}}_{\mathcal{R}}(\vec{f})(x))\\
&\geq\displaystyle\sum\limits_{\mu=1}^mD_lf_\mu(x)\Big(\prod\limits_{\nu\neq\mu,1\leq\nu\leq m}f_{\nu}(x)\Big).
\end{array}\end{equation}
Below we estimate the upper bound of
$D_l{\mathscr{M}}_{\mathcal{R}}(\vec{f})(x)$. We consider
the following four cases:

{\rm (a)} If $(r_{1,1,k},r_{1,2,k},r_{2,1,k},r_{2,2,k})=
(0,0,0,0)$ for infinitely many $k$, then
$$\aligned D_l {\mathscr{M}}_{\mathcal{R}}(\vec{f})(x)&=\lim\limits_{k\rightarrow\infty}\frac{1}{h_{1,k}}({\mathscr{M}}_{\mathcal{R}}(\vec{f})(x+h_{1,k}\vec{e}_l)
-{\mathscr{M}}_{\mathcal{R}}(\vec{f})(x))\\
&\leq\displaystyle\sum\limits_{\mu=1}^m\lim\limits_{k\rightarrow\infty}\frac{1}{h_{1,k}}(f_\mu(x+h_{1,k}\vec{e}_l)-f_\mu(x))
\Big(\prod\limits_{\nu=1}^{\mu-1}f_\nu(x)\Big)\Big(\prod\limits_{j=\mu+1}^{m}f_j(x+h_{1,k}\vec{e}_l)\Big)\\
&\leq\displaystyle\sum\limits_{\mu=1}^mD_lf_\mu(x_1,x_2)\Big(\prod\limits_{i\neq\mu,1\leq i\leq m}f_i(x_1,x_2)\Big)
.\endaligned$$
This leads to the desired results.

{\rm (b)} Denote $[x_1-r_{1,1,1,k},x_1+r_{1,2,1,k}]\times
[x_2-r_{2,1,1,k}, x_2+r_{2,2,2,k}]$ by $R_k$. If there exists
$k_0\in\mathbb{N}$ such that $r_{1,1,1,k}+r_{1,2,1,k}>0$ and
$r_{2,1,1,k}+r_{2,2,1,k}>0$ when $k\geq k_0$.  Then 
\eqref{2.17} gives that
\begin{equation}\label{2.34}
\aligned
{}&D_l {\mathscr{M}}_{\mathcal{R}}(\vec{f})(x)
\leq\displaystyle\sum\limits_{\mu=1}^m\lim\limits_{k\rightarrow\infty}\frac{1}{(r_{1,1,1,k}+r_{1,2,1,k})(r_{2,1,1,k}+r_{2,2,1,k})}\iint_{R_k}(f_\mu)_{h_{1,k}}^l(y_1,y_2)dy_1dy_2\\
&\quad\times\displaystyle\Big(\prod\limits_{\nu=1}^{\mu-1}\frac{1}{(r_{1,1,1,k}+r_{1,2,1,k})(r_{2,1,1,k}+r_{2,2,1,k})}\iint_{R_k}f_\nu(y_1,y_2)dy_1dy_2\Big)\endaligned\end{equation}
\begin{equation*}
\aligned\quad\times\displaystyle\Big(\prod\limits_{j=\mu+1}^{m}\frac{1}{(r_{1,1,1,k}+r_{1,2,1,k})(r_{2,1,1,k}+r_{2,2,1,k})}\iint_{R_k}(f_j)_{\tau(h_{1,k})}^l(y_1,y_2)dy_1dy_2.
\endaligned\end{equation*}
Since $(x_1,x_2)$ is a Lebesgue point for $D_lf_\mu$, then
\begin{equation}\label{2.35}
\aligned
{}&\displaystyle\lim\limits_{k\rightarrow\infty}\Big|\frac{1}{(r_{1,1,1,k}+r_{1,2,1,k})(r_{2,1,1,k}+r_{2,2,1,k})}\iint_{R_k}(f_\mu)_{h_{1,k}}^l(y_1,y_2)dy_1dy_2-D_lf_\mu(x_1,x_2)\Big|\\
&\leq\lim\limits_{k\rightarrow\infty}\mathcal{M}_{\mathcal{R}}((f_\mu)_{h_{1,k}}^l-D_lf_\mu)(x_1,x_2)\\
&\quad+\displaystyle\lim\limits_{k\rightarrow\infty}\Big|\frac{1}{(r_{1,1,1,k}+r_{1,2,1,k})(r_{2,1,1,k}+r_{2,2,1,k})}\iint_{R_k}D_lf_\mu(y_1,y_2)-D_lf_\mu(x_1,x_2)dy_1dy_2\Big|\\
&=0.
\endaligned\end{equation}
Similarly, we have
\begin{equation}\label{2.36}
\lim\limits_{k\rightarrow\infty}\Big|\frac{1}{(r_{1,1,1,k}+r_{1,1,2,k})(r_{1,2,1,k}+r_{1,2,2,k})}\iint_{R_k}(f_\mu)_{\tau(h_{1,k})}^l(y_1,y_2)dy_1dy_2-f_\mu(x_1,x_2)\Big|=0.
\end{equation}
\eqref{2.34} together with \eqref{2.35}-\eqref{2.36} yields 
the desired estimate.

{\rm (c)} If there exists $k_0\in\mathbb{N}$ such that
$r_{1,1,1,k}+r_{1,2,1,k}>0$ when $k\geq k_0$ and $r_{2,1,1,k}
=r_{2,2,1,k}=0$ for infinitely many $k$. Then we may have
$$D_l{\mathscr{M}}_{\mathcal{R}}(\vec{f})(x)\leq\sum\limits_{\mu=1}^mD_lf_\mu(x_1,x_2)\Big(\prod\limits_{\nu\neq\mu,1\leq\nu\leq m}f_{\nu}(x_1,x_2)\Big).$$
This shows the desired upper bounds.

{\rm (d)} If there exists $k_0\in\mathbb{N}$ such that
$r_{2,1,1,k}+r_{2,2,1,k}>0$ when $k\geq k_0$ and $r_{1,1,1,k}
=r_{1,2,1,k}=0$ for infinitely many $k$, we can get the
upper bounds by the arguments similar to those used
in the case (c).

\eqref{2.33} together with (a)-(d) yields \eqref{2.16} for 
almost every $x\in R_{\vec{\Lambda}}(\vec{0})$. Since 
$\Lambda$ is arbitrary. This proves Lemma \ref{l2.5}.
\end{proof}

\subsection{Proof of Theorem 1.1}

We divide the proof into three steps:

{\it Step 1: The boundedness part}. Let $1<p_1,\ldots,
p_m,p<\infty$ and $1/p=\sum_{i=1}^m1/p_i$.  Let
$\vec{f}=(f_1,\ldots,f_m)$ with each $f_i\in
W^{1,p_i}(\mathbb{R}^d)$. For a function $u$ and
$y\in\mathbb{R}^d$ we define $u_h(x)=u(x+h)$. According
to \cite[Section 7.11]{GT} we know that $u\in W^{1,p}
(\mathbb{R}^d)$ for $1<p<\infty$ if and only if
$u\in L^p(\mathbb{R}^d)$ and $\lim\sup_{h\rightarrow0}
\|u_h-u\|_{L^p(\mathbb{R}^d)}/|h|<\infty$. Therefore,
we have
\begin{equation}\label{2.37}
\limsup\limits_{h\rightarrow0}
\frac{\|(f_i)_h-f_i\|_{L^p(\mathbb{R}^d)}}{|h|}<\infty, \quad
\ \ \forall 1\leq i\leq m.
\end{equation}
On the other hand, for any fixed $h\in\mathbb{R}^d$ and
$x\in\mathbb{R}^d$, we have
\begin{equation}\label{2.38}
\begin{array}{ll}
|(\mathscr{M}_{\mathcal{R}}(\vec{f}))_h(x)
-\mathscr{M}_{\mathcal{R}}(\vec{f})(x)|
&\leq\displaystyle\sup\limits_{\substack{R \ni x \\ R\in\mathcal{R}}}
\frac{1}{|R|^m}\Big|\prod\limits_{i=1}^m\int_{R}|f_i(y+h)|dy
-\prod\limits_{i=1}^m\int_{R}|f_i(y)|dy\Big|
\\
&\leq\displaystyle\sum\limits_{i=1}^m
\sup\limits_{\substack{R \ni x \\ R\in\mathcal{R}}}\frac{1}{|R|^m}
\int_{R}|f_i(y+h)-f_i(y)|dy
\\
&\quad\times\displaystyle\Big(\prod\limits_{\mu=1}^{i-1}
\int_{R}|f_\mu(y)|dy\Big)\Big(\prod\limits_{\nu=\mu+1}^m
\int_{R}|f_i(y+h)|dy\Big)
\\
&\leq\displaystyle\sum\limits_{i=1}^m
\mathscr{M}_{\mathcal{R}}(\vec{f}_h^i)(x),
\end{array}
\end{equation}
where $\vec{f}_h^i=(f_1,\ldots,f_{i-1},(f_i)_h-f_i,
(f_{i+1})_h,\ldots,(f_m)_h)$.
\eqref{2.38} together with \eqref{1.2} yields that
\begin{equation}\label{2.39}
\begin{array}{ll}
&\|(\mathscr{M}_{\mathcal{R}}(\vec{f}))_h
-\mathscr{M}_{\mathcal{R}}(\vec{f})\|_{L^p(\mathbb{R}^d)}
\leq\displaystyle\sum\limits_{i=1}^m
\|\mathscr{M}_{\mathcal{R}}(\vec{f}_h^i)\|_{L^p(\mathbb{R}^d)}
\\
&\lesssim_{m,d,p_1,\ldots,p_m}\displaystyle\sum\limits_{i=1}^m
\|(f_i)_h-f_i\|_{L^{p_i}(\mathbb{R}^d)}
\prod\limits_{\mu\neq i,1\leq\mu\leq m}\|f_\mu\|_{L^{p_\mu}(\mathbb{R}^d)}.
\end{array}
\end{equation}
We get from \eqref{2.39} and \eqref{2.37} that 
$\limsup_{h\rightarrow0}\frac{\|(\mathscr{M}_{\mathcal{R}}
(\vec{f}))_h-\mathscr{M}_{\mathcal{R}}(\vec{f})\|_{L^p
(\mathbb{R}^d)}}{|h|}<\infty$. This together with the fact 
that $\mathscr{M}_{\mathcal{R}}(\vec{f})\in L^p(\mathbb{R}^d)$ 
yields that $\mathscr{M}_{\mathcal{R}}(\vec{f})\in W^{1,p}
(\mathbb{R}^d)$.

{\it Step 2: Pointwise estimate for $\mathscr{M}_{\mathcal{R}}
(\vec{f})$.}
Let $s_k$ ($k=1,2,\ldots$) be an enumeration of positive rational
numbers. We can write
$$
\mathscr{M}_{\mathcal{R}}(\vec{f})(x)
=\sup\limits_{{\vec{r}\in
(\{s_k\}_{k=1}^\infty)^{2d}}}\frac{1}{|R_{\vec{r}}(x)|^{m}}
\prod\limits_{i=1}^m\int_{R_{\vec{r}}(x)}|f_i(y)|dy,
$$
where $\vec{r}=(r_1^-,\dots,r_d^-;r_1^+,\dots,r_d^+)$ and $R_{\vec{r}}(x)
=(x_1-r_1^-,\,x_1+r_1^+)\times\cdots\times(x_d-r_d^-,\,x_d+r_d^+)$. 
Fixing $k\geq1$, we let
$E_k=\{\vec{r}=(r_1^-,\dots,r_d^-;r_1^+,\dots,r_d^+)\in\mathbb{R}_{+}^{2d};
r_i^-,r_i^+\in\{s_1,\ldots,s_k\},\, i=1,2,\ldots,d\}$. For
$k\in\{1,2,\ldots\}$, we define the operator $T_k$ by
$$
T_k(\vec{f})(x)=\max\limits_{{\vec{r}\in E_k}}
\frac{1}{|R_{\vec{r}}(x)|^m}
\prod\limits_{i=1}^m\int_{R_{\vec{r}}(x)}|f_i(y)|dy .
$$
For any $h\in\mathbb{R}^d$, we can write
$$
\begin{array}{ll}
|T_k(\vec{f})(x+h)-T_k(\vec{f})(x)|
&\leq\displaystyle\sum\limits_{i=1}^m
\max\limits_{{\vec{r}\in{E}_k}}
\frac{1}{|R_{\vec{r}}(x)|^m}\int_{R_{\vec{r}}(x)}|f_i(y+h)-f_i(y)|dy
\\
&\quad\times\displaystyle\Big(\prod\limits_{\mu=1}^{i-1}
\int_{R_{\vec{r}}(x)}|f_\mu(y)|dy\Big)\Big(\prod\limits_{\nu=\mu+1}^m
\int_{R_{\vec{r}}(x)}|f_i(y+h)|dy\Big).
\end{array}
$$This yields that
\begin{equation}\label{2.40}
|D_l(T_k(\vec{f}))(x)|\leq\sum\limits_{i=1}^mT_k(\vec{f}_i^l)(x)
\leq\sum\limits_{i=1}^m\mathscr{M}_{\mathcal{R}}(\vec{f}_i^l)(x), \quad \hbox{for\ a.e.}\ x\in\mathbb{R}^d.
\end{equation}
Here $\vec{f}_i^l=(f_1,\ldots,f_{i-1},D_lf_i,f_{i+1},\ldots,f_m)$.
For all $k\geq1$, by \eqref{2.40} and \eqref{1.2}, it holds that
$$
\begin{array}{ll}
\|T_k(\vec{f})\|_{1,p}&\leq\displaystyle\|T_k(\vec{f})\|_{L^p(\mathbb{R}^d)}
+\sum\limits_{l=1}^d\|D_lT_k(\vec{f})\|_{L^p(\mathbb{R}^d)}
\\
&\leq\displaystyle\|\mathscr{M}_{\mathcal{R}}(\vec{f})\|_{L^p(\mathbb{R}^d)}
+\sum\limits_{l=1}^d\sum\limits_{i=1}^m
\|\mathscr{M}_{\mathcal{R}}(\vec{f}_i^l)
\|_{L^p(\mathbb{R}^d)}\\&\lesssim_{m,p_1,\ldots,p_m}
\prod\limits_{i=1}^m\|f_i\|_{1,p_i}.
\end{array}
$$
This yields that $\{T_k(\vec{f})\}_k$ is a bounded sequence
in $W^{1,p}(\mathbb{R}^d)$ which converges to
$\mathscr{M}_{\mathcal{R}}(\vec{f})$ pointwisely. The weak
compactness of Sobolev spaces implies that
$\{D_l(T_k(\vec{f}))\}_k$ converges to
$D_l(\mathscr{M}_{\mathcal{R}}(\vec{f}))$ weakly in
$L^p(\mathbb{R}^d)$. This together with \eqref{2.40} implies 
that
$$
|D_l\mathscr{M}_{\mathcal{R}}(\vec{f})(x)|
\leq\sum\limits_{i=1}^m\mathscr{M}_{\mathcal{R}}(\vec{f}_i^l)(x), \quad \hbox{for\ a.e.}\ x\in\mathbb{R}^d.
$$
Combining this with \eqref{1.2} yields that
$$
\begin{array}{ll}
\|\nabla\mathscr{M}_{\mathcal{R}}(\vec{f})\|_{L^p(\mathbb{R}^d)}
&\leq\displaystyle\sum\limits_{l=1}^d
\|D_l\mathscr{M}_{\mathcal{R}}(\vec{f})\|_{L^p(\mathbb{R}^d)}
\leq
\sum\limits_{l=1}^d\sum\limits_{i=1}^m
\|\mathscr{M}_{\mathcal{R}}(\vec{f}_i^l)\|_{L^p(\mathbb{R}^d)}
\\
&\lesssim_{m,d,p_1,\ldots,p_m}\displaystyle\sum\limits_{i=1}^m
\sum\limits_{l=1}^d\|D_lf_i\|_{L^{p_i}(\mathbb{R}^d)}
\prod\limits_{j\neq i,1\leq j\leq m}\|f_j\|_{L^{p_j}(\mathbb{R}^d)}.
\end{array}
$$
Therefore, it holds that
\begin{equation}\label{2.41}
\|\mathscr{M}_{\mathcal{R}}(\vec{f})\|_{1,p}
=\|\mathscr{M}_{\mathcal{R}}(\vec{f})\|_{L^p(\mathbb{R}^d)}
+\|\nabla\mathscr{M}_{\mathcal{R}}(\vec{f})\|_{L^p(\mathbb{R}^d)}
\leq C_{m,d,p_1,\ldots,p_m}\prod\limits_{i=1}^m\|f_i\|_{1,p_i}.
\end{equation}

{\it Step 3: The continuity part.} For convenience, we only
prove the case $d=2$ and the case $d>2$ is analogous and
more complex, we leave the details to the interested reader.
Let $\vec{f}=(f_1,\ldots,f_m)$ with each $f_i\in W^{1,p_i}
(\mathbb{R}^2)$ for $1<p_i<\infty$. Let $\vec{f}_j=(f_{1,j},
\ldots,f_{m,j})$ such that $f_{i,j}\rightarrow f_i$ in
$W^{1,p_i}(\mathbb{R}^2)$ when $j\rightarrow\infty$. Let
$1<p<\infty$ and $1/p=\sum_{i=1}^m1/p_i$. It follows from
\eqref{2.6} that $\|\mathscr{M}_{\mathcal{R}}(\vec{f}_j)
-\mathscr{M}_{\mathcal{R}}(\vec{f})\|_{L^p(\mathbb{R}^2)}
\rightarrow0$ when $j\rightarrow\infty$. Thus, it suffices
to show that, for any $l=1,2,\ldots,d$, it holds that
\begin{equation}\label{2.42}
\|D_l\mathscr{M}_{\mathcal{R}}(\vec{f}_j)
-D_l\mathscr{M}_{\mathcal{R}}(\vec{f})\|_{L^p(\mathbb{R}^2)}\rightarrow 0
\ \ \quad\quad {\rm when}\ j\rightarrow\infty.
\end{equation}
Without loss of generality we may assume that all
$f_{i,j}\geq0$ and $f_i\geq0$.

Given $\epsilon>0$ and $l=1,2$, letting $\vec{f}_l^i=(f_1,
\ldots,f_{i-1},D_lf_i,f_{i+1},\ldots,f_m)$, there exists
$\Lambda>0$ such that $\sum_{i=1}^m\|\mathscr{M}_{\mathcal{R}}
(\vec{f}_l^i)\|_{p,B_1}<\epsilon$ with $B_1=\mathbb{R}^2
\setminus R_{\vec{\Lambda}}(\vec{0})$. Here $\vec{\Lambda}
=(\Lambda,\Lambda)$. By the absolute continuity, there
exists $\eta>0$ such that
$\sum_{i=1}^m\|\mathscr{M}_{\mathcal{R}}(\vec{f}_l^i)
\|_{p,A}<\epsilon$ whenever $A$ is a measurable subset
of $R_{\vec{\Lambda}}(\vec{0})$ such that $|A|<\eta$.
As we already observed, for a.e. $x\in\mathbb{R}^2$, we
notice that:

(i) $u_{x,\vec{f}_l^i}$ is continuous on
$\overline{\mathbb{R}}_{+}^4$ and
$
\lim\limits_{(r_{1,1},r_{1,2},r_{2,1},r_{2,2})
\in\overline{\mathbb{R}}_{+}^4\atop
r_{1,1}+r_{1,2}+r_{1,2}+r_{2,2}\rightarrow
\infty}u_{x,\vec{f}_l^i}(r_{1,1},r_{1,2},r_{2,1},r_{2,2})=0;
$

(ii) $u_{x,\vec{f}_l^i}(r_{1,1},r_{1,2},0,0)$ is continuous
on $\overline{\mathbb{R}}_{+}^2$ and
$
\lim\limits_{(r_{1,1},r_{1,2})\in\overline{\mathbb{R}}_{+}^2
\atop r_{1,1}+r_{1,2}\rightarrow\infty}u_{x,\vec{f}_l^i}
(r_{1,1},r_{1,2},0,0)=0;
$

(iii) $u_{x,\vec{f}_l^i}(0,0,r_{2,1},r_{2,2})$ is continuous
on $\overline{\mathbb{R}}_{+}^2$ and
$
\lim\limits_{(r_{2,1},r_{2,2})\in\overline{\mathbb{R}}_{+}^2
\atop r_{2,1}+r_{2,2}\rightarrow\infty}u_{x,\vec{f}_l^i}
(0,0,r_{2,1},r_{2,2})=0.
$

Then, it follows that for a.e. $x\in\mathbb{R}^2$, the
function $\sum_{i=1}^mu_{x,\vec{f}_l^i}(\cdot,\cdot,
\cdot,\cdot)$ is uniformly continuous on
$\overline{\mathbb{R}}_{+}^4$; the function
$\sum_{i=1}^mu_{x,\vec{f}_l^i}(\cdot,\cdot,0,0)$
is uniformly continuous on $\overline{\mathbb{R}}_{+}^2$;
the function $\sum_{i=1}^mu_{x,\vec{f}_l^i}(0,0,\cdot,
\cdot)$ is uniformly continuous on
$\overline{\mathbb{R}}_{+}^2$. Hence, we can find
$\delta_x>0$ such that

(iv) If $\ |\vec{r_1}-\vec{r_2}|<\delta_x$, then
$
\big|\sum\limits_{i=1}^mu_{x,\vec{f}_l^i}(\vec{r_1})
-\sum\limits_{i=1}^mu_{x,\vec{f}_l^i}(\vec{r_2})\big|
<|R_{\vec{\Lambda}}(\vec{0})|^{-1/p}\epsilon;
$

(v) If $|r_{1,1,1}-r_{2,1,1}|+|r_{1,1,2}-r_{2,1,2}|<\delta_x$,
then
$$
\big|\sum\limits_{i=1}^mu_{x,\vec{f}_l^i}(r_{1,1,1},r_{1,1,2},0,0)
-\sum\limits_{i=1}^mu_{x,\vec{f}_l^i}(r_{2,1,1},r_{2,1,2},0,0)\big|
<|R_{\vec{\Lambda}}(\vec{0})|^{-1/p}\epsilon;
$$

(vi) If $|r_{1,2,1}-r_{2,2,1}|+|r_{1,2,2}-r_{2,2,2}|<\delta_x$,
then
$$
\big|\sum\limits_{i=1}^mu_{x,\vec{f}_l^i}(0,0,r_{1,2,1},r_{1,2,2})
-\sum\limits_{i=1}^mu_{x,\vec{f}_l^i}(0,0,r_{2,2,1},r_{2,2,2})\big|
<|R_{\vec{\Lambda}}(\vec{0})|^{-1/p}\epsilon.
$$

Now we can write
$$
R_{\vec{\Lambda}}(\vec{0})
=\Big(\bigcup\limits_{i=1}^\infty\Big\{x\in R_{\vec{\Lambda}}(\vec{0}):
\ \delta_x>\frac{1}{i}\Big\}\Big)\bigcup\mathcal{N},
$$
where $|\mathcal{N}|=0$. It follows that there exists
$\delta>0$ such that
\begin{equation}\label{2.43}
\begin{array}{ll}
{}&|\{x\in R_{\vec{\Lambda}}(\vec{0}):
|\sum_{i=1}^mu_{x,\vec{f}_l^i}(\vec{r}_1)-\sum_{i=1}^mu_{x,\vec{f}_l^i}
(\vec{r}_2)|\geq|R_{\vec{\Lambda}}(\vec{0})|^{-1/p}\epsilon
\ {\rm for\ some}\ \vec{r_1},\,\vec{r_2}
\\& \quad \quad{\rm with}\ |\vec{r_1}-\vec{r_2}|<\delta\}|
=:|B_{2,1}|<\frac{\eta}{2};
\end{array}
\end{equation}
\begin{equation}\label{2.44}
\begin{array}{ll}{}
&|\{x\in R_{\vec{\Lambda}}(\vec{0}):
|\sum_{i=1}^mu_{x,\vec{f}_l^i}(r_{1,1,1},r_{1,1,2},0,0)
-\sum_{i=1}^mu_{x,\vec{f}_l^i}(r_{2,1,1},r_{2,1,2},0,0)|
\geq|R_{\vec{\Lambda}}(\vec{0})|^{-1/p}\epsilon
\\
&\quad\ {\rm for\ some}\ r_{1,1,1},\,r_{1,1,2},\,r_{2,1,1},\,r_{2,1,2}
\ {\rm with}\ |r_{1,1,1}-r_{2,1,1}|+|r_{1,1,2}-r_{2,1,2}|<\delta\}|
\\
&=:|B_{2,2}|<\frac{\eta}{2};
\end{array}
\end{equation}
\begin{equation}\label{2.45}
\begin{array}{ll}{}
&|\{x\in R_{\vec{\Lambda}}(\vec{0}):
|\sum_{i=1}^mu_{x,\vec{f}_l^i}(0,0,r_{1,2,1},r_{1,2,2})
-\sum_{i=1}^mu_{x,\vec{f}_l^i}(0,0,r_{2,2,1},r_{2,2,2})|
\geq|R_{\vec{\Lambda}}(\vec{0})|^{-1/p}\epsilon
\\
&\quad\ {\rm for\ some}\ r_{1,2,1},\,r_{1,2,2},\,r_{2,2,1},\,r_{2,2,2}
\ {\rm with}\ |r_{1,2,1}-r_{2,2,1}|+|r_{1,2,2}-r_{2,2,2}|
<\delta\}|\\&=:|B_{2,3}|<\frac{\eta}{2}.
\end{array}
\end{equation}
Applying Lemma \ref{l2.3}, there exists $j_1\in\mathbb{N}$
such that for $i=1,2,3$
\begin{equation}\label{2.46}
|\{x\in R_{\vec{\Lambda}}(\vec{0});\mathcal{B}_i(\vec{f}_j)(x)
\nsubseteq\mathcal{B}_i(\vec{f})(x)_{(\delta)}\}|=:|B^{i,j}|<\frac{\eta}{2}
\ \ {\rm when}\ j\geq j_1.
\end{equation}
Let
$
\vec{f}_l^{i,j}=(f_{1,j},\ldots,f_{i-1,j},D_lf_{i,j},f_{i+1,j},
\ldots,f_{m,j}).
$
Fix $i=1,2,3$. Invoking Lemma \ref{l2.5}, for a.e. $x\in\mathbb{R}^2$
, $j\geq j_1$, and for any $\vec{r_1}\in\mathcal{B}_i(\vec{f}_j)(x)$
and $\vec{r_2}\in\mathcal{B}_i(\vec{f})(x)$ with $i=1,2,3$, we have
\begin{equation}\label{2.47}
\begin{array}{ll}
&\Big|D_l {\mathscr{M}}_{\mathcal{R}}(\vec{f}_j)(x)
-D_l{\mathscr{M}}_{\mathcal{R}}(\vec{f})(x)\Big|
\\
&=\displaystyle\Big|\sum\limits_{i=1}^mu_{x,\vec{f}_l^{i,j}}(\vec{r_1})
-\sum\limits_{i=1}^mu_{x,\vec{f}_l^i}(\vec{r_2})\Big|
\\
&\leq\displaystyle\sum\limits_{i=1}^m|u_{x,\vec{f}_l^{i,j}}(\vec{r_1})
-u_{x,\vec{f}_l^i}(\vec{r_1})|
+\Big|\sum\limits_{i=1}^mu_{x,\vec{f}_l^i}(\vec{r_1})
-\sum\limits_{i=1}^mu_{x,\vec{f}_l^i}(\vec{r_2})\Big|.
\end{array}
\end{equation}
If $x\notin B_1\cup B_{2,i}\cup B^{i,j}$, we choose
$\vec{r_1}\in\mathcal{B}_i(\vec{f}_j)(x)$ and $\vec{r_2}
\in\mathcal{B}_i(\vec{f})(x)$ such that $|\vec{r_1}-
\vec{r_2}|<\delta$ and
\begin{equation}\label{2.48}
\Big|\sum\limits_{i=1}^mu_{x,\vec{f}_l^i}(\vec{r_1})
-\sum\limits_{i=1}^mu_{x,\vec{f}_l^i}(\vec{r_2})\Big|
<|R_{\vec{\Lambda}}(\vec{0})|^{-1/p}\epsilon.
\end{equation}
On the other hand, for any $\vec{r_1}\in\mathcal{B}_i
(\vec{f}_j)(x)$ and $\vec{r_2}\in\mathcal{B}_i(\vec{f})(x)$,
one may obtain that
\begin{equation}\label{2.49}
\Big|\sum\limits_{i=1}^mu_{x,\vec{f}_l^i}(\vec{r_1})
-\sum\limits_{i=1}^mu_{x,\vec{f}_l^i}(\vec{r_2})\Big|
\leq 2\sum\limits_{i=1}^m{\mathscr{M}}_{\mathcal{R}}(\vec{f}_l^i)(x), \quad \hbox{for a.e.}\  x\in\mathbb{R}^2.
\end{equation}
To get the estimate of $\sum_{i=1}^m|u_{x,\vec{f}_l^{i,j}}
(\vec{r_1})-u_{x,\vec{f}_l^i}(\vec{r_1})|$, we consider 
the following cases:

{\bf Case 1.} For simplicity, we denote $\iint_{{R_0}}=\int_{x_1
-r_{1,1}}^{x_1+r_{1,2}}\int_{x_2-r_{2,1}}^{x_2+r_{2,2}}$. If
$\vec{r_1}=(r_{1,1},r_{1,2},r_{2,1},r_{2,2})\in
\overline{\mathbb{R}}_{+}^4$ with $r_{1,1}+r_{1,2}>0$ and
$r_{2,1}+r_{2,2}>0$. Then
\begin{eqnarray*}
&&|u_{x,\vec{f}_l^{i,j}}(\vec{r_1})-u_{x,\vec{f}_l^i}(\vec{r_1})|\\
&&=\displaystyle{\prod\limits_{w=1}^2(r_{w,1}+r_{w,2})^{-m}}\Big|\Big(
\prod\limits_{\mu=1}^{i-1}\iint_{{R_0}}f_{\mu,j}(y_1,y_2)dy_1dy_2\Big)
\Big(\iint_{{R_0}}D_lf_{i,j}(y_1,y_2)dy_1dy_2\Big)
\\&&\quad\times\Big(\prod\limits_{\nu=i+1}^{m}\iint_{{R_0}}f_{\nu,j}(y_1,y_2)dy_1dy_2\Big)
\\
&&\quad-
\Big(\prod\limits_{\mu=1}^{i-1}
\iint_{{R_0}}
f_{\mu}(y_1,y_2)dy_1dy_2\Big)
\Big(\iint_{{R_0}}
D_lf_{i}(y_1,y_2)dy_1dy_2\Big)\displaystyle\Big(\prod\limits_{\nu=i+1}^{m}
\iint_{{R_0}}f_{\nu}(y_1,y_2)dy_1dy_2\Big)\Big|
\\
&&\leq\displaystyle\sum\limits_{\mu=1}^{i-1}{\prod\limits_{w=1}^2
(r_{w,1}+r_{w,2})^{-m}}\Big(\prod\limits_{\ell=1}^{\mu-1}
\iint_{R_0}f_{\ell}(y_1,y_2)dy_1y_2\Big)
\iint_{R_0}|f_{\mu,j}-f_{\mu}|(y_1,y_2)dy_1dy_2\\
&&\quad\times\displaystyle\Big(
\prod\limits_{\kappa=\mu+1}^{i-1}\iint{R_0}f_{\kappa,j}(y_1,y_2)dy_1dy_2\Big)
\iint_{R_0}D_lf_{i,j}(y_1,y_2)dy_1dy_2
\displaystyle\\&&\quad \times\Big(\prod\limits_{\tau=i+1}^m\iint{R_0}f_{\tau,j}(y_1,y_2)dy_1dy_2\Big)
+\sum\limits_{\nu=i+1}^m{\prod\limits_{w=1}^2
(r_{w,1}+r_{w,2})^{-m}}\Big(\prod\limits_{\ell=1}^{i-1}
\iint_{R_0}f_\ell(y_1,y_2)dy_1dy_2\Big)\\
&&\quad\times\iint_{R_0}
D_lf_i(y_1,y_2)dy_1dy_2\Big(\prod\limits_{\kappa=i+1}^{\nu-1}
\iint_{R_0}
f_\kappa(y_1,y_2)dy_1dy_2\Big)\iint_{R_0}|f_{\nu,j}-f_\nu|(y_1,y_2)dy_1dy_2
\end{eqnarray*}
\begin{eqnarray*}
&&\quad\times\Big(\prod\limits_{\tau=\nu+1}^m
\iint_{R_0}
f_{\tau,j}(y_1,y_2)dy_1dy_2\Big)
+{\prod\limits_{w=1}^2(r_{w,1}+r_{w,2})^{-m}}\Big(\prod\limits_{\kappa=1}^{i-1}\iint{R_0}f_\kappa(y_1,y_2)dy_1dy_2\Big)
\\
&&\quad\times\iint_{R_0} |D_lf_{i,j}-D_lf_i|(y_1,y_2)dy_1dy_2\Big(\prod\limits_{\tau=i+1}^{m}\iint{R_0}f_{\tau,j}(y_1,y_2)dy_1dy_2\Big)
\\
&&\leq\sum\limits_{\mu=1}^{i-1}\mathscr{M}_{\mathcal{R}}(\vec{F}_{\mu,j}^l)(x)
+\sum\limits_{\nu=i+1}^m\mathscr{M}_{\mathcal{R}}(\vec{G}_{\nu,j}^l)(x)
+\mathscr{M}_{\mathcal{R}}(\vec{H}_{i,j}^l)(x)
=:\mathcal{G}_{i,j}^l(x),
\end{eqnarray*}
where $\vec{F}_{\mu,j}^l=(f_1,\ldots,f_{\mu-1},f_{\mu,j}-f_\mu,
f_{\mu+1,j},\ldots,f_{i-1,j},D_lf_{i,j},f_{i+1,j},\ldots,f_{m,j}),
$ and $\vec{G}_{\nu,j}^l, \vec{H}_{i,j}^l$ are defined by
$\vec{G}_{\nu,j}^l=(f_1,\ldots,f_{i-1},D_lf_i,f_{i+1},\ldots,
f_{\nu-1},f_{\nu,j}-f_\nu,f_{\nu+1,j},\ldots,f_{m,j})$ and
$\vec{H}_{i,j}^l=(f_1,\ldots,f_{i-1},D_lf_{i,j}-D_lf_i,f_{i+1,j},
\ldots,f_{m,j})$.

{\bf Case 2.} If $\vec{r_1}=(0,0,0,0)$, then
$$\aligned
|u_{x,\vec{f}_l^{i,j}}(\vec{r_1})-u_{x,\vec{f}_l^i}(\vec{r_1})|
&\leq\displaystyle\sum\limits_{\mu=1}^{i-1}\Big(\prod\limits_{\ell=1}^{\mu-1}
f_{\ell}(x)\Big)(f_{\mu,j}-f_{\mu})(x)
\Big(\prod\limits_{\kappa=\mu+1}^{i-1}f_{\kappa,j}(x)\Big)D_lf_{i,j}(x)
\Big(\prod\limits_{\tau=i+1}^mf_{\tau,j}(x)\Big)
\\
&\quad\displaystyle+\sum\limits_{\nu=i+1}^m\Big(\prod\limits_{\ell=1}^{i-1}
f_\ell(x)\Big)D_lf_i(x)\Big(\prod\limits_{\kappa=i+1}^{\nu-1}f_\kappa(x)
\Big)(f_{\nu,j}-f_\nu)(x)
\Big(\prod\limits_{\tau=\nu+1}^mf_{\tau,j}(x)\Big)\\
&\quad\displaystyle+\Big(\prod\limits_{\kappa=1}^{i-1}f_\kappa(x)\Big)
(D_lf_{i,j}-D_lf_i)(x)\Big(\prod\limits_{\tau=i+1}^{m}f_{\tau,j}(x)\Big).
\endaligned
$$

{\bf Case 3.} If $\vec{r_1}=(0,0,r_{2,1},r_{2,2})\in
\overline{\mathbb{R}}_{+}^4$ for $r_{2,1}+r_{2,2}>0$, then
$$\begin{array}{ll}
&|u_{x,\vec{f}_l^{i,j}}(\vec{r_1})-u_{x,\vec{f}_l^i}(\vec{r_1})|\\
&=\displaystyle\frac{1}{(r_{2,1}+r_{2,2})^m}\Big|\Big(\prod\limits_{\mu=1}^{i-1}\int_{x_2-r_{2,1}}^{x_2+r_{2,2}}f_{\mu,j}(x_1,y_2)dy_2\Big)\\
&\quad\times\displaystyle\Big(\int_{x_2-r_{2,1}}^{x_2+r_{2,2}}D_lf_{i,j}(x_1,y_2)dy_2\Big)
\Big(\prod\limits_{\nu=i+1}^{m}\int_{x_2-r_{2,1}}^{x_2+r_{2,2}}f_{\nu,j}(x_1,y_2)dy_2\Big)\\
&\quad\displaystyle-\Big(\prod\limits_{\mu=1}^{i-1}\int_{x_2-r_{2,1}}^{x_2+r_{2,2}}f_{\mu}(x_1,y_2)dy_2\Big)\\
&\quad\times\displaystyle\Big(\int_{x_2-r_{2,1}}^{x_2+r_{2,2}}D_lf_{i}(x_1,y_2)dy_2\Big)
\Big(\prod\limits_{\nu=i+1}^{m}\int_{x_2-r_{2,1}}^{x_2+r_{2,2}}f_{\nu}(x_1,y_2)dy_2\Big)\Big|\\
&\leq\displaystyle\sum\limits_{\mu=1}^{i-1}\frac{1}{(r_{2,1}+r_{2,2})^m}\Big(\prod\limits_{\ell=1}^{\mu-1}\int_{x_2-r_{2,1}}^{x_2+r_{2,2}}f_{\ell}(x_1,y_2)dy_2\Big)
\int_{x_2-r_{2,1}}^{x_2+r_{2,2}}|f_{\mu,j}-f_{\mu})|(x_1,y_2)dy_2\\
&\quad\times\displaystyle\Big(\prod\limits_{\kappa=\mu+1}^{i-1}\int_{x_2-r_{2,1}}^{x_2+r_{2,2}}f_{\kappa,j}(x_1,y_2)dy_2\Big)
\int_{x_2-r_{2,1}}^{x_2+r_{2,2}}D_lf_{i,j}(x_1,y_2)dy_2\\
&\quad\times\displaystyle\Big(\prod\limits_{\tau=i+1}^m\int_{x_2-r_{2,1}}^{x_2+r_{2,2}}f_{\tau,j}(x_1,y_2)dy_2\Big)\\
&\quad\displaystyle+\sum\limits_{\nu=i+1}^m\frac{1}{(r_{2,1}+r_{2,2})^m}\Big(\prod\limits_{\ell=1}^{i-1}\int_{x_2-r_{2,1}}^{x_2+r_{2,2}}f_\ell(x_1,y_2)dy_2\Big)\int_{x_2-r_{2,1}}^{x_2+r_{2,2}}D_lf_i(x_1,y_2)dy_2
\end{array}$$
$$\begin{array}{ll}
&\quad\times\displaystyle\Big(\prod\limits_{\kappa=i+1}^{\nu-1}\int_{x_2-r_{2,1}}^{x_2+r_{2,2}}f_\kappa(x_1,y_2)dy_2\Big)\int_{x_2-r_{2,1}}^{x_2+r_{2,2}}|f_{\nu,j}-f_\nu|(x_1,y_2)dy_2\\
&\quad\times\displaystyle\Big(\prod\limits_{\tau=\nu+1}^m\int_{x_2-r_{2,1}}^{x_2+r_{2,2}}f_{\tau,j}(x_1,y_2)dy_2\Big)\\&\quad\displaystyle+\frac{1}{(r_{2,1}+r_{2,2})^m}\Big(\prod\limits_{\kappa=1}^{i-1}\int_{x_2-r_{2,1}}^{x_2+r_{2,2}}f_\kappa(x_1,y_2)dy_2\Big)\int_{x_2-r_{2,1}}^{x_2+r_{2,2}}|D_lf_{i,j}-D_lf_i|(x_1,y_2)dy_2\\
&\quad\times\displaystyle\Big(\prod\limits_{\tau=i+1}^{m}\int_{x_2-r_{2,1}}^{x_2+r_{2,2}}f_{\tau,j}(x_1,y_2)dy_2\Big).
\end{array}$$

{\bf Case 4.} If $\vec{r_1}=(r_{1,1},r_{1,2},0,0)\in
\overline{\mathbb{R}}_{+}^4$ with $r_{1,1}+r_{1,2}>0$.
Then, similarly as in Case 3, we can obtain
\begin{eqnarray*}
&&|u_{x,\vec{f}_l^{i,j}}(\vec{r_1})-u_{x,\vec{f}_l^i}(\vec{r_1})|\\
&&\leq\displaystyle\sum\limits_{\mu=1}^{i-1}\frac{1}{(r_{1,1}+r_{1,2})^m}\Big(\prod\limits_{\ell=1}^{\mu-1}\int_{x_1-r_{1,1}}^{x_1+r_{1,2}}f_{\ell}(y_1,x_2)dy_1\Big)
\int_{x_1-r_{1,1}}^{x_1+r_{1,2}}|f_{\mu,j}-f_{\mu})|(y_1,x_2)dy_1\\
&&\quad\times\displaystyle\Big(\prod\limits_{\kappa=\mu+1}^{i-1}\int_{x_1-r_{1,1}}^{x_1+r_{1,2}}f_{\kappa,j}(y_1,x_2)dy_1\Big)
\int_{x_1-r_{1,1}}^{x_1+r_{1,2}}D_lf_{i,j}(y_1,x_2)dy_1\\
&&\quad\times\displaystyle\Big(\prod\limits_{\tau=i+1}^m\int_{x_1-r_{1,1}}^{x_1+r_{1,2}}f_{\tau,j}(y_1,x_2)dy_1\Big)\end{eqnarray*}\begin{eqnarray*}
&&\quad\displaystyle+\sum\limits_{\nu=i+1}^m\frac{1}{(r_{1,1}+r_{1,2})^m}\Big(\prod\limits_{\ell=1}^{i-1}\int_{x_1-r_{1,1}}^{x_1+r_{1,2}}f_\ell(y_1,x_2)dy_1\Big)\int_{x_1-r_{1,1}}^{x_1+r_{1,2}}D_lf_i(y_1,x_2)dy_1\\&&\quad\times\displaystyle\Big(\prod\limits_{\kappa=i+1}^{\nu-1}\int_{x_1-r_{1,1}}^{x_1+r_{1,2}}f_\kappa(y_1,x_2)dy_1\Big)\int_{x_1-r_{1,1}}^{x_1+r_{1,2}}|f_{\nu,j}-f_\nu|(y_1,x_2)dy_1\\
&&\quad\times\displaystyle\Big(\prod\limits_{\tau=\nu+1}^m\int_{x_1-r_{1,1}}^{x_1+r_{1,2}}f_{\tau,j}(y_1,x_2)dy_1\Big)\\
&&\quad\displaystyle+\frac{1}{(r_{1,1}+r_{1,2})^m}\Big(\prod\limits_{\kappa=1}^{i-1}\int_{x_1-r_{1,1}}^{x_1+r_{1,2}}f_\kappa(y_1,x_2)dy_1\Big)\int_{x_1-r_{1,1}}^{x_1+r_{1,2}}|D_lf_{i,j}-D_lf_i|(y_1,x_2)dy_1\\
&&\quad\times\displaystyle\Big(\prod\limits_{\tau=i+1}^{m}\int_{x_1-r_{1,1}}^{x_1+r_{1,2}}f_{\tau,j}(y_1,x_2)dy_1\Big).
\end{eqnarray*}
Together with the above cases, we obtain
\begin{equation}\label{2.50}
\sum\limits_{i=1}^m|u_{x,\vec{f}_l^{i,j}}(\vec{r_1})-u_{x,\vec{f}_l^i}(\vec{r_1})|
\leq\sum\limits_{i=1}^m\mathcal{G}_l^{i,j}(x)=:\mathcal{G}_l^j(x), \quad\hbox{for any } \vec{r_1}\in[0,\infty)^4.
\end{equation}
Note that
$$
\lim\limits_{j\rightarrow\infty}\|\mathcal{G}_l^{i,j}\|_{L^p(\mathbb{R}^d)}=0.
$$
It follows that there exists $j_2\in\mathbb{N}$ such that
\begin{equation}\label{2.51}
\|\mathcal{G}_{l}^j\|_{L^p(\mathbb{R}^2)}<\epsilon, \quad\ \ \forall j\geq j_2.
\end{equation}
Observe from \eqref{2.43}-\eqref{2.46} that $|B_{2,i}\cup B^{i,j}|
<\eta$ for all $j\geq j_1$ and $i=1,2,3$. These facts together with
(2.47)-(2.51) imply that
$$
\begin{array}{ll}
{}&\|D_l {\mathscr{M}}_{\mathcal{R}}(\vec{f}_j)
-D_l {\mathscr{M}}_{\mathcal{R}}(\vec{f})\|_{L^p(\mathbb{R}^2)}
\\
&\leq\displaystyle\|\mathcal{G}_l^{j}\|_{L^p(\mathbb{R}^2)}
+
\Big\|2 \sum\limits_{i=1}^m{\mathscr{M}}_{\mathcal{R}}(\vec{f}_l^i)\Big\|_{p,B_1}
+\Big\|2 \sum\limits_{i=1}^m{\mathscr{M}}_{\mathcal{R}}(\vec{f}_l^i)
\Big\|_{p,B_{2,1}\cup B^{1,j}}
+\Big\|2 \sum\limits_{i=1}^m{\mathscr{M}}_{\mathcal{R}}(\vec{f}_l^i)
\Big\|_{p,B_{2,2}\cup B^{2,j}}
\\
&\quad
+\displaystyle\Big\|2 \sum\limits_{i=1}^m{\mathscr{M}}_{\mathcal{R}}(\vec{f}_l^i)
\Big\|_{p,B_{2,3}\cup B^{3,j}}+\bigl\||R_{\vec{\Lambda}}(\vec{0})|^{-1/p}\epsilon
\bigr\|_{p,(B_1\cup(\cup_{i=1}^{3}B_{2,i}\cup B^{i,j}))^c}
\leq10\epsilon
\end{array}
$$
for all $j\geq\max\{j_1,j_2\}$, which leads to \eqref{2.42}. $\hfill\Box$
\section {Properties on Besov and Triebel-Lizorkin spaces} \label{S3}
This section will be devoted to give the proofs of Theorems \ref{thm2}
and \ref{thm3}. In what follows, we let $\Delta_\zeta f$ denote
the difference of $f$, i.e. $\Delta_\zeta f(x)=f(x+\zeta)
-f(x)$ for all $x,\,\zeta\in\mathbb{R}^d$. We also let
$\mathfrak{R}_d=\{\zeta\in\mathbb{R}^d;1/2<|\zeta|\leq1\}$.

To prove Theorems \ref{thm2} and \ref{thm3}, we need the
following characterizations of homogeneous Triebel-Lizorkin
spaces $\dot{F}_s^{p,q}(\mathbb{R}^d)$ and homogeneous Besov
spaces $\dot{B}_s^{p,q}(\mathbb{R}^d)$.

\begin{lemma}\label{l3.1}
(\cite{Ya}). {\rm (i)} Let $0<s<1$, $1<p<\infty$,
$1<q\leq\infty$ and $1\leq r<\min(p,q)$. Then
$$
\|f\|_{\dot{F}_s^{p,q}(\mathbb{R}^d)}
\sim\Big\|\Big(\sum\limits_{k\in\mathbb{Z}}2^{ksq}
\Big(\int_{\mathfrak{R}_d}|\Delta_{2^{-k}\zeta}f|^rd\zeta\Big)^{q/r}
\Big)^{{1}/{q}}\Big\|_{L^p(\mathbb{R}^d)};
$$

{\rm (ii)} Let $0<s<1$, $1\leq p<\infty$, $1\leq q\leq\infty$
and $1\leq r\leq p$. Then
\begin{equation}\label{3.1}
\|f\|_{\dot{B}_s^{p,q}(\mathbb{R}^d)}\sim\Big(\sum\limits_{k\in\mathbb{Z}}
2^{ksq}\Big\|\Big(\int_{\mathfrak{R}_d}
|\Delta_{2^{-k}\zeta}f|^rd\zeta\Big)^{1/r}\Big\|_{L^p(\mathbb{R}^d)}^q
\Big)^{1/q}.
\end{equation}
\end{lemma}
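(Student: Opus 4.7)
The statement is the classical characterization of homogeneous Besov and Triebel--Lizorkin spaces via ball-averaged first differences, attributed to \cite{Ya}. My plan is to prove it through the Littlewood--Paley decomposition. Fix a Schwartz resolution of unity $\{\varphi_j\}_{j\in\mathbb{Z}}$ with $\widehat{\varphi_j}$ supported in $\{2^{j-1}\le|\xi|\le 2^{j+1}\}$ and set $\Delta_j f=\varphi_j*f$. The definitions then read $\|f\|_{\dot F^{p,q}_s}\sim\|(\sum_j 2^{jsq}|\Delta_j f|^q)^{1/q}\|_{L^p}$ and $\|f\|_{\dot B^{p,q}_s}\sim(\sum_j 2^{jsq}\|\Delta_j f\|_{L^p}^q)^{1/q}$. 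The strategy is to sandwich the averaged differences between two Littlewood--Paley square/sum expressions.

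The two crucial building-block estimates are the following, valid for any Peetre-type maximal operator $\varphi_j^*f(x)=\sup_{y\in\mathbb{R}^d}|\Delta_j f(x-y)|/(1+2^j|y|)^a$ with $a>d/r$: for $|\zeta|\sim 1$,
\begin{equation}\label{eqPlanA}
|\Delta_{2^{-k}\zeta}\Delta_j f(x)|\lesssim \min\{1,\,2^{j-k}\}\,\varphi_j^*f(x).
\end{equation}
The first case ($j\le k$) follows from the mean value theorem applied to $\Delta_j f$, using the bound $\|\nabla\Delta_j f\|_\infty\lesssim 2^j\varphi_j^*f$; the second case ($j>k$) is immediate from the triangle inequality. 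Splitting $\Delta_{2^{-k}\zeta}f=\sum_j\Delta_{2^{-k}\zeta}\Delta_j f$ and averaging in $\zeta$ over $\mathfrak{R}_d$ gives, after applying Minkowski with the exponent $r$,
\begin{equation}\label{eqPlanB}
\Bigl(\int_{\mathfrak{R}_d}|\Delta_{2^{-k}\zeta}f(x)|^r d\zeta\Bigr)^{1/r}\lesssim\sum_{j\in\mathbb{Z}}\min\{1,2^{j-k}\}\,\varphi_j^*f(x).
\end{equation}
Multiplying by $2^{ks}$, using $0<s<1$ so that both $\sum_{j\le k}2^{(k-j)(s-1)}$ and $\sum_{j>k}2^{(k-j)s}$ converge, and applying a standard convolution/Young estimate on $\ell^q(\mathbb{Z})$ transfers the norm from index $k$ to index $j$. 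One then finishes with the Peetre maximal function characterisation $\|(\sum_j 2^{jsq}(\varphi_j^*f)^q)^{1/q}\|_{L^p}\sim\|f\|_{\dot F^{p,q}_s}$ (for which one needs $a>d/\min(p,q)$, hence the hypothesis $r<\min(p,q)$ so that one may invoke the Fefferman--Stein vector-valued maximal inequality in $L^{p/r}(\ell^{q/r})$), and the analogous scalar-valued statement for $\dot B^{p,q}_s$ (requiring only $r\le p$, i.e.\ a single application of Hardy--Littlewood per $j$).

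For the reverse inequality I would exploit the near-orthogonality of Littlewood--Paley blocks with the difference operator: choose an auxiliary function $\eta$ with $\widehat\eta(\xi)=1$ on $1/2\le|\xi|\le 2$ and vanishing outside $1/4\le|\xi|\le 4$, and write $\Delta_j f=T_j(\Delta_{2^{-j}(\cdot)}f)$ for a bounded family of convolution operators obtained from the identity $\varphi_j(x)=\int_{\mathfrak{R}_d}K_j(x,\zeta)(e^{i2^{-j}\zeta\cdot D}-1)\,d\zeta$ built from a Calder\'on reproducing formula; this yields $|\Delta_j f(x)|^r\lesssim\int_{\mathfrak{R}_d}|\Delta_{2^{-j}\zeta}f|^r(x)\,d\zeta$ up to a maximal-function smoothing, again controlled via Fefferman--Stein (for TL) or Minkowski (for Besov).

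The main obstacle is really the TL part (i): the constraint $r<\min(p,q)$ is sharp and is precisely what is needed to iterate the vector-valued maximal inequality inside \eqref{eqPlanB} without losing control when the inner $L^r(d\zeta)$ norm is pushed outside the $\ell^q$ and $L^p$ norms. Handling the tail sum in \eqref{eqPlanB} requires $s>0$ on one side and $s<1$ on the other, which is exactly the hypothesis $0<s<1$. Part (ii) is strictly easier since one works with scalar $L^p$ norms and all the summations above can be performed inside the $L^p$-norm using Minkowski, so the weaker restriction $r\le p$ suffices.
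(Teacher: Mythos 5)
The paper itself offers no proof of Lemma \ref{l3.1}; it is quoted from \cite{Ya}. So your attempt is to be judged as a free-standing proof of the standard difference characterization, and your overall architecture (Littlewood--Paley blocks, Peetre maximal functions, Young's inequality in $\ell^q$ using $s<1$ for $j\le k$ and $s>0$ for $j>k$, Fefferman--Stein in $L^{p/r}(\ell^{q/r})$ where $r<\min(p,q)$ enters) is indeed the classical route. However, your key pointwise estimate is false in the high-frequency case $j>k$, and this propagates to the averaged bound that drives the whole argument. From the triangle inequality you only get $|\Delta_jf(x+2^{-k}\zeta)|\le(1+2^{j-k}|\zeta|)^a\varphi_j^*f(x)$, so the implied constant in your bound $|\Delta_{2^{-k}\zeta}\Delta_jf(x)|\lesssim\varphi_j^*f(x)$ grows like $2^{(j-k)a}$; it is not merely a lost constant. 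Concretely, let $f$ consist of a single band-limited wave packet of height $1$ and width $2^{-j}$ centered at $x+2^{-k}\zeta_0$ with $\zeta_0\in\mathfrak{R}_d$: then $\bigl(\int_{\mathfrak{R}_d}|\Delta_{2^{-k}\zeta}f(x)|^rd\zeta\bigr)^{1/r}\gtrsim2^{-(j-k)d/r}$ while $\varphi_j^*f(x)\approx2^{-(j-k)a}$, and since the Peetre machinery forces $a>d/r$, your averaged estimate fails for $j-k$ large. The standard repair changes the shape of the estimate rather than the plan: for $j>k$ do not seek a pointwise bound by $\varphi_j^*f$; average in $\zeta$ first to get $\bigl(\int_{\mathfrak{R}_d}|\Delta_jf(x+2^{-k}\zeta)|^rd\zeta\bigr)^{1/r}\lesssim\bigl(M(|\Delta_jf|^r)(x)\bigr)^{1/r}$ with no gain in $j-k$, write $2^{ks}=2^{(k-j)s}2^{js}$ and sum $2^{(k-j)s}$ over $j>k$ using $s>0$, and then apply the Fefferman--Stein inequality to the family $2^{js}(M(|\Delta_jf|^r))^{1/r}$; this is exactly where $r<\min(p,q)$ (resp.\ $r\le p$ for the Besov case) is consumed. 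Your treatment of $j\le k$ via the mean value theorem is correct.

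The converse direction is also only gestured at, and the identity you write, $\varphi_j(x)=\int_{\mathfrak{R}_d}K_j(x,\zeta)(e^{i2^{-j}\zeta\cdot D}-1)\,d\zeta$, cannot be produced by dividing $\widehat{\varphi_j}(\xi)$ by $e^{i2^{-j}\zeta\cdot\xi}-1$ for fixed $\zeta$: that symbol vanishes on the hyperplane $\zeta\cdot\xi=0$, which meets the support of $\widehat{\varphi_j}$. The correct device is to integrate in $\zeta$ before inverting: $\Phi(u):=\int_{\mathfrak{R}_d}(e^{iu\cdot\zeta}-1)\,d\zeta$ has strictly negative real part on the annulus $1/2\le|u|\le2$, so $\widehat{\varphi_j}(\xi)/\Phi(2^{-j}\xi)$ is a legitimate dilation-invariant symbol and $\Delta_jf=\Psi_j*\bigl(\int_{\mathfrak{R}_d}\Delta_{2^{-j}\zeta}f\,d\zeta\bigr)$ with $\Psi_j$ a rescaled Schwartz kernel; H\"older in $\zeta$ and the (vector-valued) maximal inequality then give the reverse bound. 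With these two corrections your outline becomes the standard complete proof, and, as you say, part (ii) only needs scalar maximal estimates and Minkowski.
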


\quad\hspace{-20pt}{\it Proof of Theorem \ref{thm2}.}
Note that $\|f\|_{B_s^{p,q}(\mathbb{R}^d)}\thicksim\|f
\|_{\dot{B}_s^{p,q}(\mathbb{R}^d)}+\|f\|_{L^p(\mathbb{R}^d)}$ 
for $s>0$ and $1<p,q<\infty$. For a measurable function
$g:\mathbb{R}^d\times\mathbb{Z}\times\mathfrak{R}_d
\rightarrow\mathbb{R}$, we define
$$
\|g\|_{p,q}:=\Big(\sum\limits_{k\in\mathbb{Z}}2^{ksq}
\Big(\int_{\mathfrak{R}_d}
\int_{\mathbb{R}^d}|g(x,k,\zeta)|^pdxd\zeta\Big)^{q/p}\Big)^{1/q}.
$$
Using \eqref{3.1} with $r=p$ and Fubini's theorem, we have
\begin{equation}\label{3.2}
\|f\|_{\dot{B}_s^{p,q}(\mathbb{R}^d)}\thicksim\|\Delta_{2^{-k}\zeta}f\|_{p,q}.
\end{equation}
Let $0<s<1$ and $1<p_1,\ldots,p_m,p,q<\infty$ with
$1/p=\sum_{i=1}^m1/p_i$. Let $\vec{f}=(f_1,\ldots,f_m)$
with each $f_j\in B_s^{p_j,q}(\mathbb{R}^d)$. Fix
$\zeta\in\mathbb{R}^d$, it is clear that
$$
\mathscr{M}_{\mathcal{R}}(\vec{f})(x+\zeta)=\sup\limits_{R\ni x+\zeta\atop
R\in\mathcal{R}}\frac{1}{|R|^{m}}\prod\limits_{i=1}^m\int_{R}|f_i(y)|dy
=\sup\limits_{R\ni x\atop R\in\mathcal{R}}\frac{1}{|R|^{m}}
\prod\limits_{i=1}^m\int_{R}|f_i(y+\zeta)|dy.
$$
One can easily check that
\begin{equation}\label{3.3}
|\Delta_{2^{-k}\zeta}(\mathscr{M}_{\mathcal{R}}(\vec{f}))(x)|
\le \displaystyle\sum\limits_{l=1}^m\mathscr{M}_{\mathcal{R}}(\vec{f}_l^{k,\zeta})(x),
\end{equation}
where $\vec{f}_l^{k,\zeta}=(f_1,\ldots,f_{l-1},
\Delta_{2^{k}\zeta}f_l,f_{l+1}^{k,\zeta},\ldots,f_m^{k,
\zeta})$ and $f_j^{k,\zeta}(x)=f_j(x+2^{-k}\zeta)$ for
all $l+1\leq j\leq m$. Then we get from 
\eqref{3.2}-\eqref{3.3} and Minkowski's inequality that
\begin{equation}\label{3.4}
\begin{array}{ll}
&\|\mathscr{M}_{\mathcal{R}}(\vec{f})\|_{\dot{B}_s^{p,q}(\mathbb{R}^d)}
\\
&\lesssim\displaystyle\Big(\sum\limits_{k\in\mathbb{Z}}2^{ksq}
\Big(\int_{\mathfrak{R}_d}\int_{\mathbb{R}^d}
|\Delta_{2^{-k}\zeta}\mathscr{M}_{\mathcal{R}}(\vec{f})(x)|^pdxd\zeta
\Big)^{q/p}\Big)^{1/q}
\\
&\lesssim\displaystyle\sum\limits_{l=1}^m\Big(\sum\limits_{k\in\mathbb{Z}}
2^{ksq}\Big\|\|\mathscr{M}_{\mathcal{R}}(\vec{f}_l^{k,\zeta})\|_{L^p(\mathbb{R}^d)}
\Big\|_{L^p(\mathfrak{R}_d)}^q\Big)^{1/q}
\\
&\lesssim\displaystyle\sum\limits_{l=1}^m\Big(\sum\limits_{k\in\mathbb{Z}}
2^{ksq}\Big\|\prod\limits_{i=1}^{l-1}\|f_i\|_{L^{p_i}(\mathbb{R}^d)}
\|\Delta_{2^{-k}\zeta}f_l\|_{L^{p_l}(\mathbb{R}^d)}
\prod\limits_{j=l+1}^m\|f_j^{k,\zeta}\|_{L^{p_j}(\mathbb{R}^d)}
\Big\|_{L^p(\mathfrak{R}_d)}^q\Big)^{1/q}
\\
&\lesssim\displaystyle\sum\limits_{l=1}^m\prod\limits_{i\neq l,1\leq i\leq m}
\|f_i\|_{L^{p_i}(\mathbb{R}^d)}\Big(\sum\limits_{k\in\mathbb{Z}}
2^{ksq}\Big\|\|\Delta_{2^{-k}\zeta}f_l\|_{L^{p_l}(\mathbb{R}^d)}
\Big\|_{L^{p_l}(\mathfrak{R}_d)}^q\Big)^{1/q}
\\
&\lesssim\displaystyle\sum\limits_{l=1}^m\prod\limits_{i\neq l,1\leq i\leq m}
\|f_i\|_{L^{p_i}(\mathbb{R}^d)}\|f_l\|_{\dot{B}_s^{p_l,q}(\mathbb{R}^d)}.
\end{array}
\end{equation}
\eqref{3.4} together with \eqref{1.2} implies that
$$
\|\mathscr{M}_{\mathcal{R}}(\vec{f})\|_{B_s^{p,q}(\mathbb{R}^d)}
\leq C\prod\limits_{i=1}^m\|f_i\|_{B_s^{p_i,q}(\mathbb{R}^d)}.
$$
This completes the proof of the boundedness part.

We now prove the continuity part. Let $\vec{f}_j=(f_{1,j},
\ldots,f_{m,j})$ and $f_{i,j}\rightarrow f_i$ in
$B_s^{p_i,q}(\mathbb{R}^d)$ as $j\rightarrow\infty$. It is
known that $f_{i,j}\rightarrow f_i$ in $\dot{B}_s^{p_i,q}
(\mathbb{R}^d)$ and in $L^{p_i}(\mathbb{R}^d)$ as
$j\rightarrow\infty$. One can check that
\begin{equation}\label{3.5}
|\mathscr{M}_{\mathcal{R}}(\vec{f}_j)-\mathscr{M}_{\mathcal{R}}(\vec{f})|
\leq\sum\limits_{l=1}^m\mathscr{M}_{\mathcal{R}}(\vec{f}^l).
\end{equation}
Here $\vec{f}^l=(f_1,\ldots,f_{l-1},f_{l,j}-f_l,f_{l+1,j},
\ldots,f_{m,j})$. It follows from \eqref{3.5} that
$\mathscr{M}_{\mathcal{R}}(\vec{f}_j)\rightarrow
\mathscr{M}_{\mathcal{R}}(\vec{f})$ in $L^p(\mathbb{R}^d)$
as $j\rightarrow\infty$. Therefore, it suffices to show
that $\mathscr{M}_{\mathcal{R}}(\vec{f}_j)\rightarrow
\mathscr{M}_{\mathcal{R}}(\vec{f})$ in $\dot{B}_s^{p,q}
(\mathbb{R}^d)$ as $j\rightarrow\infty$.
We will prove this claim by contradiction.

Without loss of generality, we may assume that there
exists $c>0$ such that
$$
\|\mathscr{M}_{\mathcal{R}}(\vec{f}_j)
-\mathscr{M}_{\mathcal{R}}(\vec{f})\|_{\dot{B}_s^{p,q}(\mathbb{R}^d)}>c, \quad \hbox{for\ every }j.
$$
It is obvious that $\|\Delta_{2^{-k}\zeta}
(\mathscr{M}_{\mathcal{R}}(\vec{f}_j)
-\mathscr{M}_{\mathcal{R}}(\vec{f}))\|_{L^p(\mathbb{R}^d)}
\rightarrow0$ as $j\rightarrow\infty$ for every $(k,\zeta)
\in\mathbb{Z}\times\mathfrak{R}_d$. By \eqref{3.3}, for 
every $(x,k,\zeta)\in\mathbb{R}^d\times\mathbb{Z}\times
\mathfrak{R}_d$, we have
\begin{equation}\label{3.6}
\begin{array}{ll}
&|\Delta_{2^{-k}\zeta}(\mathscr{M}_{\mathcal{R}}(\vec{f}_j)-\mathscr{M}_{\mathcal{R}}(\vec{f}))(x)|\\
&\leq|\Delta_{2^{-k}\zeta}(\mathscr{M}_{\mathcal{R}}(\vec{f}_j))(x)|+|\Delta_{2^{-k}\zeta}(\mathscr{M}_{\mathcal{R}}(\vec{f}))(x)|\\
&\leq\displaystyle\sum\limits_{l=1}^m|\mathscr{M}_{\mathcal{R}}(\vec{f}_{l,j}^{k,\zeta})(x)-\mathscr{M}_{\mathcal{R}}(\vec{f}_{l}^{k,\zeta})(x)|+2\sum\limits_{l=1}^m\mathscr{M}_{\mathcal{R}}(\vec{f}_{l}^{k,\zeta})(x).
\end{array}\end{equation}
Here $\vec{f}_{l}^{k,\zeta}$ is given as in \eqref{3.3} and
$\vec{f}_{l,j}^{k,\zeta}=(f_{1,j},\ldots,f_{l-1,j},
\Delta_{2^{-k}\zeta}f_{l,j},f_{l+1,j}^{k,\zeta},\ldots,
f_{m,j}^{k,\zeta})$ with $f_{i,j}^{k,\zeta}(x)=f_{i,j}(x+
2^{-k}\zeta)$ for all $l+1\leq i\leq m$. From the 
third inequality to the last one in \eqref{3.4}, we obtain
\begin{equation}\label{3.7}
\begin{array}{ll}
\displaystyle\Big\|\sum\limits_{l=1}^m\mathscr{M}_{\mathcal{R}}(\vec{f}_{l}^{k,\zeta})\Big\|_{p,q}&\lesssim\displaystyle\Big(\sum\limits_{k\in\mathbb{Z}}2^{ksq}\Big(\int_{\mathfrak{R}_d}\int_{\mathbb{R}^d}
\Big|\sum\limits_{l=1}^m\mathscr{M}_{\mathcal{R}}(\vec{f}_{l}^{k,\zeta})(x)\Big|^pdxd\zeta\Big)^{q/p}\Big)^{1/q}
\\
&\lesssim\displaystyle\sum\limits_{l=1}^m\Big(\sum\limits_{k\in\mathbb{Z}}2^{ksq}\Big\|\|\mathscr{M}_{\mathcal{R}}(\vec{f}_l^{k,\zeta})\|_{L^p(\mathbb{R}^d)}\Big\|_{L^p(\mathfrak{R}_d)}^q\Big)^{1/q}\\
&\lesssim\displaystyle\sum\limits_{l=1}^m\prod\limits_{i\neq l,1\leq i\leq m}\|f_i\|_{L^{p_i}(\mathbb{R}^d)}\|f_l\|_{\dot{B}_s^{p_l,q}(\mathbb{R}^d)}.
\end{array}
\end{equation}
One can also verify that
\begin{equation}\label{3.8}
|\mathscr{M}_{\mathcal{R}}(\vec{f}_{l,j}^{k,\zeta})-\mathscr{M}_{\mathcal{R}}(\vec{f}_{l}^{k,\zeta})|\leq\sum\limits_{\mu=1}^{l-1}\mathscr{M}_{\mathcal{R}}(\vec{I_{\mu,j}^{k,\zeta}})
+\sum\limits_{\nu=l+1}^{m}\mathscr{M}_{\mathcal{R}}(\vec{J_{\nu,j}^{k,\zeta}})+\mathscr{M}_{\mathcal{R}}(\vec{K_{i,j}^{k,\zeta}}),\end{equation}
where $$\vec{I_{\mu,j}^{k,\zeta}}=(f_1,\ldots,f_{\mu-1},f_{\mu,j}-f_{\mu},
f_{\mu+1,j},\ldots,f_{l-1,j},\Delta_{2^{-k}\zeta}f_{l,j},f_{l+1,j}^{k,\zeta},\ldots,f_{m,j}^{k,\zeta}),$$
$$\vec{J_{\nu,j}^{k,\zeta}}=(f_1,\ldots,f_{l-1},\Delta_{2^{k}\zeta}f_l,f_{l+1}^{k,\zeta},\ldots,f_{\nu-1}^{k,\zeta},f_{\nu,j}^{k,\zeta}-f_{\nu}^{k,\zeta},f_{\nu+1,j}^{k,\zeta},\ldots,f_{m,j}^{k,\zeta}),$$
$$\vec{K_{i,j}^{k,\zeta}}=(f_1,\ldots,f_{l-1},\Delta_{2^{-k}\zeta}(f_{l,j}-f_l),f_{l+1,j}^{k,\zeta},\ldots,f_{m,j}^{k,\zeta}).$$
By \eqref{3.7} and \eqref{3.8}, one can deduce that
$$\Big\|\sum\limits_{l=1}^m|\mathscr{M}_{\mathcal{R}}(\vec{f}_{l,j}^{k,\zeta})-\mathscr{M}_{\mathcal{R}}(\vec{f}_{l}^{k,\zeta})|\Big\|_{p,q}\rightarrow0\ \ {\rm as}\ j\rightarrow\infty.$$
Thus, we can extract a subsequence such that $\sum_{j=1}^\infty
\|\sum_{l=1}^m|\mathscr{M}_{\mathcal{R}}(\vec{f}_{l,j}^{k,\zeta})
-\mathscr{M}_{\mathcal{R}}(\vec{f}_{l}^{k,\zeta})|\|_{p,q}<\infty$. 

Let
$$H(x,k,\zeta)=\sum_{j=1}^\infty\Big|\sum\limits_{l=1}^m\mathscr{M}_{\mathcal{R}}(\vec{f}_{l,j}^{k,\zeta})(x)
-\mathscr{M}_{\mathcal{R}}(\vec{f}_{l}^{k,\zeta})(x)\Big|+2\sum\limits_{l=1}^m\mathscr{M}_{\mathcal{R}}(\vec{f}_{l}^{k,\zeta})(x).$$
It is easily to check that $\|H\|_{p,q}<\infty$. By 
\eqref{3.6}, we get
\begin{equation}\label{3.9}
|\Delta_{2^{-k}\zeta}(\mathscr{M}_{\mathcal{R}}(\vec{f}_j)-\mathscr{M}_{\mathcal{R}}(\vec{f}))(x)|\leq H(x,k,\zeta)\ \ {\rm for\ a.e.}\ (x,k,\zeta)\in\mathbb{R}^d\times\mathbb{Z}\times\mathfrak{R}_d.\end{equation}
Since $\|H\|_{p,q}<\infty$, we have $\int_{\mathbb{R}^d}
|H(x,k,\zeta)^pdx<\infty$ for a.e. $(k,\zeta)\in\mathbb{Z}
\times\mathfrak{R}_d$. By \eqref{3.9} and the dominated 
convergence theorem, for a.e. $(k,\zeta)\in\mathbb{Z}
\times\mathfrak{R}_d$, it holds that
\begin{equation}\label{3.10}
\lim\limits_{j\rightarrow\infty}\int_{\mathbb{R}^d}|\Delta_{2^{-k}\zeta}(\mathscr{M}_{\mathcal{R}}(\vec{f}_j)-\mathscr{M}_{\mathcal{R}}(\vec{f}))(x)|^pdx=0.\end{equation}
Using \eqref{3.9} and the fact $\|H\|_{p,q}<\infty$ again, 
we have
\begin{equation}\label{3.11}
\int_{\mathbb{R}^d}|\Delta_{2^{-k}\zeta}(\mathscr{M}_{\mathcal{R}}(\vec{f}_j)-\mathscr{M}_{\mathcal{R}}(\vec{f}))(x)|^pdx\leq\int_{\mathbb{R}^d}H(x,k,\zeta)^pdx, \quad\hbox{for a.e }(k,\zeta)\in\mathbb{Z}\times\mathfrak{R}_d \end{equation}
and
\begin{equation}\label{3.12}
\int_{\mathfrak{R}_d}\int_{\mathbb{R}^d}H(x,k,\zeta)^pdxd\zeta<\infty\quad\quad\hbox{for every}\  k \in\mathbb{Z}.\end{equation}
It follows from \eqref{3.10}-\eqref{3.12} and
the dominated convergence theorem that
\begin{equation}\label{3.13}
\lim\limits_{j\rightarrow\infty}\Big(\int_{\mathfrak{R}_d}\int_{\mathbb{R}^d}|\Delta_{2^{-k}\zeta}(\mathscr{M}_{\mathcal{R}}(\vec{f}_j)-\mathscr{M}_{\mathcal{R}}(\vec{f}))(x)|^pdxd\zeta\Big)^{1/p}=0\end{equation}
For every $k\in\mathbb{Z}$, by \eqref{3.9} and the fact
$\|H\|_{p,q}<\infty$ again, we have
\begin{equation}\label{3.14}
\Big(\int_{\mathfrak{R}_d}\int_{\mathbb{R}^d}|\Delta_{2^{-k}\zeta}(\mathscr{M}_{\mathcal{R}}(\vec{f}_j)-\mathscr{M}_{\mathcal{R}}(\vec{f}))(x)|^pdxd\zeta\Big)^{1/p}
\leq\Big(\int_{\mathfrak{R}_d}\int_{\mathbb{R}^d}H(x,k,\zeta)^pdxd\zeta\Big)^{1/p}\end{equation} 
and
\begin{equation}\label{3.15}
\Big(\sum_{k\in\mathbb{Z}}2^{ksq}\Big(\int_{\mathfrak{R}_d}\int_{\mathbb{R}^d}H(x,k,\zeta)^pdxd\zeta\Big)^{q/p}\Big)^{1/q}<\infty.\end{equation}
Using \eqref{3.14}-\eqref{3.15} and the dominated convergence 
theorem again, one may obtain
$$\aligned\|\Delta_{2^{-k}\zeta}&(\mathscr{M}_{\mathcal{R}}(\vec{f}_j)-\mathscr{M}_{\mathcal{R}}(\vec{f}))\|_{p,q}\\
&=\Big(\sum_{k\in\mathbb{Z}}2^{ksq}\Big(\int_{\mathfrak{R}_d}\int_{\mathbb{R}^d}|\Delta_{2^{-k}\zeta}(\mathscr{M}_{\mathcal{R}}(\vec{f}_j)-\mathscr{M}_{\mathcal{R}}(\vec{f}))(x)|^pdxd\zeta\Big)^{q/p}\Big)^{1/q}\rightarrow 0\ \ {\rm as}\ j\rightarrow\infty.\endaligned$$
By \eqref{3.2}, this yields that $\|\mathscr{M}_{\mathcal{R}}(\vec{f}_j)-\mathscr{M}_{\mathcal{R}}(\vec{f})\|_{\dot{B}_s^{p,q}(\mathbb{R}^d)}
\rightarrow 0$ as $j\rightarrow\infty$, which gives
a contradiction. The proof of Theorem \ref{thm2} is
finished. $\hfill\Box$

\quad\hspace{-20pt}{\bf Proof of Theorem 1.3.} Given an
operator $T$ acting on functions in $\mathbb{R}$, we
denote by $T^j$, $j=1,2,\ldots,d$, the operator defined
on functions in $\mathbb{R}^d$ by letting $T$ act on
the $j$-th variable while keeping the remaining variables
fixed, namely
$$
T^jf(x)=T(f(x_1,x_2,\ldots,x_{j-1},\cdot,x_{j+1},\ldots,x_d))(x_j)\
\text{ for }x\in\mathbb{R}^d.
$$
We also define the operator $\mathcal{T}$ by
$
\mathcal{T}f(x)=T^1\circ T^2\circ\ldots\circ T^df(x).
$ We need the following lemma.
\begin{lemma}\label{l32}
If $T$ is bounded on $L^p(\mathbb{R},\ell^q(L^r
(\mathfrak{R}_d)))$ for some $1<p,q,r<\infty$, then the
operator $\mathcal{T}$ is bounded on $L^p(\mathbb{R}^d,
\ell^q(L^r(\mathfrak{R}_d)))$.
\end{lemma}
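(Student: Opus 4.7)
The plan is to proceed by iterating the one-dimensional hypothesis in each coordinate direction separately and then composing. The essential point is that the hypothesis on $T$ can be lifted from the one-variable Bochner space $L^p(\mathbb{R},\ell^q(L^r(\mathfrak{R}_d)))$ to the $d$-variable space $L^p(\mathbb{R}^d,\ell^q(L^r(\mathfrak{R}_d)))$ acting in any single coordinate, provided one views the remaining $d-1$ spatial coordinates as parameters via Fubini.

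More precisely, for each fixed $j\in\{1,\dots,d\}$ I would write $\hat x_j=(x_1,\dots,x_{j-1},x_{j+1},\dots,x_d)\in\mathbb{R}^{d-1}$ and, for a measurable $g:\mathbb{R}^d\times\mathbb{Z}\times\mathfrak{R}_d\to\mathbb{R}$, regard the slice
\[
\tilde g_{\hat x_j}(t,k,\zeta):=g(x_1,\dots,x_{j-1},t,x_{j+1},\dots,x_d,k,\zeta)
\]
as an element of $L^p(\mathbb{R},\ell^q(L^r(\mathfrak{R}_d)))$ in the variable $t$, for a.e.\ $\hat x_j$. By the defining action $T^j g(x,k,\zeta)=T(\tilde g_{\hat x_j})(x_j)(k,\zeta)$ and the assumed boundedness of $T$, one has
\[
\int_{\mathbb{R}}\bigl\|T^jg(\hat x_j,x_j,\cdot,\cdot)\bigr\|_{\ell^q(L^r)}^{p}\,dx_j
=\|T\tilde g_{\hat x_j}\|_{L^p(\mathbb{R},\ell^q(L^r))}^{p}
\le C^p\int_{\mathbb{R}}\bigl\|g(\hat x_j,x_j,\cdot,\cdot)\bigr\|_{\ell^q(L^r)}^{p}\,dx_j.
\]
Integrating this inequality over $\hat x_j\in\mathbb{R}^{d-1}$ and invoking Fubini's theorem (which is legitimate since we are dealing with $L^p$ norms of nonnegative integrands) yields
\[
\|T^jg\|_{L^p(\mathbb{R}^d,\ell^q(L^r))}\le C\,\|g\|_{L^p(\mathbb{R}^d,\ell^q(L^r))}
\]
with the same constant $C$ furnished by the hypothesis, for every $j=1,2,\dots,d$.

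Finally, composing these $d$ one-variable bounds gives $\|\mathcal{T}f\|_{L^p(\mathbb{R}^d,\ell^q(L^r))}\le C^d\|f\|_{L^p(\mathbb{R}^d,\ell^q(L^r))}$, which is the desired conclusion. The only point requiring some care is the measurability and well-definedness of $T^j g$ as a function on $\mathbb{R}^d\times\mathbb{Z}\times\mathfrak{R}_d$ from the a.e.-defined action of $T$ on slices; I expect this to be the main (but still standard) technical obstacle, and would handle it by approximating $g$ by simple tensor functions $\varphi(x_j)\otimes\psi(\hat x_j,k,\zeta)$ where the action $T^j(\varphi\otimes\psi)=(T\varphi)\otimes\psi$ is unambiguous, applying the estimate above on this dense class, and passing to the limit using the $L^p$ bound to define $T^j$ on the whole Bochner space by continuity.
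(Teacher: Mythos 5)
Your proposal is correct and follows essentially the same route as the paper: slice in the $j$-th coordinate, apply the one-dimensional boundedness of $T$ on $L^p(\mathbb{R},\ell^q(L^r(\mathfrak{R}_d)))$ to each slice, integrate over the remaining $d-1$ variables by Fubini to bound each $T^j$, and compose the $d$ bounds to obtain the constant $\|T\|^d$. Your additional remark on measurability via tensor-product approximation is a reasonable extra precaution that the paper simply leaves implicit.
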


\begin{proof} 
For all $j=1,\ldots,d$, we shall prove the following 
inequality
\begin{equation}\label{3.16}
\Big\|\Big(\sum\limits_{i\in\mathbb{Z}}
\|T^jf_{i,\zeta}\|_{L^r(\mathfrak{R}_d)}^q\Big)^{1/q}\Big\|_{L^p(\mathbb{R}^d)}
\leq\|T\| \Big\|\Big(\sum\limits_{i\in\mathbb{Z}}
\|f_{i,\zeta}\|_{L^r(\mathfrak{R}_d)}^q\Big)^{1/q}\Big\|_{L^p(\mathbb{R}^d)}.
\end{equation}
Here $\|T\|$ represents the operator norm of $T$ on 
$L^p(\mathbb{R},\ell^q(L^r(\mathfrak{R}_d)))$. We only 
prove \eqref{3.16} for $j=1$ and the other cases are
analogous. We may write
$$
\begin{array}{ll}
&\displaystyle\Big\|\Big(\sum\limits_{i\in\mathbb{Z}}
\|T^1f_{i,\zeta}\|_{L^r(\mathfrak{R}_d)}^q\Big)^{1/q}
\Big\|_{L^p(\mathbb{R}^d)}^p
\\
&=\displaystyle\int_{\mathbb{R}^d}\Big(
\sum\limits_{i\in\mathbb{Z}}\|T^1f_{i,\zeta}\|_{L^r(\mathfrak{R}_d)}^q
\Big)^{p/q}dx
\\
&=\displaystyle\int_{\mathbb{R}^{d-1}}\Big(
\int_{\mathbb{R}}\Big(\sum\limits_{i\in\mathbb{Z}}\Big(
\int_{\mathfrak{R}_d}|T(f_{i,\zeta}(\cdot,x_2,\ldots,x_d))(x_1)|^r
d\zeta\Big)^{q/r}\Big)^{p/q}dx_1\Big)dx_2\ldots dx_d
\\
&\leq\displaystyle\|T\|^p\int_{\mathbb{R}^{d-1}}\Big(
\int_{\mathbb{R}}\Big(\sum\limits_{i\in\mathbb{Z}}
\|f_{i,\zeta}(x_1,x_2,\ldots,x_d)\|_{L^r(\mathfrak{R}_d)}^q\Big)^{p/q}dx_1\Big)dx_2\ldots dx_d
\\
&=\displaystyle\|T\|^p\Big\|\Big(\sum\limits_{i\in\mathbb{Z}}
\|f_{i,\zeta}\|_{L^r(\mathfrak{R}_d)}^q\Big)^{1/q}
\Big\|_{L^p(\mathbb{R}^d)}^p,
\end{array}
$$
which leads to \eqref{3.16} for $j=1$. \eqref{3.16} together 
with the definition of $\mathcal{T}$ yields that
$$
\Big\|\Big(\sum\limits_{i\in\mathbb{Z}}
\|\mathcal{T}f_{i,\zeta}\|_{L^r(\mathfrak{R}_d)}^q\Big)^{1/q}
\Big\|_{L^p(\mathbb{R}^d)}\leq\|T\|^d \Big\|\Big(\sum\limits_{i\in\mathbb{Z}}
\|f_{i,\zeta}\|_{L^r(\mathfrak{R}_d)}^q\Big)^{1/q}\Big\|_{L^p(\mathbb{R}^d)}.
$$
This proves Lemma \ref{l32}.
\end{proof}

The following vector-valued inequalities of the one dimensional
uncentered Hardy-Littlewood maximal function will be very useful
in the proof of Theorem \ref{thm3}.
\begin{lemma}\label{l3.3}
(\cite{Ya}). For any $1<p,\,q,\,r<\infty$, it holds that
$$
\Big\|\Big(\sum\limits_{j\in\mathbb{Z}}
\|\mathcal{M}f_{j,\zeta}\|_{L^r(\mathfrak{R}_d)}^q\Big)^{1/q}
\Big\|_{L^p(\mathbb{R})}
\lesssim_{p,q,r}\Big\|\Big(\sum\limits_{j\in\mathbb{Z}}\|f_{j,\zeta}
\|_{L^r(\mathfrak{R}_d)}^q\Big)^{1/q}\Big\|_{L^p(\mathbb{R})}.
$$
\end{lemma}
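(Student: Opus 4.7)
The plan is to establish this mixed-norm vector-valued Fefferman--Stein inequality in two stages, first proving the special case involving only an $L^r(\mathfrak{R}_d)$-norm and then incorporating the $\ell^q$-summation via extrapolation.

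\emph{First stage: an $L^r(\mathfrak{R}_d)$-valued bound.} I would begin by proving the auxiliary estimate
\begin{equation}\label{eqFSaux}
\Big\|\|\mathcal{M}u_\zeta\|_{L^r_\zeta(\mathfrak{R}_d)}\Big\|_{L^p_x(\mathbb{R})}\lesssim_{p,r}\Big\|\|u_\zeta\|_{L^r_\zeta(\mathfrak{R}_d)}\Big\|_{L^p_x(\mathbb{R})}\qquad (1<p,r<\infty).
\end{equation}
When $p=r$, this follows at once from Fubini's theorem combined with the scalar $L^r(\mathbb{R})$-boundedness of $\mathcal{M}$. For general $p$, I would apply a Calder\'on--Zygmund decomposition at height $\lambda$ to the nonnegative scalar function $F(y):=\|u_\cdot(y)\|_{L^r_\zeta}$; the Banach-lattice structure of $L^r(\mathfrak{R}_d)$ allows one to decompose the vector-valued function $u_\cdot$ correspondingly and obtain a weak-type $(1,1)$ bound for $\mathcal{M}$ acting componentwise on $L^r$-valued inputs. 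Marcinkiewicz interpolation with the strong $(r,r)$ endpoint then delivers \eqref{eqFSaux} for every $1<p<\infty$.

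\emph{Second stage: incorporating the $\ell^q_j$-summation.} The argument for \eqref{eqFSaux} transfers verbatim to the weighted setting: for every $A_p$-weight $w$ on $\mathbb{R}$ one has
$$\Big\|\|\mathcal{M}u_\zeta\|_{L^r_\zeta(\mathfrak{R}_d)}\Big\|_{L^p(w)}\lesssim_{p,r,[w]_{A_p}}\Big\|\|u_\zeta\|_{L^r_\zeta(\mathfrak{R}_d)}\Big\|_{L^p(w)}.$$
Rubio de Francia's extrapolation theorem then automatically upgrades this weighted estimate to its $\ell^q_j$-valued analogue; specializing to $w\equiv 1$ yields the stated inequality for every $1<q<\infty$. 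Equivalently, the resulting estimate is the Fefferman--Stein maximal inequality with values in the Banach lattice $\ell^q(L^r(\mathfrak{R}_d))$.

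The main obstacle is the first stage. The naive approach---apply Minkowski's integral inequality on each interval and then take the supremum---fails, because the pointwise estimate $\|\mathcal{M}u_\cdot(x)\|_{L^r_\zeta}\le\mathcal{M}(\|u_\cdot\|_{L^r_\zeta})(x)$ is false in general: the supremum over intervals defining $\mathcal{M}$ does not commute with the $L^r_\zeta$-norm, and Minkowski's inequality can only be applied at the level of a single fixed interval. Consequently, \eqref{eqFSaux} cannot be proved pointwise and must instead be obtained at the level of $L^p$-norms, through the weak-type endpoint produced by the Calder\'on--Zygmund decomposition sketched above.
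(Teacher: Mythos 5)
The paper itself offers no argument for this lemma: it is quoted directly from \cite{Ya}, so there is no in-paper proof to match. What you propose is a self-contained derivation, and its architecture is sound: the inequality is exactly the Fefferman--Stein maximal inequality for the mixed-norm lattice $\ell^q(\mathbb{Z};L^r(\mathfrak{R}_d))$, and obtaining it by first proving the weighted $L^r(\mathfrak{R}_d)$-valued estimate for $A_p$ weights and then invoking Rubio de Francia extrapolation (whose standard corollary upgrades a weighted scalar family of pairs to $\ell^q$-valued inequalities, here applied to the pairs $\bigl(\|\mathcal{M}f_{j,\cdot}(x)\|_{L^r_\zeta},\|f_{j,\cdot}(x)\|_{L^r_\zeta}\bigr)$) is a legitimate and arguably the cleanest route when $q\neq r$. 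Your closing observation is also correct and worth making explicit: the pointwise domination $\|\mathcal{M}u_\cdot(x)\|_{L^r_\zeta}\le \mathcal{M}(\|u_\cdot\|_{L^r_\zeta})(x)$ fails because the supremum over intervals sits inside the $\zeta$-norm, so no purely pointwise reduction to the scalar maximal function is available.

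The genuine soft spot is the first stage as you have sketched it. For the weak-type $(1,1)$ endpoint of the $L^r_\zeta$-valued maximal operator, the Calder\'on--Zygmund decomposition of $F(y)=\|u_\cdot(y)\|_{L^r_\zeta}$ handles the good part, but the bad part does not close in the naive way: $\mathcal{M}$ is positive and sublinear, so the componentwise mean-zero cancellation cannot be exploited directly, and the crude bound $\mathcal{M}(b_\zeta\chi_{I_k})(x)\lesssim \mathrm{dist}(x,I_k)^{-1}\int_{I_k}|b_\zeta|$ off the expanded intervals has only $1/\mathrm{dist}$ decay, which is not integrable at infinity even in dimension one. The classical proofs supply the missing ingredient either by majorizing $\mathcal{M}$ by convolutions with a smooth bump and treating $u\mapsto\{\phi_t*u\}_t$ as a Banach-space-valued Calder\'on--Zygmund operator whose kernel satisfies a H\"ormander condition, or by the original Fefferman--Stein argument; similarly, ``transfers verbatim to the weighted setting'' is really an appeal to the Andersen--John weighted Fefferman--Stein inequality rather than a verbatim repetition. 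None of this makes the approach wrong---both ingredients are classical theorems you could simply cite (as the paper cites \cite{Ya}), and with them your extrapolation step completes the proof---but as a self-contained argument the proposal has a gap precisely at the weak-type endpoint.
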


Applying Lemmas \ref{l32} and \ref{l3.3}, we can get the following

\begin{lemma}\label{l3.4}
For any $1<p,\,q,\,r<\infty$, it holds that
$$
\Big\|\Big(\sum\limits_{j\in\mathbb{Z}}\|\mathcal{M}_{\mathcal{R}}
f_{j,\zeta}\|_{L^r(\mathfrak{R}_d)}^q\Big)^{1/q}\Big\|_{L^p(\mathbb{R}^d)}
\lesssim_{p,q,r}\Big\|\Big(\sum\limits_{j\in\mathbb{Z}}\|f_{j,\zeta}
\|_{L^r(\mathfrak{R}_d)}^q\Big)^{1/q}\Big\|_{L^p(\mathbb{R}^d)}.
$$
\end{lemma}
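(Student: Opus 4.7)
The plan is to exploit the well-known pointwise domination of the strong maximal function by an iterated composition of one-dimensional uncentered Hardy--Littlewood maximal operators, and then reduce the vector-valued inequality to its one-dimensional counterpart via Lemma \ref{l32}.

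First, I would establish the pointwise bound
\begin{equation*}
\mathcal{M}_{\mathcal{R}} f(x) \;\lesssim_{d}\; \widetilde{\mathcal{M}}^{1}\circ\widetilde{\mathcal{M}}^{2}\circ\cdots\circ\widetilde{\mathcal{M}}^{d} f(x),\qquad x\in\mathbb{R}^d,
\end{equation*}
where $\widetilde{\mathcal{M}}^{j}$ denotes the one-dimensional uncentered Hardy--Littlewood maximal operator acting on the $j$-th coordinate while the remaining coordinates are frozen. This follows from a routine Fubini argument: every rectangle $R=I_1\times\cdots\times I_d\ni x$ with sides parallel to the axes satisfies
\begin{equation*}
\frac{1}{|R|}\int_R|f(y)|\,dy \;=\; \frac{1}{|I_1|}\int_{I_1}\Big(\frac{1}{|I_2|}\int_{I_2}\cdots\frac{1}{|I_d|}\int_{I_d}|f(y_1,\ldots,y_d)|\,dy_d\cdots dy_2\Big)dy_1,
\end{equation*}
and each inner average at level $j$ is dominated by $\widetilde{\mathcal{M}}^{j+1}\circ\cdots\circ\widetilde{\mathcal{M}}^{d}f$ evaluated at $(y_1,\ldots,y_j,x_{j+1},\ldots,x_d)$; taking the sup over $I_1\ni x_1$ yields the desired inequality.

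Next, applying Lemma \ref{l3.3} with the choice $T=\widetilde{\mathcal{M}}$, we obtain that $\widetilde{\mathcal{M}}$ is bounded on $L^p(\mathbb{R},\ell^q(L^r(\mathfrak{R}_d)))$ for all $1<p,q,r<\infty$. I would then invoke Lemma \ref{l32} with this same $T$, which promotes the one-dimensional vector-valued bound to the $d$-dimensional iterated operator $\mathcal{T}=\widetilde{\mathcal{M}}^1\circ\widetilde{\mathcal{M}}^2\circ\cdots\circ\widetilde{\mathcal{M}}^d$, giving
\begin{equation*}
\Big\|\Big(\sum_{j\in\mathbb{Z}}\|\mathcal{T} f_{j,\zeta}\|_{L^r(\mathfrak{R}_d)}^q\Big)^{1/q}\Big\|_{L^p(\mathbb{R}^d)} \;\lesssim_{p,q,r}\; \Big\|\Big(\sum_{j\in\mathbb{Z}}\|f_{j,\zeta}\|_{L^r(\mathfrak{R}_d)}^q\Big)^{1/q}\Big\|_{L^p(\mathbb{R}^d)}.
\end{equation*}

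Finally, combining the pointwise majorization with this iterated vector-valued bound yields the claim. The monotonicity in the first step is crucial: since $\mathcal{M}_{\mathcal{R}} f_{j,\zeta}(x)\lesssim_d \mathcal{T}f_{j,\zeta}(x)$ holds pointwise, replacing $\mathcal{T}$ by $\mathcal{M}_{\mathcal{R}}$ on the left side only decreases the norm. The only mildly delicate step is the pointwise comparison at the start; once that is in hand, the remainder is a direct two-line application of Lemmas \ref{l32} and \ref{l3.3}, so no real obstacle arises.
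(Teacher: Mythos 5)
Your proposal is correct and follows essentially the same route as the paper: the paper's proof also uses the pointwise domination $\mathcal{M}_{\mathcal{R}}f\leq M^1\circ M^2\circ\cdots\circ M^d f$ by iterated one-dimensional uncentered maximal operators and then applies Lemmas \ref{l32} and \ref{l3.3}. The only cosmetic difference is that you spell out the Fubini argument for the pointwise bound (which in fact holds with constant $1$, so the factor $\lesssim_d$ is not needed), whereas the paper simply states it.
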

\begin{proof}
For $j=1,\ldots,d$, we define the operator $M^j$ by
$$
M^jf(x_1,x_2,\ldots,x_d)=\sup\limits_{a<x_j<b}\frac{1}{b-a}
\int_a^b|f(x_1,\ldots,x_{j-1},y,x_{j+1},\ldots,x_d)|dy.
$$
One can easily check that
\begin{equation}\label{3.17}
M^jf(x)=\mathcal{M}(f(x_1,x_2,\ldots,x_{j-1},\cdot,x_{j+1},\ldots,x_d)(x_j),\end{equation}
\begin{equation}\label{3.18}
\mathcal{M}_{\mathcal{R}}f(x)\leq M^1\circ M^2\circ\cdots\circ M^df(x).\end{equation}
Using \eqref{3.17}-\eqref{3.18} and Lemmas \ref{l32}-\ref{l3.3}, 
 for all $1<p,\,q,\,r<\infty$, we can get
$$\Big\|\Big(\sum\limits_{j\in\mathbb{Z}}\|\mathcal{M}_{\mathcal{R}}f_{j,\zeta}\|_{L^r(\mathfrak{R}_d)}^q\Big)^{1/q}\Big\|_{L^p(\mathbb{R}^d)}
\lesssim_{p,q,r}\Big\|\Big(\sum\limits_{j\in\mathbb{Z}}\|f_{j,\zeta}\|_{L^r(\mathfrak{R}_d)}^q\Big)^{1/q}\Big\|_{L^p(\mathbb{R}^d)}$$
Then Lemma \ref{l3.4} is proved.
\end{proof}
\quad\hspace{-20pt}{\it Proof of Theorem \ref{thm3}.}
Let $0<s<1$ and $1<p_1,\ldots,p_m,p,q<\infty$ with $1/p=
\sum_{i=1}^m1/p_i$. Let $\vec{f}=(f_1,\ldots,f_m)$ with
each $f_j\in F_s^{p_j,q}(\mathbb{R}^d)$. One can easily
check that \eqref{3.3} also holds. We get from \eqref{3.3} 
that
\begin{equation}\label{3.19}
\begin{array}{ll}
&|\Delta_{2^{-k}\zeta}(\mathscr{M}_{\mathcal{R}}(\vec{f}))(x)|\\
&\leq\displaystyle\sum\limits_{l=1}^m\mathcal{M}_{\mathcal{R}}(\Delta_{2^{-k}\zeta}f_l)(x)\prod\limits_{\mu=1}^{l-1}\mathcal{M}_{\mathcal{R}}f_\mu(x)\prod\limits_{\nu=l+1}^m\mathcal{M}_{\mathcal{R}}(f_\nu^{k,\zeta})(x)\\
&=\displaystyle\sum\limits_{l=1}^m\mathcal{M}_{\mathcal{R}}(\Delta_{2^{-k}\zeta}f_l)(x)\prod\limits_{\mu=1}^{l-1}\mathcal{M}_{\mathcal{R}}f_\mu(x)
\prod\limits_{\nu=l+1}^m\mathcal{M}_{\mathcal{R}}(\Delta_{2^{-k}\zeta}f_\nu+f_\nu)(x)\\
&\le\displaystyle\sum\limits_{\emptyset \ne\tau\subset \tau_m}
\prod\limits_{\mu\in\tau}\mathcal{M}_{\mathcal{R}}(\Delta_{2^{-k}\zeta}f_\mu)(x)
\prod\limits_{\nu\in\tau'}\mathcal{M}_{\mathcal{R}}f_\nu(x),
\end{array}\end{equation}
where $\tau_m=\{1,2,\dots,m\}$ and $\tau'=\tau_m\setminus
\tau$ for $\tau\subset\tau_m$.

Thus, Lemma \ref{l3.1} (i), \eqref{3.19} and the Minkowski
inequality yield that
\begin{equation}\label{3.20}
\begin{array}{ll}
&\|\mathscr{M}_{\mathcal{R}}(\vec{f})\|_{\dot{F}_s^{p,q}(\mathbb{R}^d)}
\\ 
&\lesssim\displaystyle\Big\|\Big(\sum\limits_{k\in\mathbb{Z}}2^{ksq}
\Big(\int_{\mathfrak{R}_d}|\Delta_{2^{-k}\zeta}(\mathscr{M}_{\mathcal{R}}
(\vec{f}))|d\zeta\Big)^{{q}}
\Big)^{{1}/{q}}\Big\|_{L^p(\mathbb{R}^d)}
\\ 
&\lesssim\displaystyle\sum\limits_{\emptyset \ne\tau\subset \tau_m}
\Big\|\Big(\sum\limits_{k\in\mathbb{Z}}2^{ksq}
\Big(\int_{\mathfrak{R}_d}
\prod\limits_{\mu\in\tau}\mathcal{M}_{\mathcal{R}}(\Delta_{2^{-k}\zeta}f_\mu)
\prod\limits_{\nu\in\tau'}\mathcal{M}_{\mathcal{R}}f_\nu d\zeta\Big)^{{q}}
\Big)^{{1}/{q}}\Big\|_{L^p(\mathbb{R}^d)}.
\end{array}
\end{equation}
We shall prove the following estimate.
\begin{equation}\label{3.21}
\begin{array}{ll}
&\displaystyle\Big\|\Big(\sum\limits_{k\in\mathbb{Z}}2^{ksq}
\Big(\int_{\mathfrak{R}_d}
\prod\limits_{\mu\in\tau}\mathcal{M}_{\mathcal{R}}(\Delta_{2^{-k}\zeta}f_\mu)
\prod\limits_{\nu\in\tau'}\mathcal{M}_{\mathcal{R}}f_{\nu}d\zeta\Big)^{{q}}
\Big)^{{1}/{q}}\Big\|_{L^p(\mathbb{R}^d)}
\\
&=\displaystyle\Big\|\Big(\sum\limits_{k\in\mathbb{Z}}2^{ksq}
\Big(\int_{\mathfrak{R}_d}
\prod\limits_{\mu\in\tau}\mathcal{M}_{\mathcal{R}}(\Delta_{2^{-k}\zeta}f_\mu)
d\zeta\Big)^{{q}}\Big)^{{1}/{q}}
\prod\limits_{\nu\in\tau'}\mathcal{M}_{\mathcal{R}}f_\nu
\Big\|_{L^p(\mathbb{R}^d)}
\\
&\lesssim\displaystyle
\prod\limits_{\mu\in\tau}\|f_\mu\|_{{F}_{s}^{p_\mu,q}(\mathbb{R}^d)}
\prod\limits_{\nu\in\tau'}\|f_\nu\|_{{F}_{s}^{p_\nu,q}(\mathbb{R}^d)}
\end{array}
\end{equation}
Let $1/p_{\tau}=\sum_{\mu\in\tau}1/p_\mu$. Then, using
H\"older's inequality and the $L^p$ bounds for
$\mathcal{M}_{\mathcal{R}}$ we have
\begin{align*}
&\Big\|\Big(\sum\limits_{k\in\mathbb{Z}}2^{ksq}
\Big(\int_{\mathfrak{R}_d}
\prod\limits_{\mu\in\tau}\mathcal{M}_{\mathcal{R}}(\Delta_{2^{-k}\zeta}
f_\mu)d\zeta\Big)^{{q}}\Big)^{{1}/{q}}
\prod\limits_{\nu\in\tau'}\mathcal{M}_{\mathcal{R}}
f_\nu\Big\|_{L^p(\mathbb{R}^d)}
\\ 
&\le
\Big\|\Big(\sum\limits_{k\in\mathbb{Z}}2^{ksq}
\Big(\int_{\mathfrak{R}_d}
\prod\limits_{\mu\in\tau}\mathcal{M}_{\mathcal{R}}(\Delta_{2^{-k}\zeta}f_\mu)
d\zeta\Big)^{{q}}\Big)^{{1}/{q}}\Big\|_{L^{p_\tau}(\mathbb{R}^d)}
\Big\|
\prod\limits_{\nu\in\tau'}\mathcal{M}_{\mathcal{R}}
f_\nu\Big\|_{L^{p_{\tau'}}(\mathbb{R}^d)}
\\ 
&\le
\Big\|\Big(\sum\limits_{k\in\mathbb{Z}}2^{ksq}\prod\limits_{\mu\in\tau}
\Bigl\|\mathcal{M}_{\mathcal{R}}(\Delta_{2^{-k}\zeta}f_\mu)
\Bigr\|_{L^{p_\mu /p_\tau}({\mathfrak{R}_d})}^{{q}}\Big)^{{1}/{q}}
\Big\|_{L^{p_\tau}(\mathbb{R}^d)}\prod\limits_{\nu\in\tau'}\Big\|
\mathcal{M}_{\mathcal{R}}f_\nu\Big\|_{L^{p_\nu}(\mathbb{R}^d)}
\\&\le
\Big\|\prod\limits_{\mu\in\tau}\Big(\sum\limits_{k\in\mathbb{Z}}2^{ksq}
\Bigl\|\mathcal{M}_{\mathcal{R}}(\Delta_{2^{-k}\zeta}f_\mu)
\Bigr\|_{L^{p_\mu/p_\tau}({\mathfrak{R}_d})}^{{p_\mu q/p_\tau}}
\Big)^{{p_\tau}/{p_\mu q}}\Big\|_{L^{p_\tau}(\mathbb{R}^d)}
\prod\limits_{\nu\in\tau'}\Big\|f_\nu\Big\|_{L^{p_\nu}(\mathbb{R}^d)}\\&\le\prod\limits_{i\in\tau}\Big\|\Big(\sum\limits_{k\in\mathbb{Z}}2^{ksq}
\Bigl\|\mathcal{M}_{\mathcal{R}}(\Delta_{2^{-k}\zeta}f_\mu)
\Bigr\|_{L^{p_\mu/p_\tau}({\mathfrak{R}_d})}^{{p_\mu q/p_\tau}}
\Big)^{{p_\tau}/{p_\mu q}}\Big\|_{L^{p_\mu}(\mathbb{R}^d)}
\prod\limits_{\nu\in\tau'}\Big\|f_\nu\Big\|_{L^{p_\nu}(\mathbb{R}^d)}
\end{align*}
\begin{align*}
{}&=
\prod\limits_{\mu\in\tau}\Big\|
\Big(\sum\limits_{k\in\mathbb{Z}}2^{k(p_\tau s/p_\mu)(p_\mu q/p_\tau)}
\Bigl\|\mathcal{M}_{\mathcal{R}}(\Delta_{2^{-k}\zeta}f_\mu)
\Bigr\|_{L^{p_\mu/p_\tau}({\mathfrak{R}_d})}^{{p_\mu q/p_\tau}}
\Big)^{{p_\tau}/{p_\mu q}}\Big\|_{L^{p_\mu}(\mathbb{R}^d)}
\prod\limits_{\nu\in\tau'}\|f_\nu\|_{L^{p_\nu}(\mathbb{R}^d)}\\&\lesssim
\prod\limits_{\mu\in\tau}\Big\|
\Big(\sum\limits_{k\in\mathbb{Z}}2^{k(p_\tau s/p_\mu)(p_\mu q/p_\tau)}
\|\Delta_{2^{-k}\zeta}f_\mu
\|_{L^{p_\mu/p_\tau}({\mathfrak{R}_d})}^{{p_\mu q/p_\tau}}
\Big)^{{p_\tau}/{p_\mu q}}\Big\|_{L^{p_\mu}(\mathbb{R}^d)}
\prod\limits_{\nu\in\tau'}\|f_\nu\|_{L^{p_\nu}(\mathbb{R}^d)}\\&\lesssim
\prod\limits_{\mu\in\tau}
\|f_\mu\|_{\dot{F}_{p_\tau s/p_\mu}^{p_\mu,p_\mu q/p_\tau}(\mathbb{R}^d)}
\prod\limits_{\nu\in\tau'}\|f_i\|_{L^{p_\nu}(\mathbb{R}^d)}
\\ 
&\le
\prod\limits_{\mu\in\tau}
\|f_\mu\|_{{F}_{p_\tau s/p_\mu}^{p_\mu,p_\mu q/p_\tau}(\mathbb{R}^d)}
\prod\limits_{\nu\in\tau'}\|f_\nu\|_{L^{p_\nu}(\mathbb{R}^d)}
\\ 
&\le
\prod\limits_{\mu\in\tau}
\|f_\mu\|_{{F}_{s}^{p_\mu,q}(\mathbb{R}^d)}
\prod\limits_{\nu\in\tau'}\|f_\nu\|_{{F}_{s}^{p_\nu,q}(\mathbb{R}^d)}.
\end{align*}
In the last estimate, we have used $p_\mu>p_\tau$ and the
inclusion property of Triebel-Lizorkin spaces. In the 6th
estimate, we used Lemma \ref{l3.4}. Thus, \eqref{3.21} 
holds. It follows from \eqref{3.20}-\eqref{3.21} that
\begin{equation}\label{3.22}
\|\mathscr{M}_{\mathcal{R}}(\vec{f})\|_{F_s^{p,q}(\mathbb{R}^d)}
\leq C\prod\limits_{i=1}^m\|f_i\|_{F_s^{p_i,q}(\mathbb{R}^d)}.\end{equation}
This completes the proof of the boundedness part.

Below we prove the continuity part. Let $f_{i,j}\rightarrow
f_i$ in $F_s^{p_i,q}(\mathbb{R}^d)$ as $j\rightarrow\infty$.
It is known that that $f_{i,j}\rightarrow f_i$ in
$\dot{F}_s^{p_i,q}(\mathbb{R}^d)$ and in $L^{p_i}
(\mathbb{R}^d)$ as $j\rightarrow\infty$. By \eqref{3.5}, it 
follows that $\mathscr{M}_{\mathcal{R}}(\vec{f_j})\rightarrow
\mathscr{M}_{\mathcal{R}}(\vec{f})$ in $L^p(\mathbb{R}^d)$
as $j\rightarrow\infty$. Therefore, it suffices to show that
$\mathscr{M}_{\mathcal{R}}(\vec{f_j})\rightarrow
\mathscr{M}_{\mathcal{R}}(\vec{f})$ in $\dot{F}_s^{p,q}
(\mathbb{R}^d)$ as $j\rightarrow\infty$. Again, we will prove 
this claim by contradiction. Without loss of generality we may
assume that, for every $j$, there exists $c>0$ such that
$$\|\mathscr{M}_{\mathcal{R}}(\vec{f_j})-\mathscr{M}_{\mathcal{R}}(\vec{f})\|_{\dot{F}_s^{p,q}(\mathbb{R}^d)}>c.$$
For a measurable function $g:\mathbb{R}^d\times\mathbb{Z}
\times\mathfrak{R}_d\rightarrow\mathbb{R}$, we define
$$
\|g\|_{E_{p,q}^{s}}:=\Big(\int_{\mathbb{R}^d}\Big(\sum\limits_{k\in\mathbb{Z}}
2^{ks q}\Big(\int_{\mathfrak{R}_d}|g(x,k,\zeta)|d\zeta\Big)^{q}\Big)^{p/q}
dx\Big)^{1/p}.
$$
By Lemma \ref{l3.1}, we see that if $1\leq r<\min(p,q)$, then
$\|f\|_{\dot{F}_s^{p,q}(\mathbb{R}^d)}\thicksim\|
\Delta_{2^{-k}\zeta}f\|_{E_{p,q}^{s}}$.
By \eqref{3.6} and \eqref{3.8}, we get
\begin{equation}\label{3.23}
\begin{array}{ll}
&|\Delta_{2^{-k}\zeta}(\mathscr{M}_{\mathcal{R}}(\vec{f_j})
-\mathscr{M}_{\mathcal{R}}(\vec{f}))|
\\
&\leq\displaystyle\sum\limits_{l=1}^m\Big(\sum\limits_{\mu=1}^{l-1}
\mathscr{M}_{\mathcal{R}}(\vec{I_{\mu,j}^{k,\zeta}})
+\sum\limits_{\nu=l+1}^{m}\mathscr{M}_{\mathcal{R}}(\vec{J_{\nu,j}^{k,\zeta}})
+\mathscr{M}_{\mathcal{R}}(\vec{K_{i,j}^{k,\zeta}})\Big)+2\sum\limits_{l=1}^m
\mathscr{M}_{\mathcal{R}}(\vec{f}_l^{k,\zeta}),
\end{array}
\end{equation}
where $\vec{f}_l^{k,\zeta}$ is given as in \eqref{3.19} and
$\vec{I_{\mu,j}^{k,\zeta}}$, $\vec{J_{\nu,j}^{k,\zeta}}$
and $\vec{K_{i,j}^{k,\zeta}}$ are given as in \eqref{3.8}.

Notice that
$$
\begin{array}{ll}
\mathscr{M}_{\mathcal{R}}(\vec{I_{\mu,j}^{k,\zeta}})
&\leq\displaystyle\prod\limits_{i=1}^{\mu-1}\mathcal{M}_{\mathcal{R}}f_i
\mathcal{M}_{\mathcal{R}}(f_{\mu,j}-f_\mu)
\prod\limits_{\ell=\mu+1}^{l-1}\mathcal{M}_{\mathcal{R}}f_{\ell,j}
\mathcal{M}_{\mathcal{R}}(\Delta_{2^{-k}\zeta}f_{l,j})\prod\limits_{w=l+1}^m
\mathcal{M}_{\mathcal{R}}f_{w,j}^{k,\zeta}\\&=\displaystyle\prod\limits_{i=1}^{\mu-1}\mathcal{M}_{\mathcal{R}}f_i
\mathcal{M}_{\mathcal{R}}(f_{\mu,j}-f_\mu)\prod\limits_{\ell=\mu+1}^{l-1}
\mathcal{M}_{\mathcal{R}}f_{\ell,j}\mathcal{M}_{\mathcal{R}}(\Delta_{2^{-k}\zeta}f_{l,j})\\
&\quad\times\displaystyle\prod\limits_{w=l+1}^m
\mathcal{M}_{\mathcal{R}}(\Delta_{2^{-k}\zeta}f_{w,j}+f_{w,j})
\end{array}
$$
$$
\begin{array}{ll}
&\le\displaystyle\prod\limits_{i=1}^{\mu-1}\mathcal{M}_{\mathcal{R}}f_i
\mathcal{M}_{\mathcal{R}}(f_{\mu,j}-f_\mu)
\prod\limits_{\ell=\mu+1}^{l-1}\mathcal{M}_{\mathcal{R}}f_{\ell,j}
\mathcal{M}_{\mathcal{R}}(\Delta_{2^{-k}\zeta}f_{l,j})
\\
&\quad\times\displaystyle\prod\limits_{w=l+1}^m(\mathcal{M}_{\mathcal{R}}
(\Delta_{2^{-k}\zeta}f_{w,j})+\mathcal{M}_{\mathcal{R}}f_{w,j}).
\end{array}
$$
This together with the arguments similar to those used in
deriving \eqref{3.21} yields that
\begin{equation}\label{3.24}
\|\mathscr{M}_{\mathcal{R}}(\vec{I_{\mu,j}^{k,\zeta}})\|_{E_{p,q}^{s}}\lesssim\|f_{\mu,j}-f_\mu\|_{F_s^{p_\mu,q}(\mathbb{R}^d)}
\prod\limits_{i=1}^{\mu-1}\|f_i\|_{F_s^{p_i,q}(\mathbb{R}^d)}\prod\limits_{w=\mu+1}^m\|f_{w,j}\|_{F_s^{p_w,q}(\mathbb{R}^d)}.
\end{equation}
Similarly, we can conclude that
\begin{equation}\label{3.25}
\|\mathscr{M}_{\mathcal{R}}(\vec{J_{\nu,j}^{k,\zeta}})\|_{E_{p,q}^{s}}\lesssim\|f_{\nu,j}-f_\nu\|_{F_s^{p_\nu,q}(\mathbb{R}^d)}
\prod\limits_{i=1}^{\nu-1}\|f_i\|_{F_s^{p_i,q}(\mathbb{R}^d)}\prod\limits_{w=\nu+1}^m\|f_{w,j}\|_{F_s^{p_w,q}(\mathbb{R}^d)};
\end{equation}
\begin{equation}\label{3.26}
\|\mathscr{M}_{\mathcal{R}}(\vec{K_{i,j}^{k,\zeta}})\|_{E_{p,q}^{s}}\lesssim\|f_{l,j}-f_l\|_{F_s^{p_l,q}(\mathbb{R}^d)}
\prod\limits_{\ell=1}^{l-1}\|f_\ell\|_{F_s^{p_\ell,q}(\mathbb{R}^d)}\prod\limits_{w=l+1}^m\|f_{w,j}\|_{F_s^{p_w,q}(\mathbb{R}^d)};
\end{equation}
\begin{equation}\label{3.27}
\|\mathscr{M}_{\mathcal{R}}(\vec{f}_l^{k,\zeta})\|_{E_{p,q}^{s}}\lesssim\prod\limits_{\ell=1}^{m}\|f_\ell\|_{F_s^{p_\ell,q}(\mathbb{R}^d)}.
\end{equation}
It follows from \eqref{3.24}-\eqref{3.27} that
$$\Big\|\sum\limits_{\mu=1}^{l-1}\mathscr{M}_{\mathcal{R}}(\vec{I_{\mu,j}^{k,\zeta}})
+\sum\limits_{\nu=l+1}^{m}\mathscr{M}_{\mathcal{R}}(\vec{J_{\nu,j}^{k,\zeta}})+\mathscr{M}_{\mathcal{R}}(\vec{K_{i,j}^{k,\zeta}})\Big\|_{E_{p,q}^{s}}\rightarrow0\ \ {\rm as}\ j\rightarrow\infty.$$
Therefore, one can extract a subsequence, we still denote
it by $j$, such that
\begin{equation}\label{3.28}
\sum_{j=1}^\infty\Big\|\sum\limits_{\mu=1}^{l-1}\mathscr{M}_{\mathcal{R}}(\vec{I_{\mu,j}^{k,\zeta}})
+\sum\limits_{\nu=l+1}^{m}\mathscr{M}_{\mathcal{R}}(\vec{J_{\nu,j}^{k,\zeta}})+\mathscr{M}_{\mathcal{R}}(\vec{K_{i,j}^{k,\zeta}})\Big\|_{E_{p,q}^{s}}<\infty.
\end{equation}
Let
$$\begin{array}{ll}
G(x,k,\zeta)&=\displaystyle\sum\limits_{l=1}^m\sum_{j=1}^\infty
\sum\limits_{\mu=1}^{l-1}\mathscr{M}_{\mathcal{R}}(\vec{I_{\mu,j}^{k,\zeta}})(x)
+\sum\limits_{\nu=l+1}^{m}\mathscr{M}_{\mathcal{R}}(\vec{J_{\nu,j}^{k,\zeta}})(x)\\
&\quad+\displaystyle\mathscr{M}_{\mathcal{R}}(\vec{K_{i,j}^{k,\zeta}})(x)+2\sum\limits_{l=1}^m\mathscr{M}_{\mathcal{R}}(\vec{f}_l^{k,\zeta})(x).
\end{array}$$
We get from \eqref{3.27} and \eqref{3.28} that 
$\|G\|_{E_{p,q}^{s}}<\infty$. Furthermore by \eqref{3.23}, 
one obtains that
\begin{equation}\label{3.29}
|\Delta_{2^{-k}\zeta}(\mathscr{M}_{\mathcal{R}}(\vec{f_j})- \mathscr{M}_{\mathcal{R}}(\vec{f}))(x)|\leq G(x,k,\zeta)
\ \ {\rm for\ every}\ (x,k,\zeta)\in\mathbb{R}^d\times\mathbb{Z}
\times\mathfrak{R}_d.\end{equation}
\eqref{3.29} together with the dominated convergence theorem
leads to
\begin{equation}\label{3.30}
\int_{\mathfrak{R}_d}|\Delta_{2^{-k}\zeta}(
\mathscr{M}_{\mathcal{R}}(\vec{f_j})- \mathscr{M}_{\mathcal{R}}(\vec{f}))(x)|d\zeta\rightarrow0\ \ {\rm as}\ j\rightarrow\infty\ \ {\rm for\ every}\ (x,k,\zeta)\in\mathbb{R}^d\times\mathbb{Z}
\times\mathfrak{R}_d.\end{equation}
Since it holds that $\|G\|_{E_{p,q}^{s}}<\infty$, we
immediately deduce that
\begin{equation}\label{3.31}
\Big(\sum_{k\in\mathbb{Z}}2^{ksq}\Big(\int_{\mathfrak{R}_d}G(x,k,\zeta)d\zeta\Big)^{q}\Big)^{1/q}<\infty, \quad\hbox{for a.e. }x\in\mathbb{R}^d.\end{equation}
Using \eqref{3.29}, we obtain
\begin{equation}\label{3.32}
\int_{\mathfrak{R}_d}|\Delta_{2^{-k}\zeta}(\mathscr{M}_{\mathcal{R}}(\vec{f_j})-\mathscr{M}_{\mathcal{R}}(\vec{f}))(x)|d\zeta
\leq\int_{\mathfrak{R}_d}G(x,k,\zeta)d\zeta, \quad\hbox{for a.e. } x\in\mathbb{R}^d \  \hbox{and }k\in\mathbb{Z}.\end{equation}
\eqref{3.30}-\eqref{3.32} and the dominated convergence 
theorem give
\begin{equation}\label{3.33}
\Big(\sum_{k\in\mathbb{Z}}2^{ksq}\Big(\int_{\mathfrak{R}_d}|\Delta_{2^{-k}\zeta}(\mathscr{M}_{\mathcal{R}}(\vec{f_j})-\mathscr{M}_{\mathcal{R}}(\vec{f}))(x)|d\zeta\Big)^{q}\Big)^{1/q}\rightarrow 0\ \ {\rm as}\ j\rightarrow\infty, \ \ \hbox{for a.e. } x\in\mathbb{R}^d\end{equation}
By \eqref{3.29} again, for a.e. $x\in\mathbb{R}^d$, it is 
true that
\begin{equation}\label{3.34}
\Big(\sum_{k\in\mathbb{Z}}2^{ksq}\Big(\int_{\mathfrak{R}_d}|\Delta_{2^{-k}\zeta}(\mathscr{M}_{\mathcal{R}}(\vec{f_j})-\mathscr{M}_{\mathcal{R}}(\vec{f}))(x)|d\zeta\Big)^{q}\Big)^{1/q}
\leq\Big(\sum_{k\in\mathbb{Z}}2^{ksq}\Big(\int_{\mathfrak{R}_d}G(x,k,\zeta)d\zeta\Big)^{q}\Big)^{1/q}\end{equation}
It follows from \eqref{3.33}-\eqref{3.34}, $\|G\|_{E_{p,q}^{s}}
<\infty$ and the dominated convergence theorem that
$$\lim\limits_{j\rightarrow\infty}\|\Delta_{2^{-k}\zeta}(\mathscr{M}_{\mathcal{R}}(\vec{f_j})-\mathscr{M}_{\mathcal{R}}(\vec{f}))\|_{E_{p,q}^{s}}=0,$$
which yields $\|\mathscr{M}_{\mathcal{R}}(\vec{f_j})-
\mathscr{M}_{\mathcal{R}}(\vec{f})\|_{\dot{F}_s^{p,q}
(\mathbb{R}^d)}\rightarrow 0$ as $j\rightarrow\infty$
and leads to a contradiction. $\hfill\Box$

\section{ Property of $p$-quasicontinuity}\label{S4}
\begin{proof}
We will divide the proof of Theorem \ref{thm5} into three
steps.

{\it Step 1: A weak type inequality for the Sobolev capacity.}
Let us begin with a capacity inequality that can be used in
studying the pointwise behaviour of Sobolev functions by the
standard methods (see \cite{FZ}). Let $\vec{f}=(f_1,\ldots,
f_m)$ with each $f_i\in W^{1,p_i}(\mathbb{R}^d)$ for
$1<p_i<\infty$. Let $1<p<\infty$ and $1/p=\sum_{i=1}^m1/p_i
$. For $\lambda>0$, we set
$$
O_\lambda=\{x\in\mathbb{R}^d;\mathscr{M}_{\mathcal{R}}(\vec{f})(x)>\lambda\}.
$$
Note that $O_\lambda$ is an open set. We get from Theorem
\ref{thm1} that
\begin{equation}\label{4.1}
\begin{array}{ll}
C_p(O_\lambda)^{1/p}&\leq\displaystyle\frac{1}{\lambda}\Big(
\int_{\mathbb{R}^d}(|\mathscr{M}_{\mathcal{R}}(\vec{f})(x)|^p
+|\nabla\mathscr{M}_{\mathcal{R}}(\vec{f})(x)|^p)dx\Big)^{1/p}
\\
&\leq\displaystyle\frac{1}{\lambda}\|\mathscr{M}_{\mathcal{R}}
(\vec{f})\|_{1,p}
\lesssim_{m,d,p_1,\ldots,p_m}\prod\limits_{i=1}^m
\frac{\|f_i\|_{1,p_i}}{\lambda}.
\end{array}\end{equation}

{\it Step 2: The continuity of $\mathscr{M}_{\mathcal{R}}(\vec{f})$.}
To prove the $p$-quasicontinuity of $\mathscr{M}_{\mathcal{R}}(\vec{f})$,
we first prove that $\mathscr{M}_{\mathcal{R}}(\vec{f})\in\mathcal{C}
(\mathbb{R}^d)$ if $\vec{f}=(f_1,\ldots,f_m)$ with each
$f_i\in\mathcal{C}_0^\infty(\mathbb{R}^d)$. We can write
$$
\mathscr{M}_{\mathcal{R}}(\vec{f})(x)
=\sup\limits_{\vec{r}\in\mathbb{R}_{+}^{2d}}
\prod_{i=1}^{m}\frac{1}{|E_{\vec{r}}(x)|}\int_{E_{\vec{r}}(x)}|f_i(y)|dy,
$$
where $\vec r=(r^-_1,\dots,r^-_{d}; r^+_1,\dots,r^+_{d})$
and $E_{\vec{r}}(x)=(x-r^-_1,x+r^+_1)\times\cdots\times
(x-r^-_{d},x+r^+_{d})$. For fixed $x,\,h\in\mathbb{R}^d$,
we have
$$
\begin{array}{ll}
&|\mathscr{M}_{\mathcal{R}}(\vec{f})(x+h)
-\mathscr{M}_{\mathcal{R}}(\vec{f})(x)|
\\
&\leq\displaystyle
\sum\limits_{i=1}^m\sup\limits_{\vec{r}\in\mathbb{R}_{+}^{2d}}
\frac{1}{|E_{\vec{r}}(x)|^{m}}\int_{E_{\vec{r}}(x)}|f_i(y+h)-f_i(y)|dy
\\
&\quad\times\displaystyle\Big(\prod\limits_{\mu=1}^{i-1}
\int_{E_{\vec{r}}(x)}|f_\mu(y)|dy\Big)\Big(\prod\limits_{\nu=i+1}^m
\int_{E_{\vec{r}}(x+h)}|f_\nu(y)|dy\Big).
\end{array}
$$
For fixed $\vec{r}\in\mathbb{R}_{+}^{2d}$ and $i=1,\ldots,m$,
by H\"{o}lder's inequality, we obtain
$$
\begin{array}{ll}
{}&\displaystyle\frac{1}{|E_{\vec{r}}(x)|^{m}}\int_{E_{\vec{r}}(x)}
|f_i(y+h)-f_i(y)|dy\displaystyle\Big(\prod\limits_{\mu=1}^{i-1}
\int_{E_{\vec{r}}(x)}|f_\mu(y)|dy\Big)
\Big(\prod\limits_{\nu=i+1}^m\int_{E_{\vec{r}}(x+h)}|f_\nu(y)|dy\Big)
\\
&\leq\displaystyle2|E_{\vec{r}}(x)|^{-1/p}\prod\limits_{i=1}^m
\|f_i\|_{L^{p_i}(\mathbb{R}^d)}.
\end{array}
$$
It follows that given $\epsilon>0$, there exists a constant
$0<\delta_\epsilon<+\infty$ such that
$$
\frac{1}{|E_{\vec{r}}(x)|^{m}}\int_{E_{\vec{r}}(x)}|f_i(y+h)-f_i(y)|dy
\Big(\prod\limits_{\mu=1}^{i-1}\int_{E_{\vec{r}}(x)}|f_\mu(y)|dy\Big)
\Big(\prod\limits_{\nu=i+1}^m\int_{E_{\vec{r}}(x+h)}|f_\nu(y)|dy\Big)<\epsilon,
$$
when $|E_{\vec{r}}(x)|>\delta_\epsilon$. On the other hand, for any
$x,\,h\in\mathbb{R}^d$ and $\vec{r}\in
\mathbb{R}_{+}^{2d}$ with $|E_{\vec{r}}(x)|\leq
\delta_\epsilon$, by the mean value theorem for differentials, we have
$$
\frac{1}{|E_{\vec{r}}(x)|}\int_{E_{\vec{r}}(x)}|f_i(y+h)-f_i(y)|dy
\leq C(f_i)|h|
$$
and there exists $M_i>0$ such that $|f_i(x)|\leq M_i$
for all $x\in\mathbb{R}^d$ and $i=1,\ldots,m$. Then
we have
$$
\begin{array}{ll}
&\displaystyle\frac{1}{|E_{\vec{r}}(x)|^{m}}\int_{E_{\vec{r}}(x)}
|f_i(y+h)-f_i(y)|dy\displaystyle\Big(\prod\limits_{\mu=1}^{i-1}
\int_{E_{\vec{r}}(x)}|f_\mu(y)|dy\Big)
\Big(\prod\limits_{\nu=i+1}^m\int_{E_{\vec{r}}(x+h)}|f_\nu(y)|dy\Big)
\\
&\leq\displaystyle C(f_i)\prod\limits_{\mu\neq i,1\leq\mu\leq m}M_\mu|h|.
\end{array}
$$
Therefore, for the above $\epsilon>0$ and fixed
$x\in\mathbb{R}^d$, there exists $\gamma=\gamma
(\epsilon)>0$, if $|h|<\gamma$, then
$$
|\mathscr{M}_{\mathcal{R}}(\vec{f})(x+h)-\mathscr{M}_{\mathcal{R}}(\vec{f})(x)|
\leq C(\vec{f})\epsilon.
$$
Thus, it holds that $\mathscr{M}_{\mathcal{R}}(\vec{f})
\in\mathcal{C}(\mathbb{R}^d)$.

{\it Step 3: The $p$-quasicontinuity of $\mathscr{M}_{
\mathcal{R}}(\vec{f})$.} Suppose that $f_i\in W^{1,p_i}
(\mathbb{R}^d)$, we can choose a sequence of functions
$\{f_{i,k}\}_{k\geq1}\subset\mathcal{C}_0^\infty
(\mathbb{R}^d)$ such that $f_{i,k}\rightarrow f_i$ in
$W^{1,p_i}(\mathbb{R}^d)$. This yields that there
exists a large $K_0\in\mathbb{N}$ such that
\begin{equation}\label{4.2}
\|f_{i,k}-f_i\|_{1,p_i}
\leq 2^{-2k}, \ \ \forall k\geq K_0\ {\rm and}\ 1\leq i\leq m.
\end{equation}
Fix $k\geq K_0$. Let $\vec{f}_k=(f_{1,k},\ldots,f_{m,k})$
and
$$
E_k=\{x\in\mathbb{R}^d:|\mathscr{M}_{\mathcal{R}}(\vec{f}_k)(x)
-\mathscr{M}_{\mathcal{R}}(\vec{f})(x)|>2^{-k}\}.
$$
By \eqref{2.6}, we have
\begin{equation}\label{4.3}
|\mathscr{M}_{\mathcal{R}}(\vec{f}_k)(x)-\mathscr{M}_{\mathcal{R}}(\vec{f})(x)|
\leq\sum\limits_{l=1}^m\mathscr{M}_{\mathcal{R}}(\vec{F}_k^l)(x),
\end{equation}
where $\vec{F}_k^l(x)=(f_1,\ldots,f_{l-1},f_{l,k}-f_l,
f_{l+1,k},\ldots,f_{m,k})$. Then, by \eqref{4.1}-\eqref{4.3}, 
we have
\begin{equation}\label{4.4}
\begin{array}{ll}
(C_p(E_k))^{1/p}&\displaystyle\lesssim_{m,d,p_1,\ldots,p_m}2^k
\sum\limits_{l=1}^m\prod\limits_{\mu=1}^{l-1}\|f_\mu\|_{1,p_\mu}
\|f_{l,k}-f_l\|_{1,p_l}\prod\limits_{\nu=l+1}^m\|f_{\nu,k}\|_{1,p_\nu}
\\
&\lesssim_{m,d,p_1,\ldots,p_m}2^{-k}.
\end{array}\end{equation}
Let $G_k=\bigcup_{i=k}^\infty E_i$ with $k\geq K_0$. Then
by subadditivity and \eqref{4.4}, it holds that
$$
C_p(G_k)\leq\sum\limits_{i=k}^\infty C_p(E_i)\lesssim_{m,d,p_1,\ldots,p_m}
\sum\limits_{i=k}^\infty 2^{-ip}\lesssim_{m,d,p_1,\ldots,p_m}
\frac{2^{(1-k)p}}{2^p-1},\ \ \forall k\geq K_0,
$$
which leads to $\lim_{k\rightarrow\infty}C_p(G_k)=0$. On
the other hand, for $x\in\mathbb{R}^d\setminus G_k$,
\begin{equation}\label{4.5}
|\mathscr{M}_{\mathcal{R}}(\vec{f}_{k})(x)
-\mathscr{M}_{\mathcal{R}}(\vec{f})(x)|\leq2^{-k}\ \ \forall k\geq K_0.
\end{equation}
This implies that $\{\mathscr{M}_{\mathcal{R}}(\vec{f}_k)\}$
converges to $\mathscr{M}_{\mathcal{R}}(\vec{f})$ uniformly
in $\mathbb{R}^d\setminus G_k$. By Step 2, we see that
$\mathscr{M}_{\mathcal{R}}(\vec{f}_k)\in\mathcal{C}
(\mathbb{R}^d)$. It follows that $\mathscr{M}_{\mathcal{R}}
(\vec{f})$ is continuous in $\mathbb{R}^d\setminus G_k$.
We notice that $\mathscr{M}_{\mathcal{R}}(\vec{f}_{K_0})(x)
<\infty$ for all $x\in\mathbb{R}^d$. This together with
\eqref{4.5} implies that $\mathscr{M}_{\mathcal{R}}(\vec{f})$
is finite in $\mathbb{R}^d\setminus G_k$. Hence,
$\mathscr{M}_{\mathcal{R}}(\vec{f})$ is
$q$-quasicontinuous.
\end{proof}
\section{Approximate differentiability of $\mathscr{M}_{\mathcal{R}}$} \label{S5}

This section is devoted to proving Theorem \ref{thm6}. Let
us recall some definitions and present some useful lemmas.

Let $f$ be a real-valued function defined on a set
$E\subset\mathbb{R}^d$. We say that $f$ is approximately
differentiable at $x_0\in E$ if there is a vector
$L=(L_1,L_2,\ldots,L_d)\in\mathbb{R}^d$ such that for
any $\epsilon>0$ the set
$$
A_\epsilon=\Big\{x\in\mathbb{R}^d:\frac{|f(x)-f(x_0)-L(x-x_0)|}{|x-x_0|}
<\epsilon\Big\}
$$
has $x_0$ as a density point. If this is the case, then
$x_0$ is a density point of $E$ and $L$ is uniquely
determined. The vector $L$ is  called the approximate
differential of $f$ at $x_0$ and is denoted by
$\nabla f(x_0)$. Note that every function $f\in W^{1,1}
(\mathbb{R}^d)$ is approximately differentiable a.e.
It was pointed out in \cite{HM} that $Mf$ is approximately
differentiable a.e. under the assumption that
$f\in W^{1,1}(\mathbb{R}^d)$. However, it is unknown
that whether $f\in W^{1,1}(\mathbb{R}^d)$ implies the
weak differentiability of $Mf$ when $d\geq2$. The
relationship between approximate differentiability and
weak differentiability is still not clear.

To prove Theorem \ref{thm6}, we need the following lemma,
which provides several characterizations of a.e. approximate
differentiability of a function.
\begin{lemma}
{\rm (\cite{Wh})} \label{l5.1}Let $f:E\rightarrow\mathbb{R}$
be measurable, $E\subset\mathbb{R}^d$. Then the following
conditions are equivalent:
\begin{enumerate}
\item[{\rm (i)}] $f$ is approximately differentiable a.e.

\item[{\rm (ii)}] For any $\epsilon>0$, there is a closed set
$F\subset E$ and a locally Lipschitz function $g:\mathbb{R}^d
\rightarrow\mathbb{R}$ such that $f=g$ on $x\in F$ and
$|E\setminus F|<\epsilon$.

\item[{\rm (iii)}] For any $\epsilon>0$, there is a closed
set $F\subset E$ and a function $g\in\mathcal{C}^1
(\mathbb{R}^d)$ such that $f=g$ on $x\in F$ and
$|E\setminus F|<\epsilon$.\end{enumerate}
\end{lemma}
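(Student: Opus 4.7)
The plan is to prove the cyclic chain of implications (iii) $\Rightarrow$ (ii) $\Rightarrow$ (i) $\Rightarrow$ (iii). The first implication is immediate since any $g \in \mathcal{C}^1(\mathbb{R}^d)$ is locally Lipschitz, so the same closed set $F$ and function $g$ witness (ii).

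For (ii) $\Rightarrow$ (i), I would apply (ii) with $\epsilon_n = 1/n$ to obtain closed sets $F_n \subset E$ with $|E \setminus F_n| < 1/n$ and locally Lipschitz functions $g_n$ such that $f = g_n$ on $F_n$. By Rademacher's theorem, each $g_n$ is differentiable a.e. on $\mathbb{R}^d$. At any point $x_0 \in F_n$ that is simultaneously a density point of $F_n$ and a point of differentiability of $g_n$, the equality $f = g_n$ on $F_n$ together with the density-one property forces $f$ to be approximately differentiable at $x_0$ with approximate gradient $\nabla g_n(x_0)$: the classical differential estimate for $g_n$ combined with restriction to the full-density subset $F_n$ supplies the required density set $A_\epsilon$ for every $\epsilon > 0$. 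The set of points in $E$ where $f$ fails to be approximately differentiable is then contained in $\bigcap_n (E \setminus F_n)$ together with a null set, hence has measure zero.

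The hard direction is (i) $\Rightarrow$ (iii), which is a Lusin--Whitney extension argument. Fix $\epsilon > 0$. Approximate differentiability a.e. yields a measurable vector field $\nabla f$ on a full-measure subset $E_0 \subset E$. For each $k \in \mathbb{N}$, introduce the measurable set
\[
E_k = \Big\{x \in E_0 : \big|\{y \in B(x,r) : |f(y)-f(x)-\nabla f(x)\cdot(y-x)| \le r/k\}\big| \ge (1 - 1/k)\,|B(x,r)|\text{ for all }r < 1/k\Big\},
\]
so that $E_0 = \bigcup_k E_k$ up to a null set by the definition of the approximate differential. Choose $k$ large enough that $|E \setminus E_k| < \epsilon/3$. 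Apply Lusin's theorem to the restriction of $\nabla f$ to $E_k$ to extract a closed subset $F' \subset E_k$ with $|E_k \setminus F'| < \epsilon/3$ on which $\nabla f$ is continuous. A final density-point regularization yields a closed $F \subset F'$ with $|F' \setminus F| < \epsilon/3$, each point of which is a point of density of $F$ and inherits the quantitative approximate-differentiability estimate. One then checks that the pair $(f, \nabla f)$ satisfies the Whitney compatibility condition uniformly on $F$, namely $|f(y)-f(x)-\nabla f(x)\cdot(y-x)| = o(|y-x|)$ as $x,y \in F$ with $|y-x| \to 0$. The Whitney extension theorem then delivers $g \in \mathcal{C}^1(\mathbb{R}^d)$ with $g|_F = f|_F$, and $|E \setminus F| < \epsilon$ by construction.

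The principal obstacle will be establishing the \emph{bilateral} uniform Whitney estimate on $F \times F$. The density condition built into $E_k$ only controls $|f(y) - f(x) - \nabla f(x)\cdot(y-x)|$ when $y$ ranges over a high-density subset of $B(x,r)$, with $x$ fixed; upgrading this to a bound valid for arbitrary pairs $x, y \in F$ requires interposing auxiliary points. The standard trick is, given $x, y \in F$ with $r = |x-y|$ small, to find $z \in B(x,r) \cap B(y,r)$ lying simultaneously in the good high-density subsets attached to both $x$ and $y$, which is possible because each of those subsets occupies proportion at least $1 - 1/k$ of $B(x,r)$ or $B(y,r)$; combining the estimate at $x$ via $z$ with the estimate at $y$ via $z$, and exploiting the continuity of $\nabla f$ on $F$, yields the desired $o(|x-y|)$ control uniformly on $F$. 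This is the technical heart of Whitney's original argument; once in place, the extension theorem applies directly and the proof is complete.
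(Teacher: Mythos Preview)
The paper does not prove this lemma at all: it is simply cited from Whitney's original paper \cite{Wh} and then invoked as a black box in the proof of Theorem~\ref{thm6}. There is therefore nothing to compare your argument against within the paper itself.

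That said, your sketch is a faithful outline of the classical Whitney argument. The easy implications (iii) $\Rightarrow$ (ii) $\Rightarrow$ (i) are handled correctly, and for (i) $\Rightarrow$ (iii) you have identified the right strategy: stratify by quantitative approximate-differentiability sets $E_k$, apply Lusin to $\nabla f$, and verify the Whitney compatibility condition on the resulting closed set. You also correctly flag the bilateral estimate as the crux and describe the standard auxiliary-point trick to obtain it. One small caveat: the measurability of the sets $E_k$ as you have written them is not entirely automatic (the condition involves a supremum over uncountably many radii $r<1/k$), so in a complete write-up you would want to restrict to rational radii or otherwise justify measurability; this is routine but should be mentioned. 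Apart from that, the plan is sound and matches the argument in Whitney's paper.
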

\begin{lemma}\label{l5.2}
Let $\vec{f}=(f_1,\ldots,f_m)$ with each $f_j\in L^1(\mathbb{R}^d)$.
Let $\vec{\varepsilon}=(\varepsilon_1,\ldots,\varepsilon_d)$ with
each $\varepsilon_i>0$. The truncated multilinear strong maximal
operator $\mathscr{M}_{\mathcal{R}}^{\vec{\varepsilon}}$ is defined
by
$$\mathscr{M}_{\mathcal{R}}^{\vec{\varepsilon}}(\vec{f})(x)
=\sup\limits_{(r^-_1,\dots,r^-_{d};r^+_1,\dots,r^+_{d})\in\mathbb{R}_{+}^{2d}
\atop
r_i^{+}+r_i^{-}\geq\varepsilon_i,\,i=1,2,\ldots,d}
\prod_{i=1}^{m}\frac{1}{|E_{\vec{r}}(x)|}\int_{E_{\vec{r}}(x)}|f_i(y)|dy,$$
where $x=(x_1,\ldots,x_d)$, $\vec r=(r^-_1,\dots,r^-_{d};
r^+_1,\dots,r^+_{d})$ and $E_{\vec{r}}(x)=(x_1-r^-_1,x_1+r^+_1)
\times\cdots\times(x_d-r^-_{d},x_d+r^+_{d})$. Then
$\mathscr{M}_{\mathcal{R}}^{\vec{\varepsilon}}(\vec{f})$
is Lipschitz continuous for every $\vec{\varepsilon}\in
\mathbb{R}_{+}^{d}$.
\end{lemma}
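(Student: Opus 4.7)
The plan is to follow the standard symmetric-difference strategy for averaging operators at a fixed scale, combined with a telescoping product identity. Fix $x,y\in\mathbb R^d$ and, for each admissible $\vec{r}=(r_1^-,\dots,r_d^-;r_1^+,\dots,r_d^+)$ with $r_i^++r_i^-\geq\varepsilon_i$, set $R=E_{\vec{r}}$ and $A_R(f_i)(z):=|R|^{-1}\int_{E_{\vec{r}}(z)}|f_i(u)|du$. The sublinearity $|\sup_\alpha a_\alpha-\sup_\alpha b_\alpha|\leq\sup_\alpha|a_\alpha-b_\alpha|$, the telescoping identity
\[
\prod_{i=1}^m a_i-\prod_{i=1}^m b_i=\sum_{l=1}^m\Big(\prod_{j<l}a_j\Big)(a_l-b_l)\Big(\prod_{j>l}b_j\Big),
\]
and the a priori bound $A_R(f_i)(z)\leq\|f_i\|_{L^1(\mathbb R^d)}/\prod_j\varepsilon_j$ (a direct consequence of the truncation $|R|\geq\prod_j\varepsilon_j$) together reduce the target Lipschitz estimate for $\mathscr M_{\mathcal R}^{\vec\varepsilon}(\vec f)$ to the uniform-in-$\vec r$ Lipschitz bound $|A_R(f_l)(x)-A_R(f_l)(y)|\lesssim_{\vec f,\vec\varepsilon}|x-y|$ for each single-variable factor.

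For this single-average estimate I would write
\[
A_R(f_l)(x)-A_R(f_l)(y)=\frac{1}{|R|}\int_{\mathbb R^d}\big(\chi_{E_{\vec r}(x)}-\chi_{E_{\vec r}(y)}\big)|f_l(u)|du,
\]
and exploit the geometric fact that once $\max_j|x_j-y_j|<\min_j\varepsilon_j/2$, the symmetric difference $E_{\vec r}(x)\triangle E_{\vec r}(y)$ is contained in a union of at most $2d$ coordinate strips whose total Lebesgue measure is at most $2\sum_j|x_j-y_j|\prod_{k\neq j}(r_k^++r_k^-)$. Dividing by $|R|$ and using $r_k^++r_k^-\geq\varepsilon_k$ then yields the dimensionless bound
\[
\frac{|E_{\vec r}(x)\triangle E_{\vec r}(y)|}{|R|}\leq 2\sum_{j=1}^d\frac{|x_j-y_j|}{r_j^++r_j^-}\leq 2\sum_{j=1}^d\frac{|x_j-y_j|}{\varepsilon_j},
\]
which is linear in $|x-y|$ with a constant depending only on $\vec\varepsilon$.

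The main obstacle is upgrading this dimensionless measure estimate to the corresponding integral estimate $|R|^{-1}\int_{E_{\vec r}(x)\triangle E_{\vec r}(y)}|f_l|\lesssim_{\vec f,\vec\varepsilon}|x-y|$, uniformly in $R$. A Fubini slicing along the $j$-th strip factors the contribution as $(r_j^++r_j^-)^{-1}\int_a^{a+|x_j-y_j|}\tilde A_j(t)dt$, where $\tilde A_j(t)$ is the $(d-1)$-dimensional average of $|f_l(t,\cdot)|$ over $\prod_{k\neq j}(x_k-r_k^-,x_k+r_k^+)$; combining the prefactor $(r_j^++r_j^-)^{-1}\leq\varepsilon_j^{-1}$ with the quantitative absolute continuity of the Lebesgue integral of $\tilde A_j\in L^1(\mathbb R)$ furnishes the required linear-in-$|x-y|$ bound. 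Summing the $2d$ strip contributions gives the single-average Lipschitz estimate, and concatenating such local bounds along any finite chain of intermediate points spaced less than $\min_j\varepsilon_j/2$ apart extends the conclusion globally to all $x,y\in\mathbb R^d$, with a Lipschitz constant that depends explicitly on $\vec f$, $\vec\varepsilon$, $m$ and $d$.
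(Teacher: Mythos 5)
Your reduction steps (sublinearity of the supremum, the telescoping product identity, and the a priori bound $A_R(f_i)\le \prod_j\varepsilon_j^{-1}\|f_i\|_{L^1(\mathbb{R}^d)}$) are fine, but the single-average estimate they reduce the problem to is false, and the step where you invoke ``quantitative absolute continuity'' is exactly where the argument breaks. You need, uniformly in the admissible $\vec r$ and in the strip location $a$, a bound of the form $(r_j^-+r_j^+)^{-1}\int_a^{a+h}\tilde A_j(t)\,dt\lesssim_{\vec f,\vec\varepsilon}h$. Absolute continuity of the integral of the slice function $F_j(t):=\int_{\mathbb{R}^{d-1}}|f_l(\dots,t,\dots)|\,d\hat u\in L^1(\mathbb{R})$ only gives a modulus that is $o(1)$ as $h\to0$, uniformly in $a$; it never gives a linear rate unless $F_j$ is essentially bounded, which fails for general $f_l\in L^1$. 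Concretely, take $d=1$, $m=1$, $\varepsilon=1$ and $f_l(t)=t^{-1/2}\chi_{(0,1)}(t)$: for the admissible interval $(x-r^-,x+r^+)$ with $x-r^-=0$, the strip contribution is $(r^-+r^+)^{-1}\int_0^h t^{-1/2}dt=2\sqrt h/(r^-+r^+)$, which is not $O(h)$. Indeed the fixed-scale translated average $A_R(f_l)(\cdot)$ is in general \emph{not} Lipschitz for $f_l\in L^1$, so no correct completion of your symmetric-difference scheme (comparing the same $\vec r$ at $x$ and at $y$) can yield the stated conclusion; what your argument actually proves is uniform continuity of $\mathscr{M}_{\mathcal{R}}^{\vec\varepsilon}(\vec f)$, which is weaker than the lemma.

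The missing idea is to avoid integrating $|f_l|$ over thin strips altogether. The paper's proof compares the competitor $E_{\vec r}(x)$ with the \emph{enlarged} rectangle $E_{\vec r'}(y)$, where $\vec r'$ adds $|y_i-x_i|$ to both half-sides in the $i$-th coordinate; this rectangle contains $E_{\vec r}(x)$ and is still admissible because its side lengths only increase. Then the numerators do not decrease, and the only loss is the volume ratio
$\bigl(|E_{\vec r}(x)|/|E_{\vec r'}(y)|\bigr)^m\ge\bigl(\varepsilon_0/(\varepsilon_0+|x-y|)\bigr)^{md}\ge 1-\frac{md}{\varepsilon_0}|x-y|$ with $\varepsilon_0=\min_i\varepsilon_i$, which gives, after taking the supremum over admissible $\vec r$, the inequality $\mathscr{M}_{\mathcal{R}}^{\vec\varepsilon}(\vec f)(y)\ge\bigl(1-\frac{md}{\varepsilon_0}|x-y|\bigr)\mathscr{M}_{\mathcal{R}}^{\vec\varepsilon}(\vec f)(x)$. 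Combining this with the symmetric inequality and the uniform bound $\mathscr{M}_{\mathcal{R}}^{\vec\varepsilon}(\vec f)\le\varepsilon_0^{-md}\prod_j\|f_j\|_{L^1(\mathbb{R}^d)}$ yields the Lipschitz estimate; the possible spikes of $f_l$ never enter because only measures of rectangles, not integrals of $|f_l|$ over thin sets, are compared. You should replace your single-average step by this enlargement argument.
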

\begin{proof} Fix $\vec{\varepsilon}=(\varepsilon_1,\ldots,
\varepsilon_d)\in\mathbb{R}_{+}^d$. We set $\varepsilon_0=
\min_{1\leq i\leq d}\varepsilon_i$. Fix $\vec r=(r^-_1,
\dots,r^-_{d}; r^+_1,\dots,r^+_{d})\in\mathbb{R}_{+}^{2d}$
with $r_i^{+}+r_i^{-}\geq\varepsilon_i$ for $1\leq i\leq d$.
It is obvious that $r_i^{+}+r_i^{-}\geq\varepsilon_0$ for
all $1\leq i\leq d$.

Notice that for any $r\geq a\geq0$, $b\geq0$ and $\delta\geq1$,
it is true that
\begin{equation}\label{5.1}
\Big(\frac{r}{r+b}\Big)^\delta\geq\Big(\frac{a}{a+b}\Big)^\delta\geq1
-\delta\frac{b}{a}.
\end{equation}
Let $x=(x_1,\ldots,x_d)\in\mathbb{R}^d$ and $y=(y_1,\ldots,
y_d)\in\mathbb{R}^d$.
We note that
\begin{equation}\label{5.2}
E_{\vec{r}}(x)\subset E_{\vec{r}'}(y),
\end{equation}
where $\vec{r}'=(r_1^{-}+|y_1-x_1|,\ldots,r_d^{-}+|y_d-x_d|;
r_1^{+}+|y_1-x_1|,\ldots,r_d^{+}+|y_d-x_d|)$. \eqref{5.2} 
gives that
\begin{equation}\label{5.3}
\Big(\frac{|E_{\vec{r}}(x)|}{|E_{\vec{r}'}(y)|}\Big)^m
=\Big(\prod\limits_{j=1}^d\frac{r_j^{-}+r_j^{+}}{r_j^{-}+r_j^{+}+|y_j-x_j|}
\Big)^m
\geq\Big(\frac{\varepsilon_0}{\varepsilon_0+|x-y|}\Big)^{md}
\geq 1-\frac{md}{\varepsilon_0}|x-y|.
\end{equation}
We get from \eqref{5.2} and \eqref{5.3} that
\begin{equation}\label{5.4}
\begin{array}{ll}
\mathscr{M}_{\mathcal{R}}^{\vec{\varepsilon}}(\vec{f})(y)&\geq\displaystyle
\prod_{i=1}^{m}\frac{1}{|E_{\vec{r}'}(y)|}\int_{E_{\vec{r}}(y)}|f_i(z)|dz
\\
&\geq\displaystyle\prod_{i=1}^{m}\frac{|E_{\vec{r}}(x)|}{|E_{\vec{r}'}(y)|}
\frac{1}{|E_{\vec{r}}(x)|}\int_{E_{\vec{r}}(x)}|f_i(z)|dz
\\
&\geq\displaystyle\Big(1-\frac{md}{\varepsilon_0}|x-y|\Big)\prod_{i=1}^{m}
\frac{1}{|E_{\vec{r}}(x)|}\int_{E_{\vec{r}}(x)}|f_i(z)|dz.
\end{array}
\end{equation}
Taking the supremum over $\vec r=(r^-_1,\dots,r^-_{d};
r^+_1,\dots,r^+_{d})\in\mathbb{R}_{+}^{2d}$ with
$r_i^{+}+r_i^{-}\geq\varepsilon_i$ for $1\leq i\leq d$,
we get from \eqref{5.4} that
$$
\mathscr{M}_{\mathcal{R}}^{\vec{\varepsilon}}(\vec{f})(y)
\geq\Big(1-\frac{md}{\varepsilon_0}|x-y|\Big)
\mathscr{M}_{\mathcal{R}}^{\vec{\varepsilon}}(\vec{f})(x).
$$
It follows that
\begin{equation}\label{5.5}
\mathscr{M}_{\mathcal{R}}^{\vec{\varepsilon}}(\vec{f})(x)
-\mathscr{M}_{\mathcal{R}}^{\vec{\varepsilon}}(\vec{f})(y)
\leq\frac{md}{\varepsilon_0}|x-y|\mathscr{M}_{\mathcal{R}}^{\vec{\varepsilon}}
(\vec{f})(x).
\end{equation}
Similarly, we can get
\begin{equation}\label{5.6}
\mathscr{M}_{\mathcal{R}}^{\vec{\varepsilon}}(\vec{f})(y)
-\mathscr{M}_{\mathcal{R}}^{\vec{\varepsilon}}(\vec{f})(x)
\leq\frac{md}{\varepsilon_0}|x-y|\mathscr{M}_{\mathcal{R}}^{\vec{\varepsilon}}
(\vec{f})(y).
\end{equation}
Thus, \eqref{5.5} and \eqref{5.6} imply that
$$
|\mathscr{M}_{\mathcal{R}}^{\vec{\varepsilon}}(\vec{f})(x)
-\mathscr{M}_{\mathcal{R}}^{\vec{\varepsilon}}(\vec{f})(y)|
\leq \displaystyle\frac{md}{\varepsilon_0}
|x-y|(\mathscr{M}_{\mathcal{R}}^{\vec{\varepsilon}}(\vec{f})(x)
+\mathscr{M}_{\mathcal{R}}^{\vec{\varepsilon}}(\vec{f})(y))
\leq\displaystyle\frac{2md}{\varepsilon_0^{md+1}}
\prod\limits_{j=1}^m\|f_j\|_{L^1(\mathbb{R}^d)}|x-y|.
$$
This proves Lemma \ref{l5.2}.
\end{proof}

\quad\hspace{-20pt}{\it Proof of Theorem \ref{thm6}.}
Let $Z_j$ be the set of all Lebesgue points of $f_j$ and
$u_{x,\vec{f}}(r)$ defined as in Section 2. We set
$E_0=\mathbb{R}^d\setminus(\bigcap_{j=1}^mZ_j)$. Let
$x\in\bigcap_{j=1}^mZ_j$ such that $\mathscr{M}_{\mathcal{R}}
(\vec{f})(x)>u_{x,\vec{f}}(\vec{0})$ with
$\vec{0}\in\mathbb{R}^{2d}$. Since $f_j\in L^1(\mathbb{R}^d)$
and $\mathscr{M}_{\mathcal{R}}(\vec{f})(x)>0$, there
exists a sequence $\{\vec{r}_k\}_{k\geq1}$ with
$\vec{r}_k=(r_{1,k}^{-},\ldots,r_{d,k}^{-};r_{1,k}^{+},
\ldots,r_{d,k}^{+})\in\overline{\mathbb{R}}_{+}^{2d}$,
all $r_{i,k}^{-}+r_{i,k}^{+}$ are bounded such that
$$
\lim\limits_{k\rightarrow\infty}u_{x,\vec{f}}(\vec{r}_k)
=\mathscr{M}_{\mathcal{R}}(\vec{f})(x).
$$
Hence there exists a subsequence $\{\vec{r}_k'\}_{k\geq1}
\subset\{\vec{r}_k\}_{k\geq1}$ and $\vec{r}=(r_{1}^{-},\ldots,
r_d^{-};r_1^{+},\ldots,r_d^{+})\in\overline
{\mathbb{R}}_{+}^{2d}$ with $r_{i}^{-}+r_{i}^{+}>0$ for
all $1\leq i\leq d$ such that $\lim_{k\rightarrow\infty}
\vec{r}_k'=\vec{r}$. It follows that
$\mathscr{M}_{\mathcal{R}}(\vec{f})(x)=u_{x,\vec{f}}(\vec{r}).$
This, of course, yields that
$$\mathbb{R}^d=E_0\cup\{x\in\mathbb{R}^d:\,\mathscr{M}_{\mathcal{R}}
(\vec{f})(x)=u_{x,\vec{f}}(\vec{0})\}\cup E,
$$
where $E=\bigcup_{k_1=1}^\infty\ldots\bigcup_{k_d=1}^\infty
E_{k_1,\ldots,k_d}$ and $E_{k_1,\ldots,k_d}=\{x\in\mathbb{R}^d:\
\mathscr{M}_{\mathcal{R}}(\vec{f})(x)=
\mathscr{M}_{\mathcal{R}}^{1/k_1,\ldots,1/k_d}(\vec{f})(x)\}$.
By Lemma \ref{l5.1}, $\prod_{j=1}^m|f_j|$ is approximately
differentiable a.e. Then $\mathscr{M}_{\mathcal{R}}(\vec{f})$
is approximately differentiable a.e. in the set
$\{x\in\mathbb{R}^2:\,\mathscr{M}_{\mathcal{R}}(\vec{f})(x)
=u_{x,\vec{f}}(\vec{0})\}$. By Lemma \ref{l5.2} we have that
$\mathscr{M}_{\mathcal{R}}^{1/k_1,\ldots,1/k_d}(\vec{f})$
is Lipschitz continuous for any $k_i\geq1$ and $1\leq i\leq d$.
Then, for any $k_i\geq1$ and $1\leq i\leq d$, the function
$\mathscr{M}_{\mathcal{R}}^{1/k_1,\ldots,1/k_d}(\vec{f})$
is approximately differentiable a.e.. It follows that
$\mathscr{M}_{\mathcal{R}}(\vec{f})\chi_{E}$ is approximately
differentiable a.e. Note that $|E_0|=0$. Therefore,
$\mathscr{M}_{\mathcal{R}}(\vec{f})$ is approximately
differentiable a.e. This completes the proof of Theorem \ref{thm6}. $\hfill\Box$

\section{ Properties of discrete strong maximal
functions}\label{S6}
This section is devoted to proving Theorem \ref{thm7}. For
$a\in\mathbb{R}$ and $r>0$, we define
$$g(a;r)=|\{k\in\mathbb{Z};|k-a|<r\}|.$$
If $a\in\mathbb{Z}$, then $g(a;r)\geq\chi_{(0,1]}(r)
+(2[r-1]+1)\chi_{(1,\infty)}$, where $[x]=\max\{k\in
\mathbb{Z};k\leq x\}$. If $a\in\mathbb{R}\backslash
\mathbb{Z}$, then there exists an integer $n_0\in
\mathbb{Z}$ such that $|n_0-a|\leq1/2$ and
$$\{k\in\mathbb{Z};|k-n_0|<r-1/2\}\subset\{k\in\mathbb{Z};|k-a|<r\}.$$
It follows that $g(a;r)\geq\chi_{(\frac{1}{2},\frac{3}{2}]}
+(2[r-3/2]+1)\chi_{(\frac{3}{2},\infty)}(r)$ for $r>1/2$.
Specially, if there exists an integer $n_0$ such that
$|n_0-a|<r$, then
\begin{equation}\label{6.1}
g(a;r)\geq F(r):=\chi_{(0,\frac{3}{2}]}(r)+(2[r-3/2]+1)\chi_{(\frac{3}{2},
\infty)}(r)\ \ \forall r>0\ {\rm and}\ a\in\mathbb{R}.
\end{equation}
For $\vec{r}=(r_1,\ldots,r_d)\in\mathbb{R}_{+}^d$ and
$\vec{x}=(x_1,\ldots,x_d)\in\mathbb{R}^d$, it is easy
to see that $N(R_{\vec{r}}(\vec{x}))=\prod_{i=1}^d
g(x_i;r_i)$. Furthermore, if there exists $\vec{n}\in
R_{\vec{r}}(\vec{x})\cap\mathbb{Z}^d$, then by
(\ref{6.1}), it holds that
\begin{equation}\label{6.2}
N(R_{\vec{r}}(\vec{x}))\geq\prod\limits_{i=1}^dF(r_i).
\end{equation}

We now divide the proof of Theorem \ref{thm7} into two
parts.

\subsection{\bf The boundedness part}
Without loss of generality we may assume all $f_j\geq0$
since $\|\nabla|f|\|_{\ell^1(\mathbb{Z}^d)}\leq
\|\nabla f\|_{\ell^1(\mathbb{Z}^d)}$. For all $1\leq l\leq d$,
it suffices to show that
\begin{equation}\label{6.3}
\|D_l\mathbb{M}_{\mathcal{R}}(\vec{f})\|_{\ell^1(\mathbb{Z}^d)}
\lesssim_{d,m}\sum\limits_{i=1}^m\|D_lf_i\|_{\ell^1(\mathbb{Z}^d)}
\prod\limits_{j\neq i,1\leq j\leq m}\|f_j\|_{\ell^1(\mathbb{Z}^d)}
\end{equation}
We only prove \eqref{6.3} for $l=d$, since the other
cases are analogous. In what follows, we set $\vec{n}
=(n',n_d)\in\mathbb{Z}^d$ with $n'=(n_1,\ldots,n_{d-1})
\in\mathbb{Z}^{d-1}$. For each $n'\in\mathbb{Z}^{d-1}$, 
let
$$
X_{n'}^{+}=\{n_d\in\mathbb{Z}:\mathbb{M}_{\mathcal{R}}(\vec{f})(n',n_d+1)
\leq\mathbb{M}_{\mathcal{R}}(\vec{f})(n',n_d)\},
$$
$$
X_{n'}^{-}=\{n_d\in\mathbb{Z}:\mathbb{M}_{\mathcal{R}}(\vec{f})(n',n_d+1)
>\mathbb{M}_{\mathcal{R}}(\vec{f})(n',n_d)\}.
$$
Then we can write
$$\begin{array}{ll}
\|D_l\mathbb{M}_{\mathcal{R}}(\vec{f})\|_{\ell^1(\mathbb{Z}^d)}
&=\displaystyle\sum\limits_{n'\in\mathbb{Z}^{d-1}}
\sum\limits_{n_d\in X_{n'}^{+}}(\mathbb{M}_{\mathcal{R}}(\vec{f})(n',n_d)
-\mathbb{M}_{\mathcal{R}}(\vec{f})(n',n_d+1))
\\
&\quad+\displaystyle\sum\limits_{n'\in\mathbb{Z}^{d-1}}
\sum\limits_{n_d\in X_{n'}^{-}}(\mathbb{M}_{\mathcal{R}}(\vec{f})(n',n_d+1)
-\mathbb{M}_{\mathcal{R}}(\vec{f})(n',n_d)).
\end{array}
$$
Therefore, to prove \eqref{6.3} with $l=d$, it suffices 
to show that
\begin{equation}\label{6.4}
\sum\limits_{n'\in\mathbb{Z}^{d-1}}\sum\limits_{n_d\in X_{n'}^{+}}
(\mathbb{M}_{\mathcal{R}}(\vec{f})(n',n_d)-\mathbb{M}_{\mathcal{R}}
(\vec{f})(n',n_d+1))
\lesssim_{d}\sum\limits_{i=1}^m\|D_lf_i\|_{\ell^1(\mathbb{Z}^d)}
\prod\limits_{j\neq i,1\leq j\leq m}\|f_j\|_{\ell^1(\mathbb{Z}^d)};
\end{equation}
\begin{equation}\label{6.5}
\sum\limits_{n'\in\mathbb{Z}^{d-1}}\sum\limits_{n_d\in X_{n'}^{-}}
(\mathbb{M}_{\mathcal{R}}(\vec{f})(n',n_d+1)-\mathbb{M}_{\mathcal{R}}
(\vec{f})(n',n_d))
\lesssim_{d}\sum\limits_{i=1}^m\|D_lf_i\|_{\ell^1(\mathbb{Z}^d)}
\prod\limits_{j\neq i,1\leq j\leq m}\|f_j\|_{\ell^1(\mathbb{Z}^d)}.
\end{equation}
We only prove \eqref{6.4}, since \eqref{6.5} is analogous. 
For $\vec{r}\in\mathbb{R}_{+}^d$, define $A_{\vec{r}}
(\vec{f}):\mathbb{R}^d\rightarrow\mathbb{R}$ by
$$A_{\vec{r}}(\vec{f})(\vec{x})=\frac{1}{N(R_{\vec{r}}(\vec{x}))^{m}}
\prod\limits_{j=1}^m
\sum\limits_{\vec{k}\in R_{\vec{r}}(\vec{x})\cap\mathbb{Z}^d}f_j(\vec{k})
,\ \ \forall \vec{x}\in\mathbb{R}^d.
$$
We can write
$$
\mathbb{M}_{\mathcal{R}}(\vec{f})(\vec{n})
=\sup\limits_{\vec{r}\in\mathbb{R}_{+}^d\atop\vec{x}\in\mathbb{R}^d,\,
\vec{n}\in R_{\vec{r}}(\vec{x})}A_{\vec{r}}(\vec{f})(\vec{x}),
\ \ \forall \vec{n}\in\mathbb{Z}^d.
$$
\begin{lemma}\label{l6.1}Let $\vec{f}=(f_1,\ldots,f_m)$ 
with each $f_j\in\ell^1(\mathbb{Z}^d)$. Then for any 
$\vec{n}\in\mathbb{Z}^d$, ${\mathbb{M}}_{\mathcal{R}}
(\vec{f})(\vec{n})$ is attained for some $R$ with 
$\vec{n}\in R\in\mathcal{R}$.
\end{lemma}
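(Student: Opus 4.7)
The plan is to reduce the supremum defining $\mathbb{M}_{\mathcal{R}}(\vec f)(\vec n)$ from an uncountable family of open rectangles to a finite family of combinatorial data, using the fact that the averaging functional only depends on which lattice points lie in $R$, together with $\ell^1$-decay to eliminate large rectangles.

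First I would carry out a \emph{discretization} step. For any $R=\prod_{i=1}^d(x_i-r_i^-,\,x_i+r_i^+)\in\mathcal{R}$ with $\vec n\in R$, both the numerator $\prod_{j=1}^m\sum_{\vec k\in R\cap\mathbb{Z}^d}|f_j(\vec k)|$ and $N(R)^m$ depend on $R$ only through the integer rectangle $R\cap\mathbb{Z}^d=\prod_{i=1}^d\{a_i,a_i+1,\ldots,b_i\}$, where $a_i\le n_i\le b_i$. Conversely, any such integer rectangle is realised by a concrete $R^*\in\mathcal{R}$ containing $\vec n$, e.g.\ $R^*=\prod_{i=1}^d(a_i-\tfrac12,\,b_i+\tfrac12)$, which satisfies $R^*\cap\mathbb{Z}^d=\prod_i\{a_i,\ldots,b_i\}$. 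Hence it suffices to take the supremum over the countable family $\mathcal{I}_{\vec n}$ of $d$-tuples of integer intervals $([a_i,b_i])_{i=1}^d$ with $a_i\le n_i\le b_i$.

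Next I would invoke $\ell^1$-summability to reduce to a finite maximisation. If $M:=\mathbb{M}_{\mathcal{R}}(\vec f)(\vec n)=0$ the conclusion is trivial (take any $R$ with $R\cap\mathbb{Z}^d=\{\vec n\}$). Otherwise, fix any candidate $([a_i,b_i])\in\mathcal{I}_{\vec n}$ whose associated average is at least $M/2$. Since the numerator is bounded by $\prod_{j=1}^m\|f_j\|_{\ell^1(\mathbb{Z}^d)}$, we obtain
\[ \Bigl(\prod_{i=1}^d(b_i-a_i+1)\Bigr)^m=N(R)^m\le\frac{2}{M}\prod_{j=1}^m\|f_j\|_{\ell^1(\mathbb{Z}^d)}. \]
Each integer side-length $b_i-a_i+1$ is therefore bounded by a constant $C=C(\vec f,\vec n,m)$, so only finitely many elements of $\mathcal{I}_{\vec n}$ satisfy this bound. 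The supremum over this finite set is a maximum, and pulling back along the discretisation correspondence exhibits an explicit maximising rectangle $R^*\in\mathcal{R}$ containing $\vec n$.

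I do not anticipate any substantive obstacle. The only delicate point is the discretisation reduction: one must check that the averaging functional evaluated on $R$ and on the canonical $R^*$ coincide whenever $R\cap\mathbb{Z}^d=R^*\cap\mathbb{Z}^d$, and that $R^*\ni\vec n$. Both facts are immediate from the definition $N(R)=|R\cap\mathbb{Z}^d|$ and from $a_i-\tfrac12<n_i<b_i+\tfrac12$, so the reasoning is routine.
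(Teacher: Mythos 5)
Your proof is correct and complete. It rests on the same two facts as the paper's argument---the averaging functional depends on $R$ only through $R\cap\mathbb{Z}^d$, and the $\ell^1$ bound forces the average to be at most $N(R)^{-m}\prod_{j=1}^m\|f_j\|_{\ell^1(\mathbb{Z}^d)}$---but you organize them differently. The paper argues by contradiction: if the supremum $M>0$ were not attained, it would have to coincide with the supremum over rectangles with $N(R)\geq r_k$ for every $k$, and letting $r_k\to\infty$ forces $M=0$; the step ``not attained $\Rightarrow$ the supremum is already realized by the supremum over arbitrarily large rectangles'' is asserted without justification and in fact tacitly uses exactly the discretization/finiteness observation you make explicit (among rectangles with $N(R)<r_k$ containing $\vec{n}$ only finitely many values of the average occur, so that restricted supremum is attained). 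Your direct route---reduce to discrete boxes $\prod_{i}\{a_i,\ldots,b_i\}$ containing $\vec{n}$, note that any configuration with average at least $M/2$ satisfies $N(R)^m\leq 2M^{-1}\prod_{j=1}^m\|f_j\|_{\ell^1(\mathbb{Z}^d)}$ and hence lies in a finite family, and take the maximum there---avoids the contradiction and supplies the finiteness argument the paper leaves implicit, at the mild cost of the routine verification that every discrete box is realized by an open rectangle $R^*\ni\vec{n}$ with the same lattice intersection. Both proofs are short; yours is the more self-contained of the two.
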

\begin{proof}
Fix $\vec{n}\in\mathbb{Z}^d$. If
${\mathbb{M}}_{\mathcal{R}}(\vec{f})(\vec{n})=0$,
then all $f_j\equiv0$. For any $R$
with $\vec{n}\in R\in\mathcal{R}$, it suffices to show that
$$
{\mathbb{M}}_{\mathcal{R}}(\vec{f})(\vec{n})=\frac{1}{N(R)^m}
\prod\limits_{i=1}^m\sum\limits_{\vec{k}\in R\cap\mathbb{Z}^d}|f_i(\vec{k})|=0.
$$
If ${\mathbb{M}}_{\mathcal{R}}(\vec{f})(\vec{n})>0$.
Suppose that ${\mathbb{M}}_{\mathcal{R}}(\vec{f})
(\vec{n})$ is not attained for $R$ with $\vec{n}\in
R\in\mathcal{R}$. Let $\{r_k\}_{k\geq1}$ be an
increasing sequence of positive numbers with
$\lim_{k\rightarrow\infty}r_k=\infty$. By the definition
of ${\mathbb{M}}_{\mathcal{R}}(\vec{f})$ and our assumption, we have
$$
{\mathbb{M}}_{\mathcal{R}}(\vec{f})(\vec{n})
=\sup\limits_{\vec{n}\in R\in\mathcal{R}\atop N(R)\geq r_k}\frac{1}{N(R)^m}
\prod\limits_{i=1}^m\sum\limits_{\vec{k}\in R\cap\mathbb{Z}^d}|f_i(\vec{k})|,
\ \ \ \forall k\geq1.
$$
It follows that
$$
{\mathbb{M}}_{\mathcal{R}}(\vec{f})(\vec{n})\leq\frac{1}{r_k^m}
\prod\limits_{i=1}^m\|f_i\|_{\ell^1(\mathbb{Z})},\ \ \forall k\geq1.
$$
Let $k\rightarrow\infty$, we obtain $\widetilde{\mathbb{M}}_{\mathcal{R}}
(\vec{f})(\vec{n})=0$, which is a contradiction. Thus,
${\mathbb{M}}_{\mathcal{R}}(\vec{f})(\vec{n})$ is
attained for some $R$ with $\vec{n}\in R\in\mathcal{R}$.
\end{proof}

Since all $f_j\in\ell^1(\mathbb{Z}^d)$, by Lemma \ref{l6.1},
for any $(n',n_d)\in\mathbb{Z}^d$, there exist
$\vec{x}\in\mathbb{R}^d$ and $\vec{r}(n',n_d)\in
\mathbb{R}_{+}^d$ such that $(n',n_d)\in R_{\vec{r}
(n',n_d)}(\vec{x})$ and $\mathbb{M}_{\mathcal{R}}(\vec{f})
(n',n_d)=A_{\vec{r}(n',n_d)}(\vec{f})(\vec{x})$. Then
$$
\mathbb{M}_{\mathcal{R}}(\vec{f})(n',n_d)
-\mathbb{M}_{\mathcal{R}}(\vec{f})(n',n_d+1)
\leq A_{\vec{r}(n',n_d)}(\vec{f})(\vec{x})-
A_{\vec{r}(n',n_d)}(\vec{f})(\vec{x}+\vec{e}_d).
$$
For convenience, we set $\vec{r}(n',n_d)=(r_1(n',n_d),
\ldots,r_d(n',n_d))$. Note that $(n',n_d)\in R_{\vec{r}
(n',n_d)}(\vec{x})$ and $R_{\vec{r}(n',n_d)}(\vec{x})
\subset R_{2\vec{r}(n',n_d)}(n',n_d)$. These facts
together with \eqref{6.2} yields that

\begin{eqnarray*}
&&A_{\vec{r}(n',n_d)}(\vec{f})(\vec{x})-A_{\vec{r}(n',n_d)}(\vec{f})(\vec{x}+\vec{e}_d)\\
&&\leq\displaystyle\prod\limits_{i=1}^{d}\frac{1}{F(r_i(n',n_d))^{m}}\sum\limits_{\mu=1}^m\Big|\sum\limits_{\vec{k}\in R_{\vec{r}(n',n_d)}(\vec{x})\cap\mathbb{Z}^d}f_\mu(\vec{k})-\sum\limits_{\vec{k}\in R_{\vec{r}(n',n_d)}(\vec{x}+\vec{e}_d)\cap\mathbb{Z}^d}f_\mu(\vec{k})\Big|\\
&&\quad\times\displaystyle\Big(\prod\limits_{i=1}^{\mu-1}\sum\limits_{\vec{k}\in R_{\vec{r}(n',n_d)}(\vec{x})\cap\mathbb{Z}^d}f_i(\vec{k})\Big)\Big(\prod\limits_{\nu=\mu+1}^m\sum\limits_{\vec{k}\in R_{\vec{r}(n',n_d)}(\vec{x}+\vec{e}_d)\cap\mathbb{Z}^d}f_\nu(\vec{k})\Big).
\\
&&\leq\displaystyle\sum\limits_{\mu=1}^m\prod\limits_{\nu\neq\mu,1\leq\nu\leq m}\|f_\nu\|_{\ell^1(\mathbb{Z}^d)}\prod\limits_{i=1}^{d}\frac{1}{F(r_i(n',n_d))^{m}}\sum\limits_{\vec{k}\in R_{2\vec{r}(n',n_d)}(n',n_d)\cap\mathbb{Z}^d}|D_df_\mu(\vec{k})|.
\end{eqnarray*}
Therefore, $\mathbb{M}_{\mathcal{R}}(\vec{f})(n',n_d)-
\mathbb{M}_{\mathcal{R}}(\vec{f})(n',n_d+1)$ can be
controlled by
$$
\displaystyle\sum\limits_{\mu=1}^m\prod\limits_{\nu\neq\mu,1\leq\nu\leq m}\|f_\nu\|_{\ell^1(\mathbb{Z}^d)}
\prod\limits_{i=1}^{d}\frac{1}{F(r_i(n',n_d))^{m}}\sum\limits_{\vec{k}\in R_{2\vec{r}(n',n_d)}(n',n_d)\cap\mathbb{Z}^d}|D_df_\mu(\vec{k})|.
$$
It follows that
\begin{equation}\label{6.6}
\begin{array}{ll}
&\displaystyle\sum\limits_{n'\in\mathbb{Z}^{d-1}}\sum\limits_{n_d\in X_{n'}^{+}}(\mathbb{M}_{\mathcal{R}}f(n',n_d)-\mathbb{M}_{\mathcal{R}}f(n',n_d+1))\\
&\leq\displaystyle\sum\limits_{\mu=1}^m\prod\limits_{\nu\neq\mu,1\leq\nu\leq m}\|f_\nu\|_{\ell^1(\mathbb{Z}^d)}\\
&\quad\times\displaystyle\Big(\sum\limits_{n'\in\mathbb{Z}^{d-1}}\sum\limits_{n_d\in X_{n'}^{+}}\prod\limits_{i=1}^{d}\frac{1}{(F(r_i(n',n_d)))^{m}}\sum\limits_{\vec{k}\in R_{2\vec{r}(n',n_d)}(n',n_d)\cap\mathbb{Z}^d}|D_df_\mu(\vec{k})|\Big).
\end{array}\end{equation}
By direct calculations, we obtain
\begin{equation}\label{6.7}
\begin{array}{ll}
&\displaystyle\sum\limits_{n'\in\mathbb{Z}^{d-1}}\sum\limits_{n_d\in X_{n'}^{+}}\prod\limits_{i=1}^{d}\frac{1}{(F(r_i(n',n_d)))^{m}}\sum\limits_{\vec{k}\in R_{2\vec{r}(n',n_d)}(n',n_d)\cap\mathbb{Z}^d}|D_df_\mu(\vec{k})|\\
&\leq\displaystyle\sum\limits_{(k_1,\ldots,k_d)\in\mathbb{Z}^d}|D_df_\mu(k_1,\ldots,k_d)|\prod\limits_{i=1}^{d}\Big(\sum\limits_{n_i\in\mathbb{Z}}\frac{1}{(F(r_i(n',n_d)))^{m}}\chi_{\{|k_i-n_i|<2r_i(n',n_d)\}}\Big).
\end{array}\end{equation}
For fixed $1\leq i\leq d$ and $k_i\in\mathbb{Z}$, we have
\begin{equation}\label{6.8}
\begin{array}{ll}
&\displaystyle\sum\limits_{n_i\in\mathbb{Z}}\frac{1}{(F(r_i(n',n_d)))^{m}}\chi_{\{|k_i-n_i|<2r_i(n',n_d)\}}\leq\displaystyle9+\sum\limits_{n_i\in\mathbb{Z}}\frac{1}{(2[\frac{|k_i-n_i|-3}{2}]+1)^{m}}\chi_{\{|k_i-n_i|\geq5\}}.
\end{array}\end{equation}
Note that $m\geq2$, then
$
\displaystyle\sum\limits_{n_i\in\mathbb{Z}}\frac{1}{(2[\frac{|k_i-n_i|-3}{2}]+1)^{m}}\chi_{\{|k_i-n_i|\geq5\}}\leq\displaystyle\sum\limits_{n_i\in\mathbb{Z}\atop|n_i|\geq5}\frac{1}{(2[\frac{|n_i|-3}{2}]+1)^{m}}\leq4.
$This together with \eqref{6.8} yields that
\begin{equation}\label{6.9}
\sum\limits_{n_i\in\mathbb{Z}}\frac{1}{(F(r_i(n',n_d)))^{m}}\chi_{\{|k_i-n_i|<2r_i(n',n_d)\}}\leq13.
\end{equation}
Combining \eqref{6.9} with \eqref{6.7} gives that
\begin{equation}\label{6.10}
\sum\limits_{n'\in\mathbb{Z}^{d-1}}\sum\limits_{n_d\in X_{n'}^{+}}\prod\limits_{i=1}^{d}\frac{1}{(F(r_i(n',n_d)))^{m}}\sum\limits_{\vec{k}\in R_{2\vec{r}(n',n_d)}(n',n_d)\cap\mathbb{Z}^d}|D_df_\mu(\vec{k})|\lesssim_{d}\|D_df_\mu\|_{\ell^1(\mathbb{Z}^d)}.
\end{equation}
Then \eqref{6.4} follows immediately from \eqref{6.6} 
and \eqref{6.10}.

\subsection{\bf The continuity part}

Let $g_{i,j}\rightarrow f_j$ in $\ell^1(\mathbb{Z}^d)$
for all $1\leq j\leq m$ when $i\rightarrow\infty$. For
convenience, we set $\vec{g}_i=(g_{i,1},\ldots,g_{i,m})$.
It suffices to show that
\begin{equation}\label{6.11}
\lim\limits_{i\rightarrow\infty}\|D_l\mathbb{M}_{\mathcal{R}}(\vec{g}_i)
-D_l\mathbb{M}_{\mathcal{R}}(\vec{f})\|_{\ell^1(\mathbb{Z}^d)}=0, \quad \hbox{for each\ }1\leq l\leq d.
\end{equation} We only prove \eqref{6.11} for
the case $l=d$ and the other cases are similar.
Since we have $\|D_\ell |g_{i,j}| - D_\ell |f_{j}|\|_{\ell^1
(\mathbb{Z}^d)}\le 2\||g_{i,j}| - |f_{j}|\|_{\ell^1
(\mathbb{Z}^d)}\le 2 \|g_{i,j} - f_{j}\|_{\ell^1(\mathbb{Z}^d)}$, 
we may assume without loss of generality that all $g_{i,j}\geq0$
and $f_j\geq0$. Given $\epsilon\in(0,1)$, there exists
$N_1=N_1(\epsilon,\vec{f})\in\mathbb{N}$ such that for all
$1\leq j\leq m$ and $i\geq N_1$,
\begin{equation}\label{6.12}
\|g_{i,j}-f_j\|_{\ell^1(\mathbb{Z}^d)}<\epsilon
\ \ {\rm and}\ \|g_{i,j}\|_{\ell^1(\mathbb{Z}^d)}
\leq\|f_j\|_{\ell^1(\mathbb{Z}^d)}+1.
\end{equation}
For all $1\leq j\leq m$ and $i\geq N_1$, it then follows 
that
\begin{equation}\label{6.13}
\|D_dg_{i,j}\|_{\ell^1(\mathbb{Z}^d)}
\leq 2\|g_{i,j}\|_{\ell^1(\mathbb{Z}^d)}
\leq 2(\|f_j\|_{\ell^1(\mathbb{Z}^d)}+1)
\end{equation} On the other
hand, by the boundedness part in Theorem \ref{thm7}, we obtain
$D_d\mathbb{M}_{\mathcal{R}}(\vec{f})\in\ell^1(\mathbb{Z}^d)$.
Hence, for the above $\epsilon>0$, there exists $\Lambda_1>0$
such that
\begin{equation}\label{6.14}
\max\{\|D_d\mathbb{M}_{\mathcal{R}}(\vec{f})
\chi_{(R_{\vec{\Lambda_1}}(\vec{0}))^c}\|_{\ell^1(\mathbb{Z}^d)},
\sup\limits_{1\leq j\leq m}
\|f_j\chi_{(R_{\vec{\Lambda_1}}(\vec{0}))^c}\|_{\ell^1(\mathbb{Z}^d)}\}
<\epsilon.
\end{equation}
Here $\vec{\Lambda_1}=(\Lambda_1,\ldots,\Lambda_1)$.
Since $m>1$, there exists an integer $\Lambda_2>0$
such that
\begin{equation}\label{6.15}
\Lambda_2^{1-m}<\epsilon.
\end{equation}

{\bf Step1: Reduction.} When $i\geq N_1$, we get from 
\eqref{6.13} that
$$\begin{array}{ll}
&|\mathbb{M}_{\mathcal{R}}(\vec{g}_i)(\vec{n})
-\mathbb{M}_{\mathcal{R}}(\vec{f})(\vec{n})|
\\
&\leq\displaystyle\sup\limits_{\vec{n}\in R\in\mathcal{R}}
\frac{1}{N(R)^{m}}\Big|\prod\limits_{i=1}^m
\sum\limits_{\vec{k}\in R\cap\mathbb{Z}^d}g_{i,j}(\vec{k})
-\prod\limits_{i=1}^m\sum\limits_{\vec{k}\in R\cap\mathbb{Z}^d}f_j(\vec{k})
\Big|
\\
&\leq\displaystyle\sum\limits_{\mu=1}^m
\sum\limits_{\vec{k}\in R\cap\mathbb{Z}^d}|g_{i,\mu}(\vec{k})-f_\mu(\vec{k})|
\Big(\prod\limits_{\iota=1}^{\mu-1}\sum\limits_{\vec{k}\in R\cap\mathbb{Z}^d}
g_{i,\iota}(\vec{k})\Big)
\Big(\prod\limits_{\nu=\mu+1}^m\sum\limits_{\vec{k}\in R\cap\mathbb{Z}^d}
f_\nu(\vec{k})\Big)
\\
&\leq\displaystyle\sum\limits_{\mu=1}^m
\|g_{i,\mu}-f_\mu\|_{\ell^1(\mathbb{Z}^d)}\Big(
\prod\limits_{\nu\neq\mu,1\leq\nu\leq m}(\|f_\nu\|_{\ell^1(\mathbb{Z}^d)}+1)\Big).
\end{array}
$$
This implies that $\mathbb{M}_{\mathcal{R}}(\vec{g}_i)
(\vec{n})\rightarrow\mathbb{M}_{\mathcal{R}}(\vec{f})
(\vec{n})$ as $i\rightarrow\infty$ for any
$\vec{n}\in\mathbb{Z}^d$, and
\begin{equation}\label{6.16}
D_d\mathbb{M}_{\mathcal{R}}(\vec{g}_i)(\vec{n})\rightarrow D_d\mathbb{M}_{\mathcal{R}}(\vec{f})(\vec{n})\ \ {\rm as}\ \ i\rightarrow\infty,\ \ \forall\vec{n}\in\mathbb{Z}^d.
\end{equation}
Let $\Lambda=\max\{\Lambda_1,\Lambda_2,6\}$. It follows
from \eqref{6.16} that there exists $N_2=N_2(\epsilon,
\Lambda)\in\mathbb{N}$ such that
\begin{equation}\label{6.17}
|D_d\mathbb{M}_{\mathcal{R}}(\vec{g}_i)(\vec{n})-D_d\mathbb{M}_{\mathcal{R}}
(\vec{f})(\vec{n})|\leq\frac{\epsilon}{(N(R_{\vec{2\Lambda}}(\vec{0})))},
\ \ \forall i\geq N_2\ {\rm and}\ \vec{n}\in R_{\vec{\Lambda}}(\vec{0})
\cap\mathbb{Z}^d.
\end{equation}
Then, by \eqref{6.14} and \eqref{6.17}, we have that for
all $i\geq N_2$,
$$\begin{array}{ll}
\|D_d\mathbb{M}_{\mathcal{R}}(\vec{g}_i)-D_d\mathbb{M}_{\mathcal{R}}
(\vec{f})\|_{\ell^1(\mathbb{Z}^d)}
&=\|(D_d\mathbb{M}_{\mathcal{R}}(\vec{g}_i)-D_d\mathbb{M}_{\mathcal{R}}
(\vec{f}))\chi_{N(R_{\vec{2\Lambda}}(\vec{0}))}\|_{\ell^1(\mathbb{Z}^d)}\\
&\quad+\|(D_d\mathbb{M}_{\mathcal{R}}(\vec{g}_i)-D_d\mathbb{M}_{\mathcal{R}}
(\vec{f}))\chi_{(N(R_{\vec{2\Lambda}}(\vec{0})))^c}\|_{\ell^1(\mathbb{Z}^d)}
\\
&\leq2\epsilon+\|D_d\mathbb{M}_{\mathcal{R}}(\vec{g}_i)
\chi_{(N(R_{\vec{2\Lambda}}(\vec{0})))^c}\|_{\ell^1(\mathbb{Z}^d)}
\end{array}
$$
Thus, to prove \eqref{6.11} for $l=d$,
it suffices to show that
\begin{equation}\label{6.18}
\|D_d\mathbb{M}_{\mathcal{R}}(\vec{g}_i)
\chi_{(N(R_{\vec{2\Lambda}}(\vec{0})))^c}\|_{\ell^1(\mathbb{Z}^d)}
\lesssim_{d,m,\vec{f}}\epsilon, \ \ \ \forall i\geq N_1.
\end{equation}

{\bf Step 2: Proof of \eqref{6.18}.} Note that
$$(R_{\vec{2\Lambda}}(\vec{0}))^c\cap\mathbb{Z}^d\subset
\bigcup\limits_{\mu=1}^dE_\mu
:=\bigcup\limits_{\mu=1}^d\mathbb{Z}^d\setminus
\{\vec{n}=(n_1,\ldots,n_d)\in\mathbb{Z}^d: |n_\mu|\leq 2\Lambda\}.
$$
Fix $j\geq N_1$. Then we have
\begin{equation}\label{6.19}
\|D_d\mathbb{M}_{\mathcal{R}}(\vec{g}_i)\chi_{(N(R_{\vec{2\Lambda}}
(\vec{0})))^c}\|_{\ell^1(\mathbb{Z}^d)}\leq\sum\limits_{\mu=1}^d A_{2,\mu}
:=\sum\limits_{\mu=1}^d\sum\limits_{\vec{n}\in E_\mu}
|D_d\mathbb{M}_{\mathcal{R}}(\vec{g}_i)(\vec{n})|.
\end{equation}

{\bf Step 3: Estimates for $A_{2,d}$.} For each
$n'\in\mathbb{Z}^{d-1}$, let
$$Y_{n'}^{+}=\{|n_d|\geq2\Lambda: \mathbb{M}_{\mathcal{R}}(\vec{g}_i)(n',n_d+1)\leq\mathbb{M}_{\mathcal{R}}(\vec{g}_i)(n',n_d)\}, \quad \hbox{and}$$
$$Y_{n'}^{-}=\{|n_d|\geq2\Lambda: \mathbb{M}_{\mathcal{R}}(\vec{g}_i)(n',n_d+1)>\mathbb{M}_{\mathcal{R}}(\vec{g}_i)(n',n_d)\}.$$
Then, we have
\begin{equation}\label{6.20}
\begin{array}{ll}
A_{2,d}
&\leq\displaystyle\sum\limits_{n'\in\mathbb{Z}^{d-1}}
\sum\limits_{n_d\in Y_{n'}^{+}}(\mathbb{M}_{\mathcal{R}}(\vec{g}_i)(n',n_d)
-\mathbb{M}_{\mathcal{R}}(\vec{g}_i)(n',n_d+1))
\\
&\quad+\displaystyle\sum\limits_{n'\in\mathbb{Z}^{d-1}}
\sum\limits_{n_d\in Y_{n'}^{-}}(\mathbb{M}_{\mathcal{R}}(\vec{g}_i)(n',n_d+1)
-\mathbb{M}_{\mathcal{R}}(\vec{g}_i)(n',n_d)).
\end{array}\end{equation}
We want to show that
\begin{equation}\label{6.21}
\sum\limits_{n'\in\mathbb{Z}^{d-1}}\sum\limits_{n_d\in Y_{n'}^{+}}
(\mathbb{M}_{\mathcal{R}}(\vec{g}_i)(n',n_d)
-\mathbb{M}_{\mathcal{R}}(\vec{g}_i)(n',n_d+1))\lesssim_{d,m,\vec{f}}\epsilon,
\ \ \forall i\geq N_1;
\end{equation}
\begin{equation}\label{6.22}
\sum\limits_{n'\in\mathbb{Z}^{d-1}}\sum\limits_{n_d\in Y_{n'}^{-}}
(\mathbb{M}_{\mathcal{R}}(\vec{g}_i)(n',n_d+1)
-\mathbb{M}_{\mathcal{R}}(\vec{g}_i)(n',n_d))
\lesssim_{d,m,\vec{f}}\epsilon,\ \ \forall i\geq N_1.
\end{equation}

We will only prove \eqref{6.21}, since \eqref{6.22} is 
analogous. Fix $i\geq N_1$. Since all $g_{i,j}\in\ell^1
(\mathbb{Z}^d)$, then for any $(n',n_d)\in\mathbb{Z}^d$, 
there exist $\vec{x}\in\mathbb{R}^d$ and $\vec{r}(n',n_d)
\in\mathbb{R}_{+}^d$ such that $(n',n_d)\in R_{\vec{r}
(n',n_d)}(\vec{x})$ and $\mathbb{M}_{\mathcal{R}}
(\vec{g_i})(n',n_d)=A_{\vec{r}(n',n_d)}(\vec{g_i})(\vec{x})$. 
Let $\vec{r}(n',n_d)=(r_1(n',n_d),\ldots,r_d(n',n_d))$. By 
the similar arguments as in getting \eqref{6.6} and 
\eqref{6.7}, we obtain
\begin{equation}\label{6.23}
\begin{array}{ll}
&\displaystyle\sum\limits_{n'\in\mathbb{Z}^{d-1}}\sum\limits_{n_d\in Y_{n'}^{+}}(\mathbb{M}_{\mathcal{R}}(\vec{g}_i)(n',n_d)-\mathbb{M}_{\mathcal{R}}(\vec{g}_i)(n',n_d+1))\\
&\leq\displaystyle\sum\limits_{\mu=1}^m\prod\limits_{\nu\neq\mu,1\leq\nu\leq m}\|g_{i,\nu}\|_{\ell^1(\mathbb{Z}^d)}\\
&\quad\times\displaystyle\Big(\sum\limits_{n'\in\mathbb{Z}^{d-1}}\sum\limits_{n_d\in Y_{n'}^{+}}\prod\limits_{i=1}^{d}\frac{1}{(F(r_i(n',n_d)))^{m}}\sum\limits_{\vec{k}\in R_{2\vec{r}(n',n_d)}(n',n_d)\cap\mathbb{Z}^d}|D_dg_{i,\mu}(\vec{k})|\Big)
\end{array}\end{equation}\begin{equation*}
\begin{array}{ll}{}&\leq\displaystyle\sum\limits_{\mu=1}^m\prod\limits_{\nu\neq\mu,1\leq\nu\leq m}\|g_{i,\nu}\|_{\ell^1(\mathbb{Z}^d)}\\
&\quad\times\displaystyle\sum\limits_{(k_1,\ldots,k_d)\in\mathbb{Z}^d}|D_dg_{i,\mu}(k_1,\ldots,k_d)|\prod\limits_{i=1}^{d}\Big(\sum\limits_{n_i\in\mathbb{Z}}\frac{1}{(F(r_i(n',n_d)))^{m}}\chi_{\{|k_i-n_i|<2r_i(n',n_d)\}}\Big).
\end{array}\end{equation*}
When $|k_d|>\Lambda$, by \eqref{6.9}, we get
\begin{equation}\label{6.24}
\sum\limits_{n_d\in\mathbb{Z}\atop|n_d|\geq2\Lambda}\frac{1}{(F(r_d(n',n_d)))^{m}}\chi_{\{|k_d-n_d|<2r_d(n',n_d)\}}\leq13.
\end{equation}
When $|k_d|\leq\Lambda$, then $|k_d-n_d|\geq\Lambda\geq6$
for $|n_d|\geq2\Lambda$. This together with \eqref{6.15}
yields that
\begin{equation}\label{6.25}
\begin{array}{ll}
&\displaystyle\sum\limits_{n_d\in\mathbb{Z}\atop|n_d|\geq2\Lambda}\frac{1}{(F(r_d(n',n_d)))^{m}}\chi_{\{|k_d-n_d|<2r_d(n',n_d)\}}\\
&\leq\displaystyle\sum\limits_{n_d\in\mathbb{Z}\atop|n_d|\geq2\Lambda}\frac{1}{(2[\frac{|k_d-n_d|-3}{2}]+1)^{m}}\chi_{\{|k_d-n_d|\geq\Lambda\}}\\
\\&\lesssim_{d,m}\epsilon.
\end{array}\end{equation}
By \eqref{6.12}-\eqref{6.14} and \eqref{6.24}-\eqref{6.25},
we obtain
\begin{equation}\label{6.26}
\begin{array}{ll}
&\displaystyle\sum\limits_{(k_1,\ldots,k_d)\in\mathbb{Z}^d}
|D_dg_{i,\mu}(k_1,\ldots,k_d)|\prod\limits_{i=1}^{d}
\Big(\sum\limits_{n_i\in\mathbb{Z}}\frac{1}{(F(r_i(n',n_d)))^{m}}
\chi_{\{|k_i-n_i|<2r_i(n',n_d)\}}\Big)
\\
&\lesssim_{d,m}\displaystyle\sum\limits_{k'\in\mathbb{Z}^{d-1}}
\sum\limits_{k_d\in\mathbb{Z}\atop|k_d|>\Lambda}|D_dg_{i,\mu}(k',k_d)|
+\sum\limits_{k'\in\mathbb{Z}^{d-1}}\sum\limits_{k_d\in\mathbb{Z}\atop|k_d|
\leq\Lambda}|D_dg_{i,\mu}(k',k_d)|\epsilon
\\
&\lesssim_{d,m}\|g_{i,\mu}-f_\mu\|_{\ell^1(\mathbb{Z}^d)}
+\|f_\mu\chi_{(R_{\vec{\Lambda}}(\vec{0}))^c)}\|_{\ell^1(\mathbb{Z}^d)}
+2(\|f_\mu\|_{\ell^1(\mathbb{Z}^d)}+1)\epsilon
\\
&\lesssim_{d,m,f_\mu}\epsilon.
\end{array}\end{equation}
\eqref{6.26} together with \eqref{6.12}-\eqref{6.13} and
\eqref{6.23} yields \eqref{6.21}. It follows from
\eqref{6.20}-\eqref{6.22} that
\begin{equation}\label{6.27}
A_{2,d}\lesssim_{d,m,\vec{f}}\epsilon,\ \ \forall i\geq N_1.
\end{equation}

{\bf Step 4: Estimates for $A_{2,\mu}$ with $\mu=1,2,\ldots,
d-1$.} We first estimates $A_{2,1}$. For each 
$n'\in\mathbb{Z}^{d-1}$, let
$$
Z_{n'}^{+}=\{n_d\in\mathbb{Z}: \mathbb{M}_{\mathcal{R}}(\vec{g}_i)(n',n_d+1)
\leq\mathbb{M}_{\mathcal{R}}(\vec{g}_i)(n',n_d)\},
$$
$$
Z_{n'}^{-}=\{n_d\in\mathbb{Z}:
\mathbb{M}_{\mathcal{R}}(\vec{g}_i)(n',n_d+1)>\mathbb{M}_{\mathcal{R}}
(\vec{g}_i)(n',n_d)\}.
$$
Then we have
\begin{equation}\label{6.28}
\begin{array}{ll}
A_{2,1}&=\displaystyle\sum\limits_{|n_1|>2\Lambda}\sum\limits_{n'\in\mathbb{Z}^{d-1}}|\mathbb{M}_{\mathcal{R}}(\vec{g}_i)(n_1,\ldots,n_{d-1},n_d+1)-\mathbb{M}_{\mathcal{R}}(\vec{g}_i)(n_1,\ldots,n_d)|\\
&=\displaystyle\sum\limits_{n'\in\mathbb{Z}^{d-1}\atop |n_1|>2\Lambda}\sum\limits_{n_d\in Z_{n'}^{+}}(\mathbb{M}_{\mathcal{R}}(\vec{g}_i)(n',n_d)-\mathbb{M}_{\mathcal{R}}(\vec{g}_i)(n',n_d+1))\\
&\quad+\displaystyle\sum\limits_{n'\in\mathbb{Z}^{d-1}\atop|n_1|>2\Lambda}\sum\limits_{n_d\in Z_{n'}^{-}}(\mathbb{M}_{\mathcal{R}}(\vec{g}_i)(n',n_d+1)-\mathbb{M}_{\mathcal{R}}(\vec{g}_i)(n',n_d)).
\end{array}
\end{equation}
We want to show that
\begin{equation}\label{6.29}
\sum\limits_{n'\in\mathbb{Z}^{d-1}\atop |n_1|>2\Lambda}
\sum\limits_{n_d\in Z_{n'}^{+}}(\mathbb{M}_{\mathcal{R}}(\vec{g}_i)(n',n_d)
-\mathbb{M}_{\mathcal{R}}(\vec{g}_i)(n',n_d+1))
\lesssim_{d,m,\vec{f}}\epsilon,\ \ \forall i\geq N_1;
\end{equation}
\begin{equation}\label{6.30}
\sum\limits_{n'\in\mathbb{Z}^{d-1}\atop|n_1|>2\Lambda}
\sum\limits_{n_d\in Z_{n'}^{-}}(\mathbb{M}_{\mathcal{R}}(\vec{g}_i)(n',n_d+1)
-\mathbb{M}_{\mathcal{R}}(\vec{g}_i)(n',n_d))
\lesssim_{d,m,\vec{f}}\epsilon,
\ \ \forall i\geq N_1.
\end{equation}
We will only prove \eqref{6.29}, since \eqref{6.30} is 
analogous. By the similar arguments as in getting 
\eqref{6.23}, for any $(n',n_d)\in\mathbb{Z}^d$, there 
exists $\vec{r}(n',n_d)=(r_1(n',n_d),\ldots,r_d(n',n_d))
\in\mathbb{R}_{+}^d$ such that
\begin{equation}\label{6.31}
\begin{array}{ll}
&\displaystyle\sum\limits_{n'\in\mathbb{Z}^{d-1}\atop|n_1|>2\Lambda}
\sum\limits_{n_d\in Z_{n'}^{+}}(\mathrm{M}_{\mathcal{R}}f_j(n',n_d)
-\mathrm{M}_{\mathcal{R}}f_j(n',n_d+1))
\\
&\lesssim_{m}\displaystyle\sum\limits_{\mu=1}^m
\prod\limits_{\nu\neq\mu,1\leq\nu\leq m}\|g_{i,\nu}\|_{\ell^1(\mathbb{Z}^d)}
\\
&\quad\times\displaystyle\sum\limits_{(k_1,\ldots,k_d)\in\mathbb{Z}^d}
|D_dg_{i,\mu}(k_1,\ldots,k_d)|\Big(\sum\limits_{|n_1|>2\Lambda}
\frac{1}{(F(r_1(n',n_d)))^{m}}\chi_{\{|k_1-n_1|<2r_1(n',n_d)\}}\Big)
\\
&\quad\times\displaystyle\prod\limits_{i=2}^{d}\Big(
\sum\limits_{n_i\in\mathbb{Z}}\frac{1}{(F(r_i(n',n_d)))^{m}}
\chi_{\{|k_i-n_i|<2r_i(n',n_d)\}}\Big).
\end{array}\end{equation}
When $|k_1|\leq\Lambda$, then $|k_1-n_1|\geq\Lambda\geq6$
for $|n_1|\geq2\Lambda$. By the arguments similar to those
used in deriving \eqref{6.25}, we can obtain
\begin{equation}\label{6.32}
\sum\limits_{|n_1|>2\Lambda}\frac{1}{(F(r_1(n',n_d)))^{m}}
\chi_{\{|k_1-n_1|<2r_1(n',n_d)\}}\lesssim_{d,m}\epsilon.
\end{equation}
When $|k_1|>\Lambda$, by \eqref{6.9}, we get
\begin{equation}\label{6.33}
\sum\limits_{n_d\in\mathbb{Z}\atop|n_1|\geq2\Lambda}
\frac{1}{(F(r_1(n',n_d)))^{m}}\chi_{\{|k_1-n_1|<2r_1(n',n_d)\}}\leq13.
\end{equation}
Then \eqref{6.29} follows from \eqref{6.31}-\eqref{6.33}
and \eqref{6.13}-\eqref{6.14}. By \eqref{6.28}-\eqref{6.30}, 
it holds that 
\begin{equation}\label{6.34}
A_{2,1}\lesssim_{d,m,\vec{f}}\epsilon,\ \ \forall i\geq N_1.
\end{equation}
Similarly, we can get
\begin{equation}\label{6.35}
A_{2,\mu}\lesssim_{d,m,\vec{f}}\epsilon,\ \ \forall i\geq N_1
\ {\rm and}\ \mu=2,\ldots,d-1.
\end{equation}
\eqref{6.35} together with \eqref{6.19}, \eqref{6.27} and
\eqref{6.34} yields \eqref{6.18}. Thus, the proof
of the continuity part is complete. $\hfill\Box$

\section{Properties of $u_{x,\vec{f}}$}\label{S7}

We summrize the properties of $u_{x,\vec{f}}$ into nine
Claims. The proofs of them are thoroughly elementary.
In what follow, we set $\overline{\mathbb{Q}}_{+}=
\mathbb{Q}_{+}\cup\{0\}$.
\par\medskip
{\bf Claim 1.}
Let $1<p_j<\infty$ and $f_j\in L^{p_j}(\mathbb R^2)$ for
$1\leq j\leq m$. Then
\begin{equation}\label{7.1}
\lim_{(r_{1,1},r_{1,2},r_{2,1},r_{2,2})\in\overline{\mathbb{R}}_{+}^4\atop
r_{1,1}+r_{1,2}+r_{2,1}+r_{2,2}\to\infty}
u_{x,\vec{f}}(r_{1,1},r_{1,2},r_{2,1},r_{2,2})=0
\ \text{ for a.e. }x\in\mathbb R^2.
\end{equation}
\par\medskip
\begin{proof}
Fix $1\leq j\leq m$. We note first
$$
\|f_j(x_1,\cdot)\|_{L^{p_j}(\mathbb R)},\,
\|\widetilde{\mathcal{M}}_{1}f_j(x_1,\cdot)\|_{L^{p_j}(\mathbb R)}
<\infty \ \text{ a.e }x_1\in \mathbb R, \quad \hbox{and}
$$
$$
\|f_j(\cdot,x_2)\|_{L^{p_j}(\mathbb R)},\,
\|\widetilde{\mathcal{M}}_{2}f_j(\cdot,x_2)
\|_{L^{p_j}(\mathbb R)}
<\infty \ \text{ a.e }x_2\in \mathbb R.
$$
Let $\vec{r}=(r_{1,1},r_{1,2},r_{2,1},r_{2,2})\in
\overline{\mathbb{R}}_{+}^4$. We consider the following
three cases.

(a) $r_{1,1}+r_{1,2}>0$ and $r_{2,1}+r_{2,2}>0$. By
H\"{o}lder's inequality, one finds that
\begin{align*}
u_{x,\vec{f}}(\vec r)
&\le \prod_{j=1}^{m}\frac{1}{r_{1,1}+r_{1,2}}\int_{x_1-r_{1,1}}^{x_1+r_{1,2}}
\widetilde{\mathcal{M}}_2f_j(y_1,x_2)dy_1
\le \prod_{j=1}^{m}\frac{1}{(r_{1,1}+r_{1,2})^{1/{p_j}}}
\|\widetilde{\mathcal{M}}_2f_j(\cdot,x_2)\|_{L^{p_j}(\mathbb R)}.
\end{align*}
Simiarly we get
\begin{equation*}
u_{x,\vec{f}}(\vec r)\le \prod_{j=1}^{m}\frac{1}{(r_{2,1}+r_{2,2})^{1/{p_j}}}
\|\widetilde{\mathcal{M}}_1f_j(x_1\cdot)\|_{L^{p_j}(\mathbb R)}.
\end{equation*}
So, we have
\begin{equation*}
u_{x,\vec{f}}(\vec r)\le
\min\biggl\{\prod_{j=1}^{m}\frac{1}{(r_{1,1}+r_{1,2})^{1/{p_j}}}
\|\widetilde{\mathcal{M}}_2f_j(\cdot,x_2)\|_{L^{p_j}(\mathbb R)},\,
\prod_{j=1}^{m}\frac{1}{(r_{2,1}+r_{2,2})^{1/{p_j}}}
\|\widetilde{\mathcal{M}}_1f_j(x_1\cdot)\|_{L^{p_j}(\mathbb R)}
\biggr\}.
\end{equation*}

(b) $r_{1,1}=r_{1,2}=0$ and $r_{2,1}+r_{2,2}>0$. By
H\"{o}lder's inequality,
$$u_{x,\vec{f}}(\vec r)=\prod_{j=1}^{m}\frac{1}{r_{2,1}+r_{2,2}}
\int_{x_2-r_{2,1}}^{x_2+r_{2,2}}f_j(x_1,y_2)dy_2\le \prod_{j=1}^{m}\frac{1}{(r_{2,1}+r_{2,2})^{1/{p_j}}}
\|f(x_1,\cdot)\|_{L^{p_j}(\mathbb R)}.$$

(c) $r_{1,1}+r_{1,2}>0$ and $r_{2,1}=r_{2,2}=0$. Similarly
to (b) we get
\begin{equation*}
u_{x,\vec{f}}(\vec r)
\le\prod_{j=1}^{m}\frac{1}{(r_{1,1}+r_{1,2})^{1/{p_j}}}\|f_j(\cdot,x_2)\|_{L^{p_j}(\mathbb R)}.
\end{equation*}

From (a), (b) and (c), we see that \eqref{7.1} holds for
a.e. $x\in \mathbb R^2$.
\end{proof}
{\bf Claim  2.} Let $1\leq j\leq m$ and
$f_j\in L_{\mathrm{loc}}^1(\mathbb R^2)$. Then the set $A_j$
\begin{equation*}
A_j:=\Bigl\{
(x_1,x_2)\in\mathbb R^2: \lim\limits_{(r_{1,1},r_{1,2})\in\overline{\mathbb{R}}_{+}^2\atop
r_{1,1}+r_{1,2}\to 0}\frac{1}{r_{1,1}+r_{1,2}}
\int_{x_1-r_{1,1}}^{x_1+r_{1,2}}|f_j(y_1,x_2)-f_j(x_1,x_2)|dy_1=0\Bigr\}
\end{equation*}
is a measurable set in $\mathbb R^2$.
\begin{proof}
$(x_1,x_2)\in A_j$ is equivalent to the following: For any
$k\in \mathbb N$, there exists an $\ell\in\mathbb N$ such
that for $r_{1,1},r_{1,2}\geq0$ with $r_{1,1}+r_{1,2}<1/\ell$,
\begin{equation*}
\frac{1}{r_{1,1}+r_{1,2}}
\int_{x_1-r_{1,1}}^{x_1+r_{1,2}}|f_j(y_1,x_2)-f_j(x_1,x_2)|dy_1<\frac{1}{k}.
\end{equation*}
And this is equivalent to: For any $k\in \mathbb N$,
there exists an $\ell\in\mathbb N$ such that for
$r_{1,1},r_{1,2}\in \mathbb Q_+$ with $r_{1,1}+r_{1,2}<1/\ell$,
\begin{equation*}
\frac{1}{r_{1,1}+r_{1,2}}
\int_{x_1-r_{1,1}}^{x_1+r_{1,2}}|f_j(y_1,x_2)-f_j(x_1,x_2)|dy_1<\frac{1}{k}.
\end{equation*}
Thus, $A_j$ can be written in the following form:
\begin{equation*}
A_j=\bigcap_{k\in\mathbb N}\bigcup_{\ell\in\mathbb N}
\bigcap_{\substack{(r_{1,1},r_{1,2})\in \mathbb Q_+^2\atop r_{1,1}+r_{1,2}<1/\ell}}
\Bigl\{(x_1,x_2)\in\mathbb{R}^2:\frac{1}{r_{1,1}+r_{1,2}}
\int_{x_1-r_{1,1}}^{x_1+r_{1,2}}|f_j(y_1,x_2)-f_j(x_1,x_2)|dy_1<\frac{1}{k}\Bigr\}.
\end{equation*}
Since $f_j\in L_{\mathrm{loc}}^1(\mathbb R^2)$, we see
that the function $\frac{1}{r_{1,1}+r_{1,2}}\int_{x_1-
r_{1,1}}^{x_1+r_{1,2}}|f_j(y_1,x_2)-f_j(x_1,x_2)|dy_1$
is measurable in $\mathbb R^2$, and hence
\begin{equation*}
\Bigl\{(x_1,x_2)\in\mathbb{R}^2:\frac{1}{r_{1,1}+r_{1,2}}
\int_{x_1-r_{1,1}}^{x_1+r_{1,2}}|f_j(y_1,x_2)-f_j(x_1,x_2)|dy_1<\frac{1}{k}
\Bigr\}
\end{equation*}
is a measurable set in $\mathbb R^2$. Thus, $A_j$ is a
measurable set in $\mathbb R^2$.
\end{proof}
{\bf Claim 3.} Let $1\leq j\leq m$, $f_j\in L_{\mathrm{loc}}^1
(\mathbb R^2)$ and $r_{1,1}+r_{1,2}>0$. Then the set $B_j$
$$\begin{array}{ll}
B_j:=\displaystyle\Big\{(x_1,x_2)\in\mathbb R^2:\lim_{(r_{2,1},r_{2,2})\in\overline{\mathbb{R}}_{+}^2\atop
r_{2,1}+r_{2,2}\to 0}\frac{1}{r_{2,1}+r_{2,2}}&\displaystyle\int_{x_2-r_{2,1}}^{x_2+r_{2,2}}\biggl|\frac{1}{r_{1,1}+r_{1,2}}\int_{x_1-r_{1,1}}^{x_1+r_{1,2}}f_j(y_1,y_2)dy_1\\
&-\displaystyle\frac{1}{r_{1,1}+r_{1,2}}\int_{x_1-r_{1,1}}^{x_1+r_{1,2}}f_j(y_1,x_2)dy_1\Big|dy_2=0\Big\}
\end{array}$$
is a measurable set in $\mathbb R^2$.
\par\smallskip
\begin{proof}
As in the proof of Claim 2, we can write
\begin{align*}
B_j=\bigcap_{k\in\mathbb N}\bigcup_{\ell\in\mathbb N}
\bigcap_{\substack{(r_{2,1},r_{2,2})\in\mathbb Q_+^2\atop r_{2,1}+r_{2,2}<1/\ell}}
\Bigl\{(x_1,x_2)\in\mathbb{R}^2:&\frac{1}{r_{2,1}+r_{2,2}}
\int_{x_2-r_{2,1}}^{x_2+r_{2,2}}
\biggl|\frac{1}{r_{1,1}+r_{1,2}}\int_{x_1-r_{1,1}}^{x_1+r_{1,2}}f_j(y_1,y_2)dy_1
\\
&-\frac{1}{r_{1,1}+r_{1,2}}\int_{x_1-r_{1,1}}^{x_1+r_{1,2}}f_j(y_1,x_2)dy_1
\biggr|dy_2<\frac{1}{k}\Bigr\}.
\end{align*}
Hence as in the proof of Claim 2, we see that $B_j$ is a
measurable set in $\mathbb R^2$.

\end{proof}

{\bf Claim 4.} Let $1<p_j<\infty$ and $f_j\in L^{p_j}(\mathbb R^2)$
for $1\leq j\leq m$. Then
\begin{equation}\label{7.2}
\lim\limits_{(r_{1,1},r_{1,2})\in\overline{\mathbb{R}}_{+}^2\atop
r_{1,1}+r_{1,2}\rightarrow0}u_{x,\vec{f}}(r_{1,1},r_{1,2},0,0)=u_{x,\vec{f}}(0,0,0,0)\ \ {\rm for\ a.e.}\ x\in\mathbb{R}^2.
\end{equation}
\par\medskip
\begin{proof}
To prove \eqref{7.2}, it suffices to show that for fixed 
$1\leq j\leq m$, there exists a null set $E_{1}$ in 
$\mathbb R^2$ such that for $(x_1,x_2)\in\mathbb R^2
\setminus E_{1}$,
\begin{equation}\label{7.3}
\lim\limits_{(r_{1,1},r_{1,2})\in\overline{\mathbb{R}}_{+}^2\atop r_{1,1}+r_{1,2}\to 0}\frac{1}{r_{1,1}+r_{1,2}}
\int_{x_1-r_{1,1}}^{x_1+r_{1,2}}|f_j(y_1,x_2)-f_j(x_1,x_2)|dy_1=0.
\end{equation}
From $f_j\in L^{p_j}(\mathbb R^2)$ it follows that 
$f_j(\cdot, x_2)\in L^{p_j}(\mathbb R)$ a.e $x_2\in\mathbb{R}$.
Hence for these $x_2$, by Lebesgue's differentiation theorem
(note $p_j>1$) we see that \eqref{7.3} holds for a.e.
$x_1\in\mathbb R$. By Claim 2, we see that there exists a null
set $E_1$ in $\mathbb R^2$ such that \eqref{7.3} holds for
$x\in\mathbb R^2\setminus E_{1}$.
\end{proof}

\medskip

Applying the arguments similar to those used in deriving Claim 4,
we can get the following claim. The details are omitted.

{\bf Claim 5.} Let $1<p_j<\infty$ and $f_j\in L^{p_j}(\mathbb R^2)$
for $1\leq j\leq m$. Then
$$\lim\limits_{(r_{2,1},r_{2,2})\in\overline{\mathbb{R}}_{+}^2\atop r_{2,1}+r_{2,2}\rightarrow0}u_{x,\vec{f}}(0,0,r_{2,1},r_{2,2})=u_{x,\vec{f}}(0,0,0,0)\ \ {\rm for\ a.e.}\ x\in\mathbb{R}^2.$$
\par\medskip

{\bf Claim 6.} Let $1<p_j<\infty$ and $f_j\in L^{p_j}
(\mathbb R^2)$ for $1\leq j\leq m$. Then there exists
a null set $E_{\mathbb Q}$ in $\mathbb R^2$ such that
for $(x_1,x_2)\in\mathbb R^2\setminus E_{\mathbb Q}$
and $(r_{1,1},r_{1,2})\in\overline{\mathbb Q}_+^2$ with
$r_{1,1}+r_{1,2}>0$,
\begin{equation}\label{7.4}
\begin{array}{ll}
&\displaystyle\lim\limits_{(r_{2,1},r_{2,2})\in\overline{\mathbb{R}}_{+}^2\atop r_{2,1}+r_{2,2}\to 0}
\frac{1}{r_{2,1}+r_{2,2}}
\int_{x_2-r_{2,1}}^{x_2+r_{2,2}}
\biggl|\frac{1}{r_{1,1}+r_{1,2}}\int_{x_1-r_{1,1}}^{x_1+r_{1,2}}f_j(y_1,y_2)dy_1\\
&\qquad\qquad\qquad\qquad-\displaystyle\frac{1}{r_{1,1}+r_{1,2}}\int_{x_1-r_{1,1}}^{x_1+r_{1,2}}f_j(y_1,x_2)dy_1
\biggr|dy_2=0.
\end{array}
\end{equation}
\par\medskip
\begin{proof}
Let $(r_{1,1}, r_{1,2})\in\overline{\mathbb{R}}_{+}^2$ with
$r_{1,1}+ r_{1,2}>0$. From $f_j\in L^{p_j}(\mathbb R^2)$ we
see that for all $x_1\in\mathbb R$,
\begin{equation*}
\biggl|
\frac{1}{r_{1,1}+r_{1,2}}\int_{x_1-r_{1,1}}^{x_1+r_{1,2}}f_j(y_1,y_2)dy_1
\biggr|
\le \frac{1}{(r_{1,1}+r_{1,2})^{1/p}}\|f_j(\cdot,y_2)\|_{L^{p_j}(\mathbb R)}.
\end{equation*}
and hence
\begin{equation*}
\biggl(\int_{\mathbb R}\biggl|
\frac{1}{r_{1,1}+r_{1,2}}\int_{x_1-r_{1,1}}^{x_1+r_{1,2}}f_j(y_1,y_2)dy_1
\biggr|^pdy_2\biggr)^{1/p}
\le \frac{1}{(r_{1,1}+r_{1,2})^{1/p}}\|f_j\|_{L^{p_j}(\mathbb R^2)}.
\end{equation*}
Hence, for every $x_1\in\mathbb R$, by Lebesgue's differentiation
theorem, \eqref{7.4} holds for a.e. $x_2\in\mathbb R$. So, by
Claim 2, there exists a null set $E_{r_{1,1},r_{1,2}}\in\mathbb R^2$
such that \eqref{7.4} holds for $(x_1,x_2)\in\mathbb R^2\setminus
E_{r_{1,1}, r_{1,2}}$. Now set
$$
E_{\mathbb Q}=\bigcup\limits_{r_{1,1},r_{1,2}\in\overline{{\mathbb Q}}_{+}\atop
r_{1,1}+r_{1,2}>0}E_{r_{1,1}, r_{1,2}}.
$$
Then $|E_{\mathbb Q}|=0$ and for $(x_1,x_2)\in\mathbb R^2
\setminus E_{\mathbb Q}$, \eqref{7.4} holds for $(r_{1,1},
r_{1,2})\in\overline{\mathbb Q}_+^2$ with $r_{1,1}+r_{1,2}>0$.
\end{proof}
{\bf Claim 7.} Let $1<p_j<\infty$ and $f_j\in L^{p_j}(\mathbb R^2)$
for $1\leq j\leq m$. Then there exists a null set $E_{2}$ in
$\mathbb R^2$ such that for $(x_1,x_2)\in\mathbb R^2\setminus E_{2}$
and $(r_{1,1},r_{1,2})\in\overline{\mathbb{R}}_{+}^2$ with
$r_{1,1}+r_{1,2}>0$,
\begin{equation}\label{7.5}
\lim_{(r_{1,1}',r_{1,2}')\in\overline{\mathbb{R}}_{+}^2\atop
(r_{1,1}',r_{1,2}')\to(r_{1,1},r_{1,2})}
\frac{1}{r_{1,1}'+r_{1,2}'}\int_{x_1-r_{1,1}'}^{x_1+r_{1,2}'}f_j(y_1,x_2)dy_1
=\frac{1}{r_{1,1}+r_{1,2}}\int_{x_1-r_{1,1}}^{x_1+r_{1,2}}f_j(y_1,x_2)dy_1,
\end{equation}
for $1\le j\le m$,
\par\medskip
\begin{proof}
Fix $(r_{1,1}',r_{1,2}')\in\overline{\mathbb{R}}_{+}^2$, we have
\begin{eqnarray*}
&&\biggl| \frac{1}{r_{1,1}'+r_{1,2}'}
\int_{x_1-r_{1,1}'}^{x_1+r_{1,2}'}f_j(y_1,x_2)dy_1-
\frac{1}{r_{1,1}+r_{1,2}}
\int_{x_1-r_{1,1}}^{x_1+r_{1,2}}f_j(y_1,x_2)dy_1\biggr|
\\
&&\le
\biggl|\frac{1}{r_{1,1}'+r_{1,2}'}-\frac{1}{r_{1,1}+r_{1,2}}\biggr|
\int_{x_1-r_{1,1}'}^{x_1+r_{1,2}'}|f_j(y_1,x_2)|dy_1
\\
&&\quad+\frac{1}{r_{1,1}+r_{1,2}}\biggl(
\int_{x_1+\min(r_{1,2}',r_{1,2})}^{x_1+\max(r_{1,2}',r_{1,2})}
|f_j(y_1,x_2)|dy_1+\int_{x_1-\max(r_{1,1}',r_{1,1})}^{x_1-\min(r_{1,1}',r_{1,1})}
|f_j(y_1,x_2)|dy_1\biggr)
\\
&&\le
\frac{|r_{1,1}-r_{1,1}'|+|r_{1,2}-r_{1,2}'|}
{(r_{1,1}+r_{1,2})(r_{1,1}'+r_{1,2}')}(r_{1,1}'+r_{1,2}')^{1/p_j'}
\|f_j(\cdot,x_2)\|_{L^{p_j}(\mathbb R)}
\\
&&
\quad+\frac{|r_{1,1}-r_{1,1}'|^{1/p_j'}+|r_{1,2}-r_{1,2}'|^{1/p_j'}}
{r_{1,1}+r_{1,2}}\|f_j(\cdot,x_2)\|_{L^{p_j}(\mathbb R)}.
\end{eqnarray*}
Now, there exists a null set $E_{2,1}$ in $\mathbb R$ such
that $\|f_j(\cdot,x_2)\|_{L^{p_j}(\mathbb R)}<\infty$ for
$x_2\in\mathbb R\setminus E_{2,1}$. Set $E_2=\mathbb R
\times E_{2,1}$. Then $E_{2}$ ia a null set in $\mathbb R^2$.
And for $(x_1,x_2)\in\mathbb R^2\setminus E_2$,
\eqref{7.5} holds.
\end{proof}

{\bf Claim 8.} Let $1<p_j<\infty$ and $f_j\in L^{p_j}
(\mathbb R^2)$ for $1\leq j\leq m$. Then there exists
a null set $E_{3}$ in $\mathbb R^2$ such that for
$(x_1,x_2)\in\mathbb R^2\setminus E_{3}$ and $(r_{1,1},
r_{1,2},r_{2,1},r_{2,2})\in\overline{\mathbb{R}}_{+}^4$
with $r_{1,1}+r_{1,2}>0$ and $r_{2,1}+r_{2,2}>0$,
\begin{equation}\label{7.6}
\begin{array}{ll}
&\displaystyle\lim\limits_{(r_{1,1}',r_{1,2}')\in\overline{\mathbb{R}}_{+}^2\atop
(r_{1,1}',r_{1,2}')\rightarrow(r_{1,1},r_{1,2})}\frac{1}{r_{2,1}+r_{2,2}}
\int_{x_2-r_{2,1}}^{x_2+r_{2,2}}
\biggl|\frac{1}{r_{1,1}+r_{1,2}}
\int_{x_1-r_{1,1}}^{x_1+r_{1,2}}f_j(y_1,y_2)dy_1\\
&\qquad\qquad\qquad\qquad-\displaystyle\frac{1}{r_{1,1}'+r_{1,2}'}\int_{x_1-r_{1,1}'}^{x_1+r_{1,2}'}f_j(y_1,y_2)dy_1\Big|dy_2=0
\end{array}
\end{equation}
\begin{proof}
\begin{align*}
&\text{The left side of \eqref{7.6}}
\\
&\le
\frac{1}{r_{2,1}+r_{2,2}}
\int_{x_2-r_{2,1}}^{x_2+r_{2,2}}
\biggl(\biggl|\frac{1}{r_{1,1}+r_{1,2}}-\frac{1}{r_{1,1}'+r_{1,2}'}\biggr|
\int_{x_1-r_{1,1}}^{x_1+r_{1,2}}|f_j(y_1,y_2)|dy_1
\\
&\hspace{1cm}+\frac{1}{r_{1,1}'+r_{1,2}'}\biggl(
\int_{x_1+\min(r_{1,2}',r_{1,2})}^{x_1+\max(r_{1,2}',r_{1,2})}
|f_j(y_1,y_2)|dy_1+\int_{x_1-\max(r_{1,1}',r_{1,1})}^{x_1-\min(r_{1,1}',r_{1,1})}
|f_j(y_1,y_2)|dy_1\biggr)
\biggr)dy_2
\\
&\le \frac{|r_{1,1}-r_{1,1}'|+|r_{1,2}-r_{1,2}'|}
{(r_{1,1}+r_{1,2})^2(r_{1,1}'+r_{1,2}')}\mathcal{M}_{\mathcal{R}}f_j(x_1,x_2)
+\frac{|r_{1,1}'-r_{1,1}|^{1/p_j'}+|r_{1,2}'-r_{1,2}|^{1/p_j}}{(r_{2,1}+r_{2,2})^{1/p_j}(r_{1,1}'+r_{1,2}')}\|f_j\|_{L^{p_j}(\mathbb{R}^2)}.
\end{align*}
Then \eqref{7.6} follows from this.
\end{proof}

Applying Claim 8, we can obtain the following claim
immediately.

{\bf Claim 9.} Let $1<p_j<\infty$ and $f_j\in L^{p_j}
(\mathbb R^2)$ for $1\leq j\leq m$. Then for $(r_{1,1},
r_{1,2},r_{2,1},r_{2,2})\in\overline{\mathbb{R}}_{+}^4$
with $r_{1,1}+r_{1,2}>0$ and $r_{2,1}+r_{2,2}>0$,
$$\lim\limits_{(r_{1,1}',r_{1,2}')\in\overline{\mathbb{R}}_{+}^2\atop
(r_{1,1}',r_{1,2}')\rightarrow(r_{1,1},r_{1,2})}u_{x,\vec{f}}(r_{1,1}',r_{1,2}',r_{2,1},r_{2,2})=u_{x,\vec{f}}(r_{1,1},r_{1,2},r_{2,1},r_{2,2}).$$

\noindent\textbf{\large{Acknowledgements.}} This work was completed during the second author was visiting the University of Kansas. The
 second author is very grateful to both Professor R. H. Torres and the math department for hospitality and stimulation conditions.


\end{document}